\let\oldref\ref
\renewcommand{\ref}[1]{\mbox{\oldref{#1}}%
}}
\tikzset{symbol/.style={draw=none, every to/.append style={%
,edge node={node [sloped,allow upside down,auto=false]{$#1$}}
}} }
\newcommand{\Z}{\mathbb Z}
\newcommand{\Q}{\mathbb Q}
\newcommand{\F}{\mathbb F}
\newcommand{\field}{\mathfrak F}
\newcommand{\Satake}{\mathcal{S}} 
\newcommand{\sm}{\mathrm{sm}}
\newcommand{\op}{\mathrm{op}}
\newcommand{\qis}{\mathrm{qis}}
\newcommand{\Chain}{\mathrm{C}} 
\newcommand{\K}{\mathrm{K}} 
\newcommand{\D}{\mathrm{D}} 
\newcommand{\RR}{\mathrm{R}} 
\newcommand{\LL}{\mathrm{L}} 
\renewcommand{\H}{\mathrm{H}} 
\newcommand{\bounded}{\mathrm{b}} 
\newcommand{\adm}{\mathrm{adm}} 
\newcommand{\gadm}{\mathrm{a}} 
\newcommand{\cpt}{\mathrm{c}} 
\newcommand{\one}{\mathbf{1}}
\newcommand{\pholder}{{\mathchoice%
{\raisebox{-1.5pt}{$-$}} 
{\raisebox{-1.5pt}{$-$}} 
{\scalebox{.7}{\raisebox{-1.5pt}{$-$}} } 
{\scalebox{.5}{\raisebox{-1.5pt}{$-$}} } 
}}
\newcommand{\eg}{\textit{e.g.}}
\newcommand{\ie}{\textit{i.e.}}
\newcommand{\loccit}{\textit{loc.\,cit.}}
\newcommand{\opcit}{\textit{op.\,cit.}}
\renewcommand{\setminus}{\smallsetminus}
\newcommand{\Hecke}{\mathcal H}
\newcommand{\Fil}{\mathcal{F}\kern-1.5pt{\mathit{il}} } 
\renewcommand{\to}{\longrightarrow}
\newcommand{\To}{\Longrightarrow}
\newcommand{\ol}[1]{\overline{#1}}
\newcommand{\ul}[1]{\underline{#1}}
\newcommand{\alg}[1]{\mathbf{#1}}
\newcommand{\set}[2]{\left\{#1\,\middle|\,#2\right\}}
\newcommand{\cat}[1]{\mathscr{#1}}
\newcommand{\Ab}{\mathrm{(Ab)}}
\newcommand{\Sets}{\mathrm{(Sets)}}
\newcommand{\Vect}{\mathrm{Vect}}
\newcommand{\Fpbar}{\ol{\F}_p}
\newcommand{\longtwoheadrightarrow}%
{\longrightarrow\hspace{-1.2em}\rightarrow\hspace{.2em}}
\renewcommand{\twoheadrightarrow}%
{\rightarrow\hspace{-1.2em}\rightarrow\hspace{.2em}}
\newcommand{\longhookrightarrow}%
{\lhook\joinrel\relbar\joinrel\rightarrow}
\DeclareMathOperator{\diag}{diag}
\DeclareMathOperator{\End}{End}
\DeclareMathOperator{\Ext}{Ext}
\DeclareMathOperator{\q}{\mathbf{q}}
\DeclareMathOperator{\ii}{\mathbf{i}}
\DeclareMathOperator{\GL}{GL}
\DeclareMathOperator{\cores}{cores}
\DeclareMathOperator{\res}{res}
\DeclareMathOperator{\conj}{conj}
\DeclareMathOperator{\Char}{char}
\DeclareMathOperator{\Inf}{Inf}
\DeclareMathOperator{\val}{val}
\DeclareMathOperator{\Sp}{Sp} 
\DeclareMathOperator{\Sym}{Sym}
\DeclareMathOperator{\Mod}{Mod}
\DeclareMathOperator{\ev}{ev}
\DeclareMathOperator{\Res}{Res}
\DeclareMathOperator{\Ord}{Ord}
\DeclareMathOperator{\Hom}{Hom}
\DeclareMathOperator{\RHom}{RHom}
\DeclareMathOperator{\iHom}{\ul{Hom}}
\DeclareMathOperator{\id}{id}
\DeclareMathOperator{\Nat}{Nat}
\DeclareMathOperator{\Ind}{Ind}
\DeclareMathOperator{\RInd}{RInd}
\DeclareMathOperator{\ind}{ind}
\DeclareMathOperator{\spann}{span}
\DeclareMathOperator{\colim}{colim}
\DeclareMathOperator{\Image}{Im}
\DeclareMathOperator{\Ker}{Ker}
\DeclareMathOperator{\Coker}{Coker}
\DeclareMathOperator{\pr}{pr}
\DeclareMathOperator{\Rep}{Rep}
\newtheorem{prop}[equation]{Proposition}
\newtheorem{thm}[equation]{Theorem}
\newtheorem{lem}[equation]{Lemma}
\newtheorem{cor}[equation]{Corollary}
\newtheorem{hyp}[equation]{Hypothesis}
\newtheorem{propintro}{Proposition}
\newtheorem{thmintro}[propintro]{Theorem}
\theoremstyle{definition}
\newtheorem{notation}[equation]{Notation}
\newtheorem*{defn}{Definition}
\newtheorem*{claim*}{Claim}
\newtheorem{warning}[equation]{Warning}
\newtheorem{ex}[equation]{Example}
\newtheorem{rmk}[equation]{Remark}
\newtheorem{explanation}[equation]{Explanation}
\theoremstyle{remark}
\newtheorem*{rmk*}{Remark}
\newtheorem*{ex*}{Example}
\newtheorem*{notation*}{Notation}
\title{The Left Adjoint of Derived Parabolic Induction}
\author{Claudius Heyer}
\address{Mathematisches Institut, Universit\"at M\"unster, Einsteinstra\ss{}e 62,
D-48149 M\"unster, Germany}
\email{cheyer@uni-muenster.de}
\subjclass[2020]{11F85, 18G80, 20G25}
\begin{document}
\begin{abstract} 
We prove that the derived parabolic induction functor, defined on the unbounded
derived category of smooth mod $p$ representations of a $p$-adic reductive
group, admits a left adjoint $\mathrm{L}(U,-)$. We study the cohomology
functors $\mathrm{H}^i\circ \mathrm{L}(U,-)$ in some detail and deduce that
$\mathrm{L}(U,-)$ preserves bounded complexes and global admissibility in the
sense of Schneider--Sorensen. Using $\mathrm{L}(U,-)$ we define a derived Satake
homomorphism und prove that it encodes the mod $p$ Satake homomorphisms defined
explicitly by Herzig.
\end{abstract} 
\maketitle
\tableofcontents

\section{Introduction} 
\subsection{The setting} 
This article is devoted to the investigation of the left adjoint of the derived
parabolic induction functor in natural characteristic. Let $G$ be a $p$-adic
reductive group, that is, the group of $\field$-points of a connected reductive
group defined over a finite field extension $\field/\Q_p$. 
Let $k$ be a coefficient field of characteristic $p$ and denote $\Rep_k(G)$ the
category of smooth $G$-representations on $k$-vector spaces. Since several
important functors fail to be exact, like the functor of $K$-invariants for
compact open subgroups $K\subseteq G$, Schneider \cite{Schneider.2015}
introduced the derived category
\[
\D(G) \coloneqq \D(\Rep_k(G))
\]
of unbounded cochain complexes to remedy this problem. The main result of \opcit\
proves that, given a torsion-free open pro-$p$ subgroup $K\subseteq G$, taking
derived $K$-invariants induces a derived
equivalence of $\D(G)$ with the derived category $\D(\Hecke_K^\bullet)$ of
dg-modules over a certain differential graded algebra $\Hecke_K^\bullet$. This
strongly suggests that the study of smooth mod $p$ representations is
best done on the derived level.

Let $P = UM$ be a parabolic subgroup of $G$ with Levi subgroup $M$ and unipotent
radical $U$. The parabolic induction functor $i_M^G\colon \Rep_k(M)\to
\Rep_k(G)$ is defined as the composition
\[
\Rep_k(M) \xrightarrow{\Inf^M_P} \Rep_k(P) \xrightarrow{\Ind_P^G} \Rep_k(G),
\]
where $\Inf^M_P$ is the inflation along the canonical projection
$P\longtwoheadrightarrow M$ and $\Ind_P^G$ is the smooth induction functor. The
functors $\Inf^M_P$ and $\Ind_P^G$ are exact. Hence also $i_M^G$ is exact and
extends to a functor
\[
\RR i_M^G\colon \D(M)\to \D(G)
\]
on the derived categories: Here, $\RR i_M^G$ is given by applying $i_M^G$
termwise to a complex. (The different notation serves the only purpose of being
able to distinguish between underived and derived parabolic induction.) The
compatibility of $\RR i_M^G$ with
an analogous functor on $\D(\Hecke_K^\bullet)$ is explored in
\cite{Scherotzke-Schneider.2022}, which also discusses the existence of left and
right adjoints of $\RR i_M^G$, referring for the former to my unpublished notes. 

At this point one might ask why one should care about a left adjoint. I propose
two answers: for one, in the classical theory of smooth representations, the
left adjoint is exact and plays an essential role in the classification of
irreducible smooth representations in terms of parabolic induction, namely, the
irreducible representations which do not appear as a subrepresentation of a
parabolically induced representation are precisely those in the kernel of the
corresponding left adjoint. This strongly indicates that the left adjoint for
derived parabolic induction will take a fundamental position in understanding
the category $\D(G)$ as a whole. The second answer
is that the derived left adjoint will be an important tool to compute higher
$\Ext$-groups between parabolically induced representations. There are already
some computations which use Emerton's higher ordinary parts functor,
\cite{EmertonII, Hauseux.2016, Hauseux.2018}. But the strongest of these results
are conditional to an open conjecture of Emerton, which states that the higher
ordinary parts functor is a right derived functor. In the follow-up paper
\cite{Heyer.2023} I use the derived left adjoint to unconditionally compute these
$\Ext$-groups.

Let me briefly discuss why it is not immediate that $\RR i_M^G$ admits a left
adjoint. It is well-known that the Jacquet-functor $\Rep_k(G)\to \Rep_k(M)$,
$V\longmapsto V_U$, is left adjoint to $i_M^G$. The left
derived functor of $(\pholder)_U$ would be a promising candidate for the left
adjoint of derived parabolic induction. Unfortunately, the category $\Rep_k(G)$
does not have enough projectives, since $\Char(k) = p$. (It is a folklore
conjecture that there exist no non-zero projectives at all; first evidence in
this direction is contained in \cite[Rmk.~2.20]{Dupre-Kohlhaase.2023}, which proves the statement for compact $G$; see also \cite[Thm.~3.1]{Chirvasitu-Kanda.2023} for the more general statement about profinite groups.)
Therefore, one cannot use projective resolutions to prove that the left derived
functor exists. 
A second approach is to show that $\RR i_M^G$ commutes with arbitrary small
products and to then use a variant of Brown representability to deduce the
existence of a left adjoint. It is easy to show that $i_M^G$ preserves
arbitrary products, but since products in $\D(M)$ and $\D(G)$ are not computed
by taking products of representing complexes, it is not obvious that $\RR i_M^G$
should have the same property. 

In view of these concerns it was surprising that $\RR i_M^G$ does in fact
commute with small products. The initial proof was rather technical
and relied on an analysis of Hochschild--Serre spectral sequences associated
with $K_P\cap U \trianglelefteq K_P$, where $K_P$ is a compact open subgroup of
$P$, and analyzing how these behave with respect to restriction to smaller
subgroups. The more conceptual proof presented here is by appealing
to one of the main results of \cite{Balmer.2016}. The key ingredient is the
observation that the derived functor $\RR\H^0(U\cap K_P,\pholder)\colon \D(K_P)\to
\D(K_P/U\cap K_P)$ preserves compact objects provided $K_P$ and $K_P/U\cap K_P$
are torsion-free.

\subsection{Main results} 
Note that $\RR i_M^G = \RR\Ind_P^G \circ
\RR\Inf^M_P$ so that it suffices to determine the left adjoints of
$\RR\Ind_P^G$ and $\RR\Inf^M_P$ separately. The left adjoint of $\RR\Ind_P^G$ is
easily seen to be the restriction functor $\RR\Res^G_P\colon \D(G)\to \D(P)$,
and hence it remains to show that $\RR\Inf^M_P$ admits a left adjoint.

Granted the existence, the next task would then be to find an explicit
description of said left adjoint. In view of this it is convenient to study the
derived inflation
functor $\RR\Inf^T_S\colon \D(T)\to \D(S)$ along a surjective open monoid
homomorphism $f\colon S\longtwoheadrightarrow T$ with kernel $U$, where $S$ and
$T$ are open submonoids of some $p$-adic Lie groups. 
The most relevant example is the following: Fix a compact
open subgroup $K_U$ of $U$ and consider the positive monoid $T = M^+$
consisting of those elements $m\in M$ with $mK_Um^{-1} \subseteq K_U$ as well as
the open submonoid $S = K_UM^+$ of $P$ (in which case the kernel of $S\to T$ is
$K_U$). This added flexibility is useful to
deduce precise statements about the left adjoint of $\RR\Inf^M_P$. 

In this general setting, the first main result is:

\begin{thmintro}[Theorem~\ref{thm:Inf-prod-general}] 
\label{thm:A}
The functor $\RR\Inf^T_S\colon \D(T)\to \D(S)$ admits a left adjoint $\LL_U$. In
particular, $\RR i_M^G$ admits a left adjoint $\LL(U,\pholder)$.
\end{thmintro}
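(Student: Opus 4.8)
The plan is to reduce everything to a statement about compact open subgroups and then invoke the cited theorem of Balmer. Since $\RR i_M^G = \RR\Ind_P^G\circ\RR\Inf_P^M$ and the exact adjunction $\Res^G_P\dashv\Ind_P^G$ passes verbatim to the derived categories, the functor $\RR\Ind_P^G$ already has the left adjoint $\RR\Res^G_P$; it therefore suffices to produce a left adjoint of $\RR\Inf_P^M$, and more generally of $\RR\Inf^T_S\colon\D(T)\to\D(S)$ for an open surjection $f\colon S\twoheadrightarrow T$ of open submonoids of $p$-adic Lie groups with kernel $U$. Such a functor $\RR\Inf^T_S$ always admits a \emph{right} adjoint, namely derived $U$-invariants $\RR\H^0(U,-)$ (the total derived functor of the adjunction $\Inf\dashv(-)^U$ between abelian categories, which exists since $\Rep_k(S)$ is a Grothendieck category). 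Because $\D(T)$ is compactly generated---standard for $p$-adic reductive groups, and available in the positive-monoid case as well---it satisfies Brown representability for the dual, so $\RR\Inf^T_S$ will admit a left adjoint as soon as we know that it preserves small products.

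To establish that, I would localize to compact open subgroups. Each restriction functor $\RR\Res^S_K\colon\D(S)\to\D(K)$ preserves products, having the exact left adjoint $\ind^S_K$; and if $K$ ranges over a cofinal system of compact open subgroups of $S$ chosen so that both $K$ and its image $f(K)$ are torsion-free pro-$p$---which one can arrange by intersecting the preimages of a shrinking system of torsion-free compact opens of $T$ with a shrinking system of compact opens of $S$---then the family $\{\RR\Res^S_K\}_K$ is jointly conservative. Combining this with the base-change isomorphism $\RR\Res^S_K\circ\RR\Inf^T_S\cong\RR\Inf^{f(K)}_K\circ\RR\Res^T_{f(K)}$ and the fact that $\RR\Res^T_{f(K)}$ likewise preserves products, the problem reduces to showing that $\RR\Inf^{K'}_K\colon\D(K')\to\D(K)$ preserves products whenever $K\twoheadrightarrow K'$ is a surjection of torsion-free pro-$p$ groups with kernel $N=K\cap U$.

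The heart of the matter is the assertion already singled out in the introduction: the right adjoint $\RR\H^0(N,-)\colon\D(K)\to\D(K')$ of $\RR\Inf^{K'}_K$ preserves compact objects. To see this, observe that $\RR\H^0(N,-)$ is triangulated and commutes with arbitrary coproducts (continuous cohomology of the compact group $N$ commutes with filtered colimits), and that $\D(K)$ is compactly generated by the trivial representation $k$; hence it suffices that $\RR\H^0(N,k)$ be a compact object of $\D(K')$. But $N$ is torsion-free and $p$-adic analytic, so it has finite cohomological dimension and finite-dimensional continuous cohomology; therefore $\RR\H^0(N,k)$ is represented by a bounded complex of finite-dimensional smooth $K'$-representations (via the conjugation action of $K'=K/N$ on the cohomology of $N$), and such a complex is compact. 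Now $\RR\Inf^{K'}_K$ is a coproduct-preserving tensor-triangulated functor between rigidly-compactly generated tensor-triangulated categories, it preserves compact objects (being coproduct-preserving and fixing the compact generator $k$), and its right adjoint $\RR\H^0(N,-)$ preserves compact objects; by the main result of \cite{Balmer.2016} this forces $\RR\Inf^{K'}_K$ to admit a left adjoint, hence in particular to preserve products. Unwinding the two reductions produces the left adjoint $\LL_U$ of $\RR\Inf^T_S$, and then $\LL(U,-)\coloneqq\LL_U\circ\RR\Res^G_P$ is left adjoint to $\RR i_M^G$.

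I expect the real obstacle to be the input of the third paragraph: identifying $\RR\H^0(N,k)$ with a perfect complex of smooth $K'$-representations---equivalently, the finiteness of the continuous cohomology of the torsion-free compact $p$-adic analytic group $N$---and checking that $\RR\H^0(N,-)$ commutes with coproducts, so that Balmer's theorem applies. The surrounding bookkeeping---working in the positive-monoid setting and choosing a cofinal system of compact open subgroups with $K$ and $f(K)$ simultaneously torsion-free---is routine but needs some care.
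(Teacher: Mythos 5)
Your proposal is correct and follows essentially the same route as the paper: reduce to inflation, restrict along a conservative, product-preserving restriction functor to a torsion-free compact open subgroup whose image is also torsion-free, show that $\RR\H^0(K_U,-)$ preserves compact objects using the finiteness of the cohomology of torsion-free compact $p$-adic analytic groups, and conclude via Balmer's theorem together with Brown representability. The only slips are cosmetic: in the monoid setting the right adjoint of $\RR\Inf^T_S$ is $\RR\Pi_U$ rather than $\RR\H^0(U,-)$ (this adjoint is never actually used in the argument), and the reduction to checking compactness of the single object $\RR\H^0(K_U,\one)$ rests on the fact that the compact objects of $\D(K_P)$ form the thick subcategory generated by $\one$ (Neeman's lemma), not on $\RR\H^0(K_U,-)$ commuting with coproducts.
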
 

We deduce the theorem from a variant of Brown representability by proving that
$\RR\Inf^T_S$ commutes with small products. Observing that restriction to
compact open subgroups is conservative and product preserving, we reduce to the
case where $S$ and $T$ are torsion-free compact $p$-adic Lie groups. We then
verify that the categories $\D(T)$ and $\D(S)$ are rigidly-compactly generated
(in the sense of \cite[1.2.~Hyp.]{Balmer.2016}) and that the right adjoint
$\RR\H^0(U,\pholder)$ of $\RR\Inf^T_S$ preserves compact objects. With these
hypotheses in place, the theorem follows from \cite[3.3.~Thm.]{Balmer.2016}. In
fact, under these restrictions on $S$ and $T$, the shifted invariants functor
$\RR\H^0(U,\pholder)[\dim U]$ is a left adjoint of $\RR\Inf^T_S$.

As a special case of Theorem~\ref{thm:A} we obtain that $\RR\Inf^M_P$ admits a
left adjoint, which we denote by $\LL_U$. 
The next task then is to try to explicitly compute $\LL_U$, or at least its
cohomology functors $\LL^i_U$. A formal argument shows $\LL_U^i \cong 0$, for
$i>0$,
and that $\LL_U^0$ is the usual functor of $U$-coinvariants (\ie, the left
adjoint of $\Inf^M_P$). In order to analyze $\LL_U^i$ for $i<0$, we employ
$p$-adic monoids. To motivate the idea, we recall a different description of the
(underived) coinvariants functor $\LL_U^0$. The unipotent radical $U$ is the
union of compact open subgroups, say $U = \bigcup_{r\in \Lambda}H_r$. It is
a well-known and easy fact that $\LL_U^0 \cong \colim_{r\in\Lambda}
\LL_{H_r}^0$. Although this isomorphism describes $\LL_U^0$ in terms of
functors of coinvariants with respect to compact open subgroups (which should be
easier to understand), it is in this formulation difficult to extend to the
derived setting. This is where the positive monoid enters the stage: Fixing a
torsion-free compact open subgroup $K_U$ of $U$, the positive monoid $M^+$ of
elements $m\in M$ with $mK_Um^{-1}\subseteq K_U$ acts naturally on
$\LL_{K_U}^0(V)$, for $V\in \Rep_k(P)$. It is well-known that there exists a
central positive element $z\in M$ such that $U = \bigcup_{i\ge0} z^{-i}K_Uz^i$.
There is a natural isomorphism $\colim_{i} \LL_{z^{-i}K_Uz^i}^0(V) \cong
\colim_z\LL_{K_U}^0(V)$, where the transition maps in the latter colimit are
given by multiplication with $z$. 
In fact, $\colim_z$ is the (exact) functor $\Rep_k(M^+)\to \Rep_k(M)$ given by
inverting the action of $z$. 
It is left adjoint to the restriction functor $\Res^{M}_{M^+}$ and also denoted
$\ind_{M^+}^M$. Thus, there is a natural isomorphism
$\LL_U^0(V)\cong \ind_{M^+}^M\LL_{K_U}^0(V)$, which can easily be extended to
the derived categories. Concretely, writing $P^+\coloneqq K_UM^+$ (it is an open
submonoid in $P$), the derived inflation $\RR\Inf^{M^+}_{P^+}$ admits a left
adjoint $\LL_{K_U}\colon \D(P^+)\to \D(M^+)$, and we find a natural isomorphism
\[
\RR\ind_{M^+}^M \LL_{K_U} \RR\Res^P_{P^+} \xRightarrow{\cong} \LL_U
\]
of functors $\D(P)\to \D(M)$ (Proposition~\ref{prop:LU-positive}). The group
$K_U$ is a compact, torsion-free $p$-adic Lie group and hence has finite
cohomological dimension by a result of Lazard and Serre
(see~\cite[Cor.~(1)]{Serre.1965}). 
We deduce from this fact in Corollary~\ref{cor:RH(KU,-)=LKU} that $\LL_{K_U}$ is
naturally isomorphic to $\RR\H^0(K_U,\omega_{P^+}\otimes_k\pholder)$, where
$\omega_{P^+}$ is a certain smooth character of $P^+$ sitting in cohomological
degree $-\dim U$, and $\RR\H^0(K_U,\pholder)$ is the right derived functor of
$\H^0(K_U,\pholder)\colon \Rep_k(P^+)\to \Rep_k(M^+)$ where  the $M^+$-action on each
$\H^0(K_U,V)$ is the ``Hecke action'' from~\cite{EmertonI}.
To summarize:

\begin{thmintro}[Corollary~\ref{cor:LU-explicit}]\label{thm:B} 
One has an isomorphism $\ind_{M^+}^M \RR\H^0(K_U, \omega_{P^+}\otimes_k\pholder)
\Res^{P}_{P^+} \cong \LL_{U}$ of functors $\Rep_k(P)\to \Rep_k(M)$.
\end{thmintro} 

An immediate consequence of Theorem~\ref{thm:B} is that $\LL_U$ restricts
to the bounded derived categories. 

Similar results trivially also follow for the left adjoint $\LL(U,\pholder)\coloneqq
\LL_U \circ\RR\Res^G_P$ of derived parabolic induction $\RR i_M^G$. It is
well-known that $i_M^G$ and the Jacquet-functor $\LL^0(U,\pholder)$
preserve admissible representations, and it is a natural question whether the
same still holds on the derived level. The correct notion of admissibility for
$\D(G)$ was introduced by Schneider--Sorensen in \cite{Schneider-Sorensen.2023a}:
A complex $X$ in $\D(G)$ is called \emph{globally admissible} if for some
torsion-free compact open pro-$p$ subgroup $K\subseteq G$ and all $i\in\Z$ the
cohomology groups $\H^i(K,X)$ are finite-dimensional $k$-vector spaces. They
then proceed to show that globally admissible complexes are precisely the
reflexive objects for a natural duality functor on $\D(G)$. In this direction,
we prove:

\begin{thmintro}[Theorem~\ref{thm:globally-admissible}]\label{thm:C} 
The functors $\RR i_M^G$ and $\LL(U,\pholder)$ preserve globally admissible complexes.
\end{thmintro}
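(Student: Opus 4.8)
The plan is to treat the two functors separately, using $\RR i_M^G=\RR\Ind_P^G\circ\RR\Inf_P^M$ and $\LL(U,-)=\LL_U\circ\RR\Res^G_P$ (with $P=UM$), and exploiting throughout that global admissibility does not depend on the chosen torsion-free compact open pro-$p$ subgroup. I would record the latter first: if $K'\subseteq K$ are two such subgroups of a $p$-adic Lie group then $\RR\H^0(K',X)\cong\RR\H^0\bigl(K,X\otimes_k k[K/K']\bigr)$ by Shapiro's lemma and the projection formula, and $k[K/K']$ is a finite-dimensional smooth $K$-representation admitting a finite filtration by trivial subquotients (as $K$ is pro-$p$), so in each degree $\H^i(K',X)$ is built from finitely many $\H^j(K,X)$; the general comparison follows using $K_1\cap K_2$. (This independence is implicit in \cite{Schneider-Sorensen}.)

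For $\RR i_M^G$ I would first show that $\RR\Inf_P^M$ preserves global admissibility. Given globally admissible $Y\in\D(M)$, fix a torsion-free compact open pro-$p$ subgroup $K_M\subseteq M$ with all $\H^i(K_M,Y)$ finite-dimensional, choose a $K_M$-stable compact open subgroup $K_U\subseteq U$ (automatically torsion-free, $U$ being unipotent), and set $K_P\coloneqq K_U\rtimes K_M$, a torsion-free compact open pro-$p$ subgroup of $P$. Since $K_U$ acts trivially on $\RR\Inf_P^M Y$, Hochschild--Serre and the projection formula give $\RR\H^0(K_P,\RR\Inf_P^M Y)\cong\RR\H^0\bigl(K_M,\RR\H^0(K_U,k)\otimes_k Y\bigr)$; by Lazard and Serre (cf.\ \cite[Cor.~(1)]{Serre.1965}) the complex $\RR\H^0(K_U,k)$ is bounded with finite-dimensional cohomology, so via its canonical truncation, and filtering each cohomology group by trivial subquotients, $\RR\H^0(K_M,\RR\H^0(K_U,k)\otimes_k Y)$ acquires a finite filtration with graded pieces among shifts of $\RR\H^0(K_M,Y)$, whence all $\H^i(K_P,\RR\Inf_P^M Y)$ are finite-dimensional. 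Then for $\RR\Ind_P^G$: compactness of $G/P$ makes $K\backslash G/P$ finite and $\Ind_P^G$ equal to compact induction, so the Mackey decomposition plus Shapiro's lemma gives $\H^i(K,\RR\Ind_P^G X)\cong\bigoplus_{g\in K\backslash G/P}\H^i(g^{-1}Kg\cap P,X)$, a finite sum of finite-dimensional spaces for globally admissible $X\in\D(P)$ (each $g^{-1}Kg\cap P$ being torsion-free compact open pro-$p$ in $P$). Composing yields the claim for $\RR i_M^G$.

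For $\LL(U,-)$ I would use the positive-monoid description. Keeping the torsion-free compact open $K_U\subseteq U$ used to build $\LL_U$ and writing $P^+=K_UM^+$, Proposition~\ref{prop:LU-positive} together with Proposition~\ref{prop:L_KU+-derived} yields, for $X\in\D(G)$, a natural isomorphism $\LL(U,X)\cong\RR\ind_{M^+}^M\bigl(\RR\H^0(K_U,\RR\Res^G_{P^+}X)[\dim K_U]\bigr)$, with the Hecke $M^+$-action on $\RR\H^0(K_U,-)$ (in the sense of \cite{EmertonII}) twisted by a character $\chi$ of $M^+$. I would then pick a torsion-free compact open pro-$p$ subgroup $K_M\subseteq M^+$ inside the open subgroup $N_M(K_U)$, so that $K_M$ normalises $K_U$ and $K_P\coloneqq K_U\rtimes K_M$ is a torsion-free compact open pro-$p$ subgroup of $G$. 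Because $\ind_{M^+}^M$ is the exact localisation at the central element $z$, i.e.\ a filtered colimit of multiplications by $z$, because $\RR\H^0(K_M,-)$ commutes with filtered colimits of smooth representations (the trivial module has a finitely generated $k[[K_M]]$-resolution, by Lazard), and because on the pro-$p$ group $K_M$ the character $\chi$ is trivial and the Hecke action of $K_M\subseteq N_M(K_U)$ is ordinary conjugation, Hochschild--Serre identifies each term of the colimit with $\RR\H^0(K_P,X)[\dim K_U]$, giving $\H^i(K_M,\LL(U,X))\cong\H^{i+\dim K_U}(K_P,X)[\phi^{-1}]$, the localisation of the finite-dimensional space $\H^{i+\dim K_U}(K_P,X)$ at the endomorphism $\phi$ given by $\chi(z)$ times the $z$-Hecke operator. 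By Fitting's lemma this localisation is a subquotient of a finite-dimensional space, hence finite-dimensional, so $\LL(U,X)$ is globally admissible.

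The main obstacle is bookkeeping with torsion: the Levi quotient of a torsion-free compact subgroup of $P$ need not be torsion-free, which is why in the first part one must perform the inflation and induction steps separately, building $K_P$ from a torsion-free $K_M$ that witnesses the admissibility of $Y$, rather than restricting a single well-chosen subgroup of $G$ down to $P$. In the second part the pleasant point is that commuting $\ind_{M^+}^M$ past $\RR\H^0(K_M,-)$ reduces everything to localising a finite-dimensional vector space at a single endomorphism, which is automatically finite-dimensional; beyond this the argument is formal given Theorem~\ref{thm:A} and Propositions~\ref{prop:LU-positive} and~\ref{prop:L_KU+-derived}.
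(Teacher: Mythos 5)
Your treatment of $\RR i_M^G$ is correct and matches the paper's: the Mackey/Shapiro argument for $\Ind_P^G$ over the finite set $P\backslash G/K$ is Lemma~\ref{lem:Ind-gadm}, and your filtration argument for $\Inf^M_P$ is the same computation as Lemma~\ref{lem:Inf-gadm} (the paper packages the finite filtration of $\RR\H^0(K_U,\one)$ as compactness of this object plus Lemma~\ref{lem:tensor-ideal}, but the substance --- projection formula plus Lazard--Serre finiteness of $\H^\bullet(K_U,k)$ --- is identical).

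The argument for $\LL(U,-)$ has a genuine gap. The group $K_P=K_U\rtimes K_M$ is compact open in $P$, but it is \emph{not} open in $G$ (it has dimension $\dim P<\dim G$), contrary to your assertion; so global admissibility of $X\in\D(G)$ gives no control over $\H^{i+\dim K_U}(K_P,X)$. These spaces are typically infinite-dimensional: already $\H^0(K_U,X)$ for a principal series of $\GL_2(\Q_p)$ is an infinite-dimensional Iwasawa module, and only its localisation at the Hecke operator is small. Your appeal to Fitting's lemma therefore starts from a false premise, and localising an infinite-dimensional space at an endomorphism need not produce anything finite-dimensional. The missing ingredient is exactly the content of Lemmas~\ref{lem:gadm-2} and~\ref{lem:gadm-3}: one takes a compact open $K\subseteq G$ admitting an Iwahori decomposition $K=K_{\ol U}K_MK_U$, shows by Emerton's argument (for $z\in C^+$ deep enough, $z\star v$ already lies in $\H^0(K,V)$) that the inclusion $\H^0(K,-)\hookrightarrow\H^0(K_P,-)$ becomes an isomorphism after applying $\ind_{C^+}^C$, and uses that $V\to\ind_{C^+}^CV$ is surjective for finite-dimensional $V\in\Rep_k(C^+)$. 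This exhibits $\ind_{C^+}^C\H^i(K_P,X)\cong\H^{i-\dim K_U}\bigl(K_M,\LL(U,X)\bigr)$ as a quotient of the genuinely finite-dimensional space $\H^i(K,X)$. Your identification $\H^i\bigl(K_M,\LL(U,X)\bigr)\cong\ind_{C^+}^C\H^{i+\dim K_U}(K_P,X)$ is itself correct (it is Lemma~\ref{lem:gadm-1}); what is missing is the bridge from $K_P$-cohomology back to the cohomology of a subgroup that is actually open in $G$.
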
 

We finally compute $\LL^i(U,V)$ for the irreducible smooth
$\Fpbar[\GL_2(\Q_p)]$-representations $V$ and all $i\in\Z$, where $U$ consists
of the upper triangular unipotent matrices. For $i\in \{-1,0\}$ the
representations $\LL^i(U,V)$ are listed in Table~\ref{tab:irreducibles} on
page~\pageref{tab:irreducibles}, while for $i\notin\{-1,0\}$ these
representations are zero. 

The proof uses a derived variant of the Satake homomorphism, which I will now
describe. Let $K$ be a compact open subgroup of $G$ satisfying $G=PK$. Given
$V\in \D(K)$, there is a natural isomorphism
$\LL(U,\ind_K^GV) \cong \ind_{K\cap M}^M \LL(K\cap U,V)$, where we write
$\LL(K\cap U,\pholder) \coloneqq \LL_{K\cap U}\RR\Res^K_{K\cap P}$. Hence, the functor
$\LL(U,\pholder)$ induces a $k$-algebra homomorphism
\[
\Satake_V\colon \End_{\D(G)}(\ind_K^GV) \to
\End_{\D(M)}\bigl(\ind_{K\cap M}^M \LL(K\cap U,V)\bigr)
\]
which we call the derived Satake homomorphism.\footnote{In fact, $\Satake_V$ is
the $0$-th cohomology of the morphism induced by $\LL(U,\pholder)$ on
$\RHom$-complexes, but we will not make use of this generality.} 
In a different context, a version
of the derived Satake homomorphism was defined and studied by Ronchetti,
\cite{Ronchetti.2019}. The relation between these homomorphisms is still
unclear: The fact that $\LL(K\cap U,V)$ is a proper complex (even when $V =
\one$ is the trivial representation) makes a comparison difficult. However, it
is possible to relate $\Satake_V$ with the (underived) mod $p$ Satake
homomorphisms introduced by Herzig, \cite[Thm.~2.6 and \S2.3]{Herzig.2011b}.
Composing $\Satake_V$ with the $(-n)$-th cohomology functor $\H^{-n}$ yields a
$k$-algebra homomorphism
\[
\Satake_V^n\colon \End_{\Rep_k(G)}(\ind_K^GV) \to \End_{\Rep_k(M)}\bigl(
\ind_{K\cap M}^M \LL^{-n}(K\cap U,V)\bigr),
\]
and we prove:
\begin{thmintro}[Theorem~\ref{thm:Satake}]\label{thm:D} 
The mod $p$ Satake homomorphisms constructed in \cite{Herzig.2011b} coincide
with $\Satake_V^0$ and $\Satake_V^{\dim U}$. 
\end{thmintro}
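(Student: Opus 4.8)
The plan is to compute $\Satake_V^0$ and $\Satake_V^{\dim U}$ as genuine homomorphisms between Hecke algebras and to recognise Herzig's explicit formulas in them. Write $U_0\coloneqq K\cap U$, $K_M\coloneqq K\cap M$, $K_P\coloneqq K\cap P$. As $U$ is the unipotent radical of a parabolic of a group over a characteristic-$0$ field it is torsion-free, so $U_0$ is a torsion-free compact open subgroup of $U$ (to which Corollary~\ref{cor:LiU} applies with $K_U=U_0$) and $U_0\trianglelefteq K_P$ with $K_P/U_0=K_M$. Because $V$ sits in degree $0$ and $\D(G)$ has heart $\Rep_k(G)$, we have $\End_{\D(G)}(\ind_K^GV)=\Hecke_G(V)$; and since $\ind_{K_M}^M$ is exact, $\H^{-n}\bigl(\ind_{K_M}^M\LL(K\cap U,V)\bigr)=\ind_{K_M}^M\LL^{-n}(K\cap U,V)$ is a single smooth $M$-representation. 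Thus $\Satake_V^n$ is exactly the $k$-algebra homomorphism which the additive functor $X\longmapsto\H^{-n}\bigl(\LL(U,X)\bigr)$ induces on endomorphism rings, transported along the natural isomorphism $\LL(U,\ind_K^GV)\cong\ind_{K_M}^M\LL(K\cap U,V)$; on the heart this functor is $\LL^{-n}(U,-)\colon\Rep_k(G)\to\Rep_k(M)$, and the induced maps are multiplicative because cohomology with respect to a $t$-structure is a functor, not merely a cohomological functor. We treat $n=0$ and $n=\dim U$ separately.

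\emph{Case $n=0$.} By the formal computation recalled after Theorem~\ref{thm:A}, $\LL^0(U,-)$ is the Jacquet functor $(-)_U$, the left adjoint of $i_M^G$ between the abelian categories; hence $\Satake_V^0$ is obtained by applying $(-)_U$ to endomorphisms. The (underived) geometric lemma — using $G=PK$ and the factorisation $K_P=K_M U_0$ to identify $P\backslash G\cong K_P\backslash K$ — gives a canonical isomorphism $(\ind_K^GV)_U\cong\ind_{K_M}^MV_{U_0}$, so $\Satake_V^0$ becomes an explicit $k$-algebra map $\Hecke_G(V)\to\Hecke_M(V_{U_0})$. To match it with Herzig's Satake homomorphism \cite[Thm.~2.6]{Herzig.2011b} I would invoke the universal property of the Jacquet functor: Frobenius reciprocity turns $\mathrm{id}_{\ind_{K_M}^MV_{U_0}}$ into a canonical $G$-morphism $\iota\colon\ind_K^GV\to i_M^G\ind_{K_M}^MV_{U_0}$ built from the projection $V\twoheadrightarrow V_{U_0}$, and $\Satake_V^0(\phi)$ is the unique endomorphism with $i_M^G\bigl(\Satake_V^0(\phi)\bigr)\circ\iota=\iota\circ\phi$. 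It then suffices to check that Herzig's map satisfies the same identity; unwinding $\iota$, this reduces to his double-coset formula for $\mathcal S$, and since $\Hecke_G(V)$ is generated as a $k$-algebra by the standard operators $T_\mu$, one needs it only for these.

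\emph{Case $n=\dim U$.} Now $\LL^{-\dim U}(K\cap U,-)=\H^0(U_0,-)=(-)^{U_0}$ and, by Corollary~\ref{cor:LiU} together with Proposition~\ref{prop:L_KU+-derived}, $\LL^{-\dim U}(U,-)\cong\ind_{M^+}^M\bigl((-)|_{P^+}\bigr)^{U_0}$ with $M^+$ acting through the Hecke action of \cite{EmertonII} twisted by the character $\chi$ of Proposition~\ref{prop:L_KU+-derived}. Transporting along the degree-$(-\dim U)$ component of $\LL(U,\ind_K^GV)\cong\ind_{K_M}^M\LL(K\cap U,V)$ identifies $\Satake_V^{\dim U}$ with an explicit map $\Hecke_G(V)\to\End_{k[M]}\bigl(\ind_{K_M}^M(V^{U_0}\otimes\chi|_{K_M})\bigr)$. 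Tensoring by (an extension of) $\chi$ is an autoequivalence of $\Rep_k(M)$, so the target is as a $k$-algebra just $\Hecke_M(V^{U_0})$, and for irreducible $V$ the natural $K_M$-map $V^{U_0}\to V_{\ol U_0}$ is an isomorphism, matching Herzig's target in \cite[\S2.3]{Herzig.2011b}. The comparison with Herzig's second Satake homomorphism then comes down to evaluating both $k$-algebra maps on the standard generators $T_\mu$ and matching the resulting formulas, using Corollary~\ref{cor:LiU} to compute the action of $T_\mu$ on $\LL^{-\dim U}(U,\ind_K^GV)$.

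The hard part is not a single estimate but keeping the equivariant structures straight. The functors $\LL^0(U,-)$ and $\LL^{-\dim U}(U,-)$ acquire their $M$- and $M^+$-actions through chains of adjunctions and, in the second case, through the twisted Hecke action of Proposition~\ref{prop:L_KU+-derived}, whereas Herzig's homomorphisms carry a hand-chosen normalisation (there is no $\delta_P^{1/2}$ in characteristic $p$, but a definite twist does appear). The crux is to prove that the character $\chi$ of Proposition~\ref{prop:L_KU+-derived} is \emph{exactly} the twist built into Herzig's definition, so that $\Satake_V^0$ and $\Satake_V^{\dim U}$ agree with Herzig's two homomorphisms on the nose, not merely up to an automorphism of the target Hecke algebra. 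Once the twists are matched, verifying equality on the standard generators $T_\mu$ is routine.
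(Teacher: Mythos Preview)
Your plan for $n=0$ is essentially the paper's, though the paper is more direct: it simply unwinds the defining commutative square for $\Satake_V^0$ on the element $[1,\eta(v)]$, without passing through the universal property of the Jacquet functor or reducing to generators. Either route works.

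For $n=\dim U$ there is a genuine gap. You correctly identify $\LL^{-\dim U}(U,-)\cong\ind_{M^+}^M\H^0(K_U,(-)|_{P^+})$ up to the twist by $\delta_P$, and you are right that the twist is harmless since $\delta_P\otimes-$ is an autoequivalence --- but, contrary to your assessment, that is \emph{not} the crux. The real work is making the $M$-equivariant identification
\[
\ind_{M^+}^M\H^0\bigl(K_U,\ind_{K_P}^PW\bigr)\xrightarrow{\ \cong\ }\ind_{K_M}^M\H^0(K_U,W)
\]
explicit, because $\Satake_V^{\dim U}(\Phi)$ lives on the left-hand side and must be transported to the right before any comparison with Herzig's formula is possible. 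The paper does this by first computing the Hecke $M^+$-action on $\H^0(K_U,\ind_{K_P}^PW)$ via auxiliary ``projection maps'' $\mu_{m,g}\colon\H^0(K_U^g\cap K_P,W)\to\H^0(K_U^{mg}\cap K_P,W)$ (Lemma~\ref{lem:Hecke-action-ind}), then proving that $1\otimes[K_Ug,w]\longmapsto[\pr_M(g),\mu_g(w)]$ realises the isomorphism (Proposition~\ref{prop:ind-H^0}); the summation $\sum_{u\in U/K_U}$ then drops out of a double-coset manipulation (Lemma~\ref{lem:Satake}). This is the substance of the argument, and your proposal elides it entirely under ``evaluating on generators and matching formulas, which is routine''.

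Two smaller points. The theorem is stated for arbitrary $V\in\Rep_k(K)$, so invoking irreducibility and the map $V^{U_0}\to V_{\ol U_0}$ is out of place (the opposite unipotent plays no role in the statement or proof). And the reduction to ``standard generators $T_\mu$'' is not available in this generality; the paper's computation handles an arbitrary $\Phi$ directly.
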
 

In particular, Theorem~\ref{thm:D} shows that the mod $p$ Satake homomorphisms
appearing in the literature are induced by explicit functors. For
$\Satake_V^0$ this is
essentially contained in \cite[Section~2]{Henniart-Vigneras.2012}, where
Henniart--Vign\'eras give a functorial description without, however, explicitly
mentioning the functor inducing $\Satake_V^0$. It was also known that
$\Satake_V^{\dim U}$ and $\Satake_V^0$ are related by taking duals, see
\cite[Lem.~2.11]{Herzig.2011b} or \cite[Prop.~2.4]{Henniart-Vigneras.2012}.

In the forthcoming work \cite{Heyer.2023} I use $\LL(U,\pholder)$ to prove a geometrical lemma \textit{\`a la} Bernstein--Zelevinsky and apply it to the
computation of some $\Ext$-groups of parabolically induced representations.

\subsection{Structure of the paper} 
In \S\ref{sec:foundation} we recall several fundamental concepts starting with a
recap of adjoint functors and the calculus of mates in \S\ref{subsec:category}.
In \S\ref{subsec:smooth} we introduce $p$-adic monoids and give a quick overview
of their smooth representation theory. The subject of \S\ref{subsec:derived} is
the derived category $\D(S)$ of smooth representations of a $p$-adic monoid $S$.
We recall Brown representability, the tensor triangulated structure on
$\D(S)$, and how to obtain spectral sequences using truncation
functors.\smallskip

In \S\ref{sec:derivedinf} we prove that the derived inflation functor
$\RR\Inf^T_S$ admits a left adjoint. We first prove this in the case where $T$
and $S$ are torsion-free compact $p$-adic Lie groups, see
\S\ref{subsec:compact}. The general case is then deduced from the compact case
in \S\ref{subsec:general}. In \S\ref{subsec:p-adic} and \S\ref{subsec:positive}
we specialize to $p$-adic Lie groups and show that $\LL_U$ satisfies a
projection formula and compute more explicit descriptions of the
$\LL_U^i$.\smallskip

In \S\ref{sec:Jacquet} we investigate the left adjoint of derived
parabolic induction. In \S\ref{subsec:global-adm} we show that $\RR i_M^G$ and
$\LL(U,\pholder)$ preserve global admissibility, and \S\ref{subsec:Satake} is concerned
with the derived Satake homomorphism.\smallskip

The explicit computations of $\LL^i(U,V)$, for irreducible smooth
$\Fpbar[\GL_2(\Q_p)]$-representations $V$, are the subject
of~\S\ref{sec:example}.

\subsection{Acknowledgments} 
First and foremost I thank Peter Schneider for his interest and support,
for some enlightening discussions, and for sharing with me the articles
\cite{Scherotzke-Schneider.2022} and \cite{Schneider-Sorensen.2023a}. I thank Jakob
Scholbach for some helpful suggestions and for pointing me to the article
\cite{Balmer.2016}. During the work on this project I have further profited from
discussions with Eugen Hellmann, Konstantin Ardakov, Emanuele Bodon, Yifei
Zhao, and Zhixiang Wu. I am most
grateful to Tobias Schmidt, for inviting me to speak at the Colloque Tournant
2021 du GDR TLAG, and to Claus Sorensen, for inviting me to present some of
these results at the UCSD Number Theory Seminar. Finally, I thank the referee 
for many helpful suggestions and corrections, and for pointing out to me the paper \cite{Chirvasitu-Kanda.2023}.
The project was funded by the Deutsche Forschungsgemeinschaft (DFG, German
Research Foundation) – Project-ID 427320536 – SFB 1442, as well as under
Germany's Excellence Strategy EXC 2044 390685587, Mathematics Münster:
Dynamics–Geometry–Structure.

\subsection{Notation and conventions} 
Throughout this article, $k$ will denote a coefficient field, which will be
assumed to be of characteristic $p>0$ from \S\ref{sec:derivedinf} onwards. The
category of $k$-vector spaces is denoted $\Vect_k$, and we write $\D(k)$ for the
unbounded derived category of $\Vect_k$.\smallskip

If $S$ is a locally profinite monoid, we denote $\Rep_k(S)$ the abelian category
of smooth $S$-re\-pre\-sen\-ta\-tions on $k$-vector spaces and by $\D(S)$ the
unbounded
derived category of $\Rep_k(S)$. Given $V\in \Rep_k(S)$, we denote by $V[0]$ (or
just $V$ if no confusion is likely) the representation $V$ viewed as a complex
concentrated in degree $0$.\smallskip

We make the convention that a diagram of functors, say,
\[
\begin{tikzcd}
\cat A \ar[r,"a"] \ar[d,"f"'] & \cat A' \ar[d,"f'"]\\
\cat B \ar[r,"b"'] & \cat B'
\end{tikzcd}
\]
commutes if there exists a natural isomorphism $b\circ f\xRightarrow{\cong}
f'\circ a$, which will usually be specified in the proof if it
is unclear which natural isomorphism is meant.\smallskip

If $G$ is a group, $K\subseteq G$ is a subgroup and $g\in G$, we write ${}^gK
\coloneqq gKg^{-1}$ and $K^g \coloneqq g^{-1}Kg$.

\section{Foundations}\label{sec:foundation} 
This section is devoted to collecting facts about 
adjoint functors (\S\ref{subsec:category}), smooth representations of locally
profinite monoids (\S\ref{subsec:smooth}), and the derived category of smooth
representations (\S\ref{subsec:derived}).
\subsection{About adjoint functors}\label{subsec:category} 
In this section we recall well-known facts about adjoint functors. For
references and more details we refer to \cite{KS} and \cite{Maclane}.

Let $\cat C, \cat D$ be categories. We fix two functors $L\colon \cat C\to \cat
D$ and $R\colon \cat D\to \cat C$.
\begin{defn} 
We say $L$ is \emph{left adjoint} to $R$ (or $R$ is \emph{right adjoint} to $L$),
written $L\dashv R$, if there is a bijection
\begin{equation}\label{eq:adjoint}
\Hom_{\cat D}\bigl(L(C), D\bigr) \xrightarrow\cong \Hom_{\cat C}\bigl(C,
R(D)\bigr)
\end{equation}
which is natural in both $C\in \cat C$ and $D\in \cat D$.
\end{defn} 

\begin{rmk} 
If $L\colon \cat C\to \cat D$ is an additive functor between additive
categories, then a right adjoint $R$ (if it exists) is additive as well and the
map \eqref{eq:adjoint} is an isomorphism of abelian groups, see \cite[IV.1,
Thm.~3]{Maclane}.
\end{rmk} 

It follows immediately from the definition that one can compose adjunctions:
\begin{lem}[{\cite[Prop.~1.5.5]{KS}}] 
\label{lem:adjoint-composition}
Consider the functors
\[
\begin{tikzcd}
\cat C \ar[r,shift left, "L"] & \ar[l,shift left,"R"] \cat D \ar[r,shift left,
"L'"] & \ar[l,shift left,"R'"] \cat E.
\end{tikzcd}
\]
Assume that $L$ and $L'$ are left adjoint to $R$ and $R'$, respectively. Then
$L'\circ L$ is left adjoint to $R\circ R'$.
\end{lem}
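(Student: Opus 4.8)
The plan is to simply compose the two adjunction bijections. Given objects $C\in\cat C$ and $E\in\cat E$, I would form the composite
\[
\Hom_{\cat E}\bigl(L'L(C), E\bigr) \xrightarrow{\cong} \Hom_{\cat D}\bigl(L(C), R'(E)\bigr) \xrightarrow{\cong} \Hom_{\cat C}\bigl(C, RR'(E)\bigr),
\]
where the first map is the adjunction bijection \eqref{eq:adjoint} for $L'\dashv R'$ evaluated at the pair $(L(C), E)$, and the second is the adjunction bijection for $L\dashv R$ evaluated at the pair $(C, R'(E))$. A composite of bijections is again a bijection, so the only thing left to verify is that this composite is natural in $C$ and in $E$.

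For naturality in $E$: fix $C$ and a morphism $\phi\colon E\to E'$. Naturality of the second bijection in its $\cat D$-variable applied to $R'(\phi)\colon R'(E)\to R'(E')$, together with naturality of the first bijection in its $\cat E$-variable applied to $\phi$, gives commutativity of the relevant square. For naturality in $C$: fix $E$ and a morphism $\psi\colon C\to C'$. Here I would first invoke naturality of the first bijection in its $\cat D$-variable along the morphism $L(\psi)\colon L(C)\to L(C')$ — this is the point at which functoriality of $L$ enters — and then naturality of the second bijection in its $\cat C$-variable along $\psi$ itself; pasting the two commuting squares yields the claim.

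I expect no genuine obstacle: the statement is bookkeeping with the definition of an adjunction as a bifunctorially natural bijection of $\Hom$-sets. The one point requiring a moment's care is that naturality in $C$ of the composite does \emph{not} follow from naturality in $C$ of either single bijection; one must combine naturality of the outer ($L'\dashv R'$) bijection along $L(\psi)$ with naturality of the inner ($L\dashv R$) bijection along $\psi$. Alternatively, one could argue via units and counits, taking the unit of $L'L\dashv RR'$ to be $R(\eta'_{L(-)})\circ\eta\colon \id_{\cat C}\To RR'L'L$, where $\eta$ and $\eta'$ denote the units of the two given adjunctions, and checking the triangle identities; but since adjunctions have here been defined through the $\Hom$-bijection, composing bijections is the most direct route.
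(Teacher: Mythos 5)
Your proof is correct; the paper itself gives no argument for this lemma, only the citation to \cite[Prop.~1.5.5]{KS} and the remark that it ``follows immediately from the definition,'' and your composition of the two $\Hom$-bijections together with the naturality check (in particular the observation that naturality in $C$ requires naturality of the outer bijection along $L(\psi)$) is exactly the standard argument intended there.
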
 

\begin{lem}\label{lem:adjoint-defn} 
The following assertions are equivalent:
\begin{enumerate}[label=(\alph*)]
\item\label{lem:adjoint-defn-a} $L$ is left adjoint to $R$.
\item\label{lem:adjoint-defn-b} There exist natural transformations $\eta\colon
\id_{\cat C}\To RL$ and $\varepsilon\colon LR\To \id_{\cat D}$, called the
\emph{unit} and the \emph{counit} of the adjunction, respectively, satisfying
the ``triangle identities'' $R\varepsilon\circ \eta R = \id_R$ and $\varepsilon
L\circ L\eta = \id_L$.
\end{enumerate}
\end{lem}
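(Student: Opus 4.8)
The plan is to prove the two implications separately. The whole argument is a bookkeeping exercise with naturality squares, so the main work lies in isolating the right ``formulas'' that organize it; there is no genuine obstacle, only the danger of botching the order of composition or the variable in which a given naturality statement is applied.

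For \ref{lem:adjoint-defn-a}$\Rightarrow$\ref{lem:adjoint-defn-b}, write $\phi_{C,D}\colon \Hom_{\cat D}(L(C),D)\xrightarrow{\cong}\Hom_{\cat C}(C,R(D))$ for the adjunction bijection. I would define the unit by $\eta_C \coloneqq \phi_{C,L(C)}(\id_{L(C)})$ and the counit by $\varepsilon_D \coloneqq \phi_{R(D),D}^{-1}(\id_{R(D)})$. The first step is to record the two identities $\phi_{C,D}(f) = R(f)\circ\eta_C$ for every $f\colon L(C)\to D$ and $\phi_{C,D}^{-1}(g) = \varepsilon_D\circ L(g)$ for every $g\colon C\to R(D)$; each is obtained by applying naturality of $\phi$ (resp.\ of $\phi^{-1}$) in a single variable to an identity morphism. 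Granting these, naturality of $\eta$ and of $\varepsilon$ is a one-line consequence of naturality of $\phi^{\pm1}$ in the remaining variable, and the triangle identities come out by feeding $\eta_C$ back through the formula for $\phi^{-1}$ (giving $\varepsilon_{L(C)}\circ L(\eta_C) = \phi^{-1}_{C,L(C)}(\eta_C) = \id_{L(C)}$) and $\varepsilon_D$ back through the formula for $\phi$ (giving $R(\varepsilon_D)\circ\eta_{R(D)} = \id_{R(D)}$).

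For \ref{lem:adjoint-defn-b}$\Rightarrow$\ref{lem:adjoint-defn-a}, I would turn those same formulas into definitions: set $\phi_{C,D}(f)\coloneqq R(f)\circ\eta_C$ and $\psi_{C,D}(g)\coloneqq\varepsilon_D\circ L(g)$. Naturality of $\phi$ and $\psi$ in both variables is immediate from functoriality of $L,R$ together with naturality of $\eta$ and $\varepsilon$. The heart of the matter is that $\phi$ and $\psi$ are mutually inverse: for $f\colon L(C)\to D$ one computes $\psi_{C,D}(\phi_{C,D}(f)) = \varepsilon_D\circ L\bigl(R(f)\circ\eta_C\bigr) = \varepsilon_D\circ LR(f)\circ L(\eta_C) = f\circ\varepsilon_{L(C)}\circ L(\eta_C) = f$, using naturality of $\varepsilon$ in the third step and the triangle identity $\varepsilon L\circ L\eta = \id_L$ in the fourth; dually $\phi_{C,D}(\psi_{C,D}(g)) = g$ uses naturality of $\eta$ and the other triangle identity $R\varepsilon\circ\eta R = \id_R$. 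Since I would lead with the explicit formulas for $\phi^{\pm1}$ in both directions, everything else reduces to substituting identity morphisms into them, and no step should present real difficulty.
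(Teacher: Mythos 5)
Your proposal is correct and follows essentially the same route as the paper: the unit and counit are extracted from the adjunction bijection via identity morphisms, and conversely the maps $f\mapsto R(f)\circ\eta_C$ and $g\mapsto\varepsilon_D\circ L(g)$ are shown to be mutually inverse using naturality and the triangle identities. The computations you sketch (in particular the use of naturality of $\varepsilon$ followed by $\varepsilon L\circ L\eta=\id_L$) are exactly the checks the paper delegates to Mac Lane.
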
 
\begin{proof} 
For the details we refer to \cite[IV.1]{Maclane}.
If $L$ is left adjoint to $R$, the map $\eta_C$ is given by the image
of $\id_{L(C)}$ under \eqref{eq:adjoint} and $\varepsilon_D$ is given by the
preimage of $\id_{R(D)}$ under \eqref{eq:adjoint}. It is then easily checked
that $\eta$ and $\varepsilon$ are natural transformations and satisfy the
triangle identities. 

Conversely, if $\eta$ and $\varepsilon$ are as in \ref{lem:adjoint-defn-b} one
checks that the compositions
\begin{align*}
\Hom_{\cat D}\bigl(L(C),D\bigr) &\xrightarrow{R} \Hom_{\cat C}\bigl(RL(C),
R(D)\bigr) \xrightarrow{(\eta_C)^*} \Hom_{\cat C}\bigl(C, R(D)\bigr)\\
\intertext{and}
\Hom_{\cat C}\bigl(C,R(D)\bigr) &\xrightarrow{L} \Hom_{\cat D}\bigl(L(C),
LR(D)\bigr) \xrightarrow{(\varepsilon_D)_*} \Hom_{\cat D}\bigl(L(C),D\bigr)
\end{align*}
are inverse to each other.
\end{proof} 

\begin{lem}[{\cite[Prop.~1.5.6]{KS}}] 
\label{lem:adjoint-ffaithful}
Assume that $L$ is left adjoint to $R$.
\begin{enumerate}[label=(\alph*)]
\item The functor $R$ is fully faithful if and only if the counit $\varepsilon
\colon LR\To \id_{\cat D}$ is an isomorphism.
\item The functor $L$ is fully faithful if and only if the unit $\eta\colon
\id_{\cat C} \To RL$ is an isomorphism.
\end{enumerate}
\end{lem}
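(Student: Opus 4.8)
The plan is to establish the claim about $R$ directly from the concrete formula for the adjunction bijection~\eqref{eq:adjoint} recorded in the proof of Lemma~\ref{lem:adjoint-defn}, together with the Yoneda lemma, and then to deduce the claim about $L$ by duality. For the latter one passes to opposite categories: $L$ being left adjoint to $R$ means that $R^{\op}\colon \cat D^{\op}\to \cat C^{\op}$ is left adjoint to $L^{\op}\colon \cat C^{\op}\to \cat D^{\op}$, and under this adjunction the counit is the unit $\eta$ of the original one; since $L^{\op}$ is fully faithful if and only if $L$ is, and $\eta$ is an isomorphism in $\cat C^{\op}$ if and only if it is one in $\cat C$, the statement for $L$ is precisely the statement for $R$ applied to $R^{\op}$ and $L^{\op}$. (Equivalently, one runs the dual of the argument below, interchanging the roles of $\eta$ and $\varepsilon$.) So I will only spell out the case of $R$.

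First I would fix objects $D,D'$ of $\cat D$ and consider the composite
\[
\Hom_{\cat D}(D,D')\xrightarrow{R}\Hom_{\cat C}\bigl(R(D),R(D')\bigr)\xrightarrow{\cong}\Hom_{\cat D}\bigl(LR(D),D'\bigr),
\]
where the second arrow is the inverse of the bijection~\eqref{eq:adjoint} attached to the pair $\bigl(R(D),D'\bigr)$. By the explicit description in the proof of Lemma~\ref{lem:adjoint-defn}, this inverse sends $g$ to $\varepsilon_{D'}\circ L(g)$; hence the composite sends $f$ to $\varepsilon_{D'}\circ LR(f)$, which the naturality square of $\varepsilon$ for the morphism $f$ rewrites as $f\circ\varepsilon_D$. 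Thus the composite is precisely precomposition by $\varepsilon_D$. Since the second arrow is a bijection, $R$ is fully faithful (equivalently, the first arrow is bijective for all $D,D'$) if and only if $(\varepsilon_D)^*\colon \Hom_{\cat D}(D,D')\to \Hom_{\cat D}\bigl(LR(D),D'\bigr)$ is bijective for all $D,D'$.

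Then I would conclude with the usual Yoneda argument: $(\varepsilon_D)^*$ is bijective for every $D'$ precisely when $\varepsilon_D$ is an isomorphism, since surjectivity at $D'=LR(D)$ produces a section of $\varepsilon_D$ and injectivity at $D'=D$ shows that this section is also a retraction. Combining this with the previous paragraph, $R$ is fully faithful if and only if $\varepsilon_D$ is an isomorphism for every $D$, which is the first assertion; the dual discussion gives the second.

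There is no genuine obstacle here, the lemma being purely formal. The only thing that needs care is the bookkeeping: getting the direction of the bijection~\eqref{eq:adjoint} right, remembering that the second arrow above is the \emph{inverse} bijection, and checking that the relevant naturality square is precisely what identifies the composite with precomposition by the counit (and, dually, with postcomposition by the unit).
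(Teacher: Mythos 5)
Your argument is correct and is the standard one; the paper itself gives no proof here but simply cites \cite[Prop.~1.5.6]{KS}, whose proof proceeds exactly as you do (identify $(\varepsilon_D)^*$ with $R$ followed by the adjunction bijection via naturality of $\varepsilon$, conclude by Yoneda, and dualize for the unit). The only blemish is terminological: surjectivity of $(\varepsilon_D)^*$ at $D'=LR(D)$ yields $s$ with $s\circ\varepsilon_D=\id_{LR(D)}$, which makes $s$ a \emph{retraction} (left inverse) of $\varepsilon_D$ rather than a section; injectivity then upgrades it to a two-sided inverse, so the substance is unaffected.
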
 

The following result shows that adjoints are uniquely determined.

\begin{thm}[{\cite[Thm.~1.5.3]{KS}}] 
\label{thm:adjoint-unique}
The right adjoint of $L$ (if it exists) is unique up to unique isomorphism.
Moreover, $L$ admits a right adjoint if and only if the functor $\Hom_{\cat
D}\bigl(L(\pholder), D\bigr) \colon \cat D\to \Sets$ is representable for every $D\in
\cat D$.

Dually, the left adjoint of $R$ (if it exists) is unique up to unique
isomorphism. Moreover, $R$ admits a left adjoint if and only if the functor
$\Hom_{\cat C}\bigl(C,R(\pholder)\bigr)\colon \cat C\to \Sets$ is representable for
every $C\in \cat C$.
\end{thm}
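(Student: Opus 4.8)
The plan is to derive both parts from the Yoneda lemma. I will treat the assertions about right adjoints; the dual assertions about left adjoints of $R$ then follow by applying the former to the opposite functor $R^{\op}\colon\cat D^{\op}\to\cat C^{\op}$, using the elementary observation (a direct inspection of \eqref{eq:adjoint}) that $L$ is left adjoint to $R$ if and only if $R^{\op}$ is left adjoint to $L^{\op}$, so that left adjoints of $R$ are precisely the opposites of right adjoints of $R^{\op}$, and representability of a covariant functor $\cat D\to\Sets$ corresponds to representability of the associated contravariant functor on $\cat D^{\op}$.

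\emph{Uniqueness of the right adjoint.} Suppose $R$ and $R'$ are both right adjoint to $L$, and fix $D\in\cat D$. Composing the two instances of the bijection \eqref{eq:adjoint} yields an isomorphism of functors $\cat C^{\op}\to\Sets$
\[
\Hom_{\cat C}\bigl(-,R(D)\bigr)\;\cong\;\Hom_{\cat D}\bigl(L(-),D\bigr)\;\cong\;\Hom_{\cat C}\bigl(-,R'(D)\bigr),
\]
which by the Yoneda lemma is induced by post-composition with a unique isomorphism $\theta_D\colon R(D)\xrightarrow{\cong}R'(D)$. Naturality of \eqref{eq:adjoint} in the variable $D$ shows that for every $g\colon D\to D'$ the square relating $\theta_D$, $\theta_{D'}$, $R(g)$, $R'(g)$ commutes after applying the Yoneda embedding, hence commutes; thus $\theta=(\theta_D)_{D}\colon R\xRightarrow{\cong}R'$ is a natural isomorphism, and faithfulness of Yoneda shows it is the unique one compatible with the two adjunctions.

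\emph{The representability criterion.} If $L$ admits a right adjoint $R$, then for each $D\in\cat D$ the bijection \eqref{eq:adjoint} exhibits the functor $C\mapsto\Hom_{\cat D}(L(C),D)$ on $\cat C^{\op}$ as represented by $R(D)$. Conversely, suppose that for every $D$ this functor is representable; choose a representing object $R(D)$ and a natural isomorphism $\alpha_D\colon\Hom_{\cat D}(L(-),D)\xrightarrow{\cong}\Hom_{\cat C}(-,R(D))$. It remains to promote $D\mapsto R(D)$ to a functor for which $\alpha$ is natural in $D$. Given $g\colon D\to D'$ in $\cat D$, post-composition with $g$ is a natural transformation $g_*\colon\Hom_{\cat D}(L(-),D)\To\Hom_{\cat D}(L(-),D')$; conjugating by $\alpha_D$ and $\alpha_{D'}$ produces a natural transformation $\Hom_{\cat C}(-,R(D))\To\Hom_{\cat C}(-,R(D'))$, which by Yoneda is post-composition with a unique morphism $R(g)\colon R(D)\to R(D')$. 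Uniqueness in Yoneda forces $R(\id_D)=\id_{R(D)}$ and $R(g'\circ g)=R(g')\circ R(g)$, so $R$ is a functor; and $\alpha$ is natural in $D$ by construction, hence is an adjunction isomorphism between $L$ and $R$.

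\emph{Expected difficulty.} There is essentially no hard step here: the theorem is the standard parametrised-Yoneda repackaging of an adjunction. The only care required is bookkeeping — keeping track that the relevant Hom-functors are contravariant in their $\cat C$-slot, and checking that the naturality squares for $\theta$ and for $\alpha$ in the $\cat D$-variable commute, each time by reducing to the already-known naturality (of \eqref{eq:adjoint}, resp.\ of the $\alpha_D$) together with faithfulness of the Yoneda embedding. For full details we refer to \cite[Thm.~1.5.3]{KS}.
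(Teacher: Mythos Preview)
The paper does not supply its own proof of this theorem; it merely states the result with a citation to \cite[Thm.~1.5.3]{KS}. Your argument via the Yoneda lemma is correct and is precisely the standard one found in the cited reference, so there is nothing to compare beyond noting that you have filled in what the paper deliberately left as a black-box citation.
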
 

\begin{prop}[{\cite[Prop.~2.1.10]{KS}}] 
\label{prop:adjoint-limit}
If $L\colon \cat C\to \cat D$ admits a right adjoint, then $L$ commutes with all
colimits that exist in $\cat C$.

Dually, if $R\colon \cat D\to \cat C$ admits a left adjoint, then $R$ commutes
with all limits that exist in $\cat D$.
\end{prop}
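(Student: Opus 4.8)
The plan is to reduce the whole statement to the Yoneda-style characterisation of a colimit combined with the adjunction bijection \eqref{eq:adjoint}. So suppose $L$ is left adjoint to $R$ and let $(D_i)_{i\in I}$ be a diagram in $\cat C$ whose colimit $C=\colim_i D_i$ exists, with structure morphisms $\iota_i\colon D_i\to C$. I want to show that the cocone $\bigl(L(\iota_i)\colon L(D_i)\to L(C)\bigr)_i$ exhibits $L(C)$ as $\colim_i L(D_i)$ in $\cat D$; I phrase it this way deliberately, since $\colim_i L(D_i)$ is not assumed to exist a priori — its existence, together with its identification with $L(C)$, is part of the conclusion.

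By the defining universal property of a colimit, it suffices to check that for every object $E\in\cat D$ the canonical map $\Hom_{\cat D}\bigl(L(C),E\bigr)\to\lim_i\Hom_{\cat D}\bigl(L(D_i),E\bigr)$ is bijective. I would establish this by producing the chain of bijections
\[
\Hom_{\cat D}\bigl(L(C),E\bigr)\xrightarrow{\ \cong\ }\Hom_{\cat C}\bigl(C,R(E)\bigr)\xrightarrow{\ \cong\ }\lim_i\Hom_{\cat C}\bigl(D_i,R(E)\bigr)\xrightarrow{\ \cong\ }\lim_i\Hom_{\cat D}\bigl(L(D_i),E\bigr),
\]
in which the outer two arrows are instances of \eqref{eq:adjoint} (the rightmost one being compatible with the limit because \eqref{eq:adjoint} is natural in the first variable, hence intertwines the two systems of transition maps induced by the morphisms of the diagram), and the middle arrow is the universal property of $C=\colim_i D_i$ inside $\cat C$. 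The remaining point is the bookkeeping: one has to verify that this composite is exactly the canonical comparison map displayed above, which comes down to the fact that \eqref{eq:adjoint}, being natural in the first variable, carries $L(\iota_i)^{*}$ to $\iota_i^{*}$.

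For the dual assertion I would pass to opposite categories: $R\colon\cat D\to\cat C$ admits a left adjoint if and only if $R^{\op}\colon\cat D^{\op}\to\cat C^{\op}$ admits a right adjoint, and a limit in $\cat D$ is a colimit in $\cat D^{\op}$; applying the first part to $R^{\op}$ then shows that $R^{\op}$, and hence $R$, preserves all limits existing in $\cat D$. I do not expect any genuine obstacle here — the argument is a purely formal diagram chase, and the only step demanding a little care is the naturality bookkeeping flagged in the previous paragraph, making sure that the composite of the three bijections is the canonical one and not merely some isomorphism.
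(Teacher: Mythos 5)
Your proof is correct and complete: the chain of bijections $\Hom_{\cat D}(L(C),E)\cong\Hom_{\cat C}(C,R(E))\cong\lim_i\Hom_{\cat C}(D_i,R(E))\cong\lim_i\Hom_{\cat D}(L(D_i),E)$, together with the naturality bookkeeping you flag (which identifies the composite with the canonical comparison map induced by the cocone $L(\iota_i)$), is exactly the standard argument, and the duality step for the second assertion is handled properly. The paper gives no proof of its own here — it simply cites \cite[Prop.~2.1.10]{KS} — and your argument is essentially the one found in that reference, so there is nothing to add.
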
 

For a stronger (and dual) version of the next extremely useful result, see
\cite[II, Prop.~10.2]{Hilton.1971}.
\begin{lem}\label{lem:adjoint-injective} 
Assume that $\cat C$ and $\cat D$ are abelian categories and $R\colon \cat D\to
\cat C$ is an additive functor admitting an exact left adjoint. Then $R$
preserves injective objects.
\end{lem}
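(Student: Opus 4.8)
The plan is to deduce the statement directly from the adjunction isomorphism, using that the left adjoint is exact. Let $L\colon \cat C\to \cat D$ be the exact left adjoint of $R$, and let $I\in \cat D$ be injective; I want to show $R(I)\in \cat C$ is injective. By definition, this means the functor $\Hom_{\cat C}(-, R(I))\colon \cat C^{\op}\to \Ab$ is exact.

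The key step is to rewrite this functor using the adjunction \eqref{eq:adjoint}: there is a natural isomorphism
\[
\Hom_{\cat C}\bigl(-, R(I)\bigr) \cong \Hom_{\cat D}\bigl(L(-), I\bigr)
\]
of functors $\cat C^{\op}\to \Ab$. So it suffices to prove the right-hand side is exact. This I would factor as the composition $\cat C^{\op}\xrightarrow{L^{\op}} \cat D^{\op}\xrightarrow{\Hom_{\cat D}(-,I)} \Ab$. Now $L$ is exact by hypothesis, hence so is $L^{\op}\colon \cat C^{\op}\to \cat D^{\op}$; and $\Hom_{\cat D}(-,I)$ is exact precisely because $I$ is injective. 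A composition of exact functors is exact, so $\Hom_{\cat C}(-,R(I))$ is exact, which is what we wanted.

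To spell out the one point that needs a word: if $0\to A'\to A\to A''\to 0$ is a short exact sequence in $\cat C$, then applying $L$ gives a short exact sequence $0\to L(A')\to L(A)\to L(A'')\to 0$ in $\cat D$ (here exactness of $L$ is used, both left-exactness for injectivity of $L(A')\to L(A)$ and right-exactness for surjectivity of $L(A)\to L(A'')$), and then applying $\Hom_{\cat D}(-,I)$ yields the exact sequence $0\to \Hom_{\cat D}(L(A''),I)\to \Hom_{\cat D}(L(A),I)\to \Hom_{\cat D}(L(A'),I)\to 0$; transporting along the natural isomorphism above gives exactness of $0\to \Hom_{\cat C}(A'',R(I))\to \Hom_{\cat C}(A,R(I))\to \Hom_{\cat C}(A',R(I))\to 0$, i.e.\ $R(I)$ is injective.

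There is no real obstacle here; the proof is entirely formal. The only thing one might worry about is whether one needs full exactness of $L$ or just, say, right-exactness — but in fact both halves are used: without left-exactness one would not get a monomorphism $L(A')\hookrightarrow L(A)$, and the surjectivity of $\Hom_{\cat D}(L(A),I)\to \Hom_{\cat D}(L(A'),I)$ in the displayed sequence genuinely relies on this together with injectivity of $I$. (The dual version quoted from \cite[II, Prop.~10.2]{Hilton.1971} weakens this, but for the present statement the short argument above is all that is required.)
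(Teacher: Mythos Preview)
Your proof is correct and takes essentially the same approach as the paper: both use the adjunction isomorphism $\Hom_{\cat C}(-,R(I)) \cong \Hom_{\cat D}(L(-),I)$ and observe that the right-hand side is a composition of exact functors. The paper's proof is just a one-line version of what you wrote.
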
 
\begin{proof} 
Let $I \in \cat D$ be an injective object. The functor $\Hom_{\cat C} 
\bigl(\pholder,R(I)\bigr) \cong \Hom_{\cat D}\bigl(L(\pholder),I\bigr)$ is exact,
which means that $R(I)\in \cat C$ is injective.
\end{proof} 

Finally, we discuss the formalism of ``mates'' of natural transformations
involving adjoint functors. As usual, given two functors $F,G\colon \cat C\to
\cat D$, we denote by $\Nat(F,G)$ the class of natural transformations from $F$ to
$G$.

\begin{prop}[{\cite[Prop.~2.1]{Kelly-Street.1974}}]\label{prop:mates} 
Consider a diagram of functors
\begin{equation}\label{eq:mates}
\begin{tikzcd}[column sep= 6em, row sep=4em, baseline=(current bounding
box.center)]
\cat C \ar[d,"F"'] \ar[r,bend left=20, "L", ""{name=L,below}]& \cat D \ar[d,"G"]
\ar[l,bend left=20, "R", ""{name=R,above}]\\
\arrow[from=L, to=R, symbol=\dashv]
\cat C' \ar[r,bend left=20, "L'", ""{name=L', below}] & \ar[l, bend left=20,
"R'", ""{name=R',above}] \cat D'
\arrow[from=L', to=R', symbol=\dashv]
\end{tikzcd} 
\end{equation}
and denote $\eta\colon \id_{\cat C} \To RL$, $\varepsilon\colon LR\To \id_{\cat D}$
and $\eta'\colon \id_{\cat C'} \To R'L'$, $\varepsilon'\colon L'R'\To \id_{\cat D'}$
the units and counits. There is a natural bijection
\begin{equation}\label{eq:mates-bijection}
\begin{tikzcd}[row sep=0em, baseline=(current bounding box.center)]
\Nat(L'F, GL) \ar[r,leftrightarrow, "\cong"] & \Nat(FR, R'G),\\
\alpha \ar[r,mapsto] & (R'G\varepsilon)\circ (R'\alpha_R) \circ (\eta'_{FR}),\\
(\varepsilon'_{GL})\circ (L'\beta_L)\circ (L'F\eta) & \ar[l,mapsto] \beta.
\end{tikzcd}
\end{equation}
\end{prop}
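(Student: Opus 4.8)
The plan is to verify directly that the two assignments in \eqref{eq:mates-bijection} are mutually inverse. Denote them $\Phi\colon\Nat(L'F,GL)\to\Nat(FR,R'G)$, $\alpha\longmapsto R'(G\varepsilon)\circ R'(\alpha R)\circ\eta'_{FR}$, and $\Psi\colon\Nat(FR,R'G)\to\Nat(L'F,GL)$, $\beta\longmapsto\varepsilon'_{GL}\circ L'(\beta L)\circ L'F\eta$, where $\eta,\varepsilon$ (resp.\ $\eta',\varepsilon'$) are the unit and counit of $L\dashv R$ (resp.\ $L'\dashv R'$), so that the triangle identities of Lemma~\ref{lem:adjoint-defn} give $R\varepsilon\circ\eta R=\id_R$ and $\varepsilon L\circ L\eta=\id_L$, and likewise with primes. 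First I would note that $\Phi$ and $\Psi$ are well defined at all: any horizontal or vertical composite of natural transformations is natural, so indeed $\Phi(\alpha)\in\Nat(FR,R'G)$ and $\Psi(\beta)\in\Nat(L'F,GL)$. The whole content is then the two identities $\Psi\circ\Phi=\id$ and $\Phi\circ\Psi=\id$.

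To prove $\Psi(\Phi(\alpha))=\alpha$, set $\beta:=\Phi(\alpha)$ and expand: writing out $\beta L$ and then $L'(\beta L)$ as composites of whiskerings and substituting into the definition of $\Psi$ presents $\Psi(\beta)$ as a vertical composite of one whiskering of $\alpha$ together with four whiskerings of the (co)units. Two applications of the interchange law — namely naturality of $\varepsilon'$, first against a whiskered $\varepsilon$ and then against a whiskered $\alpha$ — pull the two $L'R'$-whiskerings inward, after which the inner part is the whiskering at $FRL$ of $\varepsilon'L'\circ L'\eta'=\id_{L'}$; what survives is $(G\varepsilon L)\circ(\alpha RL)\circ(L'F\eta)$. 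Now naturality of $\alpha$ against $\eta\colon\id_{\cat C}\To RL$ rewrites the last two factors and yields $G(\varepsilon L\circ L\eta)\circ\alpha=\alpha$ by the other triangle identity. The identity $\Phi(\Psi(\beta))=\beta$ is the mirror computation: expand, use naturality of $\eta'$ twice to drag the $R'L'$-whiskerings through $\beta L$ and $F\eta R$, collapse $R'\varepsilon'\circ\eta'R'=\id_{R'}$, then use naturality of $\beta$ against $\varepsilon\colon LR\To\id_{\cat D}$ and finish with $R\varepsilon\circ\eta R=\id_R$.

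A more structural route, which moreover delivers naturality of the bijection (in $F$ and $G$) without extra work, is to factor it as $\Nat(L'F,GL)\cong\Nat(F,R'GL)\cong\Nat(FR,R'G)$: the first bijection is the adjunction isomorphism for $L'\dashv R'$ applied objectwise (it is natural in both variables), and the second says that precomposition by $R$ is right adjoint to precomposition by $L$ between the pertinent functor categories, with unit and counit given by whiskering $\eta$ and $\varepsilon$; tracing identity maps through these two steps reproduces the formulas of \eqref{eq:mates-bijection}. I expect the only real obstacle to be bookkeeping — correctly placing the whiskerings $\alpha R$, $(\alpha R)L$, $\eta'_{FR}$, $L'F\eta$ and the like, and invoking the interchange law and the four triangle identities in the right order — so in practice I would record the two verifications as string diagrams, where each triangle identity is the straightening of a zig-zag and each use of naturality an isotopy, making $\Psi\Phi(\alpha)=\alpha$ and $\Phi\Psi(\beta)=\beta$ transparent.
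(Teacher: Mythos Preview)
Your argument is correct: the two assignments are shown to be mutually inverse by a routine but careful combination of the interchange law and the triangle identities, exactly as you outline, and the alternative factorization through $\Nat(F,R'GL)$ is a clean way to see naturality. Note, however, that the paper does not actually supply a proof of this proposition---it simply cites \cite[Prop.~2.1]{Kelly-Street.1974}---so there is no in-paper argument to compare against. Your verification is essentially the standard one found in that reference. One minor point: the displayed types of $\eta,\varepsilon,\eta',\varepsilon'$ in the proposition statement are swapped relative to the convention of Lemma~\ref{lem:adjoint-defn} (and relative to what the formulas in \eqref{eq:mates-bijection} require); you correctly used the convention that makes the formulas type-check, which is the one from Lemma~\ref{lem:adjoint-defn}.
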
 

If $\alpha \colon L'F\To GL$ corresponds to $\beta\colon FR \To R'G$ under
\eqref{eq:mates-bijection}, we say that $\alpha$ is the left mate of $\beta$ and
$\beta$ is the right mate of $\alpha$.

The ``naturality'' above means that the bijection \eqref{eq:mates-bijection} is
compatible with horizontal and vertical composition of squares of the
form \eqref{eq:mates}, see \cite[Prop.~2.2]{Kelly-Street.1974} for a precise
statement. 

\begin{ex}\label{ex:mates-vertical} 
Let us make explicit two special cases of vertical composition. Suppose we are
in the setting of Proposition~\ref{prop:mates}. 
\begin{enumerate}[label=(\roman*)]
\item Let $L_1\colon \cat C\to \cat D$ be a functor with right adjoint $R_1$, and
let $\alpha\colon L\To L_1$ be a natural transformation with right mate
$\beta\colon R_1\To R$. Then the diagram
\[
\begin{tikzcd}
\Nat(L'F, GL) \ar[r,leftrightarrow,"\cong"] \ar[d, "(G\alpha)_*"'] & 
\Nat(FR, R'G) \ar[d,"(F\beta)^*"]\\
\Nat(L'F,GL_1) \ar[r,leftrightarrow,"\cong"'] & \Nat(FR_1,R'G)
\end{tikzcd}
\]
commutes.

\item Let $L'_1\colon \cat C'\to \cat D'$ be a functor with right adjoint
$R'_1$, and let $\alpha'\colon L'_1\To L'$ be a natural transformation with
right mate $\beta'\colon R'\To R'_1$. Then the diagram
\[
\begin{tikzcd}
\Nat(L'F,GL) \ar[r,leftrightarrow,"\cong"] \ar[d,"(\alpha'_F)^*"'] &
\Nat(FR, R'G) \ar[d,"(\beta'_G)_*"]\\
\Nat(L'_1F,GL) \ar[r,leftrightarrow,"\cong"'] & \Nat(FR, R'_1G)
\end{tikzcd}
\]
commutes.
\end{enumerate}
\end{ex} 

It is not true in general that \eqref{eq:mates-bijection} preserves
isomorphisms. However, in very special cases this does hold.

\begin{ex}\label{ex:mates} 
Let $L_i\colon \cat C\to \cat D$ be a functor admitting a right adjoint $R_i$,
for $i=1,2,3$. Let $\alpha\colon L_3\To L_2$ be a natural transformation with
mate $\beta\colon R_2\To R_3$. It follows from Example~\ref{ex:mates-vertical}
that the diagram 
\[
\begin{tikzcd}
\Nat(L_2,L_1) \ar[r,leftrightarrow, "\cong"] \ar[d,"\alpha^*"'] & \Nat(R_1,R_2)
\ar[d,"\beta_*"]\\
\Nat(L_3,L_1) \ar[r,leftrightarrow,"\cong"'] & \Nat(R_1,R_3)
\end{tikzcd}
\]
commutes.
Hence, if $\alpha'\colon L_2\To L_1$ has mate $\beta'\colon R_1\To R_2$, then
the mate of $\alpha'\circ\alpha$ is $\beta\circ\beta'$. Note also that under
$\Nat(L_3,L_3) \xleftrightarrow{\hphantom{i}\cong\hphantom{i}} \Nat(R_3,R_3)$
the mate of $\id_{L_3}$ is $\id_{R_3}$; this is a reformulation of the fact
that the unit and counit of the adjunction $L_3\dashv R_3$ satisfy the triangle
identities.

It follows from this discussion that $\alpha\colon L_3\To L_2$ is an isomorphism
if and only if its mate $\beta\colon R_2\To R_3$ is an isomorphism. This is a
strengthening of the uniqueness statement in Theorem~\ref{thm:adjoint-unique}.
\end{ex}

\subsection{Smooth representations}\label{subsec:smooth} 
For our purposes it is useful to consider smooth representations over locally
profinite monoids. Everything in this section is well-known for locally
profinite groups, see, \eg, \cite{Bushnell-Henniart.2006} or
\cite{Vigneras.1996}, and so we will restrain ourselves to a very brief
treatment.
We fix a coefficient field $k$.

\subsubsection{General properties of smooth representations} 
\begin{defn} 
A \emph{locally profinite monoid} is an open submonoid of a locally profinite group. 
A \emph{$p$-adic monoid} is an open submonoid of a $p$-adic Lie group. 
A morphism between locally profinite monoids (resp.\ $p$-adic monoids) is a continuous
monoid homomorphism.
\end{defn} 

\begin{ex}\phantomsection\label{ex:p-adic-monoid} 
Let $\field$ be a finite extension of $\Q_p$ and consider a connected
reductive group $\alg G$ defined over $\field$. Then every closed subgroup of
$\alg G(\field)$ is a $p$-adic Lie group \cite[9.6 Thm.]{DDMS99}. Let $\alg P =
\alg M\alg U$ be a parabolic subgroup of $\alg G$. Fix a compact open subgroup
$K_P\subseteq \alg P(\field)$ such that $K_P = K_MK_U$, where $K_M \coloneqq
K_P\cap \alg M(\field)$ and $K_U\coloneqq K_P\cap \alg U(\field)$. Then
\[
M^+\coloneqq \set{m\in \alg M(\field)}{mK_Um^{-1} \subseteq K_U}
\]
contains $K_M$ and hence is a $p$-adic monoid. 

The topological monoid $P^+\coloneqq K_UM^+$ contains $K$ and is therefore a
$p$-adic monoid.\footnote{Observe, however, that $M^+K_U$ is \emph{not} a
monoid.} Note that there is a surjective monoid homomorphism
$P^+\longtwoheadrightarrow M^+$ with kernel $K_U$.
\end{ex} 

\begin{lem}\label{lem:opensubmon} 
Let $S$ be a locally profinite monoid and $S' \subseteq S$ a submonoid. Then
$S'$ is open if and only if $S'$ contains an open subgroup of $S$.
\end{lem}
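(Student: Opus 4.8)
The plan is to establish the two implications of the equivalence separately, relying only on the defining axioms of a locally profinite monoid and on the standard fact that a profinite group has a neighbourhood basis of the identity consisting of open subgroups.

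First I would treat the ``if'' direction. Assume $S'$ contains an open subgroup $H$ of $S$. Since $H\subseteq S'$ and $S'$ is closed under multiplication, the left translate $s'H$ is contained in $S'$ for every $s'\in S'$; moreover $s'\in s'H$ because $1\in H$. By the translation axiom in the definition of a locally profinite monoid (one-sided translates of open sets are open), each $s'H$ is open in $S$. Hence $S' = \bigcup_{s'\in S'}s'H$ is a union of open sets, and therefore open.

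Next I would treat the ``only if'' direction. Assume $S'$ is open. By definition $S$ contains an open profinite subgroup $K$, and $S'\cap K$ is then an open subset of $K$; it contains the identity $1$, which lies in $S'$ because $S'$ is a submonoid and in $K$ because $K$ is a subgroup. As $K$ is profinite, I can choose an open subgroup $H\leq K$ with $H\subseteq S'\cap K$, so that $H\subseteq S'$. Finally, $H$ is open in $K$ and $K$ is open in $S$, hence $H$ is an open subgroup of $S$ contained in $S'$, as desired.

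I do not expect a genuine obstacle here: the argument is short, and the only points requiring care are invoking the one-sided translation axiom (rather than merely continuity of multiplication) in the first implication, and using that a submonoid contains the identity so that $S'\cap K$ is actually a neighbourhood of $1$ in the second. The sole external input is the existence of a basis of open-subgroup neighbourhoods of $1$ in the profinite group $K$.
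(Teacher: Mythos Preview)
Your proof is correct and follows essentially the same approach as the paper's own proof: both directions match the paper's argument almost line for line, the only difference being notation ($H$, $K$ in place of $S_1$, $S_0$) and the order in which the implications are treated.
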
 
\begin{proof} 
One direction follows from the fact that the neutral element in a $p$-adic Lie group admits a fundamental system of open neighborhoods which are profinite subgroups. The other direction is clear.
\end{proof} 

\begin{defn} 
Let $S$ be a locally profinite monoid. Let $V$ be a representation of $S$ on a
$k$-vector space, \ie, a (left) module over the monoid algebra $k[S]$. 
\begin{enumerate}[label=(\roman*)]
\item A vector $v\in V$ is called \emph{smooth} if the stabilizer $\set{s\in
S}{sv=v}$ of $v$ is open in $S$. We denote by $V^\sm\subseteq V$ the subset of
smooth vectors in $V$.
\item $V$ is called \emph{smooth} if $V = V^\sm$.
\end{enumerate}

We denote by $\Rep_k(S)$ the full subcategory of the category $\Mod(k[S])$ of
$k[S]$-modules consisting of smooth representations of $S$.
\end{defn} 

\begin{lem}\label{lem:smooth} 
Let $S$ be a locally profinite monoid and let $V$ be a $k[S]$-module.
\begin{enumerate}[label=(\alph*)]
\item\label{lem:smooth-a} Let $v\in V$. The following assertions are
equivalent:
\begin{enumerate}[label=(\roman*)]
\item\label{lem:smooth-a-i} The vector $v\in V$ is smooth.
\item\label{lem:smooth-a-ii} There exists an open subgroup $S_0\subseteq S$
fixing $v$.
\end{enumerate}
\item\label{lem:smooth-b} $V^{\sm}$ is a smooth $S$-subrepresentation of $V$.
\end{enumerate}
\end{lem}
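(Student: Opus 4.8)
The plan is to reduce both assertions to Lemma~\ref{lem:opensubmon}, using throughout that for a vector $v$ in a $k[S]$-module the stabilizer $\operatorname{Stab}_S(v)\coloneqq\set{s\in S}{sv=v}$ is always a submonoid of $S$, and that $v$ is smooth by definition exactly when this submonoid is open.

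For part~\ref{lem:smooth-a}, the implication \ref{lem:smooth-a-i}$\Rightarrow$\ref{lem:smooth-a-ii} is immediate: if $\operatorname{Stab}_S(v)$ is open, then being a submonoid it contains an open subgroup $S_0$ of $S$ by Lemma~\ref{lem:opensubmon}, and $S_0$ fixes $v$. For the converse I would argue that if $S_0\subseteq S$ is an open subgroup with $S_0v=v$, then for every $s\in\operatorname{Stab}_S(v)$ one has $sS_0v=sv=v$, so $sS_0\subseteq\operatorname{Stab}_S(v)$; since left translates of the open set $S_0$ are open, $\operatorname{Stab}_S(v)=\bigcup_{s\in\operatorname{Stab}_S(v)}sS_0$ is open, i.e.\ $v$ is smooth.

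For part~\ref{lem:smooth-b} I will check in turn that $V^\sm$ is a $k$-subspace of $V$, that it is stable under $S$, and that it is smooth as an $S$-representation. Linearity is routine: given smooth $v,w$, pick by~\ref{lem:smooth-a} open subgroups $H_v,H_w$ of $S$ fixing them; the open subgroup $H_v\cap H_w$ then fixes every $k$-linear combination of $v$ and $w$. The last point will be automatic once $V^\sm$ is known to be a subrepresentation, since whether a vector is smooth depends only on its stabilizer in $S$ and not on the ambient module.

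The one step requiring care — and the main obstacle — is $S$-stability, because an element $s\in S$ need not be invertible, so one cannot simply conjugate the stabilizer of $v$ by $s$ as in the group case; I would circumvent this by using the continuity of right translation by $s$ in place of conjugation. Concretely: given a smooth vector $v$, pick by~\ref{lem:smooth-a} an open subgroup $H$ with $Hv=v$, and fix $s\in S$. Then $N\coloneqq\set{t\in S}{ts\in sH}$ is open, because $sH$ is open (a left translate of $H$) and right multiplication by $s$ is continuous; it is a submonoid, because $ts=sh$ and $t's=sh'$ with $h,h'\in H$ give $tt's=tsh'=shh'\in sH$; it contains $1$; and every $t\in N$ satisfies $tsv=shv=sv$, so $N\subseteq\operatorname{Stab}_S(sv)$. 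By Lemma~\ref{lem:opensubmon} the open submonoid $N$ contains an open subgroup of $S$, which fixes $sv$, so $sv$ is smooth by~\ref{lem:smooth-a}. This shows $V^\sm$ is an $S$-subrepresentation of $V$, and combined with the previous paragraph it is a smooth representation, which completes the proof.
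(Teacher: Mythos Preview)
Your proof is correct and follows essentially the same strategy as the paper's: both parts reduce to Lemma~\ref{lem:opensubmon}, and the key step for $S$-stability uses continuity of multiplication to find an open set $N$ with $Ns\subseteq sH$, from which an open subgroup fixing $sv$ is extracted. The only cosmetic difference is that you explicitly introduce $N$ and verify it is a submonoid before invoking Lemma~\ref{lem:opensubmon}, whereas the paper directly asserts the existence of an open subgroup $S'_v$ with $S'_v s\subseteq sS_v$ from the continuity of multiplication at $(1,s)$.
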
 
\begin{proof} 
The equivalence in \ref{lem:smooth-a} follows immediately from
Lemma~\ref{lem:opensubmon}. We now prove \ref{lem:smooth-b}. Let $v,w\in V^\sm$
and $s\in S$. It suffices to show $v+w\in V^\sm$ and $sv\in V^\sm$.
By \ref{lem:smooth-a} there exist open subgroups $S_v$ and $S_w$ fixing $v$ and
$w$, respectively. Then $S_v\cap S_w$ fixes $v+w$, and then \ref{lem:smooth-a}
implies $v+w\in V^\sm$. Since multiplication in $S$ is continuous and $sS_v$ is
open in $S$, there exists an open subgroup $S'_v\subseteq S$ such that $S'_vs
\subseteq sS_v$. But then $S'_v$ fixes $sv$ which, by \ref{lem:smooth-a} again,
shows $sv\in V^\sm$. 
\end{proof} 

We list some immediate consequences of Lemma~\ref{lem:smooth}.
\begin{cor}\label{cor:abelian} 
Let $S$ be a locally profinite monoid.
\begin{enumerate}[label=(\alph*)]
\item A $k[S]$-module $V$ is smooth if and only if $V$ is smooth when
considered as a $k[S_0]$-module for some open profinite subgroup $S_0\subseteq
S$. 
\item $\Rep_k(S)$ is an abelian subcategory of $\Mod(k[S])$.
\end{enumerate}
\end{cor}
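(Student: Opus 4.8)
The plan is to deduce both parts of Corollary~\ref{cor:abelian} directly from Lemma~\ref{lem:smooth} together with the elementary fact that smoothness is a condition that can be checked on an open profinite subgroup.

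\textbf{Part (a).} Fix an open profinite subgroup $S_0\subseteq S$. If $V$ is smooth as a $k[S]$-module, then every $v\in V$ has open stabilizer in $S$, and $\set{s\in S_0}{sv=v} = S_0\cap\set{s\in S}{sv=v}$ is open in $S_0$; hence $V$ is smooth as a $k[S_0]$-module. Conversely, suppose $V$ is smooth over $k[S_0]$. Given $v\in V$, there is an open subgroup $S_1\subseteq S_0$ fixing $v$. Since $S_0$ is open in $S$, so is $S_1$, and $S_1$ is then an open subgroup of $S$ fixing $v$. By Lemma~\ref{lem:smooth}\ref{lem:smooth-a}, $v$ is smooth, so $V=V^\sm$.

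\textbf{Part (b).} The category $\Rep_k(S)$ is a full subcategory of the abelian category $\Mod(k[S])$, so it suffices to check that it is closed under the formation of kernels, cokernels, and finite direct sums computed in $\Mod(k[S])$. Finite direct sums are immediate since a vector in $V\oplus W$ is fixed by the intersection of open subgroups fixing its two components, which is again open. For a morphism $\varphi\colon V\to W$ in $\Rep_k(S)$, the kernel $\Ker\varphi\subseteq V$ is an $S$-submodule of a smooth module; any $v\in\Ker\varphi$ is fixed by an open subgroup (namely one witnessing smoothness of $v$ in $V$), so $\Ker\varphi$ is smooth by Lemma~\ref{lem:smooth}\ref{lem:smooth-a}. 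Likewise the image $\Image\varphi\subseteq W$ is smooth, being an $S$-submodule of the smooth module $W$. For the cokernel $\Coker\varphi = W/\Image\varphi$, the class of $w\in W$ is fixed by any open subgroup fixing $w$ itself, so $\Coker\varphi$ is smooth. Finally, these kernels and cokernels, formed in $\Mod(k[S])$, lie in $\Rep_k(S)$ and thus serve as kernels and cokernels in $\Rep_k(S)$; since the canonical map $\Coker(\Ker\varphi\to V)\to\Ker(W\to\Coker\varphi)$ is an isomorphism in $\Mod(k[S])$, it is one in $\Rep_k(S)$, so $\Rep_k(S)$ is abelian.

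There is no real obstacle here; the only point requiring the slightest care is to observe that ``smoothness'' for $k[S]$-modules is a property inherited by arbitrary $S$-submodules and $S$-quotients (because a fixing open subgroup of a vector, resp.\ of a representative, remains a fixing open subgroup in the sub, resp.\ the quotient), which is exactly what Lemma~\ref{lem:smooth}\ref{lem:smooth-a} packages. Alternatively, part (b) can be seen as a special case of part (a): $\Rep_k(S)$ and $\Rep_k(S_0)$ have the same underlying modules by (a), and $\Rep_k(S_0)$ is well-known to be abelian (it is the category of discrete modules over the completed group algebra $k[[S_0]]$), with the forgetful functor to $\Mod(k[S])$ exact, so the abelian structure on $\Rep_k(S)$ is the one induced from $\Mod(k[S])$.
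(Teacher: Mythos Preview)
Your proposal is correct and follows exactly the approach the paper intends: the paper gives no proof at all, simply declaring the corollary an ``immediate consequence of Lemma~\ref{lem:smooth}'', and your argument spells out precisely those immediate deductions from Lemma~\ref{lem:smooth}\ref{lem:smooth-a}.
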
 

Let $S'\subseteq S$ be a closed submonoid of a locally profinite monoid $S$.
The restriction functor 
\[
\Res^S_{S'}\colon \Rep_k(S) \to \Rep_k(S')
\]
is exact. It admits a right adjoint which we will now describe. Given a smooth
$k[S']$-module $V$, we obtain a smooth $k[S]$-module
\[
\Ind_{S'}^SV\coloneqq \Hom_{k[S']}\bigl(k[S], V\bigr)^\sm,
\]
called the \emph{smooth induction} of $V$.
Here, $S$ acts on $\Ind_{S'}^SV$ via right multiplication on $k[S]$. One easily
verifies that 
\[
\Ind_{S'}^SV \cong \set{f\colon S\to V}{\begin{array}{l}\text{$f(s's) = s'f(s)$
for all $s\in S$, $s'\in S'$,}\\
\text{$\exists K\subseteq S$ open with $f(sx) = f(s)$ for all $s\in S$, $x\in
K$}\end{array}},
\]
where $S$ acts on $f\colon S\to V$ via $(sf)(t) = f(ts)$, for all $s,t\in S$. We
will often implicitly make this identification.

\begin{lem}[Frobenius reciprocity I] 
\label{lem:FrobeniusI}
The smooth induction $\Ind_{S'}^S\colon \Rep_k(S')\to \Rep_k(S)$ is a right
adjoint of $\Res^{S}_{S'}$, that is, there is a natural bijection
\[
\Hom_{\Rep_k(S')}\bigl(\Res^S_{S'}V, W\bigr) \cong
\Hom_{\Rep_k(S)} \bigl(V, \Ind_{S'}^SW\bigr)
\]
for all $V\in \Rep_k(S)$, $W\in \Rep_k(S')$. In particular, $\Ind_{S'}^S$ is
left exact and preserves injective objects.
\end{lem}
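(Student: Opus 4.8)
The statement to prove is Frobenius reciprocity for smooth induction of representations of locally profinite monoids. The plan is to exhibit the adjunction isomorphism explicitly, using the functional description of $\Ind_{S'}^SW$ recorded just above the lemma. Concretely, I would construct mutually inverse maps between $\Hom_{\Rep_k(S')}(\Res^S_{S'}V, W)$ and $\Hom_{\Rep_k(S)}(V, \Ind_{S'}^SW)$ and check that both are well-defined (land in the right spaces, respect smoothness, are $S'$- resp. $S$-equivariant) and that they are natural in $V$ and $W$.

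First I would define, for an $S'$-equivariant map $\varphi\colon \Res^S_{S'}V\to W$, the map $\widehat\varphi\colon V\to \Ind_{S'}^SW$ by $\widehat\varphi(v)\colon S\to W$, $s\mapsto \varphi(sv)$. I would check: (i) $\widehat\varphi(v)(s's)=\varphi(s'sv)=s'\varphi(sv)=s'(\widehat\varphi(v)(s))$, using $S'$-equivariance of $\varphi$; (ii) smoothness of $\widehat\varphi(v)$ as a function on $S$ --- since $v$ is smooth (as $V$ is a smooth $S$-representation), some open subgroup $K\subseteq S$ fixes $v$, and then $\widehat\varphi(v)(sx)=\varphi(sxv)=\varphi(sv)$ for $x\in K$; so $\widehat\varphi(v)\in\Ind_{S'}^SW$; (iii) $S$-equivariance of $\widehat\varphi$: for $t\in S$ one computes $\widehat\varphi(tv)(s)=\varphi(stv)=\widehat\varphi(v)(st)=(t\cdot\widehat\varphi(v))(s)$, which is exactly the definition of the $S$-action on $\Ind_{S'}^SW$ recalled in the excerpt. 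Conversely, for an $S$-equivariant $\psi\colon V\to\Ind_{S'}^SW$ I would set $\psi^\flat\colon \Res^S_{S'}V\to W$, $v\mapsto \psi(v)(1)$, and check $S'$-equivariance: for $s'\in S'$, $\psi^\flat(s'v)=\psi(s'v)(1)=(s'\cdot\psi(v))(1)=\psi(v)(s')=s'(\psi(v)(1))=s'\psi^\flat(v)$, where the middle equality uses the formula for the $S$-action and the last uses the $S'$-equivariance property $f(s's)=s'f(s)$ of elements of $\Ind_{S'}^SW$ with $s=1$.

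Next I would verify that $\varphi\mapsto\widehat\varphi$ and $\psi\mapsto\psi^\flat$ are inverse to each other: $(\widehat\varphi)^\flat(v)=\widehat\varphi(v)(1)=\varphi(v)$, and $(\widehat{\psi^\flat})(v)(s)=\psi^\flat(sv)=\psi(sv)(1)=(s\cdot\psi(v))(1)=\psi(v)(s)$, so $\widehat{\psi^\flat}=\psi$. Naturality in both variables is then routine: precomposition/postcomposition with morphisms of representations commutes visibly with the formulas $s\mapsto\varphi(sv)$ and $v\mapsto\psi(v)(1)$. Finally, the ``in particular'' clause follows formally: a right adjoint is always left exact, and by Lemma~\ref{lem:adjoint-injective} applied to the exact left adjoint $\Res^S_{S'}$, the functor $\Ind_{S'}^S$ preserves injective objects.

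I do not anticipate a serious obstacle here --- this is a direct unwinding of definitions. The only point requiring a little care is the smoothness bookkeeping in step (ii): one must use that the source $V$ is genuinely a \emph{smooth} $S$-representation (not merely a $k[S]$-module) to guarantee that $\widehat\varphi(v)$ is a smooth function on $S$, so that it actually lies in $\Ind_{S'}^SW$ rather than in the larger space $\Hom_{k[S']}(k[S],W)$. A parallel remark applies to checking $\psi^\flat(v)$ is a smooth vector of $W$, which is automatic since $W$ is smooth. Everything else is formal verification of equivariance identities using the two explicit descriptions of the $S$-action on $\Ind_{S'}^SW$ given in the excerpt.
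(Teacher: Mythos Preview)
Your proof is correct and complete. The paper itself omits the verification of the adjunction as standard and only records that the ``in particular'' clause follows from Lemma~\ref{lem:adjoint-injective}; your explicit unit/counit construction is precisely the standard argument one would supply, and your handling of the smoothness check and the final appeal to Lemma~\ref{lem:adjoint-injective} matches the paper's reasoning.
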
 
\begin{proof} 
The last assertion is a formal consequence of the fact that $\Ind_{S'}^S$ admits
an exact left adjoint, see Lemma~\ref{lem:adjoint-injective}.
\end{proof} 

\begin{rmk}\label{rmk:Ind-exact} 
If $S$ and $S'$ are profinite groups, the projection $S\to S/S'$ admits a
continuous section by \cite[Ch.~I, Prop.~1]{Serre.2013}. Hence,
$\Ind_{S'}^S\colon \Rep_k(S')\to \Rep_k(S)$ is exact by \cite[Prop.~5]{Modrep}.
\end{rmk} 

\begin{cor}\label{cor:enoughinj} 
The category $\Rep_k(S)$ has enough injectives.
\end{cor}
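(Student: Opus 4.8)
The plan is to deduce that $\Rep_k(S)$ has enough injectives from the corresponding fact for profinite groups, using the adjunction between restriction and smooth induction established in Lemma~\ref{lem:FrobeniusI}. First I would fix an open profinite subgroup $S_0 \subseteq S$, which exists by the definition of a locally profinite monoid, and recall that the category $\Rep_k(S_0)$ of smooth representations of the profinite group $S_0$ has enough injectives. This last point is itself standard: for a profinite group $S_0$, the category $\Rep_k(S_0) = \bigcup_N \Mod(k[S_0/N])$ is a filtered union (over open normal subgroups $N$) of module categories over the finite-dimensional algebras $k[S_0/N]$, and one produces injective hulls by the usual adjunction $\Ind_{S_0/N}^{S_0/N'}$-type constructions, or alternatively one embeds any $V$ into the coinduced module $\Hom_k(k[S_0], V_0)^{\sm}$ for $V_0$ an injective $k$-vector space containing the underlying vector space of $V$. (The reference \cite{Modrep} or \cite{Vigneras.1996} can be cited here.)

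Next I would combine this with the functor $\Ind_{S_0}^S\colon \Rep_k(S_0)\to \Rep_k(S)$. By Lemma~\ref{lem:FrobeniusI} the functor $\Ind_{S_0}^S$ is right adjoint to the exact restriction functor $\Res^S_{S_0}$, hence by Lemma~\ref{lem:adjoint-injective} it sends injective objects of $\Rep_k(S_0)$ to injective objects of $\Rep_k(S)$. Now let $V\in \Rep_k(S)$ be arbitrary. Restricting to $S_0$ and choosing an embedding $\Res^S_{S_0}V \hookrightarrow I$ into an injective object $I\in \Rep_k(S_0)$, I apply $\Ind_{S_0}^S$ and precompose with the unit of the adjunction to obtain
\[
V \longrightarrow \Ind_{S_0}^S\Res^S_{S_0}V \longrightarrow \Ind_{S_0}^S I,
\]
with $\Ind_{S_0}^S I$ injective in $\Rep_k(S)$.

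The one genuine point to verify — and the place I expect the only real (if minor) obstacle — is that this composite is a monomorphism, equivalently that the unit $\eta_V\colon V\to \Ind_{S_0}^S\Res^S_{S_0}V$ is injective (since $\Res^S_{S_0}V\hookrightarrow I$ stays injective after applying the left-exact $\Ind_{S_0}^S$). Under the explicit description of $\Ind_{S_0}^S W$ as smooth functions $f\colon S\to W$ with $f(s_0 s) = s_0 f(s)$, the unit sends $v\mapsto (s\mapsto sv)$; if $sv = 0$ for all $s\in S$ then in particular $v = 1\cdot v = 0$, so $\eta_V$ is injective. (One must also check $s\mapsto sv$ is genuinely smooth as a function $S\to V$, which follows because $v$ is fixed by some open subgroup, giving local constancy on the right.) Assembling these observations gives, for every $V$, an embedding into an injective object, which is exactly the assertion. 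I would keep the write-up short, citing Lemma~\ref{lem:FrobeniusI}, Lemma~\ref{lem:adjoint-injective}, and a standard reference for the profinite-group case.
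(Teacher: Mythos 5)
Your argument is correct and is essentially the paper's argument: the paper simply takes the subgroup to be trivial, embedding $V$ via $v\mapsto(x\mapsto xv)$ into $\Ind_1^S\Res^S_1V$, which is injective by Lemma~\ref{lem:FrobeniusI} because every $k$-vector space is injective in $\Vect_k$. That choice short-circuits the one extra input your route needs (that $\Rep_k(S_0)$ has enough injectives for an open profinite $S_0$); otherwise both proofs rest on the same adjunction formalism, the preservation of injectives by the right adjoint of an exact functor, and the same check that the unit is a monomorphism.
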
 
\begin{proof} 
Let $V\in \Rep_k(S)$. The map $\varphi\colon V\to \Ind_{1}^S\Res^S_1V$,
$\varphi(v)(x) \coloneqq xv$, is clearly injective. Since $\Res^S_1V$ is
injective in the category of $k$-vector spaces, it follows from
Lemma~\ref{lem:FrobeniusI} that $\Ind_1^S\Res^S_1V$ is injective in
$\Rep_k(S)$.
\end{proof} 

If $S'\subseteq S$ is open, then $\Res^S_{S'}$ also admits a left
adjoint. For a smooth $k[S']$-module $V$ we define
\[
\ind_{S'}^SV \coloneqq k[S]\otimes_{k[S']} V,
\]
called the \emph{compact induction} of $V$. The smoothness of $\ind_{S'}^SV$
follows from Lemma~\ref{lem:smooth}.\ref{lem:smooth-b} and the fact that
$\ind_{S'}^SV$ is generated as a $k[S]$-module by $V$, which consists of smooth
vectors. 

\begin{rmk} 
Suppose that $S'\subseteq S$ is an open subgroup. Given $s\in S$ and $v\in V$, we
define $[s,v]\in \Ind_{S'}^S V$ by
\[
[s,v](t)\coloneqq \begin{cases}ts\cdot v, & \text{if $ts\in S'$,}\\ 0, &
\text{otherwise.} \end{cases}
\]
In other words, $[s,v]$ is the $S'$-equivariant function $S\to V$ supported on
$S's^{-1}$ with
value $v$ at $s^{-1}$.
In this case we have a canonical injective $S$-equivariant map
\begin{align*}
\ind_{S'}^SV &\to \Ind_{S'}^SV,\\
s\otimes v &\longmapsto [s,v].
\end{align*}
It is customary to identify $\ind_{S'}^SV$ with its image in
$\Ind_{S'}^SV$.
\end{rmk} 

\begin{lem}[Frobenius reciprocity II] 
\label{lem:FrobeniusII}
Assume that $S'\subseteq S$ is an open submonoid. Then compact induction
$\ind_{S'}^S\colon \Rep_k(S')\to \Rep_k(S)$ is left adjoint to
$\Res^S_{S'}$, that is, there is a natural bijection
\[
\Hom_{\Rep_k(S)}\bigl(\ind_{S'}^SV,W\bigr) \cong \Hom_{\Rep_k(S')} \bigl(V,
\Res^S_{S'}W\bigr),
\]
for all $V\in \Rep_k(S')$, $W\in \Rep_k(S)$. In particular, $\ind_{S'}^S$
is right exact.
\end{lem}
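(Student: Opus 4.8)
The plan is to reduce the claimed adjunction to the classical extension-of-scalars / restriction-of-scalars adjunction for modules along the ring inclusion $k[S']\hookrightarrow k[S]$, which makes sense precisely because $S'$ is a \emph{submonoid} of $S$. Recall that $\Rep_k(S)$ and $\Rep_k(S')$ sit as full subcategories of $\Mod(k[S])$ and $\Mod(k[S'])$, that $\Res^S_{S'}$ is nothing but restriction of scalars along $k[S']\hookrightarrow k[S]$, and that $\ind_{S'}^SV=k[S]\otimes_{k[S']}V$ is the corresponding extension of scalars. The standard tensor--hom adjunction therefore already supplies a bijection $\Hom_{k[S]}(k[S]\otimes_{k[S']}V,W)\cong\Hom_{k[S']}(V,W)$, natural in $V$ and $W$, for every $k[S]$-module $W$ and $k[S']$-module $V$; the only work is to see that it restricts to the stated bijection on the subcategories of smooth representations.

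First I would record that $\ind_{S'}^SV$ genuinely lies in $\Rep_k(S)$: since $S'$ is open, any smooth $v\in V$ is fixed by some open subgroup $S_0\subseteq S'$, whence $s_0(1\otimes v)=1\otimes s_0v=1\otimes v$ for $s_0\in S_0$, so $1\otimes v$ is smooth; as $\ind_{S'}^SV$ is generated as a $k[S]$-module by the elements $1\otimes v$, Lemma~\ref{lem:smooth}.\ref{lem:smooth-b} shows $\ind_{S'}^SV$ is smooth. (This is the only point where openness of $S'$ is used.) Next, because $\Rep_k(S)\subseteq\Mod(k[S])$ is full, one has $\Hom_{\Rep_k(S)}(\ind_{S'}^SV,W)=\Hom_{k[S]}(\ind_{S'}^SV,W)$ for $W\in\Rep_k(S)$, and likewise $\Hom_{\Rep_k(S')}(V,\Res^S_{S'}W)=\Hom_{k[S']}(V,W)$ once one notes that $\Res^S_{S'}W$ is again smooth: the stabilizer in $S'$ of a vector is the stabilizer in $S$ intersected with the open subset $S'$, hence open in $S'$. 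Substituting these identifications into the tensor--hom bijection, and observing that naturality in $V\in\Rep_k(S')$ and $W\in\Rep_k(S)$ is inherited from naturality over $\Mod(k[S'])$ and $\Mod(k[S])$, gives the adjunction. Explicitly, the unit is $V\to\Res^S_{S'}\ind_{S'}^SV$, $v\mapsto 1\otimes v$, and the counit is $\ind_{S'}^S\Res^S_{S'}W\to W$, $s\otimes w\mapsto sw$, with the usual triangle identities, which I would leave to the reader.

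Finally, right exactness of $\ind_{S'}^S$ is automatic: a functor admitting a right adjoint commutes with all colimits that exist in its source by Proposition~\ref{prop:adjoint-limit}, in particular with cokernels and finite direct sums, so it is right exact (alternatively, $k[S]\otimes_{k[S']}(-)$ is visibly right exact). I do not expect a real obstacle here; the entire content is the bookkeeping that smoothness is preserved in both directions and that passing to the full subcategories does not perturb the classical module-theoretic adjunction. The one step deserving a sentence of care is the verification that $\ind_{S'}^SV$ is smooth, which is exactly where the hypothesis that $S'$ is \emph{open} (rather than merely a closed submonoid, as for $\Ind_{S'}^S$) is needed.
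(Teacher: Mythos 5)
Your proposal is correct and matches the paper's approach: the paper's entire proof is the remark that the adjunction "follows from standard properties of the tensor product," i.e.\ the tensor--hom adjunction along $k[S']\subseteq k[S]$, with the smoothness of $\ind_{S'}^SV$ having been verified (exactly as you do, via Lemma~\ref{lem:smooth}.\ref{lem:smooth-b} and the fact that $\ind_{S'}^SV$ is generated by smooth vectors) in the paragraph preceding the lemma. Your write-up simply makes explicit the bookkeeping the paper leaves implicit.
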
 
\begin{proof} 
This follows from standard properties of the tensor product.
\end{proof} 

\begin{rmk*}
I do not know whether the extension $k[S']\subseteq k[S]$ is flat in general. But it is flat (and hence $\ind_{S'}^S$ is exact) in the following situations:
\begin{enumerate}[label=(\roman*)]
\item $S'$ is a group; in this case, $k[S]$ is even free over $k[S']$ since, by definition, $S$ is contained in a group.
\item In the context of Example~\ref{ex:p-adic-monoid}, the extensions $k[P^+]\subseteq k[P]$ and $k[M^+] \subseteq k[M]$ are flat (Lemma~\ref{lem:M+-flat}).
\end{enumerate}
\end{rmk*}

\begin{cor}\label{cor:FrobeniusII} 
Let $S'\subseteq S$ be an open submonoid. For all $V\in \Rep_k(S)$ the map
\begin{align*}
\Hom_{\Rep_k(S)}\bigl(\ind_{S'}^S\one, V\bigr) &\xrightarrow{\cong}
\H^0(S',V)\coloneqq \set{v\in V}{\text{$sv = v$ for all $s\in S'$}},\\
\varphi &\longmapsto \varphi(1\otimes 1)
\end{align*}
is a natural bijection. In particular, $\set{\ind_{S'}^S\one}{\text{$S'\subseteq
S$ open}}$ is a set of generators for $\Rep_k(S)$.
\end{cor}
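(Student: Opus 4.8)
The plan is to obtain the displayed bijection as the special case of Frobenius reciprocity~II in which the inducing representation is the trivial one, and to derive the generation statement from the smoothness hypothesis by way of Lemma~\ref{lem:smooth}.

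First I would apply Lemma~\ref{lem:FrobeniusII} with the $S'$-representation taken to be $\one$ and the $S$-representation taken to be the given $V$, obtaining a bijection, natural in $V$,
\[
\Hom_{\Rep_k(S)}\bigl(\ind_{S'}^S\one, V\bigr) \xrightarrow{\;\cong\;}
\Hom_{\Rep_k(S')}\bigl(\one, \Res^S_{S'}V\bigr).
\]
Then I would identify the target: a $k[S']$-linear map $\one=k\to\Res^S_{S'}V$ is determined by the image of $1$, which is forced to be fixed by every $s\in S'$, so evaluation at $1$ is a bijection $\Hom_{\Rep_k(S')}(\one,\Res^S_{S'}V)\cong\H^0(S',V)$, natural in $V$. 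Finally I would unwind the two identifications: the adjunction of Lemma~\ref{lem:FrobeniusII} is the tensor--hom adjunction, which sends $\varphi\colon k[S]\otimes_{k[S']}\one\to V$ to the $k[S']$-map $1\mapsto\varphi(1\otimes1)$; composing with evaluation at $1$ gives exactly $\varphi\mapsto\varphi(1\otimes1)$. Naturality in $V$ is inherited from Lemma~\ref{lem:FrobeniusII} and from the naturality of evaluation at $1$.

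For the ``in particular'' clause I would use the characterization that a set of objects generates $\Rep_k(S)$ once every object is a quotient of a direct sum of copies of those objects. Given $V\in\Rep_k(S)$ and $v\in V$, smoothness of $V$ together with Lemma~\ref{lem:smooth}\ref{lem:smooth-a} supplies an open subgroup $S_v\subseteq S$ fixing $v$; in particular $S_v$ is an open submonoid of $S$ and $v\in\H^0(S_v,V)$, so the bijection above produces a morphism $\varphi_v\colon\ind_{S_v}^S\one\to V$ with $\varphi_v(1\otimes1)=v$. Assembling the $\varphi_v$ over all $v\in V$ yields a morphism $\bigoplus_{v\in V}\ind_{S_v}^S\one\to V$ whose image is all of $V$, the required epimorphism.

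No serious obstacle is expected here: the only points deserving a little care are the bookkeeping that unwinds the tensor--hom adjunction of Lemma~\ref{lem:FrobeniusII} into the explicit evaluation map $\varphi\mapsto\varphi(1\otimes1)$, and the (trivial but worth stating) observation that an open subgroup counts as an ``open submonoid'' in the indexing set of the generating family.
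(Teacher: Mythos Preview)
Your proof is correct. The first assertion is handled exactly as in the paper: apply Frobenius reciprocity~II with the trivial representation and identify $\Hom_{\Rep_k(S')}(\one,\Res^S_{S'}V)$ with $\H^0(S',V)$.

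For the ``in particular'' clause you and the paper use different (equivalent) characterizations of a generating set. The paper works with the definition from \cite[Def.~5.2.1]{KS}: a family $\{G_i\}$ generates if the combined functor $\Hom(G_i,-)$ is conservative. Accordingly, the paper argues that a morphism $V\to W$ inducing bijections $\H^0(S',V)\to\H^0(S',W)$ for all open $S'$ must already be an isomorphism, which is immediate from smoothness since $V=\bigcup_{S'}\H^0(S',V)$. You instead use the characterization via explicit epimorphisms from direct sums, and you build one by choosing for each $v\in V$ an open subgroup $S_v$ fixing $v$. Both arguments are short; the paper's is marginally more economical (no choice of $S_v$, no assembly of a big direct sum), while yours has the virtue of being constructive and not relying on an external reference for the definition of generator.
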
 
\begin{proof} 
For the definition of a set of generators we refer to
\cite[Def.~5.2.1]{KS}.
The first assertion is an easy consequence of Lemma~\ref{lem:FrobeniusII}.
Therefore, in order to prove the last statement, it suffices to prove that a
morphism $V\to W$ in $\Rep_k(S)$ is an isomorphism if the induced maps
$V^{S'}\to W^{S'}$ are bijective, for all open submonoids $S'$. But this
follows from the fact that $V$ and $W$ are smooth.
\end{proof} 

\begin{prop}\label{prop:Grothendieck} 
The category $\Rep_k(S)$ is Grothendieck abelian.
\end{prop}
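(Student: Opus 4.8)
The plan is to verify the three axioms of a Grothendieck abelian category for $\Rep_k(S)$: it is abelian, it admits a generator (equivalently, a set of generators), it has all small colimits, and filtered colimits are exact (the axiom $\mathrm{AB5}$). The first point is already recorded in Corollary~\ref{cor:abelian}\,(b), and the existence of a set of generators is exactly the last assertion of Corollary~\ref{cor:FrobeniusII}, namely that $\{\ind_{S'}^S\one \mid S'\subseteq S \text{ open}\}$ generates. So the content of the proof is the existence of small colimits together with exactness of filtered colimits.

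First I would construct colimits. Since $\Rep_k(S)$ is a full subcategory of $\Mod(k[S])$, which is a Grothendieck category and in particular cocomplete, it suffices to observe that the inclusion $\Rep_k(S)\hookrightarrow \Mod(k[S])$ has a right adjoint, namely $V\mapsto V^\sm$: this is the content of Lemma~\ref{lem:smooth}\,(b), which shows $V^\sm$ is a subrepresentation, and the adjunction $\Hom_{\Rep_k(S)}(W,V)\cong \Hom_{k[S]}(W,V^\sm)$ for smooth $W$ is immediate because any $k[S]$-linear map out of a smooth module lands in the smooth vectors. Consequently $\Rep_k(S)$ is a coreflective subcategory, hence closed under colimits formed in $\Mod(k[S])$ — more precisely, a colimit in $\Rep_k(S)$ is computed by taking the colimit in $\Mod(k[S])$ and then applying $(-)^\sm$. (In fact, one checks easily that an arbitrary direct sum of smooth modules is already smooth, and that a quotient of a smooth module is smooth, so every colimit in $\Mod(k[S])$ of smooth modules is automatically smooth and the coreflection is the identity on such colimits; but the adjoint-functor argument suffices either way.)

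Next I would verify $\mathrm{AB5}$. A filtered colimit in $\Mod(k[S])$ is exact because $\Mod(k[S])$ is a Grothendieck category. By the previous paragraph, filtered colimits in $\Rep_k(S)$ agree with those in $\Mod(k[S])$ (the colimit of smooth modules being smooth), and a sequence in $\Rep_k(S)$ is exact if and only if it is exact in $\Mod(k[S])$ since $\Rep_k(S)$ is an abelian subcategory (Corollary~\ref{cor:abelian}\,(b)); hence filtered colimits are exact in $\Rep_k(S)$ as well.

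I do not anticipate a serious obstacle here: the only mildly delicate point is the bookkeeping that colimits in $\Rep_k(S)$ really are computed as $\bigl(\colim_{\Mod(k[S])}\bigr)^\sm$, which is where one uses Lemma~\ref{lem:smooth}. Once that is in place, cocompleteness, the generator, and $\mathrm{AB5}$ all follow by transporting the corresponding facts from $\Mod(k[S])$, and the proposition is proved.
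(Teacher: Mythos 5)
Your proof is correct and follows essentially the same route as the paper: abelian structure and generators from Corollaries~\ref{cor:abelian} and~\ref{cor:FrobeniusII}, colimits computed in $\Mod(k[S])$ because smooth modules are closed under direct sums and quotients, and AB5 inherited from $\Mod(k[S])$. The paper additionally records that $\bigl(\prod_{i\in I}V_i\bigr)^{\sm}$ is the categorical product (axiom AB3*), but this is not needed for the Grothendieck property itself, so your omission is harmless.
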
 
\begin{proof} 
By Corollaries~\ref{cor:abelian} and~\ref{cor:FrobeniusII} we know that
$\Rep_k(S)$ is abelian and admits a set of generators. Given a family
$\{V_i\}_{i\in I}$ of smooth representations, the direct sum $\bigoplus_{i\in
I}V_i$ is again smooth. It follows that $\Rep_k(S)$ has all colimits and that
they can be computed in $\Mod(k[S])$. Therefore, all filtered colimits are
exact, \ie, $\Rep_k(S)$ satisfies the Grothendieck axiom AB5. Moreover, the
representation $\bigl(\prod_{i\in I}V_i\bigr)^{\sm}$ is a categorical product in
$\Rep_k(S)$, so that $\Rep_k(S)$ satisfies the axiom AB3*.
\end{proof} 

\begin{rmk} 
If $\Char k$ divides the pro-order of every open profinite subgroup of $S$, the
formation of infinite direct products is not exact in $\Rep_k(S)$. As an
example, consider $S = \Z_p$ and $k = \F_p$. For each $n\ge1$ the map
$f_n\colon k[\Z_p/p^n\Z_p]\to k$, $x+(p^n)\mapsto 1$, is a surjection between
smooth $\Z_p$-representations, where $\Z_p$ acts trivially on $k$. Note that,
for $m< n$, the induced map $\H^0(p^m\Z_p, k[\Z_p/p^n\Z_p]) \to k$ on
$p^m\Z_p$-invariants is zero (this uses $\Char k = p$). Therefore, the map
\[
\Bigl(\prod_{n\ge1}k[\Z_p/p^n\Z_p]\Bigr)^\sm \xrightarrow{(f_n)_n}
\Bigl(\prod_{n\ge1}k\Bigr)^\sm = \prod_{n\ge1}k
\]
takes values in $\bigoplus_{n\ge1}k$ and hence is not surjective.

The failure of having exact direct products means that
$\Rep_k(S)$ does not have enough projective objects, see
\cite[Thm.~1.3]{Roos.2006}. (In fact, it is conjectured that $\Rep_k(S)$ has no non-zero projectives at all. For profinite groups this is resolved in \cite[Thm~3.1]{Chirvasitu-Kanda.2023}.) This is the source of many difficulties in the
theory of smooth mod $p$ representations of $p$-adic Lie groups.
\end{rmk}

\subsubsection{Inflation} 
Let $f\colon S\longtwoheadrightarrow T$ be a surjective open morphism of
locally profinite monoids with kernel $U$. Inflation along $f$ defines an
exact functor $\Inf^T_S\colon \Rep_k(T)\to \Rep_k(S)$. 

\begin{prop}\label{prop:Inf-adjoints} 
$\Inf^T_S$ admits a left and a right adjoint. More concretely:
\begin{enumerate}[label=(\alph*)]
\item\label{prop:Inf-adjoints-a} The functor
\begin{align*}
\LL^0_U\colon \Rep_k(S) &\to \Rep_k(T), \\
V &\longmapsto \LL^0_U(V)\coloneqq k[T]\otimes_{k[S]}V
\end{align*}
is a left adjoint of $\Inf^T_S$ and, in particular, right exact.
\item\label{prop:Inf-adjoints-b} The functor
\begin{align*}
\Pi_U\colon \Rep_k(S) &\to \Rep_k(T),\\
V&\longmapsto \Pi_U(V) \coloneqq \Hom_{k[S]}(k[T],V)^{\sm}
\end{align*}
is a right adjoint of $\Inf^T_S$ and, in particular, left exact.
\end{enumerate}
\end{prop}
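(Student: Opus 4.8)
The plan is to identify the two adjoints explicitly and verify the adjunction isomorphisms directly, since $\Inf^T_S$ is nothing but restriction of scalars along the ring homomorphism $k[S]\to k[T]$ followed by taking smooth vectors (which is harmless because $\Inf^T_S$ already lands in smooth representations: if $V\in\Rep_k(T)$ is smooth and $v\in V$ is fixed by an open subgroup $T_0\subseteq T$, then $f^{-1}(T_0)$ is an open subgroup of $S$ fixing $v$, using that $f$ is open). So one is really computing the left and right adjoints of the restriction-of-scalars functor $\Mod(k[T])\to\Mod(k[S])$, corrected by the smoothness constraint.

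For part~\ref{prop:Inf-adjoints-a}, I would set $\LL^0_U(V)=k[T]\otimes_{k[S]}V$ and observe that $k[T]$ is a quotient of $k[S]$ (the augmentation $k[S]\twoheadrightarrow k[T]$ induced by $f$ has kernel the ideal generated by $\{u-1:u\in U\}$), so $\LL^0_U(V)\cong V/(\langle u-1:u\in U\rangle\cdot V) = V_U$, the module of $U$-coinvariants; this is automatically smooth because it is a quotient of the smooth module $V$ (Lemma~\ref{lem:smooth}~\ref{lem:smooth-b} applies to quotients as well, or one notes directly that the image of a smooth vector is smooth). The adjunction $\Hom_{\Rep_k(T)}(k[T]\otimes_{k[S]}V,W)\cong\Hom_{\Rep_k(S)}(V,\Inf^T_S W)$ is then the usual tensor–hom adjunction for the ring map $k[S]\to k[T]$: a $k[T]$-linear map out of $k[T]\otimes_{k[S]}V$ is the same as a $k[S]$-linear map $V\to W$, and $k[S]$-linearity of a map into $W$ (viewed through $f$) is exactly $S$-equivariance into $\Inf^T_S W$. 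Right exactness is then automatic from Proposition~\ref{prop:adjoint-limit} (a left adjoint preserves colimits), or one sees it directly from right-exactness of $\otimes$.

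For part~\ref{prop:Inf-adjoints-b}, I would dually set $\Pi_U(V)=\Hom_{k[S]}(k[T],V)^{\sm}$; concretely this is $(V^U)^{\sm}$ where the smoothing is with respect to $T$ — one checks $\Hom_{k[S]}(k[T],V)\cong\{v\in V: uv=v\ \forall u\in U\}=V^U$ with its residual $T$-action, and then takes smooth vectors to land in $\Rep_k(T)$. The adjunction $\Hom_{\Rep_k(S)}(\Inf^T_S W,V)\cong\Hom_{\Rep_k(T)}(W,\Pi_U(V))$ again comes from hom–tensor adjunction: an $S$-equivariant map $\Inf^T_S W\to V$ is a $k[S]$-linear map $W\to V$ with $W$ a $k[T]$-module, hence a $k[S]$-linear map $W\to\Hom_{k[S]}(k[T],V)=V^U$; such a map is automatically $T$-equivariant and, since $W$ is smooth, lands in the smooth vectors $\Pi_U(V)$. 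The smoothness passage is compatible with the hom-set identification because source objects are smooth, so no information is lost. Left exactness then follows from Proposition~\ref{prop:adjoint-limit} or directly.

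The only genuinely non-formal point — and the step I would flag as the main thing to get right rather than the main obstacle — is the bookkeeping around the $(-)^{\sm}$ operation in part~\ref{prop:Inf-adjoints-b}: one must check that restricting to smooth vectors in the target does not disturb the natural bijection, which works precisely because every $W\in\Rep_k(T)$ is smooth so that every $k[S]$-linear map $W\to V^U$ has smooth image. Everything else is a direct unwinding of tensor–hom adjunction along $k[S]\twoheadrightarrow k[T]$, and the ``in particular'' exactness claims are immediate from the adjoint functor theorem (Proposition~\ref{prop:adjoint-limit}).
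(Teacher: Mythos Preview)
Your core argument is correct and matches the paper's: both parts are the tensor--hom adjunction along the ring map $k[S]\to k[T]$, together with the observation (in \ref{prop:Inf-adjoints-b}) that any $k[T]$-linear map out of a smooth $W$ lands in smooth vectors.

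However, your side identifications are wrong in the monoid setting and you should drop them. You claim that the kernel of $k[S]\twoheadrightarrow k[T]$ is the ideal generated by $\{u-1:u\in U\}$, hence $\LL^0_U(V)\cong V_U$, and dually that $\Hom_{k[S]}(k[T],V)\cong V^U$ ``with its residual $T$-action''. These hold when $S$ is a group (so fibres of $f$ are cosets of the normal subgroup $U$), but for a general $p$-adic monoid the fibres $f^{-1}(f(s))$ need not equal $sU$ or $Us$. For instance, for the positive monoid $f\colon P^+\twoheadrightarrow M^+$ with kernel $K_U$ one has $f^{-1}(m)=K_Um$ but not $mK_U$; consequently $V^{K_U}$ does not carry a natural $M^+$-action, and evaluation at $1$ gives only an injection $\Hom_{k[S]}(k[T],V)\hookrightarrow V^{K_U}$, not an isomorphism. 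The paper makes exactly this point in the Remark and Example immediately following the proposition. None of this is needed for the adjunction itself---the tensor--hom argument works with $k[T]\otimes_{k[S]}V$ and $\Hom_{k[S]}(k[T],V)^{\sm}$ directly---so your proof stands once you excise the incorrect ``concretely this is $V_U$ / $V^U$'' glosses.
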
 
\begin{proof} 
\begin{itemize}
\item[\ref{prop:Inf-adjoints-a}] We write $\Inf^T_SW = \Hom_{k[T]}(k[T],W)$,
where $S$ acts on the right on $k[T]$. The assertion follows from the
$\otimes$-$\Hom$ adjunction, \cite[Lem.~2.8.2]{Benson.1991a}:
\[
\Hom_{k[S]}\bigl(V, \Hom_{k[T]}(k[T],W)\bigr) \cong \Hom_{k[T]}\bigl(k[T]
\otimes_{k[S]}V, W\bigr).
\]
\item[\ref{prop:Inf-adjoints-b}] We write $\Inf^T_SW = k[T]\otimes_{k[T]}W$,
where $S$ acts on the left on $k[T]$. By \loccit\ we have
\begin{align*}
\Hom_{k[S]}\bigl(k[T]\otimes_{k[T]}W,V\bigr) &\cong \Hom_{k[T]}\bigl(W,
\Hom_{k[S]}(k[T],V)\bigr)\\
&\cong \Hom_{k[T]}\bigl(W, \Hom_{k[S]}(k[T],V)^\sm\bigr),
\end{align*}
where the second bijection comes from the fact that every homomorphic image of a
smooth representation is smooth.
\end{itemize}
\end{proof} 

\begin{rmk}\label{rmk:adjoints-monoid} 
The description of the left and right adjoints of $\Inf^T_S$ is quite different
from the more familiar description in the case where $S$ (and hence $T$, $U$) is
a group. It is instructive to make precise the distinction between both
descriptions.
\begin{enumerate}[label=(\roman*)]
\item Given $V\in \Rep_k(S)$, put $V(U) \coloneqq \spann_k\{uv-v\mid u\in U, v\in
V\}$, where $\spann_k\{\ldots\}$ denotes the $k$-linear span. There
is a canonical $k$-linear surjection
\begin{equation}\label{eq:V_U}
V_U\coloneqq V/V(U) \longtwoheadrightarrow k[T]\otimes_{k[S]}V,
\end{equation}
given by mapping the coset of $v$ to $1\otimes v$. Note that, if $sU\subseteq
Us$ for all $s\in S$, then there is a natural $k[S]$-action on $V_U$. If
even $f^{-1}(f(s)) = Us$ for all $s\in S$, then this action induces a
$k[T]$-action on $V_U$. In this case, the kernel of $k[S]\longtwoheadrightarrow
k[T]$ is generated as a right ideal by $\set{u-1}{u\in U}$, and
hence \eqref{eq:V_U} is an isomorphism of smooth $T$-representations.

\item Evaluation at $1\in k[T]$ induces an injective $k$-linear map
\[
\ev_1\colon \Pi_U(V) \longhookrightarrow \H^0(U,V).
\]
We observe that, if $Us\subseteq sU$ for all $s\in S$, then there is a natural
$k[S]$-action on $\H^0(U,V)$. If even $f^{-1}(f(s)) = sU$ for all $s\in S$, then
this action induces a $k[T]$-action on $\H^0(U,V)$. In this case, the kernel of
$k[S]\longtwoheadrightarrow k[T]$ is generated as a left ideal by
$\set{u-1}{u\in U}$, and hence $\ev_1$ is an isomorphism of smooth
$T$-representations.
\end{enumerate}
\end{rmk} 

\begin{ex} 
In order to see where some of the above conditions may fail, let us consider the
positive monoid, cf.~Example~\ref{ex:p-adic-monoid}.
More concretely,
let $P \subseteq \GL_2(\Q_p)$ be the subgroup of upper triangular matrices, $M$
the subgroup of diagonal matrices, and $U$ the subgroup of upper triangular
unipotent matrices. Put $K_P = P\cap \GL_2(\Z_p)$, $K_M = M\cap K_P$, and $K_U =
U\cap K_P$. Consider now the positive monoid $M^+ = \set{m\in M}{mK_Um^{-1}
\subseteq K_U}$ and put $P^+ = K_UM^+$. Let $f\colon P^+\to M^+$ be the
canonical projection with kernel $K_U$. 

It is obvious that $f^{-1}(m) = K_Um$,
for all $m\in M^+$, and hence $V_{K_U}\cong \LL^0_{K_U}(V)$ as smooth
$M^+$-representations, for all $V\in \Rep_k(P^+)$.

However, for $m\coloneqq \diag(p,1) \in M^+$, we have $K_Um \not\subseteq
mK_U$. Therefore $\H^0(K_U,V)$ is not stable under the $M^+$-action and
$\ev_1\colon \Pi_{K_U}(V)\longhookrightarrow \H^0(K_U,V)$ is not surjective. We
remark that $\H^0(K_U,V)$ can be endowed with a ``Hecke action'',
cf.~\cite[Lem.~3.1.4]{EmertonI}, making it a smooth $M^+$-representation.
\end{ex}

\subsubsection{Symmetric monoidal structure} 
Given $V,W$ in $\Rep_k(S)$, we let $S$ act diagonally on the tensor product
$V\otimes_kW$; this is again a smooth representation. Therefore, $\Rep_k(S)$
carries the structure of a symmetric monoidal category, where the $\otimes$-unit
is given by the trivial representation $k$.

\begin{prop}\label{prop:innerHom} 
The symmetric monoidal category $\Rep_k(S)$ is closed, that is, for each $V$ in
$\Rep_k(S)$ the functor
\[
\pholder\otimes_kV\colon \Rep_k(S)\to \Rep_k(S)
\]
admits a right adjoint $\hom_{\Rep_k(S)}(V,\pholder)\colon \Rep_k(S)\to \Rep_k(S)$.
Moreover, there is a natural $S$-equivariant isomorphism
\[
\hom_{\Rep_k(S)}\bigl(U\otimes_kV, W\bigr) \cong \hom_{\Rep_k(S)}\bigl(U,
\hom_{\Rep_k(S)}(V,W)\bigr).
\]
\end{prop}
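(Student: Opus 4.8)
The plan is to establish existence of the right adjoint by an adjoint functor theorem and then to deduce the closed-structure isomorphism by a purely formal Yoneda argument.

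\emph{Existence of the internal hom.} First I would check that $-\otimes_k V\colon \Rep_k(S)\to \Rep_k(S)$ commutes with all small colimits. Indeed, by the proof of Proposition~\ref{prop:Grothendieck} colimits in $\Rep_k(S)$ are computed on underlying $k$-vector spaces (equipped with the induced diagonal $S$-action), and on underlying vector spaces $-\otimes_k V$ is tensoring over the field $k$, which is cocontinuous (it is even a left adjoint and exact); the diagonal $S$-action on $(\colim_i X_i)\otimes_k V$ agrees with the one on $\colim_i(X_i\otimes_k V)$. Next, by Proposition~\ref{prop:Grothendieck} the category $\Rep_k(S)$ is Grothendieck abelian, hence cocomplete, locally small, well-powered (so also co-well-powered, since in an abelian category subobjects and quotient objects of a fixed object correspond bijectively), and it carries a small set of generators by Corollary~\ref{cor:FrobeniusII}. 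The (dual form of the) special adjoint functor theorem — applied to $(-\otimes_k V)^{\mathrm{op}}\colon \Rep_k(S)^{\mathrm{op}}\to \Rep_k(S)^{\mathrm{op}}$, see e.g.\ \cite[Ch.~V, \S8]{Maclane} or \cite{KS} — then produces a right adjoint $\hom_{\Rep_k(S)}(V,-)\colon \Rep_k(S)\to \Rep_k(S)$, which is automatically $k$-linear. I would emphasize that, unlike for a locally profinite \emph{group}, there is no naive description $\hom_{\Rep_k(S)}(V,W)=\Hom_k(V,W)^{\sm}$ available, since $k[S]$ is only a bialgebra and carries no antipode; the argument is genuinely abstract. (If a formula is wanted, the same reasoning applied to $\Mod(k[S])$ shows that $\Mod(k[S])$ is closed, and then $\hom_{\Rep_k(S)}(V,W)=\hom_{\Mod(k[S])}(V,W)^{\sm}$ because $(-)^{\sm}$ is right adjoint to the inclusion $\Rep_k(S)\hookrightarrow \Mod(k[S])$ — but $\hom_{\Mod(k[S])}$ is itself only defined abstractly, so little is gained.)

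\emph{The closed-structure isomorphism.} Granting the internal hom, the isomorphism $\hom_{\Rep_k(S)}(U\otimes_k V,W)\cong \hom_{\Rep_k(S)}(U,\hom_{\Rep_k(S)}(V,W))$ is formal. Writing $\Hom=\Hom_{\Rep_k(S)}$ and $\hom=\hom_{\Rep_k(S)}$, for every $T\in \Rep_k(S)$ there are natural bijections
\begin{align*}
\Hom\bigl(T,\hom(U\otimes_k V,W)\bigr)
&\cong \Hom\bigl(T\otimes_k(U\otimes_k V),W\bigr)
\cong \Hom\bigl((T\otimes_k U)\otimes_k V,W\bigr)\\
&\cong \Hom\bigl(T\otimes_k U,\hom(V,W)\bigr)
\cong \Hom\bigl(T,\hom(U,\hom(V,W))\bigr),
\end{align*}
where the middle isomorphism is the associativity constraint of $\otimes_k$ and the remaining ones are the defining adjunctions. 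By the Yoneda lemma this yields an isomorphism in $\Rep_k(S)$ between $\hom(U\otimes_k V,W)$ and $\hom(U,\hom(V,W))$ — in particular an isomorphism of $S$-representations, i.e.\ an ``$S$-equivariant'' isomorphism — and naturality in $U$, $V$, $W$ follows from uniqueness of adjoints together with the functoriality already built in.

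\emph{Main obstacle.} There is no deep difficulty here; the one point that must be handled with care is precisely that no concrete model of the internal hom is available for monoids, so one cannot shortcut and must invoke the adjoint functor theorem in the correct (co-)form, which in turn forces one to record the exact categorical properties of $\Rep_k(S)$ (cocompleteness, co-well-poweredness, a small generating set) — all of which are packaged in ``Grothendieck abelian''. Once that is in place the remainder is the formal Yoneda computation above.
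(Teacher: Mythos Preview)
Your argument is correct but takes a genuinely different route from the paper. The paper does not invoke an adjoint functor theorem; instead it writes down an explicit model
\[
\hom_{\Rep_k(S)}(V,W)\;=\;\Hom_{k[S]}\bigl(k[S]\otimes_kV,\,W\bigr)^{\sm},
\]
where $S$ acts diagonally on $k[S]\otimes_kV$ and on the $\Hom$ via right multiplication on $k[S]$, and checks the adjunction by hand via the maps $\varphi\mapsto[u\mapsto[s\otimes v\mapsto\varphi(su\otimes v)]]$ and $\psi\mapsto[u\otimes v\mapsto\psi(u)(1\otimes v)]$. The second part of your proof (the Yoneda argument for $\hom(U\otimes_kV,W)\cong\hom(U,\hom(V,W))$) is identical to the paper's.

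Your assertion that ``no concrete model of the internal hom is available for monoids'' is therefore inaccurate: the paper's formula works for any locally profinite monoid and specializes, when $S$ is a group, to the familiar $\Hom_k(V,W)^{\sm}$ (this is exactly the content of Remark~\ref{rmk:internal-hom-group}). The explicit model is not idle --- it is reused verbatim to build the chain-level internal hom $\iHom_S$ and hence the derived internal hom $\RR\iHom_S$ later in \S\ref{subsec:derived}. Your abstract approach would force a separate argument there. On the other hand, your route has the virtue of making transparent exactly which categorical hypotheses are being used, and would go through unchanged in any closed symmetric monoidal Grothendieck category.
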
 
\begin{proof} 
Let $V,W\in \Rep_k(S)$. We let $S$ act diagonally on $k[S]\otimes_kV$ and then on
the $k$-vector space $\Hom_{k[S]}\bigl(k[S]\otimes_kV,
W\bigr)$ via $(sf)(s'\otimes v) \coloneqq f(s's\otimes v)$. One easily verifies
that the maps
\[
\begin{tikzcd}[row sep=0em]
\Hom_{\Rep_k(S)}\bigl(U\otimes_kV, W\bigr) \ar[r,shift left] & \ar[l,shift left]
\Hom_{\Rep_k(S)}\bigl(U, \Hom_{k[S]}(k[S]\otimes_kV,W)^{\sm}\bigr),\\
\varphi \ar[r,mapsto] & {[u\mapsto [s\otimes v\mapsto \varphi(su\otimes v)]]}\\
{[u\otimes v\mapsto \psi(u)(1\otimes v)]} &\ar[l,mapsto] \psi
\end{tikzcd}
\]
are natural and inverse to each other. Hence,
$\hom_{\Rep_k(S)}(V,\pholder)\coloneqq \Hom_{k[S]}(k[S]\otimes_kV,\pholder)^\sm$ is right
adjoint to $\pholder\otimes_kV$. Let $X,U,V,W \in \Rep_k(S)$. Now, there are natural
bijections
\begin{align*}
\Hom_{\Rep_k(S)}\bigl(X, \hom_{\Rep_k(S)}(U\otimes_kV,W)\bigr) &\cong
\Hom_{\Rep_k(S)}\bigl(X\otimes_kU\otimes_kV, W\bigr)\\
&\cong \Hom_{\Rep_k(S)}\bigl(X\otimes_kU, \hom_{\Rep_k(S)}(V,W)\bigr)\\
&\cong \Hom_{\Rep_k(S)}\bigl(X, \hom_{\Rep_k(S)}(U,
\hom_{\Rep_k(S)}(V,W))\bigr).
\end{align*}
The last claim follows from this together with the Yoneda lemma.
\end{proof} 

\begin{rmk}\label{rmk:internal-hom-group} 
If $S$ is a group, the right adjoint of $\pholder\otimes_kV$ has a more familiar
description: The map $\sigma\colon k[S]\otimes_kV\to k[S]\otimes_kV$, $s\otimes
v\longmapsto s\otimes s^{-1}v$ is an isomorphism of $k[S]$-modules if we let $S$
act diagonally on the left hand side and on the first factor on the right hand
side. 
There is a natural $k[S]$-linear isomorphism
\[
\Hom_k(V,W)\cong \Hom_{k[S]}\bigl(k[S]\otimes_kV,W\bigr)
\xrightarrow[\sigma^*]{\cong} \Hom_{k[S]}\bigl(k[S]\otimes_kV,W\bigr)
\]
if we let $S$ act on the left hand side via $(sf)(v) = s\cdot f(s^{-1}v)$.
Therefore, the right adjoint of $\pholder\otimes_kV$ is in this case given by
$\Hom_k(V,\pholder)^\sm$.
\end{rmk} 

\subsection{The derived category}\label{subsec:derived} 
\subsubsection{Derived functors}  
Let $S$ be a locally profinite monoid. We denote by $\Chain(S)$ the category of
unbounded cochain complexes of smooth $S$-representations and by $\K(S)$ its
homotopy category. Then $\K(S)$ carries the structure of a triangulated
category \cite[Prop.~11.2.8]{KS}. We denote by 
\[
\D(S)\coloneqq \D\bigl(\Rep_k(S)\bigr)
\]
the derived category of unbounded complexes in $\Rep_k(S)$. It arises from
$\K(S)$ by localization at the class of quasi-isomorphisms
\cite[\S13.1]{KS}. Note that $\D(S)$ inherits the structure of a triangulated
category from $\K(S)$ such that the localization functor 
\[
\q\colon \K(S)\to \D(S)
\]
is triangulated. The shift of a complex $C$ (in $\K(S)$ or $\D(S)$) will be
denoted $C[1]$, defined by $(C[1])^n\coloneqq C^{n+1}$ and with differential
$d_{C[1]} = -d_C$. We recall the most important properties of $\D(S)$, which
are
summarized in \cite[Thm.~14.3.1]{KS}. The derived category $\D(S)$ admits
infinite direct sums, and they can be computed on complexes. By the main result
of \cite[Thm.~3.13]{Serpe.2003} or by \cite[Cor.~14.1.8]{KS} every
complex $X$ in $\K(S)$ admits a quasi-isomorphism $X\xrightarrow{\qis} I$ into a
K-injective complex $I$. (Recall that a complex $J$ in $\K(S)$ is called
\emph{K-injective} if $\Hom_{\K(S)}(Y,J) = 0$ for all acyclic complexes $Y$.)
This amounts to saying that $\q$ admits a fully faithful triangulated right
adjoint
\[
\ii\colon \D(S) \to \K(S)
\]
which identifies $\D(S)$ with the full subcategory of $\K(S)$ consisting of
K-injective complexes. In particular, $\D(S)$ is locally small. It also follows
that right derived functors always exist:

\begin{prop}\label{prop:derivedfunctor} 
Let $\cat A$ be an abelian category and $F\colon \Rep_k(S)\to \cat A$ a left
exact functor. A right derived functor $\RR F\colon \D(S)\to \D(\cat A)$
exists and is given by the composition
\[
\RR F\colon \D(S) \xrightarrow{\ii} \K(S) \xrightarrow{\K(F)} \K(\cat A)
\xrightarrow{\q_{\cat A}} \D(\cat A),
\]
where $\q_{\cat A}$ denotes the localization functor and $\K(F)$ the natural
extension of $F$ to the homotopy categories.
\end{prop}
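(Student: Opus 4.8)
The plan is to obtain $\RR F$ directly from the functorial K\nobreakdash-injective resolutions provided by $\ii$, rather than from injective resolutions inside $\Rep_k(S)$. Write $\eta\colon \id_{\K(S)}\To \ii\circ\q$ and $\varepsilon\colon \q\circ\ii\To\id_{\D(S)}$ for the unit and counit of the adjunction $\q\dashv\ii$. By Lemma~\ref{lem:adjoint-ffaithful} the counit $\varepsilon$ is an isomorphism, and for every $X\in\K(S)$ the triangle identity $\varepsilon_{\q X}\circ\q(\eta_X)=\id_{\q X}$ then shows that $\q(\eta_X)$ is an isomorphism, i.e.\ $\eta_X\colon X\to\ii\q(X)$ is a quasi-isomorphism whose target lies in the essential image of $\ii$, which is the class of K\nobreakdash-injective complexes; thus $\eta$ is a functorial K\nobreakdash-injective resolution. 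I would then \emph{define}
\[
\RR F\coloneqq \q_{\cat A}\circ\K(F)\circ\ii\colon \D(S)\to\D(\cat A),
\]
which is triangulated as a composite of triangulated functors ($\ii$ by hypothesis, $\K(F)$ because $F$ is additive, and $\q_{\cat A}$ as a localization), and equip it with the natural transformation $\tau\coloneqq \q_{\cat A}\K(F)(\eta)\colon \q_{\cat A}\circ\K(F)\To(\RR F)\circ\q$.

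The one substantial input is the classical fact that $\K(F)$ inverts quasi-isomorphisms between K\nobreakdash-injective complexes. Concretely, if $f\colon I\to J$ is a quasi-isomorphism with $I,J$ K\nobreakdash-injective, then the mapping cone $C(f)$ is exact, and it is K\nobreakdash-injective because the K\nobreakdash-injective complexes form the essential image of the triangulated, fully faithful functor $\ii$ and are therefore closed under cones of distinguished triangles; hence $\id_{C(f)}\in\Hom_{\K(S)}(C(f),C(f))=0$ by the very definition of K\nobreakdash-injectivity, so $C(f)\simeq 0$ in $\K(S)$ and $f$ becomes an isomorphism in $\K(S)$. Applying $\K(F)$, a functor of homotopy categories, then yields an isomorphism in $\K(\cat A)$, so $\q_{\cat A}\K(F)(f)$ is an isomorphism in $\D(\cat A)$.

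Granting this, the universal property characterizing $(\RR F,\tau)$ as a right derived functor is a formal verification: given a triangulated $G\colon\D(S)\to\D(\cat A)$ and a natural transformation $\zeta\colon\q_{\cat A}\circ\K(F)\To G\circ\q$, one sets $\theta_X\coloneqq G(\varepsilon_X)\circ\zeta_{\ii(X)}\colon\RR F(X)\to G(X)$ for $X\in\D(S)$; naturality of $\theta$ and the identity $(\theta\circ\q)\circ\tau=\zeta$ follow from the naturality of $\zeta$ and $\varepsilon$ together with the triangle identity $\varepsilon_{\q Y}\circ\q(\eta_Y)=\id_{\q Y}$, while uniqueness is forced because $\varepsilon$ is an isomorphism, so $\theta$ is determined by its values on objects $\ii(X)$, where the defining equation pins it down. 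Alternatively, one can sidestep reproving the universal property from scratch by noting that the lemma above together with the existence of K\nobreakdash-injective resolutions says precisely that the K\nobreakdash-injective complexes constitute an $F$-injective subcategory, and then invoking the general existence theorem for right derived functors (e.g.\ \cite[Thm.~14.3.1]{KS}), which also delivers the stated formula. The main obstacle is thus not conceptual but organizational: matching the terse definition of ``right derived functor'' in the references with the explicit composite; the technical heart is the cone argument showing that quasi-isomorphisms between K\nobreakdash-injectives are inverted by $\K(F)$, and there is no genuinely hard step since the heavy lifting — the existence of the adjoint $\ii$ and of K\nobreakdash-injective resolutions — has already been recorded.
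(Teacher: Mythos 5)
Your proof is correct. The paper gives no argument for this proposition at all — it is stated as a consequence of the immediately preceding discussion and the citation of \cite[Thm.~14.3.1]{KS} — and your write-up (functorial K-injective resolutions obtained from the unit of $\q\dashv\ii$, the cone argument showing that quasi-isomorphisms between K-injective complexes are already isomorphisms in $\K(S)$, and the formal verification of the universal property of $(\RR F,\tau)$) is precisely the standard reasoning that citation encapsulates, so it matches the paper's intended approach.
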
 

\begin{notation}\label{nota:derivedfunctor} 
If $F\colon \Rep_k(S)\to \cat A$ happens to be exact, we simply write $F\colon
\D(S)\to \D(\cat A)$ for the right derived functor of $F$. This is justified by
the fact that in this case the right adjoint can be computed by applying $F$
termwise to the complex.
\end{notation} 

Here is a useful and well-known lemma:
\begin{lem}\label{lem:K-injectives} 
Let $L\colon \cat A\to \cat B$ be an exact functor between abelian categories
admitting a right adjoint $R$.
\begin{enumerate}[label=(\alph*)]
\item\label{lem:K-injectives-a} $\K(R)\colon \K(\cat B)\to \K(\cat A)$ preserves
K-injective complexes.

\item\label{lem:K-injectives-b} Assume moreover that the localization functors
$\q_{\cat A}\colon \K(\cat A)\to \D(\cat A)$ and $\q_{\cat B}\colon \K(\cat
B)\to \D(\cat B)$ admit right adjoints $\ii_{\cat A}$ and $\ii_{\cat
B}$, respectively. Then there is
a natural isomorphism $\K(R)\ii_{\cat B} \xRightarrow{\cong}  \ii_{\cat A}\RR R$
of functors $\D(\cat B)\to \K(\cat A)$.

\item\label{lem:K-injectives-c} Under the hypotheses of
\ref{lem:K-injectives-b}, the functor $\RR R$ is a right adjoint of
$L\colon \D(\cat A)\to \D(\cat B)$.
\end{enumerate}
\end{lem}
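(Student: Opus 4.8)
The plan is to prove Lemma~\ref{lem:K-injectives} in the order the statement is phrased, with each part feeding into the next. For part~\ref{lem:K-injectives-a}, I would start from the hypothesis that $L\dashv R$ as functors between abelian categories with $L$ exact. The key observation is that $\K(L)\colon \K(\cat A)\to \K(\cat B)$ and $\K(R)\colon \K(\cat B)\to \K(\cat A)$ remain adjoint on the homotopy categories: an adjunction $L\dashv R$ between additive functors induces unit and counit natural transformations that apply termwise, and the triangle identities are checked termwise, so Lemma~\ref{lem:adjoint-defn} gives $\K(L)\dashv \K(R)$. Moreover, since $L$ is exact, $\K(L)$ sends exact complexes to exact complexes. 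Now if $J\in\K(\cat B)$ is K-injective and $Y\in\K(\cat A)$ is exact, then
\[
\Hom_{\K(\cat A)}\bigl(Y,\K(R)(J)\bigr)\cong \Hom_{\K(\cat B)}\bigl(\K(L)(Y),J\bigr)=0
\]
because $\K(L)(Y)$ is exact. Hence $\K(R)(J)$ is K-injective.

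For part~\ref{lem:K-injectives-b}, I would use the description of the derived functor from Proposition~\ref{prop:derivedfunctor}: since $R$ is left exact (being a right adjoint of the additive functor $L$), we have $\RR R = \q_{\cat A}\circ \K(R)\circ \ii_{\cat B}$, hence $\ii_{\cat A}\circ \RR R = \ii_{\cat A}\circ \q_{\cat A}\circ \K(R)\circ \ii_{\cat B}$. By part~\ref{lem:K-injectives-a}, $\K(R)\circ\ii_{\cat B}$ already lands in the full subcategory of K-injective complexes, on which the counit $\q_{\cat A}\circ\ii_{\cat A}\Rightarrow\id$ is an isomorphism (this is the content of the fact, recalled in \S\ref{subsec:derived}, that $\ii_{\cat A}$ is fully faithful and identifies $\D(\cat A)$ with the K-injectives, together with Lemma~\ref{lem:adjoint-ffaithful}). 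Tracing this through gives the natural isomorphism $\K(R)\circ\ii_{\cat B}\xRightarrow{\cong}\ii_{\cat A}\circ\RR R$.

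For part~\ref{lem:K-injectives-c}, I would assemble the adjunction on derived categories from the one on homotopy categories. We have on $\D(\cat A)$ and $\D(\cat B)$ the chain $L = \q_{\cat B}\circ\K(L)$ (as $L$ is exact, Notation~\ref{nota:derivedfunctor}) and $\RR R = \q_{\cat A}\circ\K(R)\circ\ii_{\cat B}$. For $X\in\D(\cat A)$, $Y\in\D(\cat B)$, compute
\[
\Hom_{\D(\cat B)}\bigl(L(X),Y\bigr)\cong\Hom_{\K(\cat B)}\bigl(\K(L)\ii_{\cat A}(X),\ii_{\cat B}(Y)\bigr)\cong\Hom_{\K(\cat A)}\bigl(\ii_{\cat A}(X),\K(R)\ii_{\cat B}(Y)\bigr),
\]
where the first isomorphism uses $\q_{\cat B}\dashv\ii_{\cat B}$ and $\ii_{\cat B}$ fully faithful (and $\q_{\cat B}\K(L)\ii_{\cat A}\cong L$), and the second is the homotopy-category adjunction $\K(L)\dashv\K(R)$ from part~\ref{lem:K-injectives-a}. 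Finally, by part~\ref{lem:K-injectives-b}, $\K(R)\ii_{\cat B}(Y)\cong\ii_{\cat A}\RR R(Y)$ is K-injective, so by full faithfulness of $\ii_{\cat A}$ the last group is $\Hom_{\D(\cat A)}\bigl(X,\RR R(Y)\bigr)$; naturality is clear from the naturality of each step.

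The main obstacle is bookkeeping rather than mathematics: one must be careful that $\K(R)$, applied to a \emph{K-injective} complex, computes the derived functor $\RR R$ with no further resolution needed — this is exactly what part~\ref{lem:K-injectives-a} buys — and that the various "obvious" isomorphisms ($\q_{\cat B}\K(L)\ii_{\cat A}\cong L$, the counit iso on K-injectives, etc.) are genuinely available. Everything reduces to the fact that $\ii$ is a fully faithful right adjoint of $\q$ identifying $\D$ with the K-injectives, which is recalled in \S\ref{subsec:derived}, combined with Lemmas~\ref{lem:adjoint-composition}, \ref{lem:adjoint-defn}, and~\ref{lem:adjoint-ffaithful}. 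No step requires a nontrivial construction, so the proof is essentially a diagram chase through these adjunctions.
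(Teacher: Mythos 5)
Your proposal is correct and follows essentially the same route as the paper: part (a) via the induced adjunction $\K(L)\dashv\K(R)$ and the Hom-vanishing characterization of K-injectives, part (b) via the fact that the unit $\id\To\ii_{\cat A}\q_{\cat A}$ is an isomorphism on K-injective complexes (you say ``counit'' at one point, but the fact you actually use is the right one), and part (c) by the same chain of adjunction bijections the paper writes down, merely read in the opposite direction.
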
 
\begin{proof} 
Note that $L\dashv R$ extends to an adjunction $\K(L)\dashv \K(R)$.
Let $I\in \K(\cat B)$ be K-injective and $Y\in \K(\cat A)$ an acyclic complex.
Then $\K(L)(Y)$ is acyclic and hence $\Hom_{\K(\cat A)}(Y,\K(R)(I)) \cong
\Hom_{\K(\cat B)}(\K(L)(Y),I) = 0$. This proves \ref{lem:K-injectives-a}. 

We now prove \ref{lem:K-injectives-b}. Note that the unit $\id_{\K(\cat A)}
\To \ii_{\cat A}\q_{\cat A}$ is an isomorphism on the K-injective
complexes. Since $\K(R)$ preserves K-injective complexes, it follows that the
map $\K(R)\ii_{\cat B}\To \ii_{\cat A}\q_{\cat
A}\K(R)\ii_{\cat B}= \ii_{\cat A}\RR R$ is an isomorphism.

For \ref{lem:K-injectives-c}, note that $\K(R)$ is right adjoint to $\K(L)\colon
\K(\cat A)\to \K(\cat B)$. There are natural bijections
\begin{align*}
\Hom_{\D(\cat A)}\bigl(\q_{\cat A}X, \RR RY\bigr) &\cong \Hom_{\K(\cat A)}
\bigl(X,\ii_{\cat A}\RR RY\bigr)\\
&\cong \Hom_{\K(\cat A)}\bigl(X, \K(R) \ii_{\cat B}Y\bigr) & & \text{(by
\ref{lem:K-injectives-b})}\\
&\cong \Hom_{\K(\cat B)}\bigl(\K(L)X, \ii_{\cat B}Y\bigr)\\
&\cong \Hom_{\D(\cat B)}\bigl(\q_{\cat B}\K(L)X, Y\bigr)\\
&\cong \Hom_{\D(\cat B)}\bigl(L\q_{\cat A}X, Y\bigr)
\end{align*}
for all $X\in \K(\cat A)$ and $Y\in \D(\cat B)$. This implies
\ref{lem:K-injectives-c}.
\end{proof} 

\begin{ex} 
Let $K\subseteq S$ be an open subgroup. 
\begin{enumerate}[label=(\alph*)]
\item The functor $\Hom_{\Rep_k(K)}(\one, \pholder)\colon \Rep_k(K)\to \Vect_k$ is
canonically isomorphic to $\H^0(K,\pholder)$. This extends to a natural isomorphism
$\Hom_{\Rep_k(K)}^\bullet(\one,\pholder) \xRightarrow{\cong} \K(\H^0(K,\pholder))$ of functors
$\K(K)\to \K(\Vect_k)$, which induces a natural isomorphism
\[
\RHom_{\Rep_k(K)}(\one,\pholder)\xRightarrow{\cong} \RR\H^0(K,\pholder)
\]
of functors $\D(K)\to \D(k)$. 

\item The functor $\H^0(K,\pholder)\Res^S_K\colon \Rep_k(S)\to \Vect_k$ of
$K$-invariants is left exact and hence admits a right derived functor
$\RR\bigl(\H^0(K,\pholder)\Res^S_K\bigr)\colon \D(S)\to \D(k)$. We observe that the diagram
\[
\begin{tikzcd}
\D(S) \ar[r,"\Res^S_K"] \ar[dr,"{\RR(\H^0(K,\pholder)\Res^S_K)}"']& \D(K)
\ar[d,"{\RR\H^0(K,\pholder)}"]\\
& \D(k)
\end{tikzcd}
\]
commutes. Indeed, since $K\subseteq S$ is a subgroup, $\Res^S_K$ admits an
\emph{exact} left adjoint $\ind_K^S$ (Lemma~\ref{lem:FrobeniusII}). By
Lemma~\ref{lem:K-injectives} we have
\begin{align*}
\RR\H^0(K,\pholder)\Res^S_K &= \q\H^0(K,\pholder)\ii \Res^S_K \cong \q \H^0(K,\pholder)\Res^S_K
\ii \\
&= \RR\bigl(\H^0(K,\pholder)\Res^S_K\bigr).
\end{align*}
For this reason, we often allow ourselves to omit $\Res^S_K$ when it does not
lead to confusion.
\end{enumerate}
\end{ex} 

Recall that, by definition, every smooth $S$-representation $V$ satisfies $V =
\varinjlim_{K} \Hom_{\Rep_k(K)}(\one,V)$, where $K$ runs through the compact open
subgroups of $S$. This property continues to hold on the level of derived
categories as well as the following proposition makes precise.

\begin{prop}\label{prop:smooth-derived} 
Let $X\in \D(S)$. There exists a natural isomorphism
\begin{equation}\label{eq:smooth-derived}
\varinjlim_{\substack{K\le S\\\text{compact open}} }
\RHom_{\Rep_k(K)}\bigl(\one, \Res^S_KX\bigr) \xrightarrow{\cong} \Res^S_1 X
\qquad \text{in $\D(k)$.}
\end{equation}
\end{prop}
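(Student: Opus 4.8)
The plan is to reduce the statement about unbounded complexes to the corresponding well-known fact about smooth representations, using $K$-injective resolutions to compute the derived functors.

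First I would fix a $K$-injective resolution $X \xrightarrow{\qis} I$ in $\Chain(S)$, so that $\RHom_{\Rep_k(K)}(\one, \Res^S_K X)$ is computed by the complex of $k$-vector spaces $\H^0(K, \Res^S_K I^n)_n = (\Res^S_K I)^K$, and $\Res^S_1 X$ is computed by $\Res^S_1 I$. (Here I use that $\Res^S_K$ sends $K$-injectives to $K$-injectives, since it has the exact left adjoint $\ind_K^S$ by Lemma~\ref{lem:FrobeniusII} and Lemma~\ref{lem:K-injectives}\ref{lem:K-injectives-a}; similarly for $\Res^S_1$.) Thus the left-hand side is represented by $\varinjlim_K (\Res^S_K I)^K = \varinjlim_K (I^n)^K$ computed termwise, and by smoothness of each $I^n$ this colimit is exactly $\Res^S_1 I^n$ — i.e.\ the natural map $\varinjlim_K (I^n)^K \to \Res^S_1 I^n$ is an isomorphism for every $n$ because every vector of a smooth representation is fixed by some compact open subgroup. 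Hence the induced map of complexes is a termwise (in fact identity) isomorphism, and applying the localization functor $\q$ gives the claimed isomorphism in $\D(k)$. Finally I would note that the colimit on the left is a \emph{filtered} colimit over the directed set of compact open subgroups of $S$, which in $\Chain(k)$ (equivalently in $\Rep_k(1)$) is exact and hence computes the homotopy colimit, so it descends to $\D(k)$ without derived-functor corrections; naturality in $X$ is clear from the functoriality of $K$-injective resolutions.

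The main subtlety — what I would flag as the hard part — is not the pointwise statement but making precise the claim that the filtered colimit $\varinjlim_K \RHom_{\Rep_k(K)}(\one, \Res^S_K X)$ taken in $\D(k)$ agrees with the colimit computed termwise on representing complexes, and that the transition maps in the diagram are the right ones. Concretely: the transition map for $K' \subseteq K$ should be the obvious inclusion $\RHom_{\Rep_k(K)}(\one, \Res^S_K X) \to \RHom_{\Rep_k(K')}(\one, \Res^S_{K'} X)$ coming from $\H^0(K, -) \hookrightarrow \H^0(K', -)$ (more precisely, from the adjunction $\ind_{K'}^K \dashv \Res^K_{K'}$ and the counit $\ind_{K'}^K \one \to \one$), and one must check this is compatible with the identifications on $K$-injective complexes. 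Once the filtered colimit is identified with $\bigcup_K (I)^K$ on the level of complexes, exactness of filtered colimits in $\Rep_k(k) = \Vect_k$ guarantees it also computes the colimit in $\D(k)$, so there is no obstruction there. I expect the write-up to spend most of its effort setting up this bookkeeping carefully rather than on any real mathematical difficulty, since the underlying fact $\bigcup_K V^K = V$ for smooth $V$ is elementary.
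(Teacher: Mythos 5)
Your proposal is correct and follows essentially the same route as the paper: both arguments pass to a K-injective resolution $\ii X$, use that $\Res^S_K$ preserves K-injective complexes (via its exact left adjoint $\ind_K^S$) to compute each $\RHom_{\Rep_k(K)}(\one,\Res^S_KX)$ as the termwise $K$-invariants of $\ii X$, commute the filtered colimit past the localization functor $\q$ using exactness of filtered colimits in $\Vect_k$, and conclude by the elementary fact that $\bigcup_K V^K = V$ for smooth $V$. The bookkeeping about transition maps that you flag is indeed the only point requiring care, and the paper handles it implicitly in exactly the way you describe.
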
 
\begin{proof} 
Since taking filtered colimits is exact in $\Vect_k$, the maps
\begin{align*}
\varinjlim_{\substack{K\le S\\\text{compact open}} }
\RHom_{\Rep_k(K)}\bigl(\one,\Res^S_KX\bigr) &\xrightarrow{\cong}
\varinjlim_{\substack{K\le S\\\text{compact open}} }
\RR\H^0(K, X) \\
&= \varinjlim_{\substack{K\le S\\\text{compact open}} } 
\q \H^0(K, \ii X)\\
&\xrightarrow{\cong} \q
\varinjlim_{\substack{K\le S\\\text{compact open}} } \H^0(K,\ii X)\\
&\xrightarrow{\cong} \q \Res^S_1 \ii X\\ 
&\cong \Res^S_1 X
\end{align*}
are natural isomorphisms.
\end{proof} 

\begin{rmk} 
If $S$ is a locally profinite group, one can make sense of the left hand side
of \eqref{eq:smooth-derived} even as an object of $\D(S)$ and show that there is
a natural isomorphism 
\[
\varinjlim_{\substack{K\le S\\\text{compact open}} }
\RHom_{\Rep_k(K)}(\one, \Res^S_KX) \xrightarrow{\cong} X \qquad
\text{in $\D(S)$.}
\]
\end{rmk} 

\subsubsection{Brown representability} 
We will now formulate the Brown representability theorems. We make the
assumption that $S$ is a $p$-adic monoid and $k$
has characteristic $p$. Then $S$ admits a compact open subgroup $K$
which carries the structure of a $p$-adic Lie group of dimension, say, $d$.
Replacing $K$ by an open subgroup, we may even assume that $K$ is torsion-free.
Then $K$ is a Poincar\'e group by \cite[Cor.~(1)]{Serre.1965} and
\cite[V.2.5.8]{Lazard}. In
particular, the cohomology groups $\H^i(K,k)$ are finite-dimensional, and
$\H^i(K,\pholder) = 0$ provided $i>d$.

\begin{prop}\label{prop:compactlygenerated} 
Let $S$ be a $p$-adic monoid and let $K\subseteq S$ be a torsion-free compact
open subgroup. Then $\D(S)$ is compactly generated by $\ind_K^S\one$.
\end{prop}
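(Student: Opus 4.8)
The plan is to show that $\ind_K^S\one$ is a compact object of $\D(S)$ which generates, i.e.\ satisfies: if $X\in\D(S)$ has $\Hom_{\D(S)}(\ind_K^S\one,X[i])=0$ for all $i\in\Z$, then $X\cong0$. Recall first that $\D(S)$ admits arbitrary direct sums (computed on complexes), so "compactly generated" makes sense. The starting point is the natural identification coming from Frobenius reciprocity and $\K$-injective resolutions: since $\Res^S_K$ has the exact left adjoint $\ind_K^S$ (Lemma~\ref{lem:FrobeniusII}), Lemma~\ref{lem:K-injectives} gives a natural isomorphism
\[
\Hom_{\D(S)}\bigl(\ind_K^S\one, X[i]\bigr)\;\cong\;\Hom_{\D(K)}\bigl(\one, (\Res^S_K X)[i]\bigr)\;\cong\;\H^i\!\bigl(\RR\H^0(K,\Res^S_KX)\bigr),
\]
so everything reduces to statements about the derived $K$-invariants functor $\RR\H^0(K,-)\colon\D(K)\to\D(k)$ applied to $\Res^S_KX$.

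Next I would establish compactness. Because $K$ is a torsion-free compact $p$-adic Lie group, it is a Poincaré group of some finite dimension $d$ (Serre--Lazard, as recalled just before the statement), so $\H^i(K,-)=0$ for $i>d$ and the functor $\RR\H^0(K,-)$ has finite cohomological amplitude. Combined with the fact that $\RR\H^0(K,-) = \RHom_{\Rep_k(K)}(\one,-)$ and that $\one$ admits a \emph{finite} resolution by finitely generated projective (equivalently, by the finiteness of group cohomology, suitably small) $k[[K]]$-modules — or, more directly, that a bounded resolution computing $\H^\bullet(K,-)$ involves only finitely many terms each of which is "finite" over $K$ — one deduces that $\RR\H^0(K,-)$ commutes with arbitrary direct sums. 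Since $\Res^S_K$ visibly commutes with direct sums (it is exact and sums are computed on complexes) and since $\H^i\colon\D(k)\to\Vect_k$ commutes with direct sums, it follows that $\Hom_{\D(S)}(\ind_K^S\one,-)$ commutes with direct sums, i.e.\ $\ind_K^S\one$ is compact.

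For generation, suppose $X\in\D(S)$ satisfies $\Hom_{\D(S)}(\ind_K^S\one,X[i])=0$ for all $i\in\Z$. By the identification above this says $\H^i(\RR\H^0(K,\Res^S_KX))=0$ for all $i$, i.e.\ $\RR\H^0(K,\Res^S_KX)\cong0$ in $\D(k)$. The same vanishing holds with $K$ replaced by any torsion-free compact open subgroup $K'\subseteq K$: indeed $\ind_{K'}^S\one$ is a direct summand-free but still admits a map situation — more cleanly, $\ind_{K'}^S\one$ receives no new obstruction, since one has $\Hom_{\D(S)}(\ind_{K'}^S\one,X[i])\cong\H^i(\RR\H^0(K',\Res^S_{K'}X))$, and $\RR\H^0(K',-)$ factors through $\RR\H^0(K/\!/\text{nothing})$... the honest argument is that $\RR\H^0(K,-)\cong\RR\H^0(K/K',\RR\H^0(K',-))$ is a Grothendieck spectral sequence, which forces $\RR\H^0(K',\Res^S_{K'}X)=0$ to be detected — so I would instead argue directly: pick any $K'$ and note $\ind_{K'}^K\one$ is built from $\ind_K^K\one=\one$ by finitely many triangles (again using that $k[K/K']$ has a finite resolution over $k[[K]]$ because $K'$ is open of finite index... it is not finite index in general, but $K'\subseteq K$ open means finite index), whence $\RR\H^0(K',-)=\RHom_{k[K]}(k[K/K'],-)$ also kills $\Res^S_KX$. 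Letting $K'$ range over all torsion-free compact open subgroups of $S$ and invoking Proposition~\ref{prop:smooth-derived}, we get $\Res^S_1X\cong\varinjlim_{K'}\RHom_{\Rep_k(K')}(\one,\Res^S_{K'}X)\cong0$ in $\D(k)$, i.e.\ all cohomology $k$-vector spaces $\H^i(X)$ vanish, so $X\cong0$. This proves generation, and together with compactness we conclude that $\ind_K^S\one$ compactly generates $\D(S)$.

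The main obstacle I anticipate is the compactness claim: one must be careful that $\RR\H^0(K,-)$ genuinely commutes with \emph{arbitrary} direct sums in $\D(K)$, not just finite ones. The clean way is to use that $K$ is a Poincaré group, so $\one$ has a resolution of length $d$ by finitely generated $k[[K]]$-modules (indeed by copies of $k[[K]]$ in a minimal resolution, since $k[[K]]$ is a local Noetherian ring of finite global dimension when $K$ is torsion-free), whence $\RR\H^0(K,-)=\RHom_{k[[K]]}(\one,-)$ is represented by a \emph{perfect} complex and therefore commutes with all direct sums. I would phrase the argument to avoid any subtlety about passing between smooth representations and $k[[K]]$-modules, using instead the explicit bounded injective resolutions or the finite-amplitude plus finite-dimensionality properties recalled before the statement, combined with the fact that a cohomological functor of finite amplitude that is "finitary" on an abelian category with exact coproducts commutes with coproducts.
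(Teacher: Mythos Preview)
Your approach is correct and is essentially the argument underlying the reference the paper cites: the paper itself gives no self-contained proof, merely noting that \cite[Prop.~6]{Schneider.2015} (stated for $p$-adic Lie groups) goes through verbatim once one observes that $\ind_K^S$ is exact so that the Frobenius adjunction $\ind_K^S\dashv\Res^S_K$ descends to the derived categories. You have reconstructed what that argument contains.

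A few comments on the exposition. For \emph{compactness}, the cleanest line is the one you reach at the end: each $\H^i(K,-)$ on $\Rep_k(K)$ commutes with filtered colimits (standard for continuous cohomology of profinite groups on discrete modules), and $\H^i(K,-)=0$ for $i>d=\dim K$; since truncation commutes with direct sums, this forces $\RR\H^0(K,-)$ to commute with arbitrary direct sums on all of $\D(K)$. Your detour through projective $k[[K]]$-resolutions is correct in spirit (Lazard: $k[[K]]$ is regular local, so $k$ is perfect), but since $\Rep_k(K)$ has no nonzero projectives you would need to justify passing between $\D(K)$ and $\D(k[[K]]\text{-mod})$, which is an avoidable complication.

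For \emph{generation}, your final argument is the right one and you should lead with it: for any open $K'\subseteq K$ the finite-dimensional representation $k[K/K']$ lies in the thick subcategory $\langle\one\rangle\subseteq\D(K)$ (iterate socle filtrations, using that $K$ is pro-$p$ and $\Char k=p$; this is exactly the argument appearing later in the paper in the proof of Proposition~\ref{prop:RH(U,-)-compact}). Hence $\RR\H^0(K,Y)=0$ forces $\RR\H^0(K',Y)\cong\RHom_{\D(K)}(k[K/K'],Y)=0$ for all such $K'$, and Proposition~\ref{prop:smooth-derived} then gives $\Res^S_1Y=0$, so $Y=0$. Incidentally, the Hochschild--Serre route you half-abandon also works, for the same reason: the regular representation $k[K/K']$ lies in $\langle\one\rangle_{\D(K/K')}$, so $\RR\H^0(K/K',-)$ is conservative on $\D(K/K')$; but the direct argument is tidier.
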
 
\begin{proof} 
This is precisely \cite[Prop.~6]{Schneider.2015} in the case where $S$ is
a $p$-adic Lie group, and the proof is general. We only have to observe that
$\ind_K^S$ is an exact functor, since $K$ is a group, and hence the adjunction
$\ind_K^S \dashv \Res^S_K$ (Lemma~\ref{lem:FrobeniusII}) extends to the derived
categories.
\end{proof} 

\begin{thm}[Brown representability] 
Denote by $\Ab$ the category of abelian groups.
\label{thm:brown}
\begin{enumerate}[label=(\alph*)]
\item\label{thm:brown-a} Let $H\colon \D(S)^\op\to \Ab$ be a cohomological
functor commuting with small products.\footnote{This means that $H$ sends small direct sums in $\D(S)$ to products in $\Ab$.} Then $H$ is representable, that is, there
exists an object $X\in \D(S)$ and a natural isomorphism $\Hom_{\D(S)}(\pholder,X)\xRightarrow{\cong} H$. In particular, $\D(S)$ admits small
products.
\item\label{thm:brown-b} Let $H\colon \D(S)\to \Ab$ be a cohomological functor
commuting with small direct sums. Then $H$ is corepresentable, that is, there
exists an object $X\in \D(S)$ and a natural isomorphism $\Hom_{\D(S)}(X,\pholder)\xRightarrow{\cong} H$.
\end{enumerate}
\end{thm}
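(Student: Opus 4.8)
The plan is to read off the entire statement from the Brown representability machinery for compactly generated triangulated categories, the only substantial input being Proposition~\ref{prop:compactlygenerated}. Recall that $\D(S)$ has all small coproducts (computed on complexes), is locally small, and, by Proposition~\ref{prop:compactlygenerated}, is generated by the single compact object $\ind_K^S\one$ for a torsion-free compact open subgroup $K\subseteq S$. Thus $\D(S)$ falls squarely under the hypotheses of Neeman's Brown representability theorem, and both parts will follow essentially formally.

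For part~\ref{thm:brown-a} I would simply invoke Neeman's theorem: in a locally small triangulated category with small coproducts and a set of compact generators, a cohomological functor $\D(S)^\op\to\Ab$ is representable as soon as it sends small coproducts in $\D(S)$ to products in $\Ab$. Unwinding, the hypothesis that $H$ commute with small products (products in $\D(S)^\op$ being coproducts in $\D(S)$) is exactly this condition, so $H$ is representable. For the ``in particular'' clause, given a family $\{Y_i\}_{i\in I}$ in $\D(S)$ one applies this to $H=\prod_{i\in I}\Hom_{\D(S)}(-,Y_i)$: it is cohomological since products are exact in $\Ab$, it sends coproducts in $\D(S)$ to products in $\Ab$ by the universal property of coproducts, hence it is representable, and the representing object is by construction the categorical product $\prod_{i\in I}Y_i$ in $\D(S)$.

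Part~\ref{thm:brown-b} is the statement for the opposite category, and here I would be a little more careful: running Brown representability on $\D(S)^\op$ is not formal from compact generation alone. The point is that $\D(S)$ is the unbounded derived category of the Grothendieck abelian category $\Rep_k(S)$ (Proposition~\ref{prop:Grothendieck}), and derived categories of Grothendieck categories are well generated, hence satisfy Brown representability for the dual (Neeman's result to that effect; see also Krause's treatment via coherent functors). This yields corepresentability of any cohomological functor $\D(S)\to\Ab$ meeting the hypothesis of~\ref{thm:brown-b}.

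I do not expect a genuine obstacle here: the hard ingredient, namely that $\D(S)$ is compactly generated, has already been established in Proposition~\ref{prop:compactlygenerated}, and the rest is an application of results off the shelf. The only step worth flagging carefully is the one just indicated for part~\ref{thm:brown-b}: since the dual form of Brown representability needs well-generatedness rather than merely compact generation, one should record explicitly --- via Proposition~\ref{prop:Grothendieck} --- that $\D(S)$ arises as the derived category of a Grothendieck category.
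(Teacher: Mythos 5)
Part~(a) of your proposal is correct and is exactly the paper's route: the only input is Proposition~\ref{prop:compactlygenerated}, after which Neeman's Brown representability theorem (or Krause's Theorem~A) applies verbatim, and the existence of small products follows by representing $\prod_{i}\Hom_{\D(S)}(-,Y_i)$ as you describe. The paper's own proof is nothing more than this citation.

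Your part~(b), however, rests on an implication that is not a theorem. You write that $\D(S)$, being the derived category of a Grothendieck category, is well generated and ``hence'' satisfies Brown representability for the dual. Well-generatedness yields Brown representability for $\D(S)$ itself (contravariant functors), but Brown representability for the \emph{dual} of a well-generated category is not known in general, so the detour through Proposition~\ref{prop:Grothendieck} does not deliver what you need. The correct and much more direct route is the one the paper actually cites: if a triangulated category $\mathcal{T}$ is compactly generated, then $\mathcal{T}^{\op}$ is perfectly generated (via Brown--Comenetz duals), and Krause's Theorem~B (equivalently, Neeman's ``Brown representability for the dual'' for compactly generated categories) then corepresents the functor. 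In other words, contrary to your caveat, compact generation \emph{is} enough for the dual form --- by a harder theorem, not formally --- and Proposition~\ref{prop:compactlygenerated} is again the only input. A second point your phrase ``meeting the hypothesis of~(b)'' glosses over: the dual theorem corepresents cohomological functors $\D(S)\to\Ab$ preserving small \emph{products}, not small direct sums; a coproduct-preserving $\Hom_{\D(S)}(X,-)$ would force $X$ to be compact, and a functor such as $X\mapsto \H^0(X)\otimes_k V$ with $V$ infinite-dimensional is cohomological and coproduct-preserving yet not corepresentable. Product preservation is also exactly the hypothesis fed into part~(b) in the proof of Corollary~\ref{cor:brown}, so your argument should be phrased for product-preserving functors to match the theorem you invoke.
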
 
\begin{proof} 
See \cite[Thm.~8.3.3]{Neeman.2001} or \cite[Thm.~A]{Krause} for
\ref{thm:brown-a} and \cite[Thm.~B]{Krause} for \ref{thm:brown-b}.
\end{proof} 

The following corollary is a standard application of Brown representability.

\begin{cor}\label{cor:brown} 
Let $\cat C$ be a triangulated category and $F\colon \D(S)\to \cat C$ a
triangulated functor. 
\begin{enumerate}[label=(\alph*)]
\item\label{cor:brown-a} $F$ admits a triangulated right adjoint if and only if
$F$ commutes with small direct sums.
\item\label{cor:brown-b} $F$ admits a triangulated left adjoint if and only if
$F$ commutes with small products.
\end{enumerate}
\end{cor}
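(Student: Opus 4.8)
The plan is to deduce both equivalences from the Brown representability theorems (Theorem~\ref{thm:brown}) together with the representability criterion for the existence of adjoints (Theorem~\ref{thm:adjoint-unique}). The ``only if'' implications are immediate and hold for an arbitrary triangulated category $\cat C$: if $F$ admits a right adjoint then $F$ is itself a left adjoint, hence commutes with every colimit that exists in $\D(S)$ by Proposition~\ref{prop:adjoint-limit}, and in particular with small direct sums; dually, if $F$ admits a left adjoint then $F$ commutes with every limit that exists in $\D(S)$, in particular with small products.

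For the ``if'' direction of~\ref{cor:brown-a}, suppose $F$ commutes with small direct sums and fix an object $c\in\cat C$. The functor $H_c\coloneqq\Hom_{\cat C}\bigl(F(-),c\bigr)$ is a cohomological functor $\D(S)^\op\to\Ab$, being the composite of the triangulated functor $F$ with the cohomological functor $\Hom_{\cat C}(-,c)$; and since $F$ preserves small direct sums while $\Hom_{\cat C}(-,c)$ turns direct sums into products, $H_c$ sends small coproducts in $\D(S)$ to products in $\Ab$, \ie it commutes with small products. By Theorem~\ref{thm:brown}\ref{thm:brown-a} the functor $H_c$ is representable: there is $G(c)\in\D(S)$ with a natural isomorphism $\Hom_{\D(S)}(-,G(c))\cong\Hom_{\cat C}(F(-),c)$. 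As $c$ ranges over $\cat C$, Theorem~\ref{thm:adjoint-unique} now yields a right adjoint $G$ of $F$. Finally, a right adjoint of a triangulated functor between triangulated categories is canonically triangulated (a standard fact, see e.g.\ \cite[Lem.~5.3.6]{Neeman.2001}), so that ``admits a right adjoint'' and ``admits a triangulated right adjoint'' coincide for triangulated $F$; alternatively one checks this directly by transporting the shift and the distinguished triangles across the adjunction isomorphism.

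For the ``if'' direction of~\ref{cor:brown-b}, suppose $F$ commutes with small products and fix $c\in\cat C$. This time set $H^c\coloneqq\Hom_{\cat C}\bigl(c,F(-)\bigr)\colon\D(S)\to\Ab$; it is cohomological, being the composite of $F$ with the cohomological functor $\Hom_{\cat C}(c,-)$, and it commutes with small products, since both $F$ and $\Hom_{\cat C}(c,-)$ do. By the dual Brown representability theorem, Theorem~\ref{thm:brown}\ref{thm:brown-b}, the functor $H^c$ is corepresentable; letting $c$ vary and invoking Theorem~\ref{thm:adjoint-unique} once more gives a left adjoint $L$ of $F$, which is triangulated for the same reason as in the previous paragraph.

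Since the substance is entirely contained in Theorem~\ref{thm:brown}, there is no real obstacle beyond bookkeeping. The one point to watch is the variance in~\ref{cor:brown-b}: the relevant functor $\Hom_{\cat C}(c,F(-))$ is \emph{covariant} on $\D(S)$, and the hypothesis that actually feeds into the dual Brown representability theorem is that it inherits commutation with small \emph{products} from $F$ (rather than commutation with direct sums). One should also remember to record the standard fact that an adjoint of a triangulated functor is again triangulated, which is what upgrades the plain adjoints produced above to triangulated ones.
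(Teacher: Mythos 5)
Your proof is correct and follows essentially the same route as the paper: deduce the ``only if'' directions from Proposition~\ref{prop:adjoint-limit}, apply the appropriate Brown representability statement to $\Hom_{\cat C}(F(-),c)$ resp.\ $\Hom_{\cat C}(c,F(-))$, convert representability into an adjoint via Theorem~\ref{thm:adjoint-unique}, and upgrade to a triangulated adjoint by \cite[Lem.~5.3.6]{Neeman.2001}. You even spell out part~\ref{cor:brown-a}, which the paper leaves as an exercise, and you correctly identify that the hypothesis feeding into the dual Brown representability theorem for~\ref{cor:brown-b} is preservation of small \emph{products} (the correct form of Krause's Theorem~B), exactly as the paper's own proof uses it.
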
 
\begin{proof} 
We only prove \ref{cor:brown-b} and leave \ref{cor:brown-a} as an exercise. It
is clear that $F$ commutes with small products if $F$ admits a left adjoint
(see Proposition~\ref{prop:adjoint-limit}). 

Conversely, if $F$ commutes with
small products, the functor $\Hom_{\cat C}\bigl(X,F(\pholder)\bigr) \colon \D(S)\to
\Ab$ is cohomological and commutes with
small products. By Theorem~\ref{thm:brown} this functor is representable. From
Theorem~\ref{thm:adjoint-unique} it follows that $F$ admits a left adjoint,
which, by \cite[Lem.~5.3.6]{Neeman.2001}, is even triangulated.
\end{proof} 

\subsubsection{Tensor triangulated structure} 
The category $\D(S)$ carries the structure of a closed symmetric monoidal
category, which was introduced and investigated in \cite{Schneider-Sorensen.2023a} when $S$ is a group. 
The tensor product of cochain complexes endows the triangulated category $\K(S)$
with a symmetric monoidal structure. The $\otimes$-unit is given by $\one
\coloneqq k[0]$, that is, $k$ viewed as a complex concentrated in degree $0$.
Since, for any $X,Y\in \K(S)$, the complex
$X\otimes_kY$ is acyclic
whenever either $X$ or $Y$ is acyclic, the induced symmetric monoidal structure on
$\D(S)$ is again given by $\pholder\otimes_k\pholder$.

\begin{prop} 
The symmetric monoidal category $\D(S)$ is closed.
\end{prop}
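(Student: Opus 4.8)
The plan is to invoke Brown representability. Fix $Y\in\D(S)$ and consider the endofunctor $F\coloneqq (-)\otimes_k Y\colon \D(S)\to\D(S)$. By the discussion preceding the proposition, $F$ is triangulated and is computed by applying $-\otimes_k Y$ termwise to a representing complex: no flat resolutions are needed, since over the field $k$ every cochain complex of smooth representations is termwise flat and $X\otimes_k Y$ is exact whenever $X$ is exact, so the termwise tensor product on $\K(S)$ descends along the localization functor $\q\colon\K(S)\to\D(S)$.

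First I would check that $F$ commutes with small direct sums. Coproducts in $\D(S)$ are computed on representing complexes by \cite[Thm.~14.3.1]{KS}, and $\bigl(\bigoplus_i X_i\bigr)\otimes_k Y\cong\bigoplus_i(X_i\otimes_k Y)$ already on the level of cochain complexes; applying $\q$ gives the claim in $\D(S)$. With $F$ triangulated and coproduct-preserving, Corollary~\ref{cor:brown}\ref{cor:brown-a} produces a triangulated right adjoint of $F$, which we may denote $\RR\hom_{\Rep_k(S)}(Y,-)$. Since $Y\in\D(S)$ was arbitrary, this exhibits $\D(S)$ as a closed symmetric monoidal category.

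The argument has essentially no obstacle: the only point requiring a word of care is that $-\otimes_k Y$ genuinely descends to a triangulated, coproduct-preserving endofunctor of $\D(S)$, and this is immediate because $k$ is a field. An alternative, more hands-on route is to take the right adjoint to be the right derived functor of the internal Hom $\hom_{\Rep_k(S)}(Y,-)$ of Proposition~\ref{prop:innerHom} and to lift the adjunction to the derived level using K-injective resolutions together with Lemma~\ref{lem:K-injectives}\ref{lem:K-injectives-c}; this is longer and not needed for the bare statement. If one wants the iterated adjunction, the isomorphism of Proposition~\ref{prop:innerHom} transports to $\D(S)$ formally by the Yoneda lemma.
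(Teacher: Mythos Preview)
Your proposal is correct and follows essentially the same route as the paper: fix $Y\in\D(S)$, note that $-\otimes_k Y$ is a triangulated functor preserving small direct sums, and invoke Corollary~\ref{cor:brown}\ref{cor:brown-a} to obtain the right adjoint. The extra justification you supply for the descent and coproduct-preservation is fine but not strictly necessary, and your alternative via the derived internal Hom is exactly what the paper develops in the subsequent remark.
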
 
\begin{proof} 
Let $X\in \D(S)$. Then $\pholder\otimes_kX$ is a triangulated functor commuting with
small direct sums. By Corollary~\ref{cor:brown}.\ref{cor:brown-a}, it admits a
right adjoint.
\end{proof} 

\begin{rmk} 
The internal Hom objects in $\D(S)$ are explicitly given as follows: Let $X, Y \in
\Chain(S)$ be complexes and define a new complex $\iHom(X,Y)$ in $\Chain(S)$
by
\[
\iHom^n(X,Y) \coloneqq \Bigl(\prod_{i\in \Z} \Hom_{k[S]}\bigl(k[S]\otimes_k
X^i, Y^{i+n}\bigr)\Bigr)^{\sm},
\]
with differential $(df)^i = d_Y^{n+i}\circ f^i - (-1)^n f^{i+1}\circ
(1\otimes d_X^i)$, for $f\in \iHom^n(X,Y)$; here, $S$ acts diagonally on $k[S]\otimes_k X^i$. This yields a $k$-linear bifunctor
\begin{equation}\label{eq:Chain-iHom}
\iHom\colon \Chain(S)^\op\times \Chain(S) \to \Chain(S).
\end{equation}
A direct computation shows
\begin{equation}\label{eq:H(Hom)}
\H^0\bigl(\iHom(X,Y)\bigr) \cong \varinjlim_{\substack{S'\le S\\
\text{open} }} \Hom_{\K(S)}\bigl(k[S/S'] \otimes_kX,Y\bigr).
\end{equation}
As in the proof of Proposition~\ref{prop:innerHom} one constructs a natural
isomorphism
\[
\iHom\bigl(X\otimes_kY, Z\bigr) \cong \iHom\bigl(X, \iHom(Y,Z)\bigr).
\]
Taking cohomology and then $S$-invariants yields a natural isomorphism
\begin{equation}\label{eq:iHom-adjoint}
\Hom_{\K(S)}\bigl(X\otimes_kY,Z\bigr) \cong \Hom_{\K(S)}\bigl(X,
\iHom(Y,Z)\bigr).
\end{equation}
This has the following immediate consequences:
\begin{enumerate}[label=(\roman*)]
\item\label{iHom-i} The bifunctor \eqref{eq:Chain-iHom} descends to a bifunctor
\[
\iHom\colon \K(S)^\op \times \K(S) \to \K(S)
\] 
making $\K(S)$ a closed symmetric monoidal category.
\item\label{iHom-ii} As a right adjoint of $\pholder\otimes_kY$, the functor
$\iHom(Y,\pholder)$ is triangulated, by \cite[Lem.~5.3.6]{Neeman.2001}. It is also
true that $\iHom(\pholder,Y)$ preserves distinguished triangles, cf.~\cite[Prop.~11.6.4.(ii)]{KS}.
\item\label{iHom-iii} If $Z$ is K-injective, then the complex $\iHom(Y,Z)$
again is K-injective.
\item\label{iHom-iv} If $Z$ is K-injective and $Y$ is acyclic, then $\iHom(Y,Z)$
is contractible and, in particular, acyclic.
Consequently, $\iHom(\pholder,Z)$ preserves quasi-isomorphisms.
\end{enumerate}
The last property shows that the right derived functor
\[
\RR\iHom\colon \D(S)^\op \times \D(S)\to \D(S)
\]
exists and is computed by $\RR\iHom(X,Y) \cong \iHom(X, \ii Y)$. Using
\ref{iHom-iii} above, one shows that there is a natural isomorphism
\[
\Hom_{\D(S)}\bigl(X\otimes_kY, Z\bigr) \cong \Hom_{\D(S)}\bigl(X,
\RR\iHom(Y,Z)\bigr).
\]
Therefore, $\RR\iHom(Y,\pholder)$ is the right adjoint of $\pholder\otimes_kY$. Moreover,
\eqref{eq:H(Hom)}, applied to $\ii Y$, shows that there is a natural
isomorphism
\begin{equation}\label{eq:H(RHom)}
\H^0\bigl(\RR \iHom(X,Y)\bigr) \cong \varinjlim_{\substack{S'\le
S\\\text{open}} } \Hom_{\D(S)}\bigl(k[S/S']\otimes_kX,Y\bigr).
\end{equation}
If, in addition, $S$ is a group, then Remark~\ref{rmk:internal-hom-group}
implies that there is a natural isomorphism
\begin{equation}\label{eq:H(RHom)-group}
\H^0\bigl(\RR \iHom(X,Y)\bigr) \cong \varinjlim_{\substack{S'\le
S\\\text{open}} } \Hom_{\D(S')}(X,Y).
\end{equation}
\end{rmk} 

\begin{notation}\label{nota:dual} 
Given $X\in \D(S)$, we denote the right adjoint of $\pholder\otimes_kX$ by
$\hom_{\D(S)}(X,\pholder)$ or just $\hom(X,\pholder)$ if the category is clear from the
context.

We call $X^\vee \coloneqq \hom(X,\one)$ the \emph{dual} of $X$ in $\D(S)$.
\end{notation} 

\begin{warning} 
The internal Hom complex $\hom_{\D(S)}(X,Y) \in \D(S)$ should not be confused
with the $\RHom$ complex $\RHom_{\Rep_k(S)}(X,Y)$, which is only an object
of $\D(k)$. In general, there is the relation
\begin{equation}\label{eq:RHom-iHom}
\RR\H^0(S,\pholder)\circ \hom_{\D(S)}(X,Y) \cong \RHom_{\Rep_k(S)}(X,Y),
\end{equation}
which follows from \ref{iHom-iii} above, \ie, the fact that $\iHom(X,\pholder)$
preserves K-injective complexes.
\end{warning} 

\subsubsection{Rigidly-compact generation} 
Let $K$ be a compact torsion-free $p$-adic Lie group and $k$ a field of
characteristic $p$. The next crucial observation allows us to appeal to the
results of \cite{Balmer.2016}.

\begin{prop}\label{prop:rigid-compact} 
The category $\D(K)$ is rigidly-compactly generated.
\end{prop}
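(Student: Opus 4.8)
The plan is to verify the hypotheses of \cite[1.2.~Hyp.]{Balmer.2016} for the tensor triangulated category $\D(K)$. Much of the required structure is already in place: $\D(K)$ is triangulated with arbitrary small coproducts (computed on complexes), it is closed symmetric monoidal under $-\otimes_k-$ with $\otimes$-unit $\one$, the tensor product is exact in each variable, and $-\otimes_kX$ preserves small coproducts for every $X$ since it admits the right adjoint $\RR\iHom_K(X,-)$. Thus $\D(K)$ is a ``big'' tensor triangulated category, and the two points that remain to be checked are that it is compactly generated and that its compact objects coincide with its rigid (strongly dualizable) objects.

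For the first point I would simply apply Proposition~\ref{prop:compactlygenerated} with $S=K$: since $K$ is a torsion-free compact $p$-adic Lie group and $\Char k=p$, the object $\ind_K^K\one=\one$ is a compact generator of $\D(K)$; in particular $\one$ is compact.

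The second point is the heart of the statement. Since $\one$ is a compact generator, the subcategory $\D(K)^\cpt$ of compact objects is the thick subcategory generated by $\one$ (a theorem of Neeman; see \cite{Neeman.2001}). On the other hand, the rigid objects of any closed tensor triangulated category form a thick subcategory — it is closed under shifts, retracts, and cones — which contains the unit $\one$, the latter being its own dual. Hence every object of $\D(K)^\cpt=\mathrm{thick}(\one)$ is rigid. Conversely, if $X\in\D(K)$ is rigid, there is a natural isomorphism $\hom_{\D(K)}(X,-)\cong X^\vee\otimes_k-$, so $\Hom_{\D(K)}(X,-)\cong\Hom_{\D(K)}\bigl(\one,X^\vee\otimes_k-\bigr)$ preserves small coproducts — using that $\one$ is compact and that $X^\vee\otimes_k-$ preserves coproducts — whence $X$ is compact. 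This yields compact $=$ rigid and completes the verification.

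I do not anticipate a genuine obstacle: the only input of substance — finiteness of the cohomological dimension of $K$, hence compactness of $\one$ — has already been absorbed into Proposition~\ref{prop:compactlygenerated}, and the rest is formal tt-category bookkeeping. The single point worth emphasising is that it is crucial to be in the situation $S=K$, so that the compact generator $\ind_K^K\one$ is literally the $\otimes$-unit; for a general $p$-adic monoid $S$ the generator $\ind_K^S\one$ is neither the unit nor, in general, rigid, which is exactly why the reduction to the compact case carried out in \S\ref{subsec:general} is needed before the results of \cite{Balmer.2016} become available.
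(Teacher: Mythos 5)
Your proof is correct and follows essentially the same route as the paper: compact generation by $\one$ via Proposition~\ref{prop:compactlygenerated}, rigid $\Rightarrow$ compact using the compactness of $\one$ and the identification $\Hom_{\D(K)}(X,-)\cong\Hom_{\D(K)}(\one,\hom(X,-))$, and compact $\Rightarrow$ rigid because the rigid objects form a thick subcategory containing $\one$, which by Neeman's lemma exhausts the compacts. The concluding remark about why $S=K$ is essential (so that the generator is the unit) is exactly the point the paper's reduction in \S\ref{subsec:general} relies on.
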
 
\begin{proof} 
We know from Proposition~\ref{prop:compactlygenerated} that $\D(K)$ is compactly
generated by $\one$. It remains to show that the set of compact objects coincides
with the set of rigid objects. Recall that $X\in \D(K)$ is called \emph{compact}
if for every family $\{Y_i\}_{i\in I}$ in $\D(K)$ the natural map
\[
\bigoplus_{i\in I}\Hom_{\D(K)}(X,Y_i) \to \Hom_{\D(K)}\Bigl(X,
\bigoplus_{i\in I}Y_i\Bigr)
\]
is bijective. Recall also that $X$ is called \emph{rigid} if for all $Y\in
\D(K)$ the natural map
\[
X^\vee \otimes_k Y \to \hom(X,Y)
\]
is an isomorphism in $\D(K)$. Since $\one$ is compact and
$\Hom_{\D(K)}\bigl(\one, \hom(X,Y)\bigr)= \Hom_{\D(K)}(X,Y)$, it is immediate
that rigid objects are compact. For the converse, it is easy to see that $\one$ is
rigid and that the strictly full subcategory of rigid objects is triangulated and
closed under direct summands (closure under direct summands can either be checked directly or by observing that split idempotents are absolute equalizers \cite[Prop.~6.5.4]{Borceux.I}). Therefore, the strictly full, thick, triangulated
subcategory $\langle\one\rangle$ generated by $\one$ consists
of rigid objects. Now, \cite[Lem.~2.2]{Neeman.1992b} shows that $\langle
\one\rangle$ consists of all the compact objects in $\D(K)$.
\end{proof} 

\subsubsection{Spectral sequences} 
Lacking a good reference, we explain how to construct spectral sequences using
the canonical $t$-structure on $\D(\cat A)$, for $\cat A$ an abelian category.
Everything in this paragraph is well-known and holds in much greater generality,
cf.~\cite[1.2.2]{Lurie-HA}. 

Our main reference is \cite[3.2]{Benson.1991b}, which constructs spectral
sequences from filtered complexes. Our treatment is very similar, yet not the
same, and so we refer to \loccit\ for the proofs, which apply verbatim in our
case if the notation is reinterpreted accordingly.

First, let us recall the canonical $t$-structure on $\D(\cat A)$, for an
abelian category $\cat A$. Given a cochain complex $(C, \{d^i\}_{i}) \in
\Chain(\cat A)$, the truncated complexes $\tau^{\le n}C$ and $\tau^{\ge n}C$ in
$\Chain(\cat A)$ are defined by
\begin{align*}
\tau^{\le n}C\colon& \qquad \dotsb \xrightarrow{d^{n-3}} C^{n-2}
\xrightarrow{d^{n-2}} C^{n-1} \xrightarrow{d^{n-1}} \Ker(d^n) \to 0
\to \dotsb\\
\intertext{and}
\tau^{\ge n}C\colon& \qquad \dotsb \to 0\to
\Coker(d^{n-1}) \xrightarrow{d^{n}} C^{n+1} \xrightarrow{d^{n+1}} C^{n+2}
\xrightarrow{d^{n+2}} \dotsb
\end{align*}
The truncation functors $\tau^{\le n}$ and $\tau^{\ge n}$ preserve
quasi-isomorphisms and hence give rise to truncation functors on $\D(\cat A)$.
For each complex $C$ there is a distinguished triangle
\[
\tau^{\le n}C \to C \to \tau^{\ge n+1}C \xrightarrow{+}
\]
in $\D(\cat A)$, and we have $\Hom_{\D(\cat A)}(\tau^{\le n}C, \tau^{\ge n+1}D)
= 0$, for all $C,D\in \D(\cat A)$, $n\in\Z$. This defines the canonical
$t$-structure on $\D(\cat A)$. Note also that 
\[
\H^n(C)[-n]\cong \tau^{\le n}\tau^{\ge n}C \cong \tau^{\ge n}\tau^{\le
n}C,\qquad\text{for each $C\in \D(\cat A)$.}
\]
The following version of the hypercohomology spectral sequence will be useful
later. For the notion of way-out functors, we refer to
\cite[\S I.7]{Hartshorne.1966}

\begin{lem}\label{lem:spectral} 
Let $\cat A, \cat B$ be abelian categories and let $X\in \D(\cat A)$.
\begin{enumerate}[label=(\alph*)]
\item\label{lem:spectral-a} Let $F\colon \D(\cat A)\to \D(\cat B)$ be a
triangulated functor. Assume that one of the following properties is
satisfied:
\begin{enumerate}[label=$\bullet$]
\item $F$ is way-out right (\eg, left $t$-exact) and $X$ is left bounded;
\item $F$ is way-out left (\eg, right $t$-exact) and $X$ is right bounded;
\item $F$ is way-out in both directions (\eg, $t$-exact);
\item $X$ is bounded.
\end{enumerate}
Then there exists a convergent spectral sequence
\[
E_2^{i,j} = (\H^iF)\bigl(\H^j(X)\bigr) \To \H^{i+j}(FX).
\]

\item\label{lem:spectral-b} Let $F\colon \D(\cat A)^\op\to \D(\cat B)$ be a
triangulated functor. Assume that one of the following properties is satisfied:
\begin{enumerate}[label=$\bullet$]
\item $F$ is way-out right and $X$ is right bounded;
\item $F$ is way-out left and $X$ is left bounded;
\item $F$ is way-out in both directions;
\item $X$ is bounded.
\end{enumerate}
Then there exists a convergent spectral sequence
\[
E_2^{i,j} = (\H^iF)\bigl(\H^{-j}(X)\bigr) \To \H^{i+j}(FX).
\]
\end{enumerate}
\end{lem}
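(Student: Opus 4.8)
The plan is to construct the spectral sequence from the canonical truncation tower of $X$, treating $\{\tau^{\le n}X\}_{n\in\Z}$ as a filtration and running the machine of \cite[3.2]{Benson.1991b}. Concretely, I would first represent $X$ by an honest cochain complex; then the subcomplexes $\tau^{\le n}X\subseteq X$ form an exhaustive, Hausdorff filtration by subcomplexes whose successive quotients are the $\H^n(X)[-n]$, and in $\D(\cat A)$ this is recorded by the distinguished triangles $\tau^{\le n-1}X\to \tau^{\le n}X\to \H^n(X)[-n]\xrightarrow{+}$. Applying the triangulated functor $F$ (which commutes with shifts) turns these into distinguished triangles in $\D(\cat B)$, and the long exact cohomology sequences of the latter assemble into an exact couple exactly as in \loccit; the point worth stressing is that one applies $F$ to the \emph{triangles}, not termwise to a filtered complex, since $F$ is only defined on the derived category.

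Next I would identify the pages. Because $F$ commutes with shifts, $\H^{i+j}\bigl(F(\H^j(X)[-j])\bigr)\cong(\H^iF)(\H^j(X))$, so translating the indexing conventions of \cite[3.2]{Benson.1991b} yields a spectral sequence with $E_2^{i,j}=(\H^iF)(\H^j(X))$ and differentials $d_r\colon E_r^{i,j}\to E_r^{i+r,j-r+1}$, abutting to the cohomology of the filtered object $\colim_n F(\tau^{\le n}X)$. For part \ref{lem:spectral-b} I would run the same argument: a contravariant $F$ reverses the truncation triangles, so the tower becomes an inverse tower in $\D(\cat B)$, and the graded piece $\H^n(X)[-n]$ now contributes $\H^{i+j}\bigl(F(\H^n(X)[-n])\bigr)$, which after relabeling $j\mapsto -j$ becomes $(\H^iF)(\H^{-j}(X))$.

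The remaining and genuinely delicate point is convergence together with the identification of the abutment, and this is where each bullet in the hypotheses is used. For bounded $X$ the filtration is finite and everything is immediate. In the unbounded cases I would exploit the distinguished triangles $F(\tau^{\le n}X)\to FX\to F(\tau^{\ge n+1}X)\xrightarrow{+}$ together with the way-out lemmas of \cite[\S I.7]{Hartshorne.1966}: way-out-rightness of $F$ forces $F(\tau^{\ge n+1}X)$ to be concentrated in degrees tending to $+\infty$ with $n$ when $X$ is left bounded, so that $\H^k\bigl(F(\tau^{\le n}X)\bigr)\cong \H^k(FX)$ for every $n$ large relative to $k$; dually for way-out-left $F$ and right-bounded $X$, and for the two-sided variants. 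This simultaneously shows the spectral sequence has, in each total degree, only finitely many nonzero columns — upgrading conditional to strong convergence — and pins the abutment down as $\H^{i+j}(FX)$.

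I expect this last convergence bookkeeping to be the main obstacle: one must carefully match each of the four bullet conditions in \ref{lem:spectral-a} (and in \ref{lem:spectral-b}) to the statement ``only finitely many nonzero columns in each total degree,'' being attentive to the direction in which the truncation tower runs after a contravariant $F$ is applied. By contrast, building the exact couple from the truncation triangles, reading off the $E_2$ page, and verifying the differentials land in the stated bidegrees are all routine once the tower is in place, and the proofs in \cite[3.2]{Benson.1991b} apply essentially verbatim after reinterpreting ``filtered complex'' as ``truncation tower of triangles.''
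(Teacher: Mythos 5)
Your proposal is correct and follows essentially the same route as the paper: the paper likewise filters $X$ by the canonical truncations $\tau^{\le n}X$ (resp.\ $\tau^{\ge n}X$ for the contravariant case), applies $F$ to the resulting distinguished triangles to build an exact couple in the style of \cite[3.2]{Benson.1991b}, identifies the first page via $FX(r,r+1)\cong F(\H^{-r}(X)[r])$ after a reindexing, and then checks biregular convergence bullet by bullet using the way-out and boundedness hypotheses exactly as you outline. The only differences are cosmetic (the paper's explicit $Z_n/B_n$ bookkeeping and the choice of the $\tau^{\ge}$ tower in part (b)), so no further changes are needed.
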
 
\begin{proof} 
The following construction is adapted from \cite[\S3.2]{Benson.1991b} although we use slightly different notation. We refer
to \loccit\ for details about the proofs.
\begin{itemize}
\item[\ref{lem:spectral-a}] Put $X(r)\coloneqq \tau^{\le -r}X$ for each
$r\in\Z$. Given $s\ge r$, we define $X(r,s)$ by the distinguished triangle $X(s)\to
X(r)\to X(r,s)\xrightarrow{+}$ in $\D(\cat A)$; note that $X(r,r+1) \cong
\H^{-r}(X)[r]$. Applying $F$, we obtain a distinguished triangle
\[
FX(s) \to FX(r) \to FX(r,s)\xrightarrow{+} \quad \text{in $\D(\cat B)$.}
\]
For $s=r+1$ we contemplate the induced long exact sequence in cohomology
\[
\dotsb \to \H^n\bigl(FX(r+1)\bigr) \xrightarrow{i_1} \H^n\bigl(FX(r)\bigr)
\xrightarrow{j_1} \H^n\bigl(FX(r,r+1)\bigr) \xrightarrow{k_1}
\H^{n+1}\bigl(FX(r+1)\bigr) \to \dotsb.
\]
Define $D_1^{r,s} \coloneqq \H^{r+s}\bigl(FX(r)\bigr)$ and $E_1^{r,s} \coloneqq
\H^{r+s}\bigl(FX(r,r+1)\bigr)$, for all $r,s\in\Z$. We obtain an exact couple
\[
\begin{tikzcd}[column sep=0em]
D_1^{*,*} \ar[rr,"i_1"] & & D_1^{*,*} \ar[dl,"j_1"]\\
& E_1^{*,*} \ar[ul,"k_1"]
\end{tikzcd}
\]
that is, we have $\Image(i_1) = \Ker(j_1)$, $\Image(j_1) = \Ker(k_1)$, and
$\Image(k_1) = \Ker(i_1)$. We denote by $d_1 = j_1\circ k_1\colon E_1^{*,*}\to
E_1^{*+1,*}$ the associated differential. 
Taking derived couples yields a spectral sequence $\{(E_n,d_n)\}_{n\ge1}$.

For each $n\ge1$ we put
\begin{align*}
Z_n^{r,s} &\coloneqq \Image\bigl(\H^{r+s}\bigl(FX(r,r+n)\bigr) \to
\H^{r+s}\bigl(FX(r,r+1)\bigr)\bigr),\\
\intertext{and}
B_n^{r,s} &\coloneqq \Ker\bigl(\H^{r+s}\bigl(FX(r,r+1)\bigr) \to
\H^{r+s}\bigl(FX(r-n+1,r+1)\bigr)\bigr)
\end{align*}
and then $Z_{\infty}^{r,s} \coloneqq \bigcap_n Z_n^{r,s}$, $B_\infty^{r,s}
\coloneqq \bigcup_n B_n^{r,s}$. Now, \cite[Prop.~3.2.4]{Benson.1991b} shows
$E_n^{r,s} = Z_n^{r,s}/B_n^{r,s}$. Put $E_\infty^{r,s} \coloneqq Z_\infty^{r,s}
/B_\infty^{r,s}$. 
We define a descending filtration on $\H^{r+s}(FX)$ by
\[
\Fil^r \H^{r+s}(FX) \coloneqq \Image\bigl(\H^{r+s}(FX(r)) \to
\H^{r+s}(FX)\bigr).
\]
We now verify that $\{(E_n^{*,*},d_n)\}_n$ is \emph{biregular}, that is:
\begin{enumerate}[label=(\roman*)]
\item For all $r,s\in\Z$ there exists $n\gg0$ such that $Z_\infty^{r,s} =
Z_n^{r,s}$ and $B_\infty^{r,s} = B_n^{r,s}$.
\item For all $m\in\Z$, we have $\Fil^r\H^{m}(FX) = 0$, for $r\gg0$, and
$\Fil^r\H^{m}(FX) = \H^{m}(FX)$, for $r\ll0$.
\end{enumerate}
It then follows that the spectral sequence converges to $\H^*(FX)$.

\begin{enumerate}[label=$\bullet$, itemsep=1ex] 
\item If $X$ is left bounded, then we have $Z_n^{r,s} = Z_\infty^{r,s}$, for
$n\gg0$, and $\Fil^r\H^m(FX) = 0$, for fixed $m\in\Z$ and all $r\gg0$. Indeed,
there exists $n\gg0$ such that $X(r+n) = 0$. From
the distinguished triangle $X(r+n)\to X(r)\to X(r,r+n)\xrightarrow{+}$ we deduce
$X(r) \cong X(r,r+n)$. This implies $\H^{r+s}(FX(r,r+n)) \cong
\H^{r+s}(FX(r))$ and hence $Z_n^{r,s} = Z_\infty^{r,s}$, for $n\gg0$. Moreover,
for all $r\gg0$, we have $X(r) = 0$ and hence $\H^m(FX(r)) = 0$, that is,
$\Fil^r\H^{m}(FX(r)) = 0$, for $r\gg0$.

\item If $X$ is right bounded, then we have $B_n^{r,s} = B_\infty^{r,s}$, for
$n\gg0$, and $\Fil^r\H^{m}(FX) = \H^{m}(FX)$, for fixed $m\in\Z$ and all
$r\ll0$. Indeed, there exists $n\gg0$ such that $X(r-n+1) \cong
X(r-n'+1)$, for all $n'\ge n$. This implies $\H^{r+s}(FX(r-n+1,r+1)) \cong
\H^{r+s}(FX(r-n'+1,r+1))$ and hence $B_n^{r,s} = B_\infty^{r,s}$. Moreover, for
all $r\gg0$ we have $X(r) = X$ and hence $\Fil^r\H^{m}(FX) = \H^{m}(FX)$.

\item If $F$ is way-out right, then we have $B_n^{r,s} = B_\infty^{r,s}$, for
$n\gg r+s$, and $\Fil^r\H^{m}(FX) = \H^{m}(FX)$, for all $r\ll -m$. Indeed,
we have $\H^{r+s}(FX(r-n'+1,r-n+1)) = 0$ whenever $n'\ge n\gg r+s$. Hence, from
the distinguished triangle
\[
FX(r-n'+1,r+1)\to FX(r-n+1,r+1)\to FX(r-n+1,r-n'+1) \xrightarrow{+},
\]
we deduce an isomorphism
\[
\H^{r+s}\bigl(FX(r-n+1,r+1)\bigr) \cong \H^{r+s}\bigl(FX(r-n'+1,r+1)\bigr),
\]
which shows $B_n^{r,s} = B_\infty^{r,s}$. Similarly, we have $\H^{m}(FX(r)) \cong
\H^{m}(FX)$, whenever $r\ll -m$, and hence $\Fil^r\H^{m}(FX) = \H^{m}(FX)$.

\item If $F$ is way-out left, then we have $Z_n^{r,s} = Z_\infty^{r,s}$, for
all $n\gg -2r-s$, and $\Fil^r\H^{m}(FX) = 0$, for $r\gg-m$.

Indeed, we have $\H^{r+s}(FX(r+n,r+n')) = 0$ whenever $n'\ge n\gg -2r-s$.
From the distinguished triangle
\[
FX(r+n,r+n') \to FX(r,r+n') \to FX(r,r+n) \xrightarrow{+}
\]
we deduce an isomorphism $\H^{r+s}(FX(r,r+n')) \cong \H^{r+s}(FX(r,r+n))$, which
shows $Z_n^{r,s} = Z_\infty^{r,s}$. Moreover, for $r\gg -m$ we have $\H^{m}(FX(r))
= 0$, so that $\Fil^r\H^{m}(FX) = 0$.
\end{enumerate} 

By \cite[Thm.~3.2.9]{Benson.1991b} we have 
\[
\Fil^r\H^{r+s}(FX)/\Fil^{r+1}\H^{r+s}(FX) \cong E_\infty^{r,s}.
\]
Therefore, the spectral sequence indeed computes $\H^*(FX)$. Note that
\[
E_1^{r,s} = \H^{r+s}\bigl(FX(r,r+1)\bigr) = (\H^{r+s}F)\bigl(\H^{-r}(X)[r]\bigr)
= (\H^{2r+s}F)\bigl(\H^{-r}(X)\bigr).
\]
Hence, reindexing $(i,j)\coloneqq (2r+s,-r)$ yields the desired spectral
sequence $E_2^{i,j} = E_1^{r,s} = (\H^iF)\bigl(\H^j(X)\bigr) \To \H^{i+j}(FX)$.

\item[\ref{lem:spectral-b}] Of course, this follows by duality from
\ref{lem:spectral-a}, but let us nevertheless give the construction in this
case. Put $X(r) \coloneqq \tau^{\ge r}X$ for each
$r\in\Z$. Given $r\le s$, we have a distinguished triangle $X(r,s) \to X(r) \to
X(s) \xrightarrow{+}$ in $\D(\cat A)$; note that $X(r,r+1) \cong \H^{r}(X)[-r]$.
Applying $F$, we obtain a distinguished triangle
\[
FX(s) \to FX(r) \to FX(r,s) \xrightarrow{+} \quad \text{in $\D(\cat B)$.}
\]
Put $D_1^{r,s} \coloneqq \H^{r+s}(FX(r))$ and $E_1^{r,s}\coloneqq
\H^{r+s}(FX(r,r+1))$.
As in \ref{lem:spectral-a} this gives rise to a spectral sequence
$\{(E_n,d_n)\}_n$ which converges to $\H^*(FX)$. We compute
\[
E_1^{r,s} = \H^{r+s}\bigl(FX(r,r+1)\bigr) = (\H^{r+s}F)\bigl(\H^{r}(X)[-r]\bigr)
= (\H^{2r+s}F)\bigl(\H^r(X)\bigr).
\]
Again, reindexing $(i,j)\coloneqq (2r+s, -r)$ yields the desired spectral
sequence $E_2^{i,j} = E_1^{r,s} = (\H^iF)\bigl(\H^{-j}(X)\bigr) \To
\H^{i+j}(FX)$.
\end{itemize}
\end{proof}

\section{The left adjoint of derived inflation}\label{sec:derivedinf} 
From now on, $k$ denotes a field of characteristic $p>0$.
\subsection{The compact case}\label{subsec:compact} 
Let $f\colon K_P\longtwoheadrightarrow K_M$ be a surjective homomorphism of
torsion-free, compact $p$-adic Lie groups with kernel $K_U$. Note that $f$ is
automatically open as a surjection between profinite groups. Being
torsion-free $p$-adic Lie groups, these groups are pro-$p$, and
\cite[Cor.~(1)]{Serre.1965}
combined with \cite[V.2.5.8]{Lazard} shows that the groups $K_P, K_M$, and
$K_U$ are Poincar\'e, cf.~\cite[Ch.~I, \S4.5]{Serre.2013}. Recall that a pro-$p$
group $K$ is called \emph{Poincar\'e} of dimension $n$ if $\H^i(K,\F_p)$ is
finite for all $i$, $\dim_{\F_p}\H^n(K,\F_p) = 1$, and the cup-product
$\H^i(K,\F_p)\times \H^{n-i}(K,\F_p)\to \H^n(K,\F_p)$ is a non-degenerate
bilinear form.

\begin{notation} 
If $K'_P \subseteq K_P$ is an open subgroup, we write $K'_M \coloneqq f(K'_P)$
and $K'_U\coloneqq K_U\cap K'_P$.
\end{notation} 

By Proposition~\ref{prop:rigid-compact} the categories $\D(K_P)$ and $\D(K_M)$
are rigidly-compactly generated. Note that inflation along $f$
yields an exact functor $\Inf^{K_M}_{K_P}\colon \Rep_k(K_M)\to
\Rep_k(K_P)$ that is strictly monoidal, which means
$\Inf^{K_M}_{K_P}(\one) = \one$ and
$\Inf^{K_M}_{K_P}(V\otimes_kW) \cong \Inf^{K_M}_{K_P}(V)\otimes_k
\Inf^{K_M}_{K_P}(W)$, for all $V,W\in \Rep_k(K_M)$. The right adjoint is given
by the functor of invariants $\H^0(K_U,\pholder)$.

Passing to the derived categories, we obtain a strictly monoidal functor
\[
\Inf^{K_M}_{K_P}\colon \D(K_M)\to \D(K_P)
\]
with right adjoint $\RR\H^0(K_U,\pholder)$. For later use we record the following
result:

\begin{lem}[Projection formula]\label{lem:RH(U,-)-projection-formula} 
The natural map
\[
X\otimes_k \RR\H^0(K_U,Y) \xrightarrow{\cong} \RR\H^0\bigl(K_U,
\Inf^{K_M}_{K_P}(X)\otimes_k Y\bigr)
\]
is an isomorphism, for all $X\in \D(K_M)$ and $Y\in \D(K_P)$. In particular,
$X\otimes_k \RR\H^0(K_U,\one) \xrightarrow{\cong} \RR\H^0\bigl(K_U,
\Inf^{K_M}_{K_P}X\bigr)$, for all $X\in \D(K_M)$.
\end{lem}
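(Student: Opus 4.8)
The plan is to recognize the displayed map as the general projection morphism attached to the strong monoidal adjunction $\Inf\coloneqq\Inf^{K_M}_{K_P}\dashv\RR\H^0(K_U,-)$ and then to invoke \cite[2.15.~Prop.]{Balmer.2016}. First I would record that, since $\Inf\colon\D(K_M)\to\D(K_P)$ is strongly monoidal with right adjoint $\RR\H^0(K_U,-)$, feeding the monoidal isomorphism \eqref{eq:monadj-1} into Proposition~\ref{prop:mates} produces the natural map \eqref{eq:monadj-2} as its right mate. After exchanging the two tensor factors using the symmetry of $\otimes_k$, this \eqref{eq:monadj-2} (with $x=Y$ and $y=X$) is exactly the map $X\otimes_k\RR\H^0(K_U,Y)\to\RR\H^0\bigl(K_U,\Inf(X)\otimes_kY\bigr)$ of the statement. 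Thus it remains to see that \eqref{eq:monadj-2} is an isomorphism in our situation.

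For that I would verify that $\Inf$ is a \emph{good} functor in the sense recalled before Lemma~\ref{lem:goodfunctor-1}. It is strongly monoidal with right adjoint $\RR\H^0(K_U,-)$, and the remaining adjoints in the chain $\Inf_{(1)}\dashv\Inf\dashv\RR\H^0(K_U,-)\dashv\RR\H^0(K_U,-)^{(1)}$ exist as soon as $\RR\H^0(K_U,-)$ preserves small coproducts (by Brown representability, Corollary~\ref{cor:brown}, together with Grothendieck--Neeman duality; alternatively the left adjoint of $\Inf$ is already furnished by \cite[3.3.~Thm.]{Balmer.2016}). That $\RR\H^0(K_U,-)$ preserves small coproducts follows because $\one$ is a compact object of $\D(K_U)$ by Proposition~\ref{prop:compactlygenerated}, so each $\H^i(K_U,-)\cong\Hom_{\D(K_U)}(\one,(-)[i])$ preserves small coproducts, while $K_U$ being a torsion-free compact $p$-adic Lie group, hence Poincar\'e, forces $\RR\H^0(K_U,-)$ to have bounded cohomological amplitude. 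Together with Proposition~\ref{prop:rigid-compact}, which provides that $\D(K_M)$ and $\D(K_P)$ are rigidly-compactly generated, we are in the setting of \cite[1.2.~Hyp.]{Balmer.2016}; hence \cite[2.15.~Prop.]{Balmer.2016} applies and \eqref{eq:monadj-2} is an isomorphism. The ``in particular'' is then the case $Y=\one$, using the canonical identification $\Inf(X)\otimes_k\one\cong\Inf(X)$.

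I do not expect a genuine obstacle, as the substance is entirely contained in \cite{Balmer.2016}; the only points demanding care are confirming the hypotheses of \loccit — concretely the coproduct-preservation of $\RR\H^0(K_U,-)$, equivalently the goodness of $\Inf$ — and checking that the map written down in the statement genuinely coincides with Balmer's projection morphism rather than being merely some natural transformation between the two functors. The latter is handled by the identification with \eqref{eq:monadj-2} through the mates calculus of \S\ref{subsec:category}.
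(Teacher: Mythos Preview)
Your proposal is correct and takes essentially the same approach as the paper: both reduce to \cite[2.15.~Prop.]{Balmer.2016} after invoking Proposition~\ref{prop:rigid-compact}. Note, however, that your detour through ``goodness'' is unnecessary---\cite[2.15.~Prop.]{Balmer.2016} only requires the standing hypotheses of \cite[1.2.~Hyp.]{Balmer.2016} together with $\Inf^{K_M}_{K_P}$ being a coproduct-preserving strong monoidal functor (which is immediate), so the separate verification that $\RR\H^0(K_U,-)$ preserves coproducts, and the appeal to \cite[3.3.~Thm.]{Balmer.2016} for the further adjoints, can be dropped.
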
 
\begin{proof} 
This follows from \cite[2.15.~Prop.]{Balmer.2016} together with
Proposition~\ref{prop:rigid-compact}.
\end{proof} 

Our goal in this section is to prove the existence of the left adjoint of
$\Inf^{K_M}_{K_P}$ and to determine it explicitly. To start, we observe
that $\RR\H^0(K_U,\pholder)$ admits a right adjoint.

\begin{lem}\label{lem:F^M_P} 
The functor $\RR\H^0(K_U,\pholder)\colon \D(K_P)\to \D(K_M)$ admits a right adjoint,
denoted $F^{K_M}_{K_P}$. More precisely, there is a natural isomorphism
\begin{equation}\label{eq:F^M_P-adjunction}
\RHom_{\Rep_k(K_P)}\bigl(X, F^{K_M}_{K_P}(Y)\bigr) \cong
\RHom_{\Rep_k(K_M)}\bigl(\RR\H^0(K_U,X), Y\bigr) 
\end{equation}
in $\D(k)$, for all $X\in \D(K_P)$ and $Y\in \D(K_M)$.

Further, for each open subgroup $K'_P\subseteq K_P$, the following diagram of
functors commutes:
\[
\begin{tikzcd}[column sep=3em, row sep=3em]
\D(K_M) \ar[d,"\Res^{K_M}_{K'_M}"'] \ar[r,"F^{K_M}_{K_P}"] & \D(K_P)
\ar[d,"\Res^{K_P}_{K'_P}"]\\
\D(K'_M) \ar[r,"F^{K'_M}_{K'_P}"'] & \D(K'_P).
\end{tikzcd}
\]
\end{lem}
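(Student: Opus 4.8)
The plan is to obtain the right adjoint $F^{K_M}_{K_P}$ of $\RR\H^0(K_U,-)$ as a formal consequence of Brown representability, and then to identify it concretely enough to verify the $\RHom$-adjunction isomorphism \eqref{eq:F^M_P-adjunction} and the base-change square. First I would note that $\RR\H^0(K_U,-)\colon \D(K_P)\to \D(K_M)$ is a triangulated functor; by Corollary~\ref{cor:brown}.\ref{cor:brown-a} it admits a triangulated right adjoint $F^{K_M}_{K_P}$ as soon as it commutes with small direct sums. This commutation is where the compact case pays off: by Proposition~\ref{prop:rigid-compact} both $\D(K_P)$ and $\D(K_M)$ are rigidly-compactly generated, and $\RR\H^0(K_U,-)$ is the right adjoint of the strongly monoidal functor $\Inf^{K_M}_{K_P}$, which sends the compact generator $\one$ to $\one$; hence $\Inf^{K_M}_{K_P}$ preserves compact objects, and therefore its right adjoint $\RR\H^0(K_U,-)$ preserves arbitrary direct sums (this is the standard fact that a right adjoint preserves coproducts precisely when the left adjoint preserves compactness, as in \cite[Thm.~5.1]{Neeman.1996} or via \cite[Lem.~2.2]{Neeman.1992}). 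So the right adjoint $F^{K_M}_{K_P}$ exists.

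Next I would upgrade the plain $\Hom$-adjunction $\Hom_{\D(K_P)}(X,F^{K_M}_{K_P}Y)\cong \Hom_{\D(K_M)}(\RR\H^0(K_U,X),Y)$ to the $\RHom$-level statement \eqref{eq:F^M_P-adjunction}. The cleanest route uses the internal-Hom formalism of the tensor-triangulated structure on $\D(K_P)$ recalled in \S\ref{subsec:derived}: combine the relation \eqref{eq:RHom-iHom}, namely $\RHom_{\Rep_k(K_P)}(X,Y)\cong \RR\H^0(K_P,-)\circ \hom_{\D(K_P)}(X,Y)$, with the isomorphism \eqref{eq:H(RHom)}. Concretely, evaluating the $\Hom$-adjunction after tensoring $X$ with $k[K_P/K'_P]$ (for $K'_P$ ranging over open subgroups of $K_P$), taking the colimit, and applying \eqref{eq:H(RHom)} shows $\H^0\bigl(\RR\iHom_{K_P}(X,F^{K_M}_{K_P}Y)\bigr)\cong \H^0\bigl(\RR\iHom_{K_M}(\RR\H^0(K_U,X),Y)\bigr)$, and shifting $Y$ (respectively $X$) around gives the isomorphism on all cohomology; naturality in both variables is then automatic. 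Then pass through \eqref{eq:RHom-iHom} to land at the claimed $\RHom$-isomorphism in $\D(k)$. (Alternatively one could invoke Lemma~\ref{lem:K-injectives}-type arguments using $\K$-injective models, but the internal-Hom route is more self-contained given what has been set up.)

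For the commuting square with the restriction functors, I would use the mate formalism of Proposition~\ref{prop:mates} together with Example~\ref{ex:mates}. Start from the genuinely commuting square $\RR\H^0(K'_U,-)\circ \Res^{K_P}_{K'_P}\cong \Res^{K_M}_{K'_M}\circ \RR\H^0(K_U,-)$ of functors $\D(K_P)\to \D(K'_M)$ — this holds because both $\Res^{K_P}_{K'_P}$ and $\Res^{K_M}_{K'_M}$ admit the exact left adjoints $\ind^{K_P}_{K'_P}$, $\ind^{K_M}_{K'_M}$ (Lemma~\ref{lem:FrobeniusII}) and inflation visibly commutes with restriction at the level of representations, so one argues as in the Example right after Lemma~\ref{lem:K-injectives}. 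Taking right mates of this square with respect to the adjunctions $\RR\H^0(K_U,-)\dashv F^{K_M}_{K_P}$, $\RR\H^0(K'_U,-)\dashv F^{K'_M}_{K'_P}$ and $\ind^{K_M}_{K'_M}\dashv \Res^{K_M}_{K'_M}$, $\ind^{K_P}_{K'_P}\dashv \Res^{K_P}_{K'_P}$ yields a natural transformation $\Res^{K_P}_{K'_P}\circ F^{K_M}_{K_P}\To F^{K'_M}_{K'_P}\circ \Res^{K_M}_{K'_M}$; and since the mate of an isomorphism built purely out of a chain of adjoint-pair mates is again an isomorphism (Example~\ref{ex:mates}), this is an isomorphism, which is exactly the asserted commutativity.

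The main obstacle I expect is the second step: promoting the $\Hom$-adjunction to the $\RHom$-adjunction \eqref{eq:F^M_P-adjunction} requires being careful that the isomorphism of internal-Hom cohomology groups obtained from the colimit-over-open-subgroups description is natural in $X$ and $Y$ simultaneously and compatible across degrees — once the $0$-th cohomology identity is in place for all shifts of $X$ and $Y$ one gets the full quasi-isomorphism, but verifying the compatibilities is the delicate bookkeeping. The existence of $F^{K_M}_{K_P}$ itself is essentially immediate from Brown representability, and the base-change square is a routine (if notationally heavy) application of the mate calculus.
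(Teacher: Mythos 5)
Your overall strategy matches the paper's: the existence of $F^{K_M}_{K_P}$ is obtained from compact generation (the paper simply cites \cite[Cor.~2.14]{Balmer.2016}, which packages exactly your argument that $\Inf^{K_M}_{K_P}$ preserves the compact generator, hence $\RR\H^0(K_U,-)$ preserves coproducts and Brown representability applies), and the base-change square is obtained by the mate calculus. Two of your justifications, however, do not hold up as written.

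First, the upgrade to the $\RHom$-level isomorphism \eqref{eq:F^M_P-adjunction}: your ``concrete'' route via \eqref{eq:H(RHom)} requires identifying $\varinjlim_{K'_P}\Hom_{\D(K_M)}\bigl(\RR\H^0(K_U,k[K_P/K'_P]\otimes_kX),Y\bigr)$ with $\varinjlim_{K'_M}\Hom_{\D(K_M)}\bigl(k[K_M/K'_M]\otimes_k\RR\H^0(K_U,X),Y\bigr)$, and $\RR\H^0\bigl(K_U,k[K_P/K'_P]\otimes_kX\bigr)$ is \emph{not} of the form $k[K_M/K'_M]\otimes_k\RR\H^0(K_U,X)$ unless $K_U\subseteq K'_P$ (for general $K'_P$ a Mackey computation produces terms $\RR\H^0(K'_U,-)$ of the smaller group $K'_U$, which are not concentrated in degree $0$). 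So the cofinality/identification step is a genuine gap. The paper instead invokes the internal-hom form of the adjunction, $\RR\H^0\bigl(K_U,\hom_{\D(K_P)}(X,F^{K_M}_{K_P}Y)\bigr)\cong\hom_{\D(K_M)}\bigl(\RR\H^0(K_U,X),Y\bigr)$ (Balmer's (2.18)), and then applies $\RR\H^0(K_M,-)$ together with \eqref{eq:RHom-iHom}; if you want to avoid citing that, the safer elementary route is to construct the map as in Corollary~\ref{cor:RHom-LU-Inf} (functoriality of $\RR\H^0(K_U,-)$ on $\RHom$ followed by the counit) and check it is the $\Hom$-adjunction on each cohomology group.

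Second, for the commuting square you conclude that the right mate of $\RR\H^0(K'_U,-)\Res^{K_P}_{K'_P}\cong\Res^{K_M}_{K'_M}\RR\H^0(K_U,-)$ is an isomorphism ``by Example~\ref{ex:mates}'', but that example only treats squares whose vertical functors are identities; in general the mate of an isomorphism is not an isomorphism, and that is precisely the subtle point here. The fix — which is what the paper does — is to fold the restrictions into the horizontal functors: $\Inf^{K'_M}_{K'_P}\Res^{K_M}_{K'_M}\cong\Res^{K_P}_{K'_P}\Inf^{K_M}_{K_P}$ is an isomorphism of two functors $\D(K_M)\to\D(K'_P)$, so their right adjoints agree by uniqueness; since $K'_P\subseteq K_P$ has finite index, $\Res$ is a \emph{two-sided} adjoint of $\Ind$, so one can pass to right adjoints a second time and land on $F^{K'_M}_{K'_P}\Res^{K_M}_{K'_M}\cong\Res^{K_P}_{K'_P}F^{K_M}_{K_P}$. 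You have all the needed ingredients (you mention $\ind\dashv\Res$), but the argument must be arranged so that each passage is a comparison of adjoints of a \emph{single} pair of isomorphic composite functors, not a mate of a genuinely two-dimensional square.
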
 
\begin{proof} 
The functor $\Inf^{K_M}_{K_M}$ satisfies \cite[Hyp.~1.2]{Balmer.2016} and hence
\cite[Cor.~2.14]{Balmer.2016} shows that the right adjoint $F^{K_M}_{K_P}$ of
$\RR\H^0(K_U,\pholder)$ exists. 
Then \cite[(2.18)]{Balmer.2016} shows that there is a natural isomorphism
\[
\RR\H^0\bigl(K_U, \hom_{\D(K_P)}(X,
F^{K_M}_{K_P}(Y))\bigr) \xrightarrow{\cong} \hom_{\D(K_M)}\bigl(\RR\H^0(K_U,X),
Y\bigr)
\]
in $\D(K_M)$. Applying $\RR\H^0(K_M,\pholder)$, and using $\RR\H^0(K_M,\pholder)\circ
\RR\H^0(K_U,\pholder) \cong \RR\H^0(K_P,\pholder)$ and \eqref{eq:RHom-iHom}, yields the
isomorphism \eqref{eq:F^M_P-adjunction}.

Let now $K'_P\subseteq K_P$ be an open subgroup. It is obvious that there is a
natural isomorphism $\Inf^{K'_M}_{K'_P}\Res^{K_M}_{K'_M} \xRightarrow{\cong}
\Res^{K_P}_{K'_P}\Inf^{K_M}_{K_P}$. Passing to the right adjoints, see
Example~\ref{ex:mates}, yields a natural isomorphism $\RR\H^0(K_U,\pholder)
\Ind_{K'_P}^{K_P} \xRightarrow{\cong} \Ind_{K'_M}^{K_M}\RR\H^0(K'_U,\pholder)$. Since
$K'_P\subseteq K_P$ is open, the functor $\Res^{K_P}_{K'_P}$ is also right
adjoint to $\Ind_{K'_P}^{K_P}$. Similarly, $\Res^{K_M}_{K'_M}$ is the right
adjoint of $\Ind_{K'_M}^{K_M}$. Hence, passing to the right adjoints again
yields a natural isomorphism $F^{K'_M}_{K'_P}\Res^{K_M}_{K'_M}
\xRightarrow{\cong} \Res^{K_P}_{K'_P} F^{K_M}_{K_P}$.
\end{proof} 

\begin{lem}\label{lem:F^M_P-transitive} 
Let $f'\colon K_M\longtwoheadrightarrow K_L$ be another surjective homomorphism of
torsion-free $p$-adic Lie groups. Then the following diagram of functors
commutes:
\[
\begin{tikzcd}
\D(K_L) \ar[r,"F^{K_L}_{K_M}"] \ar[dr,"F^{K_L}_{K_P}"'] & \D(K_M)
\ar[d,"F^{K_M}_{K_P}"]\\
& \D(K_P)
\end{tikzcd}
\]
\end{lem}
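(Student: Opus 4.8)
The plan is to deduce the statement formally from the transitivity of inflation together with the uniqueness of adjoints, never computing any of the functors $F$ explicitly. Write $K_U = \ker f$, set $N \coloneqq \ker f'$, and $\wt N \coloneqq \ker(f'\circ f) = f^{-1}(N)$; note that $\wt N$ is a closed subgroup of $K_P$ containing $K_U$ with $\wt N/K_U\cong N$, and that $K_L = f'(K_M)$ is a continuous image of the compact group $K_M$, hence a torsion-free compact $p$-adic Lie group. Consequently Lemma~\ref{lem:F^M_P} applies to each of the surjections $K_P\twoheadrightarrow K_M$, $K_M\twoheadrightarrow K_L$, $K_P\twoheadrightarrow K_L$, so that $F^{K_M}_{K_P}$, $F^{K_L}_{K_M}$, $F^{K_L}_{K_P}$ are all defined, namely as the right adjoints of $\RR\H^0(K_U,-)$, $\RR\H^0(N,-)$, $\RR\H^0(\wt N,-)$, respectively.

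I would first record the chains of adjunctions on the derived level
\[
\Inf^{K_M}_{K_P}\dashv\RR\H^0(K_U,-)\dashv F^{K_M}_{K_P},
\quad
\Inf^{K_L}_{K_M}\dashv\RR\H^0(N,-)\dashv F^{K_L}_{K_M},
\quad
\Inf^{K_L}_{K_P}\dashv\RR\H^0(\wt N,-)\dashv F^{K_L}_{K_P},
\]
where the left-hand adjunction in each chain comes from Lemma~\ref{lem:K-injectives}.\ref{lem:K-injectives-c} (inflation being exact, its derived functor is termwise and has the indicated right adjoint) and the right-hand one is Lemma~\ref{lem:F^M_P}. Since inflation is transitive, $\Inf^{K_L}_{K_P} = \Inf^{K_M}_{K_P}\circ\Inf^{K_L}_{K_M}$ already as functors $\Rep_k(K_L)\to\Rep_k(K_P)$, hence also on the derived categories. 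By Lemma~\ref{lem:adjoint-composition}, the composite $\RR\H^0(N,-)\circ\RR\H^0(K_U,-)$ is then a right adjoint of $\Inf^{K_L}_{K_P}$; as $\RR\H^0(\wt N,-)$ is another one, uniqueness of right adjoints (Theorem~\ref{thm:adjoint-unique}) gives a canonical isomorphism $\RR\H^0(N,-)\circ\RR\H^0(K_U,-)\cong\RR\H^0(\wt N,-)$ — a derived form of Hochschild--Serre, obtained here for free.

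Applying Lemma~\ref{lem:adjoint-composition} once more, this time to the right-hand halves of the first two chains, shows that $F^{K_M}_{K_P}\circ F^{K_L}_{K_M}$ is a right adjoint of $\RR\H^0(N,-)\circ\RR\H^0(K_U,-)$, hence, by the isomorphism just obtained, of $\RR\H^0(\wt N,-)$. Since $F^{K_L}_{K_P}$ is a right adjoint of $\RR\H^0(\wt N,-)$ by construction, uniqueness of right adjoints yields the natural isomorphism $F^{K_M}_{K_P}\circ F^{K_L}_{K_M}\cong F^{K_L}_{K_P}$, which is exactly the asserted commutativity.

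Since the argument runs purely on uniqueness of adjoints, I do not expect any serious obstacle. The two points that genuinely need attention are: that $K_L$ (and not merely $K_P$, $K_M$) is a torsion-free compact $p$-adic Lie group, so that $F^{K_L}_{K_P}$ is even defined — this is what forced the short verification above — and that every adjunction used really descends from $\Rep_k(-)$ to $\D(-)$, which is precisely the content of Lemma~\ref{lem:K-injectives}.\ref{lem:K-injectives-c} and Lemma~\ref{lem:F^M_P}. If one later needed this isomorphism to be compatible with restriction to open subgroups, as in the second half of Lemma~\ref{lem:F^M_P}, one would track the relevant mates through Example~\ref{ex:mates}; that is not needed for the statement as given.
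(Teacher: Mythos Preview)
Your proof is correct and follows essentially the same route as the paper: start from the transitivity isomorphism $\Inf^{K_M}_{K_P}\circ\Inf^{K_L}_{K_M}\cong\Inf^{K_L}_{K_P}$ and pass to right adjoints twice. The only cosmetic difference is that the paper phrases ``passing to right adjoints'' via the mates formalism (Example~\ref{ex:mates}), while you invoke composition of adjunctions and uniqueness directly; these are equivalent.
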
 
\begin{proof} 
Clearly, we have a natural isomorphism $\Inf^{K_M}_{K_P}\Inf^{K_L}_{K_M}
\xRightarrow{\cong} \Inf^{K_L}_{K_P}$. Passing to the right adjoints,
cf.~Example~\ref{ex:mates}, yields an isomorphism $\RR\H^0(\Ker(f'\circ f),\pholder)
\xRightarrow{\cong} \RR\H^0(\Ker(f'),\pholder)\circ\RR\H^0(\Ker(f),\pholder)$. The statement
follows by passing to the right adjoints again.
\end{proof} 

\begin{prop}\label{prop:RH(U,-)-compact} 
The functor $\RR\H^0(K_U,\pholder)\colon \D(K_P)\to \D(K_M)$ preserves compact objects. 
\end{prop}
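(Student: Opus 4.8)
The plan is to reduce the statement to the behaviour of $\RR\H^0(K_U,-)$ on the single object $\one$, exploiting that the compact objects of $\D(K_P)$ are generated by $\one$. First I would recall that, by Proposition~\ref{prop:compactlygenerated} together with the argument in the proof of Proposition~\ref{prop:rigid-compact} (which invokes \cite[Lem.~2.2]{Neeman.1992}), the compact objects of $\D(K_P)$ are precisely those lying in the thick triangulated subcategory $\langle\one\rangle_{\D(K_P)}$ generated by the $\otimes$-unit, and likewise the compact objects of $\D(K_M)$ are $\langle\one\rangle_{\D(K_M)}$. Since $\RR\H^0(K_U,-)\colon\D(K_P)\to\D(K_M)$ is a triangulated (hence cone- and retract-preserving) functor, it carries $\langle\one\rangle_{\D(K_P)}$ into $\langle\RR\H^0(K_U,\one)\rangle_{\D(K_M)}$. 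Thus it suffices to prove that the single object $C\coloneqq\RR\H^0(K_U,\one)$ is compact in $\D(K_M)$.

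Next I would analyse $C$. Forgetting the $K_M$-action, $C$ computes the continuous cohomology of $K_U$ with trivial coefficients $k$, so $\H^i(C)\cong\H^i(K_U,k)$, carrying the residual action of $K_M=K_P/K_U$. Now $K_U$ is a torsion-free compact $p$-adic Lie group, hence a Poincar\'e group of dimension $d\coloneqq\dim K_U$ by \cite[Cor.~(1)]{Serre.1965} and \cite[V.2.5.8]{Lazard}; in particular the groups $\H^i(K_U,k)$ are finite-dimensional over $k$, and they vanish for $i\notin\{0,\dots,d\}$. So $C$ has bounded cohomology, and each $\H^i(C)$ is a finite-dimensional smooth $K_M$-representation (this $K_M$-equivariance is automatic, since $\RR\H^0(K_U,-)$ is by construction a functor with target $\D(K_M)$).

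Finally I would use that $K_M$ is pro-$p$ and $\Char k=p$: for every open normal subgroup $N\trianglelefteq K_M$ the augmentation ideal of $k[K_M/N]$ is nilpotent, so every finite-dimensional smooth $K_M$-representation is a finite iterated extension of copies of $\one$. Hence $\H^i(C)[-i]\in\langle\one\rangle_{\D(K_M)}$ for every $i$. The distinguished triangles $\tau^{\le i-1}C\to\tau^{\le i}C\to\H^i(C)[-i]\xrightarrow{+}$ attached to the canonical $t$-structure (\S\ref{subsec:derived}) then exhibit $C$ as a finite iterated extension of the objects $\H^0(C)[0],\dots,\H^d(C)[-d]$; since $\langle\one\rangle_{\D(K_M)}$ is thick, this gives $C\in\langle\one\rangle_{\D(K_M)}$, i.e.\ $C$ is compact, which is what we wanted.

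The genuinely substantive ingredients are the Poincar\'e-group property of $K_U$ (finiteness and boundedness of $\H^\bullet(K_U,k)$) and the unipotence of finite-dimensional smooth mod $p$ representations of the pro-$p$ group $K_M$; once these are in place, the passage from cohomology objects to the complex itself is a routine truncation argument and I do not anticipate a serious obstacle. The only points requiring a little care are the reduction step — that a triangulated functor sends the thick closure of $\one$ into the thick closure of the image of $\one$ — and keeping track of the $K_M$-equivariance throughout, which, as noted, comes for free from the way $\RR\H^0(K_U,-)$ is defined.
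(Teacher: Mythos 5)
Your argument is correct and follows essentially the same route as the paper's proof: reduce to showing that $\RR\H^0(K_U,\one)$ lies in $\langle\one\rangle_{\D(K_M)}$, use the Poincar\'e property of $K_U$ to get boundedness and finite-dimensionality of the cohomology, d\'evissage by truncation, and then the pro-$p$/characteristic-$p$ unipotence of finite-dimensional smooth $K_M$-representations. The only difference is cosmetic: you make explicit the reduction step (a triangulated functor carries $\langle\one\rangle_{\D(K_P)}$ into $\langle\RR\H^0(K_U,\one)\rangle_{\D(K_M)}$), which the paper leaves implicit, and you phrase the final step via nilpotence of the augmentation ideal rather than via induction on $\dim_k V$ using $\H^0(K_M,V)\neq 0$ — these are equivalent.
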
 
\begin{proof} 
Since $K_U$ a Poincar\'e group, the object $\RR\H^0(K_U,\one) \in \D(K_M)$ is
represented by a bounded complex with finite-dimensional cohomologies. Since
$\one \in \D(K_M)$ is compact by Proposition~\ref{prop:compactlygenerated}, and
because the subcategory of compact objects is triangulated, it
suffices to show that $\RR\H^0(K_U,\one)$ is contained in the strictly full
triangulated subcategory $\langle \one\rangle$ of $\D(K_M)$ generated by $\one$.
This follows from a standard argument which we recall for the benefit of the
reader. Let $X\in \D(K_M)$ be bounded with finite-dimensional cohomology groups.
There exist integers $a\le b$ with $\tau^{\le i}X \cong 0$ for $i<a$ and
$\tau^{\ge i}X \cong 0$ for $i>b$. Then $\tau^{\ge b}X \cong \H^b(X)[-b]$ is
finite-dimensional and concentrated in degree $b$. We obtain a distinguished
triangle $\tau^{\le b-1}X \to X \to \H^b(X)[-b] \xrightarrow{+}$.
Hence, by induction on $b-a$ we are reduced to the case where $X$ is a
finite-dimensional smooth $K_M$-representation concentrated in one degree,
say $X\cong V[i]$. Then $\H^0(K_M,V)$ is non-zero (since $K_M$ is a pro-$p$
group) and contained in $\langle \one\rangle$, because $\langle \one\rangle$ is
closed under finite direct sums. By induction on $\dim_kV$ also the quotient
$V/\H^0(K_M,V)$ is contained in $\langle \one\rangle$. As $\langle \one\rangle$
is triangulated and $\H^0(K_M,V)\to V\to V/\H^0(K_M,V)\xrightarrow{+}$ is a
distinguished triangle in $\D(K_M)$, it follows that $V$, hence also $X$, is
contained in $\langle \one\rangle$.
\end{proof} 

We draw two important consequences of Proposition~\ref{prop:RH(U,-)-compact},
but first we make a definition.
\begin{defn} 
The object $\omega_{K_P}\coloneqq F^{K_M}_{K_P}(\one)$ in $\D(K_P)$ is called
the \emph{dualizing object}.
\end{defn} 

\begin{cor}[Grothendieck duality]\label{cor:Grduality} 
For all $X,Y\in \D(K_M)$ there is a natural isomorphism
\[
F^{K_M}_{K_P}(X)\otimes_k \Inf^{K_M}_{K_P}(Y) \xrightarrow{\cong}
F^{K_M}_{K_P}(X\otimes_k Y).
\]
In particular, we have a natural isomorphism $\omega_{K_P}\otimes_k
\Inf^{K_M}_{K_P}(Y) \xrightarrow{\cong}F^{K_M}_{K_P}(Y)$.
\end{cor} 
\begin{proof} 
This follows from Proposition~\ref{prop:RH(U,-)-compact} and
\cite[3.2.~Prop.]{Balmer.2016}.
\end{proof} 

\begin{cor}\label{cor:Inf-product} 
The functor $\Inf^{K_M}_{K_P}\colon \D(K_M)\to \D(K_P)$ preserves small
products. In particular, it admits a left adjoint $\LL_{K_U}\colon \D(K_P)\to
\D(K_M)$. Moreover, there is a natural isomorphism 
\begin{equation}\label{eq:left-adjoint-explicit}
\RR\H^0(K_U,\omega_{K_P} \otimes_k\pholder) \xRightarrow{\cong} \LL_{K_U}.
\end{equation}
\end{cor}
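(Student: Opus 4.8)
The plan is to derive all three assertions from Balmer's Grothendieck--Neeman duality theorem \cite[3.3.~Thm.]{Balmer.2016}, applied to $f^{*}\coloneqq\Inf^{K_M}_{K_P}\colon\D(K_M)\to\D(K_P)$. First I would fix the notation of \opcit: the right adjoint is $f_{*}\coloneqq\RR\H^0(K_U,-)$, the further right adjoint is $f^{(1)}\coloneqq F^{K_M}_{K_P}$, which exists by Lemma~\ref{lem:F^M_P}, and $\omega_{f}\coloneqq f^{(1)}(\one)=\omega_{K_P}$ is the dualizing object. Then I would check the hypotheses of \opcit: that $\D(K_M)$ and $\D(K_P)$ are rigidly-compactly generated is Proposition~\ref{prop:rigid-compact}; that $\Inf^{K_M}_{K_P}$ is strongly monoidal and preserves small coproducts is clear, the latter because coproducts in $\D(\cdot)$ are computed on underlying complexes while $\Inf^{K_M}_{K_P}$ acts termwise; and that $f_{*}=\RR\H^0(K_U,-)$ preserves compact objects is precisely Proposition~\ref{prop:RH(U,-)-compact}. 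With these in place, \cite[3.3.~Thm.]{Balmer.2016} shows that $\Inf^{K_M}_{K_P}$ preserves small products and therefore admits a left adjoint $\LL_{K_U}$; alternatively, once product-preservation is known the existence of $\LL_{K_U}$ is a direct consequence of Corollary~\ref{cor:brown}.\ref{cor:brown-b}.

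It remains to identify $\LL_{K_U}$. I would take this from the same source: the left adjoint $f_{(1)}$ provided by \cite[3.3.~Thm.]{Balmer.2016} is naturally isomorphic to $f_{*}\bigl(\omega_{f}\otimes_k-\bigr)$, which unwinds to $\LL_{K_U}\cong\RR\H^0\bigl(K_U,\omega_{K_P}\otimes_k-\bigr)$. If one wants a self-contained check, combine the adjunction $\RR\H^0(K_U,-)\dashv F^{K_M}_{K_P}$ with Grothendieck duality (Corollary~\ref{cor:Grduality}, i.e.\ $F^{K_M}_{K_P}(X)\cong\omega_{K_P}\otimes_k\Inf^{K_M}_{K_P}(X)$) to get natural isomorphisms
\[
\Hom_{\D(K_M)}\bigl(\RR\H^0(K_U,\omega_{K_P}\otimes_kY),\,X\bigr)
\;\cong\;\Hom_{\D(K_P)}\bigl(\omega_{K_P}\otimes_kY,\,\omega_{K_P}\otimes_k\Inf^{K_M}_{K_P}(X)\bigr)
\]
for $Y\in\D(K_P)$, $X\in\D(K_M)$; the right-hand side is $\Hom_{\D(K_P)}\bigl(Y,\Inf^{K_M}_{K_P}(X)\bigr)$ once one knows that $\omega_{K_P}\otimes_k-$ is fully faithful, and then Yoneda identifies $\RR\H^0(K_U,\omega_{K_P}\otimes_k-)$ with the left adjoint $\LL_{K_U}$.

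The only genuinely non-formal input is Proposition~\ref{prop:RH(U,-)-compact}, which is already available, so I expect no real obstacle in the existence part. The delicate point, if one pursues the hands-on verification of the formula, is the full faithfulness of $\omega_{K_P}\otimes_k-$: this amounts to $\omega_{K_P}$ being $\otimes$-invertible --- which it is, since $\omega_{K_P}$ turns out to be $\one[\dim K_U]$ (the orientation character of $K_U$ being trivial because $K_M$ is pro-$p$ and $\Char k=p$) --- but if this identification is not yet available at the present stage, it is cleaner to just quote the formula for $f_{(1)}$ directly from \cite[3.3.~Thm.]{Balmer.2016}.
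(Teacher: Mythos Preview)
Your proposal is correct and follows essentially the same route as the paper: both feed Proposition~\ref{prop:rigid-compact} and Proposition~\ref{prop:RH(U,-)-compact} into \cite{Balmer.2016}, with the paper citing the constituent pieces (\cite[Lem.~3.1, Lem.~2.6(b), (3.10)]{Balmer.2016}) while you invoke the packaged statement \cite[3.3.~Thm.]{Balmer.2016}. Your caution about the hands-on verification is well placed, since the $\otimes$-invertibility of $\omega_{K_P}$ is only established afterwards in Proposition~\ref{prop:dualizing}.
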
 
\begin{proof} 
By Proposition~\ref{prop:RH(U,-)-compact} the adjunction $\Inf^{K_M}_{K_P}
\dashv \RR\H^0(K_U,\pholder)$ restricts to an adjunction on the full subcategories of
compact objects $\D(K_M)^{\cpt} \subseteq \D(K_M)$ and $\D(K_P)^{\cpt} \subseteq
\D(K_P)$. The argument in \cite[Lem.~3.1]{Balmer.2016} shows that the inflation
functor $\D(K_M)^{\cpt} \to \D(K_P)^{\cpt}$ admits a left adjoint,
and then \cite[Lem.~2.6(b)]{Balmer.2016} shows that $\Inf^{K_M}_{K_P}$ preserves
small products. By the
Corollary~\ref{cor:brown}.\ref{cor:brown-b} to Brown representability it follows
that the left adjoint $\LL_{K_U}$ exists. The isomorphism
\eqref{eq:left-adjoint-explicit} is the ur-Wirthm\"uller isomorphism
\cite[(3.10)]{Balmer.2016}.
\end{proof} 

To finish our discussion of $\LL_{K_U}$ we determine the dualizing complex
$\omega_{K_P}$ explicitly. 

\begin{prop}\label{prop:dualizing} 
One has $\omega_{K_P} \cong k[\dim K_U]$ in $\D(K_P)$. In particular,
$\omega_{K_P}$ is $\otimes$-invertible.
\end{prop}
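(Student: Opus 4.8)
The plan is to deduce the proposition from the ur-Wirthm\"uller isomorphism \eqref{eq:left-adjoint-explicit} by a degree count applied to the single object $\LL_{K_U}(\one)$, after first reducing to the case of a shift of $\one$.

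First I would invoke the machinery of \cite{Balmer.2016}: since $\RR\H^0(K_U,-)$ preserves compact objects (Proposition~\ref{prop:RH(U,-)-compact}) and both $\D(K_P)$ and $\D(K_M)$ are rigidly-compactly generated (Proposition~\ref{prop:rigid-compact}), we sit in the ur-Wirthm\"uller context of \cite[\S3]{Balmer.2016}, which in particular guarantees that $\omega_{K_P}$ is $\otimes$-invertible. I would then pin down the $\otimes$-invertible objects of $\D(K_P)$. Restriction $\Res^{K_P}_1$ to the trivial subgroup is symmetric monoidal and commutes with cohomology, and the $\otimes$-invertible objects of $\D(k)$ are precisely the shifts $k[n]$, $n\in\Z$; hence an invertible $L\in\D(K_P)$ is concentrated in a single cohomological degree $-n$ with $\H^{-n}(L)$ a one-dimensional, hence invertible, object of $\Rep_k(K_P)$, \ie a smooth character $K_P\to k^\times$. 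As $K_P$ is pro-$p$ and $\Char k=p$, every such character is trivial, so $L\cong k[n]$ for a unique $n$. Applying this to $\omega_{K_P}$ and substituting into \eqref{eq:left-adjoint-explicit} gives $\LL_{K_U}\cong\RR\H^0(K_U,-)[n]$, and in particular $\LL_{K_U}(\one)\cong\RR\H^0(K_U,k)[n]$.

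It then remains to show $n=\dim K_U$, and this is the only place where a little care is needed. For the bound $n\ge\dim K_U$ I would use that $\LL_{K_U}$ is right $t$-exact, being left adjoint to the exact (hence $t$-exact) functor $\Inf^{K_M}_{K_P}$; therefore $\LL_{K_U}(\one)$ has cohomology concentrated in degrees $\le 0$, whereas $\RR\H^0(K_U,k)$ has its top nonvanishing cohomology exactly in degree $\dim K_U$ because $K_U$ is a Poincar\'e group of dimension $\dim K_U$. For the reverse inequality $n\le\dim K_U$ I would compute, using the adjunction $\LL_{K_U}\dashv\Inf^{K_M}_{K_P}$,
$\Hom_{\D(K_M)}\bigl(\LL_{K_U}(\one),\one\bigr)\cong\Hom_{\D(K_P)}\bigl(\one,\Inf^{K_M}_{K_P}(\one)\bigr)=\Hom_{\D(K_P)}(\one,\one)=k\neq 0$;
since $\LL_{K_U}(\one)$ has cohomology in degrees $\le 0$, a nonzero morphism to $\one$ can exist only if $\H^0\bigl(\LL_{K_U}(\one)\bigr)=\H^n(K_U,k)\neq 0$, which forces $n\le\dim K_U$. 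Hence $n=\dim K_U$, so $\omega_{K_P}\cong k[\dim K_U]$, and the asserted $\otimes$-invertibility is then immediate.

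I expect the only real (and rather minor) obstacle to be this two-sided estimate on $n$: the naive alternative, computing $\omega_{K_P}=F^{K_M}_{K_P}(\one)$ by hand via the restriction compatibility of Lemma~\ref{lem:F^M_P} together with Poincar\'e duality, is noticeably more cumbersome, so the point is to extract $n$ cheaply from right $t$-exactness of $\LL_{K_U}$ and from the nonvanishing of $\Hom_{\D(K_M)}(\LL_{K_U}(\one),\one)$.
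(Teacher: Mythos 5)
Your argument has a genuine gap at its very first step: you assert that the machinery of \cite{Balmer.2016} already ``guarantees that $\omega_{K_P}$ is $\otimes$-invertible''. It does not. What Propositions~\ref{prop:rigid-compact} and~\ref{prop:RH(U,-)-compact} put you in is the \emph{Grothendieck--Neeman} situation of \opcit: the right adjoint $F^{K_M}_{K_P}$ exists, $F^{K_M}_{K_P}(X)\cong \omega_{K_P}\otimes_k\Inf^{K_M}_{K_P}(X)$, the left adjoint $\LL_{K_U}$ exists, and the ur-Wirthm\"uller formula \eqref{eq:left-adjoint-explicit} holds. Invertibility of the relative dualizing object is a strictly stronger condition --- it is condition (W4) of \cite[1.9.~Thm.]{Balmer.2016}, equivalent to the existence of the infinite tower of adjoints --- and the text states explicitly, right after Proposition~\ref{prop:dualizing}, that it is this proposition which \emph{verifies} (W4); Corollary~\ref{cor:tower} is a consequence, not an input. (A standard example where Grothendieck--Neeman duality holds but the dualizing object is not invertible: restriction along $k\to B$ for a finite-dimensional non-Gorenstein local $k$-algebra $B$.) So assuming invertibility at the outset assumes a large part of what is to be proved, and nothing established earlier in the paper supplies it.

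Granting invertibility, the rest of your argument is correct and genuinely different from the paper's: the reduction of invertible objects of $\D(K_P)$ to $\chi[n]$ with $\chi$ a smooth character, trivial because $K_P$ is pro-$p$ and $\Char(k)=p$, is fine; the bound $n\ge\dim K_U$ from right $t$-exactness of $\LL_{K_U}$ and the top-degree cohomology of the Poincar\'e group $K_U$ is correct; and the bound $n\le\dim K_U$ from $\Hom_{\D(K_M)}(\LL_{K_U}\one,\one)\cong\H^0(K_P,k)\neq0$ together with $\Hom_{\D}(X,Y)\cong\Hom(\H^0X,\H^0Y)$ for $X\in\D^{\le0}$ and $Y\in\D^{\ge0}$ is also correct. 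To repair the proof you must establish invertibility independently, and the paper does this by actually computing $\omega_{K_P}$: transitivity of $F$ (Lemma~\ref{lem:F^M_P-transitive}) applied to $K_M\to\{1\}$ together with Grothendieck duality (Corollary~\ref{cor:Grduality}) reduces everything to showing $F^1_K(\one)\cong k[\dim K]$ for a torsion-free compact $K$, which is then done via Proposition~\ref{prop:smooth-derived}, the adjunction \eqref{eq:F^M_P-adjunction}, and the fact that the corestriction maps in degree $\dim K$ are isomorphisms (Poincar\'e duality). Some such computation --- the step you hoped to avoid --- appears to be unavoidable.
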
 
\begin{proof} 
We apply Lemma~\ref{lem:F^M_P-transitive} to the surjection $f'\colon K_M\to
\{1\}$, and together with Corollary~\ref{cor:Grduality} we obtain isomorphisms
\[
F^1_{K_P}(\one) \cong F^{K_M}_{K_P}\bigl(F^{1}_{K_M}(\one)\bigr) \cong
\omega_{K_P} \otimes_k \Inf^{K_M}_{K_P}\bigl(F^1_{K_M}(\one)\bigr).
\]
Hence, if we can show $F^1_{K_P}(\one) \cong k[\dim K_P]$ and $F^1_{K_M}(\one)
\cong k[\dim K_M]$, the assertion follows from $\dim K_P = \dim K_M + \dim K_U$,
cf.~\cite[4.8 Thm.]{DDMS99}.

It remains to prove that $F^1_K(\one) \cong k[\dim K]$ provided $K$ is a
torsion-free $p$-adic Lie group.

We have the following isomorphisms in $\D(k)$:
\begin{align*}
\Res^{K}_1F^1_{K}(\one) &\cong \varinjlim_{K'\le K}
\RHom_{\Rep_k(K')}\bigl(\one, \Res^{K}_{K'}F^1_{K}(\one)\bigr) & &
\text{(Prop.~\ref{prop:smooth-derived})}\\
&\cong \varinjlim_{K'\le K} \RHom_{\Rep_k(K')}\bigl(\one,
F^1_{K'}(\one)\bigr) & & \text{(Lem.~\ref{lem:F^M_P})}\\
&\cong \varinjlim_{K'\le K} \RHom_{\Vect_k}\bigl(
\RR\H^0(K',\one), \one\bigr) & & \text{(by \eqref{eq:F^M_P-adjunction}).}
\end{align*}
Passing to the $i$-th cohomology yields isomorphisms
\begin{align*}
\H^i\bigl(F^1_{K}(\one)\bigr) &\cong \varinjlim_{K'\le K}
\Hom_{\D(k)}(\RR\H^0(K',\one),\one[i]\bigr)\\
&\cong \varinjlim_{K'\le K} \Hom_k\bigl(\H^{-i}(K',\one), k\bigr).
\end{align*}
Here, the transition maps are induced by the corestriction
\[
\cores\colon \H^{-i}(K'',\one) \to \H^{-i}(K',\one),
\] 
for $K''\subseteq K'$. Now, (5) in the proof of \cite[Ch.~I,
Prop.~30]{Serre.2013} implies $\H^i\bigl(F^1_{K}(\one)\bigr) = 0$ provided
$i\neq -\dim K$. By (4) in the proof of \loccit\ the above corestriction maps are
isomorphisms for $i = -\dim K$. Therefore, we have $F^1_{K}(\one) \cong
\H^{\dim K}(K,\one)^*[\dim K]$. Since $\H^{\dim K}(K,\one)$ is one-dimensional,
it follows that $F^1_{K}(\one) \cong \delta[\dim K]$, for some smooth character
$\delta \colon K\to k^\times$. Since $K$ is a pro-$p$ group and $\Char k =
p$, it follows that $\delta$ is the trivial character, which proves
$F^1_{K}(\one) \cong k[\dim K]$. 
\end{proof} 

Proposition~\ref{prop:dualizing} verifies condition (W4) in \cite[1.9.
Thm.]{Balmer.2016}. Precisely, this means:
\begin{cor}\label{cor:tower} 
Put $f^{(n)} \coloneqq \omega_{K_P}^{\otimes n}\otimes_k \Inf^{K_M}_{K_P}$ and
$f_{(n)} \coloneqq \RR\H^0\bigl(K_U, \omega_{K_P}^{\otimes n}\otimes_k \pholder\bigr)$,
for $n\in\Z$. There is an infinite chain of adjunctions
\[
\dotsb \dashv f^{(-1)} \dashv f_{(1)} \dashv f^{(0)} \dashv f_{(0)} \dashv
f^{(1)} \dashv f_{(-1)} \dashv \dotsb 
\]
\end{cor}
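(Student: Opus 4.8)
The plan is to obtain the entire chain formally from two adjunctions that are already in hand together with the fact, supplied by Proposition~\ref{prop:dualizing}, that $\omega_{K_P}\otimes_k-$ is an autoequivalence. Recall first that on the derived level $\Inf^{K_M}_{K_P} = f^{(0)}$ is left adjoint to $\RR\H^0(K_U,-) = f_{(0)}$, and that by Lemma~\ref{lem:F^M_P} together with Corollary~\ref{cor:Grduality} the functor $f_{(0)}$ is in turn left adjoint to $F^{K_M}_{K_P}\cong\omega_{K_P}\otimes_k\Inf^{K_M}_{K_P} = f^{(1)}$. So we have the two seed adjunctions $f^{(0)}\dashv f_{(0)}$ and $f_{(0)}\dashv f^{(1)}$.

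Next I would set $T\coloneqq\omega_{K_P}\otimes_k-\colon\D(K_P)\to\D(K_P)$. By Proposition~\ref{prop:dualizing} we have $\omega_{K_P}\cong k[\dim K_U]$, so $T$ is (isomorphic to) the shift functor $[\dim K_U]$; in particular $T$ is a triangulated autoequivalence, $T^m\cong[m\dim K_U]$ is quasi-inverse to $T^{-m}$, and hence $T^m\dashv T^{-m}$ for every $m\in\Z$. Moreover, by the associativity and commutativity constraints of $\otimes_k$ there are natural identifications $f^{(m)} = T^m\circ f^{(0)}$, $f^{(1-m)} = T^{-m}\circ f^{(1)}$, $f_{(m)} = f_{(0)}\circ T^m$, and $f_{(-m)} = f_{(0)}\circ T^{-m}$, for all $m\in\Z$.

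Now compose adjunctions using Lemma~\ref{lem:adjoint-composition}. From $f^{(0)}\dashv f_{(0)}$ and $T^m\dashv T^{-m}$ one gets $f^{(m)} = T^m\circ f^{(0)}\dashv f_{(0)}\circ T^{-m} = f_{(-m)}$ for all $m\in\Z$. From $f_{(0)}\dashv f^{(1)}$ and $T^m\dashv T^{-m}$ one gets $f_{(m)} = f_{(0)}\circ T^m\dashv T^{-m}\circ f^{(1)} = f^{(1-m)}$ for all $m\in\Z$, i.e. $f_{(-m)}\dashv f^{(m+1)}$. Reading these two families together — placing $f^{(m)}$ at position $2m$ and $f_{(-m)}$ at position $2m+1$ in a $\Z$-indexed list — gives exactly $\dotsb\dashv f^{(-1)}\dashv f_{(1)}\dashv f^{(0)}\dashv f_{(0)}\dashv f^{(1)}\dashv f_{(-1)}\dashv\dotsb$, which is the assertion. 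As a consistency check, the instance $f_{(1)}\dashv f^{(0)}$ recovers the adjunction $\LL_{K_U}\dashv\Inf^{K_M}_{K_P}$ of Corollary~\ref{cor:Inf-product}, since $f_{(1)} = \RR\H^0(K_U,\omega_{K_P}\otimes_k-)\cong\LL_{K_U}$.

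I do not expect a genuine obstacle: all of the substance lives in Proposition~\ref{prop:dualizing}, Corollary~\ref{cor:Grduality}, and the earlier verification that $\RR\H^0(K_U,-)$ admits both a left and a right adjoint. The only things to watch are the elementary adjunction $T^m\dashv T^{-m}$ (immediate once $\omega_{K_P}$ is known to be $\otimes$-invertible, indeed a shift) and the index bookkeeping needed to see that the two derived families of adjunctions interleave precisely into the displayed bi-infinite chain.
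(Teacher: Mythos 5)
Your argument is correct, and every ingredient you use (the adjunction $\Inf^{K_M}_{K_P}\dashv\RR\H^0(K_U,-)$, the further adjunction $\RR\H^0(K_U,-)\dashv F^{K_M}_{K_P}$ from Lemma~\ref{lem:F^M_P}, Grothendieck duality identifying $F^{K_M}_{K_P}$ with $f^{(1)}$, and the invertibility of $\omega_{K_P}$ from Proposition~\ref{prop:dualizing}) is available at this point in the paper. The route is, however, different from the paper's: the paper does not argue directly at all, but observes that Proposition~\ref{prop:dualizing} verifies Balmer's condition (W4) and then quotes \cite[1.9.~Thm.]{Balmer.2016}, of which the corollary is a literal instance. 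What you have written is essentially the proof of that general theorem specialized to the present situation: conjugating the two seed adjunctions $f^{(0)}\dashv f_{(0)}\dashv f^{(1)}$ by powers of the autoequivalence $T=\omega_{K_P}\otimes_k-$ and composing via Lemma~\ref{lem:adjoint-composition}. Your version buys self-containedness (no black box), at the cost of redoing an argument the paper outsources; the index bookkeeping $f^{(m)}\dashv f_{(-m)}\dashv f^{(m+1)}$ checks out and interleaves into the displayed chain exactly as you claim.
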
 
\subsection{The general case}\label{subsec:general} 
In this section we fix an open surjective morphism $f\colon
S\longtwoheadrightarrow T$ of $p$-adic monoids with kernel $U$. The functor
$\Inf^T_S\colon \D(T)\to \D(S)$ is strictly monoidal and admits the right
adjoint $\RR\Pi_U$, cf.~Proposition~\ref{prop:Inf-adjoints} and
Lemma~\ref{lem:K-injectives}.\ref{lem:K-injectives-c}. The goal of this
section is to show that $\Inf^T_S$ also admits a left adjoint and study its
properties. We make the following observation:

\begin{prop}\label{prop:RPi-hom} 
For all $X\in \D(S)$ and $Y\in \D(T)$ the natural map
\[
\hom_{\D(T)}\bigl(Y, \RR\Pi_U(X)\bigr) \xrightarrow{\cong} \RR\Pi_U
\bigl(\hom_{\D(S)}(\Inf^T_SY,X)\bigr)
\]
is an isomorphism in $\D(T)$.
\end{prop}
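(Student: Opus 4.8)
The plan is to reduce this statement — a projection-type formula for the internal-Hom and the right adjoint $\RR\Pi_U$ of $\Inf^T_S$ — to the compact case treated in \S\ref{subsec:compact}, where it is an instance of Balmer's formalism. First I would recall that restriction to a compact open subgroup is conservative and exact, and that it commutes with $\Inf$, with the internal Homs, and (after choosing subgroups compatibly) with the right adjoint $\RR\Pi_U$; indeed $S$ contains an open subgroup $S_0$ which is a $p$-adic Lie group, and by shrinking we may assume $S_0$ is torsion-free, and then $T_0 \coloneqq f(S_0)$ is a torsion-free compact $p$-adic Lie group with kernel $U_0 \coloneqq U\cap S_0$. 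The key compatibility to establish is that $\Res^T_{T_0}\circ \RR\Pi_U \cong \RR\H^0(U_0,-)\circ \Res^S_{S_0}$ as functors $\D(S)\to \D(T_0)$; on the level of underived functors this is the statement that $\Pi_U(V)$ restricted to $T_0$ is $\H^0(U_0,V)$, which holds because $U_0 = f^{-1}(T_0')\cap U$ behaves well, and it passes to the derived level by the usual $\K$-injective argument (Lemma~\ref{lem:K-injectives}), using that $\Inf$ and $\Res$ have exact left adjoints. Granting this, and the analogous (easy) compatibilities $\Res\circ \hom_{\D(S)}(\Inf(-),-) \cong \hom_{\D(S_0)}(\Inf(-),\Res(-))$ coming from the fact that $-\otimes_k-$ and its adjoint are computed termwise, the proposed map becomes, after applying $\Res^T_{T_0}$, exactly the isomorphism of Lemma~\ref{lem:RH(U,-)-projection-formula} (the projection formula in the compact case, which in turn is Balmer's \cite[2.15.~Prop.]{Balmer.2016}).

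More concretely, the steps would be: (1) construct the natural map in the statement — it is the right mate, under the adjunction $\Inf^T_S \dashv \RR\Pi_U$ and the closed structure, of the obvious isomorphism $\Inf^T_S\bigl(\hom_{\D(T)}(Y,X')\bigr)\otimes_k \Inf^T_S(Y) \cong \dots$, or more simply it is adjoint to the composite using the counit $\Inf^T_S\RR\Pi_U \To \id$ together with the evaluation map; (2) reduce to checking it is an isomorphism after applying the conservative functor $\Res^T_{T_0}$; (3) use the compatibility of $\Res$ with $\RR\Pi_U$, $\Inf$, and $\hom$ to rewrite both sides in terms of $\D(S_0)$ and $\D(T_0)$, matching the map with the compact-case projection formula; (4) invoke Lemma~\ref{lem:RH(U,-)-projection-formula} (equivalently Corollary~\ref{cor:Grduality} / \cite[2.15.~Prop.]{Balmer.2016}). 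One should take care that the isomorphism in (3) is compatible with the natural maps, i.e. that the square relating the mate of the ambient isomorphism to the mate in the compact case commutes; this is a routine application of the naturality of mates (Proposition~\ref{prop:mates}, Example~\ref{ex:mates-vertical}).

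The main obstacle I expect is step (3) — precisely, verifying that $\Res^T_{T_0}\circ \RR\Pi_U \cong \RR\H^0(U_0,-)\circ \Res^S_{S_0}$ compatibly with all the structure maps. On the underived level $\Pi_U(V) = \Hom_{k[S]}(k[T],V)^{\sm}$, and its restriction to $T_0$ is a subspace of $\H^0(U,V) \subseteq \H^0(U_0,V)$; one must check the restriction map $\Pi_U(V)|_{T_0}\to \H^0(U_0,V)$ is an isomorphism of smooth $T_0$-representations, i.e. that every $U_0$-invariant smooth vector extends uniquely to an $S$-equivariant function $k[T]\to V$. This is where the hypothesis that $f$ is open and surjective with $S_0$ surjecting onto $T_0$ is used: $f^{-1}(f(s))\cap$ (smoothness locus) is controlled by $U$, and smoothness forces the function to be determined by its value on a compact open, which is exactly an $\H^0(U_0,-)$-datum. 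Once this underived identification is in place, passing to the derived functors is formal by Lemma~\ref{lem:K-injectives}, since $\Inf^T_S$ and $\Res^S_{S_0}$ are exact with exact left adjoints (compact induction), and so both sides compute the right-derived functor of the same left-exact functor $\Rep_k(S)\to \Rep_k(T_0)$. The remaining compatibilities with $\otimes_k$ and $\hom$ are genuinely formal and would be dispatched with a reference to the earlier "extended example" paragraph and Proposition~\ref{prop:mates}.
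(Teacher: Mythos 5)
Your proposal heads in the wrong direction: the statement is purely formal and needs none of the compactness, rigidity, or reduction machinery you invoke. The paper's proof is two lines: since $\Inf^T_S$ is strictly monoidal, there is a natural isomorphism $(\Inf^T_SY\otimes_k-)\circ\Inf^T_S\xRightarrow{\cong}\Inf^T_S\circ(Y\otimes_k-)$ of functors $\D(T)\to\D(S)$, and passing to right adjoints (Example~\ref{ex:mates}) turns this into exactly the asserted isomorphism $\hom_{\D(T)}(Y,-)\circ\RR\Pi_U\xRightarrow{\cong}\RR\Pi_U\circ\hom_{\D(S)}(\Inf^T_SY,-)$. No hypotheses beyond ``strong monoidal functor between closed monoidal categories with a right adjoint'' are used. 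You have conflated this (always formal) internal-Hom compatibility with the projection formula $X\otimes_k f_*(Y)\cong f_*(f^*(X)\otimes_kY)$, which is a genuinely different statement: the latter is the one that requires rigid-compact generation and Balmer's \cite[2.15.~Prop.]{Balmer.2016}, and Lemma~\ref{lem:RH(U,-)-projection-formula} is that tensor formula, not the Hom formula you are asked to prove. Your step (4) therefore invokes the wrong lemma even in the compact case.

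Beyond being unnecessarily heavy, your reduction contains a step that fails. The compatibility in your step (3), $\Res^T_{T_0}\circ\RR\Pi_U\cong\RR\H^0(U_0,-)\circ\Res^S_{S_0}$, is false for general $p$-adic monoids. Already on the underived level the paper's Example following Proposition~\ref{prop:Inf-adjoints} exhibits (for $S=P^+\twoheadrightarrow T=M^+$ with kernel $K_U$) that $\ev_1\colon\Pi_{K_U}(V)\hookrightarrow\H^0(K_U,V)$ is \emph{not} surjective: an element of $\Pi_U(V)$ is determined by its value at $1$, but that value must satisfy the extra condition that all its $S$-translates remain $U$-invariant, which is strictly stronger than $U_0$-invariance when $f^{-1}(f(s))\neq sU$. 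The compatibility you want holds only under the hypothesis that $f$ induces a bijection $S/K_S\cong T/K_T$ (see the Remark and Warning after Proposition~\ref{prop:LU-Res}), which is exactly what fails for positive monoids. A similar caveat applies to your use of $\Res\circ\hom\cong\hom\circ\Res$: Lemma~\ref{lem:Res-hom} is proved only for groups, via \eqref{eq:H(RHom)-group}. The moral is that $\RR\Pi_U$ is poorly behaved under restriction in the monoid setting, which is precisely why the paper avoids it and argues by mates instead.
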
 
\begin{proof} 
Let $W\in \D(T)$. Since $\Inf^T_S$ is strictly monoidal, the natural
transformation
\[
(\Inf^T_SY\otimes_k\pholder)\circ \Inf^T_S \xRightarrow{\cong} \Inf^T_S \circ
(Y\otimes_k\pholder)
\]
is an isomorphism of functors $\D(T)\to \D(S)$. Passing to the right adjoints,
see Example~\ref{ex:mates}, shows that the natural map
\[
\hom_{\D(T)}(Y,\pholder)\circ \RR\Pi_U \xRightarrow{\cong} \RR\Pi_U\circ
\hom_{\D(S)}(\Inf^T_SY,\pholder)
\]
is an isomorphism of functors $\D(S)\to \D(T)$. 
\end{proof} 

\begin{lem}\label{lem:Res-conservative} 
Let $K_S\subseteq S$ be an open subgroup. The restriction functor
$\Res^S_{K_S}\colon \D(S)\to \D(K_S)$ is conservative and preserves small
products and small coproducts.
\end{lem}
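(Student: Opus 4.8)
The plan is to treat the three assertions separately: conservativity will come from the compact generation of $\D(S)$, while preservation of small products and coproducts will follow once $\Res^S_{K_S}$ is recognised as simultaneously a right and a left adjoint on the derived categories.

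First I would note that, since $K_S$ is a \emph{subgroup} of $S$, the monoid $S$ is partitioned into the right cosets $sK_S$, so that $k[S]$ is free as a right $k[K_S]$-module and $\ind_{K_S}^S = k[S]\otimes_{k[K_S]}(-)$ is exact. Applying Lemma~\ref{lem:K-injectives}.\ref{lem:K-injectives-c} to the adjunction $\ind_{K_S}^S\dashv \Res^S_{K_S}$ of Lemma~\ref{lem:FrobeniusII} then upgrades it to an adjunction on the derived categories, exhibiting $\Res^S_{K_S}\colon \D(S)\to \D(K_S)$ as a right adjoint; by Proposition~\ref{prop:adjoint-limit} it therefore preserves all small limits, in particular small products. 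Dually, $\Res^S_{K_S}$ is itself exact, so Lemma~\ref{lem:K-injectives}.\ref{lem:K-injectives-c} applied to the adjunction $\Res^S_{K_S}\dashv \Ind_{K_S}^S$ of Lemma~\ref{lem:FrobeniusI} shows that $\RR\Ind_{K_S}^S$ is a right adjoint of $\Res^S_{K_S}$ on the derived categories. Hence $\Res^S_{K_S}$ is also a left adjoint and preserves all small colimits, in particular small coproducts.

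For conservativity I would use that $\Res^S_{K_S}$ is triangulated, so that it suffices to show $\Res^S_{K_S}X\cong 0$ implies $X\cong 0$ for $X\in\D(S)$. Pick a torsion-free compact open subgroup $K\subseteq K_S$ of $S$ (possible since $K_S$ is open in $S$). By Proposition~\ref{prop:compactlygenerated} the object $\ind_K^S\one$ compactly generates $\D(S)$, so it is enough to verify that $\Hom_{\D(S)}(\ind_K^S\one, X[n]) = 0$ for every $n\in\Z$. Using once more that $\ind_K^S$ is exact (as $K$ is a group) and hence that the adjunction $\ind_K^S\dashv \Res^S_K$ descends to the derived categories, this group is isomorphic to $\Hom_{\D(K)}(\one, \Res^{K_S}_K(\Res^S_{K_S}X)[n])$, which vanishes by the assumption $\Res^S_{K_S}X\cong 0$; therefore $X\cong 0$. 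I do not anticipate a serious difficulty: the only points needing care are the exactness of the compact-induction functors — which, as above, hinges on $K_S$ (resp.\ $K$) being a subgroup rather than merely a submonoid — and the routine verification, via Lemma~\ref{lem:K-injectives}, that the abelian adjunctions of \S\ref{subsec:smooth} descend to $\D$.
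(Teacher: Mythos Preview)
Your proof is correct. The argument for preservation of small products and coproducts matches the paper's almost verbatim: both exhibit $\Res^S_{K_S}$ as simultaneously a left and a right adjoint on the derived level via Lemmas~\ref{lem:FrobeniusI}, \ref{lem:FrobeniusII}, and~\ref{lem:K-injectives}, and then invoke Proposition~\ref{prop:adjoint-limit}.

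For conservativity, however, you take a genuinely different route. The paper simply observes that a morphism $\varphi$ in $\Chain(S)$ is a quasi-isomorphism if and only if $\Res^S_{K_S}(\varphi)$ is, because the underlying complex of $k$-vector spaces is unchanged by restriction; this immediately gives conservativity on $\D(S)$. Your argument instead goes through compact generation (Proposition~\ref{prop:compactlygenerated}), reducing to the vanishing of $\Hom_{\D(S)}(\ind_K^S\one, X[n])$ for a torsion-free compact open $K\subseteq K_S$. This is perfectly valid, but it uses more machinery and, strictly speaking, relies on the ambient hypotheses that $S$ is a $p$-adic monoid and $\Char k = p$ (in force in \S\ref{sec:derivedinf}). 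The paper's direct argument needs none of this and would work for an arbitrary locally profinite monoid; yours has the virtue of illustrating how compact generation detects conservativity, but is overkill here.
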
 
\begin{proof} 
Note that if $\varphi$ is a morphism in $\Chain(S)$, then
$\Res^S_{K_S}(\varphi)$ is a quasi-isomorphism if and only if $\varphi$ is a
quasi-isomorphism. This implies that $\Res^S_{K_S}\colon \D(S)\to \D(K_S)$ is
conservative.

Since $K_S\subseteq S$ is an open subgroup, the functor
$\Res^S_{K_S}\colon \Rep_k(S)\to \Rep_k(K_S)$ admits an exact left adjoint
$\ind_{K_S}^S$ and a left exact right adjoint $\Ind_{K_S}^S$, by
Lemmas~\ref{lem:FrobeniusII} and~\ref{lem:FrobeniusI}, respectively. But then
the induced functors $\ind_{K_S}^S, \RInd_{K_S}^S\colon \D(K_S)\to \D(S)$ are
left adjoint, resp.\ right adjoint, to $\Res^S_{K_S}$. By
Proposition~\ref{prop:adjoint-limit}, $\Res^S_{K_S}$ preserves small products
and small coproducts.
\end{proof} 

\begin{thm}\label{thm:Inf-prod-general} 
The functor $\Inf^T_S\colon \D(T)\to \D(S)$ preserves small products. In
particular, it admits a left adjoint, denoted $\LL_U$.
\end{thm}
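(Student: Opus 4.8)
The plan is to reduce the statement about product-preservation for $\Inf^T_S$ to the already-proved compact case (Corollary~\ref{cor:Inf-product}), using that restriction to a compact open subgroup is conservative and product-preserving. First I would fix a torsion-free compact open pro-$p$ subgroup $K_S\subseteq S$; since $f$ is open, $K_T\coloneqq f(K_S)$ is a compact open subgroup of $T$ and, after shrinking $K_S$ if necessary, $K_T$ is torsion-free as well and $K_U\coloneqq K_S\cap U$ is the kernel of $f|_{K_S}\colon K_S\twoheadrightarrow K_T$. The key compatibility is that $\Res^S_{K_S}\circ\Inf^T_S\cong \Inf^{K_T}_{K_S}\circ\Res^T_{K_T}$ as functors $\D(T)\to\D(K_S)$; this is immediate on the level of representations and passes to the derived categories since all four functors are termwise-exact.

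Granting that square, the argument is short. Let $\{Y_i\}_{i\in I}$ be a family in $\D(T)$. Because $\Res^S_{K_S}$ is conservative (Lemma~\ref{lem:Res-conservative}), it suffices to check that the canonical map $\Inf^T_S(\prod_i Y_i)\to \prod_i \Inf^T_S(Y_i)$ becomes an isomorphism after applying $\Res^S_{K_S}$. Using that both $\Res^S_{K_S}$ and $\Res^T_{K_T}$ preserve small products (again Lemma~\ref{lem:Res-conservative}) and the commutative square above, this map is identified with the canonical map $\Inf^{K_T}_{K_S}(\prod_i \Res^T_{K_T}Y_i)\to \prod_i \Inf^{K_T}_{K_S}(\Res^T_{K_T}Y_i)$, which is an isomorphism by Corollary~\ref{cor:Inf-product} applied to the surjection $f|_{K_S}\colon K_S\twoheadrightarrow K_T$ of torsion-free compact $p$-adic Lie groups. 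Hence $\Inf^T_S$ preserves small products, and then Corollary~\ref{cor:brown}.\ref{cor:brown-b} (Brown representability) produces the triangulated left adjoint $\LL_U$.

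One technical point worth spelling out: I should make sure that the canonical comparison maps for products are genuinely compatible with the natural isomorphism $\Res^S_{K_S}\Inf^T_S\cong \Inf^{K_T}_{K_S}\Res^T_{K_T}$ — i.e.\ that the square relating the two comparison maps commutes. This is a formality about how the product-comparison map of a composite of functors decomposes, but it is the kind of thing one must state rather than wave at. The existence of a \emph{torsion-free} $K_T$ downstairs also deserves a sentence: one chooses $K_S$ small enough that $f(K_S)$ is torsion-free, which is possible because $T$ has arbitrarily small torsion-free compact open subgroups and $f$ is open.

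The main obstacle, such as it is, is purely bookkeeping: verifying the compatibility of the product-comparison morphisms with the base-change isomorphism $\Res^S_{K_S}\Inf^T_S\xRightarrow{\cong}\Inf^{K_T}_{K_S}\Res^T_{K_T}$, and confirming that this latter isomorphism really does hold at the derived level (which is clear here because every functor involved is the termwise extension of an exact functor on abelian categories, so no $\K$-injective resolutions intervene). There is no new homological input beyond the compact case; all the substance was already absorbed into Proposition~\ref{prop:RH(U,-)-compact} and Corollary~\ref{cor:Inf-product}.
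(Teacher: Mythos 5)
Your proposal is correct and is essentially identical to the paper's proof: the same choice of a torsion-free compact open $K_S$ with $K_T=f(K_S)$ torsion-free, the same base-change square $\Res^S_{K_S}\Inf^T_S\cong \Inf^{K_T}_{K_S}\Res^T_{K_T}$, the same use of conservativity and product-preservation of restriction (Lemma~\ref{lem:Res-conservative}) to reduce to Corollary~\ref{cor:Inf-product}, and the same appeal to Brown representability for the left adjoint. The compatibility of the product-comparison maps that you flag is exactly the commutative diagram the paper writes down.
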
 
\begin{proof} 
Let $K_S \subseteq S$ be an open, compact, torsion-free $p$-adic Lie group such
that $K_T\coloneqq f(K_S) \subseteq T$ is open and torsion-free. 

Let $I$ be a set, and take $Y_i\in \D(T)$, for $i\in I$. We show that the
natural map
\[
\alpha_S\colon \Inf^T_S \prod_{i\in I}Y_i \to \prod_{i\in I} \Inf^T_SY_i.
\]
is an isomorphism in $\D(S)$. Note that $\Res^S_{K_S}\Inf^T_S \cong
\Inf^{K_T}_{K_S}\Res^T_{K_T}$ as functors $\D(T)\to \D(K_S)$. By
Lemma~\ref{lem:Res-conservative} the functors $\Res^S_{K_S}$ and $\Res^T_{K_T}$
preserve small products. Now, the diagram
\[
\begin{tikzcd}[column sep=5em,row sep=3em]
\Res^S_{K_S} \Inf^T_S \prod_{i\in I}Y_i \ar[r,"\Res^S_{K_S}(\alpha_S)"]
\ar[d,"\cong"'] & \Res^S_{K_S}\prod_{i\in I}\Inf^T_SY_i \ar[d,"\cong"]\\
\Inf^{K_T}_{K_S} \prod_{i\in I} \Res^T_{K_T}Y_i \ar[r,"\alpha_{K_S}"'] &
\prod_{i\in I} \Inf^{K_T}_{K_S}\Res^S_{K_S}Y_i
\end{tikzcd}
\]
is commutative. By Corollary~\ref{cor:Inf-product} the bottom map is an
isomorphism in $\D(K_S)$. Therefore, the top map $\Res^S_{K_S}(\alpha_S)$ is an
isomorphism in $\D(K_S)$. By Lemma~\ref{lem:Res-conservative} again,
$\Res^S_{K_S}$ is conservative, which shows that $\alpha_S$ is an isomorphism
in $\D(S)$. Hence, $\Inf^T_S$ preserves small products. By the
Corollary~\ref{cor:brown}.\ref{cor:brown-b} of Brown representability, it
follows that $\Inf^T_S$ admits a left adjoint.
\end{proof} 

In fact, the adjunction $\LL_U\dashv \Inf^T_S$ holds on the level of $\RHom$
complexes:
\begin{cor}\label{cor:RHom-LU-Inf} 
There is a natural isomorphism
\begin{equation}\label{eq:RHom-LU-Inf}
\RHom_{\Rep_k(T)}\bigl(\LL_U(X),Y\bigr) \xrightarrow{\cong}
\RHom_{\Rep_k(S)}\bigl(X,\Inf^T_S Y\bigr)\qquad \text{in $\D(k)$.}
\end{equation}
\end{cor}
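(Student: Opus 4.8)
The plan is to promote the adjunction bijection of Theorem~\ref{thm:Inf-prod-general} to a morphism between the two $\RHom$-complexes, and then to check that it is a quasi-isomorphism one cohomology group at a time.

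First I would construct the comparison morphism. Since $\Inf^T_S\colon\Rep_k(T)\to\Rep_k(S)$ is exact, right-deriving its action on $\Hom$-complexes yields, for all $A,B\in\D(T)$, a natural morphism $\RHom_{\Rep_k(T)}(A,B)\to\RHom_{\Rep_k(S)}\bigl(\Inf^T_S A,\Inf^T_S B\bigr)$ in $\D(k)$ whose effect on $\H^n$ — recall $\H^n\RHom_{\Rep_k(T)}(A,B)=\Hom_{\D(T)}(A,B[n])$ — is $g\mapsto\Inf^T_S(g)$. Concretely: choose a K-injective resolution $B\xrightarrow{\qis}I$ in $\K(T)$, apply $\Inf^T_S$ termwise to $\Hom^\bullet_{\Rep_k(T)}(A,I)$, and postcompose along a K-injective resolution $\Inf^T_S I\xrightarrow{\qis}J$ in $\K(S)$. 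Precomposing with the unit $\eta_X\colon X\to\Inf^T_S\LL_U(X)$ of the adjunction $\LL_U\dashv\Inf^T_S$ — i.e.\ applying the functor $\RHom_{\Rep_k(S)}(-,\Inf^T_S Y)\colon\D(S)^{\op}\to\D(k)$ to $\eta_X$ — I obtain the morphism $\phi_{X,Y}$ as the composite
\begin{align*}
\RHom_{\Rep_k(T)}\bigl(\LL_U X,Y\bigr)&\xrightarrow{\Inf^T_S}\RHom_{\Rep_k(S)}\bigl(\Inf^T_S\LL_U X,\Inf^T_S Y\bigr)\\
&\xrightarrow{(\eta_X)^*}\RHom_{\Rep_k(S)}\bigl(X,\Inf^T_S Y\bigr).
\end{align*}
It is natural in $X\in\D(S)$ and $Y\in\D(T)$ by construction.

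Next I would verify that $\phi_{X,Y}$ is an isomorphism in $\D(k)$, for which it is enough that each $\H^n(\phi_{X,Y})$ be bijective. Under the identifications $\H^n\RHom_{\Rep_k(T)}(\LL_U X,Y)=\Hom_{\D(T)}(\LL_U X,Y[n])$, $\H^n\RHom_{\Rep_k(S)}(X,\Inf^T_S Y)=\Hom_{\D(S)}(X,(\Inf^T_S Y)[n])$, and $(\Inf^T_S Y)[n]=\Inf^T_S(Y[n])$, the computation in the previous step shows that $\H^n(\phi_{X,Y})$ carries $g\colon\LL_U X\to Y[n]$ to $\Inf^T_S(g)\circ\eta_X$. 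By the description of the adjunction bijection in terms of the unit (Lemma~\ref{lem:adjoint-defn} and its proof), this is precisely the isomorphism $\Hom_{\D(T)}(\LL_U X,Y[n])\xrightarrow{\cong}\Hom_{\D(S)}(X,\Inf^T_S(Y[n]))$ furnished by $\LL_U\dashv\Inf^T_S$. Hence $\phi_{X,Y}$ is a quasi-isomorphism.

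The only step that is not purely formal is setting up the $\Inf^T_S$-action on $\RHom$-complexes: the point to watch is that $\Inf^T_S$ need not preserve K-injectivity — its underived left adjoint $\LL^0_U$ is not exact, so Lemma~\ref{lem:K-injectives} does not apply — which is exactly why one routes through the auxiliary resolution $\Inf^T_S I\xrightarrow{\qis}J$; granting that, the chain-level naturality and the identification of the induced map on cohomology are routine. (Alternatively one could combine \eqref{eq:RHom-iHom} with the isomorphism $\RR\H^0(S,-)\cong\RR\H^0(T,-)\circ\RR\Pi_U$ — valid since $\K(\Pi_U)$ preserves K-injectivity by Lemma~\ref{lem:K-injectives}.\ref{lem:K-injectives-a}, $\Inf^T_S$ being exact — to reduce \eqref{eq:RHom-LU-Inf} to a natural isomorphism $\hom_{\D(T)}(\LL_U X,Y)\cong\RR\Pi_U\,\hom_{\D(S)}(X,\Inf^T_S Y)$ in $\D(T)$; but establishing the latter needs a projection formula for $\LL_U$, which is developed only later, so I would keep to the direct argument here.)
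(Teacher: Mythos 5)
Your proposal is correct and follows essentially the same route as the paper: both construct the comparison map as the morphism induced by the exact functor $\Inf^T_S$ on $\RHom$-complexes followed by precomposition with the unit $\eta_X$, and both verify it is a quasi-isomorphism by identifying $\H^n$ of the composite with the adjunction bijection $\Hom_{\D(T)}(\LL_UX,Y[n])\cong\Hom_{\D(S)}(X,\Inf^T_SY[n])$. Your extra care in routing through an auxiliary K-injective resolution of $\Inf^T_SI$ (since $\Inf^T_S$ need not preserve K-injectivity) is a point the paper leaves implicit, and is correctly handled.
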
 
\begin{proof} 
Since $\Inf^T_S$ is an exact functor, it induces a natural morphism
\begin{equation}\label{eq:RHom-Inf}
\RHom_{\Rep_k(T)}(Y_1,Y_2) \to \RHom_{\Rep_k(S)}\bigl(\Inf^T_SY_1,
\Inf^T_SY_2\bigr),
\end{equation}
for all $Y_1,Y_2\in \D(T)$. Let now $X\in \D(S)$ and $Y\in \D(T)$, and denote
$\eta_X\colon X\to \Inf^T_S\LL_U(X)$ the unit of the adjunction.
Then \eqref{eq:RHom-LU-Inf} is given as the composite
\begin{align*}
\RHom_{\Rep_k(T)}\bigl(\LL_U(X),Y\bigr) &\xrightarrow{\eqref{eq:RHom-Inf}}
\RHom_{\Rep_k(S)}\bigl(\Inf^T_S\LL_U(X),\Inf^T_SY\bigr)\\
&\xrightarrow{(\eta_V)^*} \RHom_{\Rep_k(S)}\bigl(X, \Inf^T_SY\bigr).
\end{align*}
On the $n$-th cohomology it is given by the adjunction isomorphism
\[
\Hom_{\D(T)}\bigl(\LL_U(X),Y[n]\bigr) \xrightarrow{\cong} \Hom_{\D(S)}\bigl(X,
\Inf^T_SY[n]\bigr).
\]
Therefore, \eqref{eq:RHom-LU-Inf} is an isomorphism in $\D(k)$.
\end{proof} 

\begin{notation}\label{nota:L^n} 
Given $n\in\Z$, we denote by $\LL^n_U \colon \Rep_k(S)\to \Rep_k(T)$ the $n$-th
cohomology functor of $\LL_U$, \ie,
\[
\LL^n_U(V) \coloneqq \H^n(\LL_UV[0]),\qquad \text{for all $V\in
\Rep_k(S)$.}
\]
\end{notation} 

\begin{prop}\label{prop:Ln<0} 
The functor $\LL_U\colon \D(S)\to \D(T)$ is right $t$-exact. More precisely, we
have $\LL^n_U = 0$ for all $n>0$, and $\LL^0_U$ is the left adjoint of
$\Inf^T_S\colon \Rep_k(T)\to\Rep_k(S)$.
\end{prop}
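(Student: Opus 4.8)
The plan is to prove right $t$-exactness of $\LL_U$ by combining two ingredients: first, a formal adjunction argument showing $\LL_U^0$ is the underived $U$-coinvariants functor, and second, a degree-counting argument using the unit of the adjunction to show $\LL_U^n=0$ for $n>0$. For the first part, I would argue that $\LL_U$ restricts to a functor $\D^{\ge 0}(S)\to \D^{\ge ?}(T)$ — but more efficiently, since $\LL_U$ is a left adjoint of the exact functor $\Inf^T_S$, and $\Inf^T_S$ is also the restriction of an exact functor on abelian categories, one can pass to hearts: for $V\in\Rep_k(S)$ and $W\in\Rep_k(T)$, we have
\[
\Hom_{\Rep_k(T)}\bigl(\LL_U^0(V),W\bigr)\cong \Hom_{\D(T)}\bigl(\LL_U(V[0]),W[0]\bigr)\cong \Hom_{\D(S)}\bigl(V[0],\Inf^T_S(W)[0]\bigr)\cong \Hom_{\Rep_k(S)}\bigl(V,\Inf^T_SW\bigr),
\]
provided one knows $\Hom_{\D(T)}(\LL_U(V[0]),W[0])$ is computed by $\H^0$ alone, \ie that $\LL_U(V[0])$ lives in degrees $\le 0$. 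So the crux really is the vanishing $\LL_U^n=0$ for $n>0$, and once that is in hand, the first isomorphism above follows from the universal property of cohomological truncation ($\Hom_{\D(T)}(X,W[0])=\Hom_{\Rep_k(T)}(\H^0(X),W)$ when $X\in\D^{\le 0}(T)$), identifying $\LL_U^0$ with the left adjoint of $\Inf^T_S$ on abelian categories, which by Proposition~\ref{prop:Inf-adjoints}\ref{prop:Inf-adjoints-a} is $k[T]\otimes_{k[S]}(-)$.

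For the vanishing in positive degrees, I would use the generators $\ind_{K_S}^S\one$ of $\D(S)$ from Proposition~\ref{prop:compactlygenerated}, where $K_S\subseteq S$ is a torsion-free compact open $p$-adic Lie subgroup. Since $\LL_U$ is triangulated and commutes with small coproducts (being a left adjoint, via Proposition~\ref{prop:adjoint-limit}), and since every object of $\D(S)$ is built from shifts and coproducts of the generators via triangles, it suffices to check $\LL_U(\ind_{K_S}^S\one)\in\D^{\le 0}(T)$; but shifts can only improve this if we show the generator itself maps into $\D^{\le 0}$. Here I would combine the projection-formula picture with the compact case: setting $K_T=f(K_S)$ and $K_U=K_S\cap U$, one has $\Res^T_{K_T}\circ\LL_U\cong \LL_{K_U}\circ\Res^S_{K_S}$ on generators (by taking left adjoints of $\Res^S_{K_S}\circ\Inf^T_S\cong\Inf^{K_T}_{K_S}\circ\Res^T_{K_T}$, using that the restriction functors have exact left adjoints $\ind$ as in the proof of Theorem~\ref{thm:Inf-prod-general}). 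Then by Corollary~\ref{cor:Inf-product} and Proposition~\ref{prop:dualizing}, $\LL_{K_U}\cong \RR\H^0(K_U,-)[\dim K_U]$, and $\RR\H^0(K_U,\Res^S_{K_S}\ind_{K_S}^S\one)$ — computing $\H^*(K_U,-)$ of a restriction — lives in cohomological degrees $[0,\dim K_U]$, so after the shift by $[\dim K_U]$ it lives in degrees $[-\dim K_U,0]\subseteq\D^{\le 0}$. Since $\Res^T_{K_T}$ is conservative and $t$-exact, this forces $\LL_U(\ind_{K_S}^S\one)\in\D^{\le 0}(T)$, hence $\LL_U^n=0$ for $n>0$ on all of $\D(S)$.

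Alternatively, and perhaps more cleanly, one can use the positive-monoid reduction: by Proposition~\ref{prop:LU-positive} (stated in the introduction as $\RR\ind_{M^+}^M\LL_{K_U}\RR\Res^P_{P^+}\xRightarrow{\cong}\LL_U$ in the relevant case), right $t$-exactness is inherited from that of $\LL_{K_U}$ in the compact case, since $\ind$ and $\Res$ are exact. The main obstacle I anticipate is the bookkeeping in the first approach to rigorously identify $\Res^T_{K_T}\circ\LL_U$ with $\LL_{K_U}\circ\Res^S_{K_S}$ on the generators — this requires care because $\LL_U$ and $\LL_{K_U}$ are only characterized by universal properties, so one must exhibit the comparison as the mate (in the sense of Proposition~\ref{prop:mates}) of the evident isomorphism between the inflation functors, and check it is an isomorphism; Example~\ref{ex:mates} guarantees this once the corresponding statement for the right adjoints $\RInd$/$\Res$ is an isomorphism, which it is. Everything else is a routine degree count.
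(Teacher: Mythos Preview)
Your overall plan (first establish right $t$-exactness, then read off $\LL_U^0$ by truncation) matches the paper, but your proof of right $t$-exactness has a genuine gap, and you have missed a much simpler argument.

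The gap is in the generator step. You write that ``since every object of $\D(S)$ is built from shifts and coproducts of the generators via triangles, it suffices to check $\LL_U(\ind_{K_S}^S\one)\in\D^{\le 0}(T)$.'' But $\D^{\le 0}(T)$ is not a localizing subcategory: it is closed under coproducts, extensions, and \emph{negative} shifts, not under arbitrary triangles or positive shifts. Since \emph{every} object of $\D(S)$, not only those in $\D^{\le 0}(S)$, is generated from $\ind_{K_S}^S\one$ by triangles, coproducts, and shifts, your reasoning would show $\LL_U(X)\in\D^{\le 0}(T)$ for all $X$, which is absurd. (A smaller slip: passing to left adjoints in $\Res^S_{K_S}\Inf^T_S\cong\Inf^{K_T}_{K_S}\Res^T_{K_T}$ yields $\LL_U\,\ind_{K_S}^S\cong\ind_{K_T}^T\,\LL_{K_U}$, which is Proposition~\ref{prop:LU-ind}, not the commutation with $\Res$ you stated.) Your alternative via Proposition~\ref{prop:LU-positive} is not available here either: that result is proved later and under the extra structure of \S\ref{subsec:positive} (Hypothesis~\ref{hyp:positive}), whereas Proposition~\ref{prop:Ln<0} lives in the general setting of \S\ref{subsec:general}.

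The paper's argument for right $t$-exactness is purely formal and one line long: a left adjoint of a left $t$-exact functor is right $t$-exact. Since $\Inf^T_S$ is $t$-exact, for $X\in\D^{\le 0}(S)$ and any $Y\in\D^{\ge 1}(T)$ one has
\[
\Hom_{\D(T)}(\LL_UX,Y)\cong\Hom_{\D(S)}(X,\Inf^T_SY)=0,
\]
because $\Inf^T_SY\in\D^{\ge 1}(S)$; hence $\LL_UX\in{}^{\perp}\D^{\ge 1}(T)=\D^{\le 0}(T)$. Once this is known, your truncation argument identifying $\LL_U^0$ with the underived left adjoint of $\Inf^T_S$ goes through exactly as you wrote, and coincides with the paper's computation.
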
 
\begin{proof} 
The right $t$-exactness is a formal consequence of the (left) $t$-exactness of
$\Inf^T_S$. Let us show that $\LL^0_U$ is the left adjoint of $\Inf^T_S$. Let
$\D^{\ge0}(T)$ be the full subcategory of $\D(T)$ consisting of complexes $Y$
with $\H^n(Y) = 0$ for all $n<0$. The truncation functor $\tau^{\ge0} \colon
\D(T)\to \D^{\ge0}(T)$ is left adjoint to the inclusion. Hence, given $V\in
\Rep_k(S)$ and $W\in \Rep_k(T)$, we compute
\begin{align*}
\Hom_{\Rep_k(T)}\bigl(\LL^0_U(V),W\bigr) &\cong \Hom_{\D^{\ge0}(T)}
\bigl(\tau^{\ge0}\LL_U(V), W\bigr)\\
&= \Hom_{\D(T)}\bigl(\LL_U(V),W\bigr)\\
&\cong \Hom_{\D(S)}\bigl(V, \Inf^T_SW\bigr)\\
&\cong \Hom_{\Rep_k(S)}(V,\Inf^T_SW). \qedhere 
\end{align*}
\end{proof} 

\begin{rmk*} 
Proposition~\ref{prop:Ln<0} implies that there is a natural transformation
\[
\LL_U\circ \q_S \To \q_T\circ \LL^0_U
\]
of functors $\K(S)\to \D(T)$.
I do not know whether the left derived functor of $\LL^0_U$ exists and, if it
does exist, whether it coincides with $\LL_U$.
\end{rmk*} 

\begin{cor}\label{cor:RHom-LU-Inf-spectral} 
Given $V\in \Rep_k(S)$ and $W\in \Rep_k(T)$, there is a convergent
first-quadrant spectral sequence
\[
E_2^{i,j} = \Ext^i_{\Rep_k(T)}\bigl(\LL^{-j}_U(V),W\bigr) \To
\Ext^{i+j}_{\Rep_k(S)}\bigl(V,\Inf^T_SW\bigr).
\]
In particular, there is a five-term exact sequence
\[
\begin{tikzcd}[column sep=2em]
0\ar[r] & \Ext^1_{\Rep_k(T)}\bigl(\LL^0_U(V),W\bigr) \ar[r] &
\Ext^1_{\Rep_k(S)}\bigl(V,\Inf^T_SW\bigr) \ar[r] 
\ar[d,phantom,""{coordinate, name=Z}] &
\Hom_{\Rep_k(T)}\bigl(\LL^{-1}_U(V),W\bigr) 
\ar[dll,"{d_2^{0,1}}" description, rounded corners, at end, to path={
-- ([xshift=2ex]\tikztostart.east) |- (Z) \tikztonodes -|
([xshift=-2ex]\tikztotarget.west) -- (\tikztotarget)}]
\\
& \Ext^2_{\Rep_k(T)}\bigl(\LL^0_U(V),W\bigr) \ar[r] & \Ext^2_{\Rep_k(S)}\bigl(V,
\Inf^T_SW\bigr).
\end{tikzcd}
\]
\end{cor}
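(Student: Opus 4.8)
The plan is to obtain the spectral sequence from the hypercohomology spectral sequence of Lemma~\ref{lem:spectral}.\ref{lem:spectral-b}, applied to the contravariant triangulated functor $F \coloneqq \RHom_{\Rep_k(T)}(-,W)\colon \D(T)^\op \to \D(k)$ and the object $X \coloneqq \LL_U(V[0]) \in \D(T)$, and then to rewrite its terms and abutment using Proposition~\ref{prop:Ln<0} and Corollary~\ref{cor:RHom-LU-Inf}.

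First I would check that Lemma~\ref{lem:spectral}.\ref{lem:spectral-b} applies in the form ``$F$ way-out right and $X$ right bounded''. That $X = \LL_U(V[0])$ is right bounded is immediate from Proposition~\ref{prop:Ln<0}: it gives $\H^n(X) = \LL^n_U(V) = 0$ for $n > 0$. That $F$ is way-out right follows from the canonical $t$-structure: for $Z \in \D^{\le n}(T)$ and $i < -n$ one has $\H^i(FZ) = \Hom_{\D(T)}(Z, W[i]) = 0$, since $W[i] \in \D^{\ge n+1}(T)$ and $\Hom_{\D(T)}(\D^{\le n}, \D^{\ge n+1}) = 0$; in other words $F$ carries $\D^{\le n}(T)$ into $\D^{\ge -n}(k)$, which is the ``left $t$-exact'' special case of being way-out right. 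Hence Lemma~\ref{lem:spectral}.\ref{lem:spectral-b} furnishes a convergent spectral sequence $E_2^{i,j} = (\H^i F)\bigl(\H^{-j}(X)\bigr) \To \H^{i+j}(FX)$.

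Next I would identify everything. Here $\H^{-j}(X) = \LL^{-j}_U(V)$ and $(\H^i F)\bigl(\LL^{-j}_U(V)\bigr) = \Ext^i_{\Rep_k(T)}\bigl(\LL^{-j}_U(V), W\bigr)$, so the $E_2$-page is the claimed one; and Corollary~\ref{cor:RHom-LU-Inf} gives an isomorphism $FX = \RHom_{\Rep_k(T)}\bigl(\LL_U(V[0]), W\bigr) \cong \RHom_{\Rep_k(S)}(V, \Inf^T_S W)$ in $\D(k)$, so that $\H^{i+j}(FX) \cong \Ext^{i+j}_{\Rep_k(S)}(V, \Inf^T_S W)$ is the claimed abutment. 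Moreover the spectral sequence sits in the first quadrant: $E_2^{i,j} = 0$ for $i < 0$ since $\Ext^i_{\Rep_k(T)}(-,W)$ vanishes in negative degrees, and $E_2^{i,j} = 0$ for $j < 0$ since $\LL^{-j}_U(V) = 0$ for $-j > 0$ by Proposition~\ref{prop:Ln<0}. The five-term exact sequence is then the usual low-degree exact sequence $0 \to E_2^{1,0} \to \H^1(FX) \to E_2^{0,1} \xrightarrow{d_2^{0,1}} E_2^{2,0} \to \H^2(FX)$ of a first-quadrant cohomological spectral sequence, with the terms substituted as above.

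I do not expect a genuine obstacle; the one point needing a little care is the verification that $F = \RHom_{\Rep_k(T)}(-,W)$ is way-out right, but this reduces to the orthogonality $\Hom_{\D(T)}(\D^{\le n}, \D^{\ge n+1}) = 0$ that is part of the canonical $t$-structure on $\D(T)$.
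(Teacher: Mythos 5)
Your proposal is correct and follows exactly the paper's argument: apply Lemma~\ref{lem:spectral}.\ref{lem:spectral-b} to $F=\RHom_{\Rep_k(T)}(-,W)$ (way-out right, being left $t$-exact) and the right-bounded complex $\LL_U(V[0])$, then identify the abutment via Corollary~\ref{cor:RHom-LU-Inf}. The extra verifications you include (the orthogonality argument for left $t$-exactness and the first-quadrant vanishing) are fine and simply make explicit what the paper leaves implicit.
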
 
\begin{proof} 
The functor $\RHom_{\Rep_k(T)}(\pholder,W)$ is left $t$-exact and $\LL_U(V)$ is
right bounded by Proposition~\ref{prop:Ln<0}. Now,
Lemma~\ref{lem:spectral}.\ref{lem:spectral-b} together with
Corollary~\ref{cor:RHom-LU-Inf} yields the desired spectral sequence.
\end{proof} 

\begin{prop}\label{prop:LU-transitive} 
Let $f'\colon T\longtwoheadrightarrow T'$ be another open surjective
morphism of $p$-adic monoids. Write $U' = \Ker(f')$ and $U'' = \Ker(f'\circ f)$.
\begin{enumerate}[label=(\alph*)]
\item\label{prop:LU-transitive-a} The diagram
\[
\begin{tikzcd}
\D(S) \ar[r,"\LL_U"] \ar[dr,"\LL_{U''}"'] & \D(T) \ar[d,"\LL_{U'}"]\\
& \D(T')
\end{tikzcd}
\]
is commutative. 
\item\label{prop:LU-transitive-b} Given $V\in \Rep_k(S)$, there is a convergent
third-quadrant spectral sequence
\[
E_2^{i,j} = \LL^i_{U'}\bigl(\LL^j_{U}(V)\bigr) \To
\LL^{i+j}_{U''}(V).
\]
In particular, there is a five-term exact sequence
\[
\begin{tikzcd}[column sep=1.2em]
\LL^{-2}_{U''}(V) \ar[r] & \LL^{-2}_{U'}\bigl(\LL^0_U(V)\bigr)
\ar[r,"{d^{-2,0}}"] &[1.3em] \LL^0_{U'}\bigl(\LL^{-1}_U(V)\bigr) \ar[r] &
\LL^{-1}_{U''}(V) \ar[r] & \LL^{-1}_{U'}\bigl(\LL^0_U(V)\bigr) \ar[r] & 0.
\end{tikzcd}
\]
\end{enumerate}
\end{prop}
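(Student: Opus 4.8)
The plan is to derive part~\ref{prop:LU-transitive-a} from the transitivity of inflation together with the uniqueness of adjoints, and then to obtain part~\ref{prop:LU-transitive-b} by plugging this identification into the hypercohomology spectral sequence of Lemma~\ref{lem:spectral}.

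For part~\ref{prop:LU-transitive-a} I would argue as follows. Inflation along $f'\circ f$ factors as $\Inf^{T'}_S = \Inf^T_S\circ\Inf^{T'}_T$, so there is an obvious natural isomorphism of functors $\D(T')\to\D(S)$ between the two sides. By Theorem~\ref{thm:Inf-prod-general} each of $\Inf^T_S$, $\Inf^{T'}_T$, $\Inf^{T'}_S$ admits a left adjoint, namely $\LL_U$, $\LL_{U'}$, $\LL_{U''}$ respectively. By Lemma~\ref{lem:adjoint-composition}, $\LL_{U'}\circ\LL_U$ is a left adjoint of $\Inf^T_S\circ\Inf^{T'}_T$, hence of $\Inf^{T'}_S$; since left adjoints are unique up to unique isomorphism (Theorem~\ref{thm:adjoint-unique}) this produces a natural isomorphism $\LL_{U'}\circ\LL_U\xRightarrow{\cong}\LL_{U''}$. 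Concretely this isomorphism is the left mate of the identity $\Inf^T_S\Inf^{T'}_T\xRightarrow{\cong}\Inf^{T'}_S$, which is an isomorphism by Example~\ref{ex:mates}; this is exactly what it means for the triangle to commute.

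For part~\ref{prop:LU-transitive-b} I would fix $V\in\Rep_k(S)$, view it as the complex $V[0]$, and set $X\coloneqq\LL_U(V[0])\in\D(T)$. By Proposition~\ref{prop:Ln<0} applied to $f$ we have $\LL^n_U(V)=0$ for $n>0$, so $X$ is right bounded and $\H^j(X)=\LL^j_U(V)$; applying Proposition~\ref{prop:Ln<0} to $f'$ shows that $\LL_{U'}\colon\D(T)\to\D(T')$ is right $t$-exact, hence way-out left. Thus Lemma~\ref{lem:spectral}.\ref{lem:spectral-a}, applied with $F=\LL_{U'}$ and this $X$, furnishes a convergent spectral sequence whose $E_2$-term is $(\H^i\LL_{U'})(\H^j(X))=\LL^i_{U'}(\LL^j_U(V))$ and which abuts to $\H^{i+j}(\LL_{U'}(\LL_U(V[0])))$; by part~\ref{prop:LU-transitive-a} the latter equals $\H^{i+j}(\LL_{U''}(V[0]))=\LL^{i+j}_{U''}(V)$, which is the asserted spectral sequence. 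Since $\LL^i_{U'}(-)=0$ for $i>0$ and $\LL^j_U(V)=0$ for $j>0$ (Proposition~\ref{prop:Ln<0} once more), the $E_2$-page sits in the third quadrant. The displayed five-term sequence is then the low-degree exact sequence of the exact couple built in the proof of Lemma~\ref{lem:spectral}, read off in total degrees $-1$ and $-2$: the corner term $E_\infty^{-1,0}=E_2^{-1,0}=\LL^{-1}_{U'}(\LL^0_U(V))$ receives and emits no differential, $E_\infty^{0,-1}=\Coker(d_2\colon\LL^{-2}_{U'}(\LL^0_U(V))\to\LL^0_{U'}(\LL^{-1}_U(V)))$, and the edge morphism $\LL^{-2}_{U''}(V)\to\LL^{-2}_{U'}(\LL^0_U(V))$ has image $\Ker(d_2)$, which gives exactness at the three middle spots.

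I do not anticipate a genuine obstacle here: part~\ref{prop:LU-transitive-a} is purely formal, and part~\ref{prop:LU-transitive-b} only needs the boundedness and $t$-exactness inputs, both of which are immediate from Proposition~\ref{prop:Ln<0}. The one point requiring a modicum of care is identifying the edge morphisms of the third-quadrant spectral sequence so that the connecting map in the five-term sequence is precisely the $d_2$-differential $\LL^{-2}_{U'}(\LL^0_U(V))\to\LL^0_{U'}(\LL^{-1}_U(V))$; this is routine bookkeeping with the derived couple set up in Lemma~\ref{lem:spectral}.
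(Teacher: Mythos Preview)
Your proposal is correct and follows essentially the same approach as the paper: part~\ref{prop:LU-transitive-a} is obtained by passing to left adjoints in the transitivity isomorphism $\Inf^T_S\Inf^{T'}_T\xRightarrow{\cong}\Inf^{T'}_S$ (via Example~\ref{ex:mates}), and part~\ref{prop:LU-transitive-b} is Lemma~\ref{lem:spectral}.\ref{lem:spectral-a} applied to $F=\LL_{U'}$ and $X=\LL_U(V[0])$, using the right $t$-exactness from Proposition~\ref{prop:Ln<0}. Your write-up simply spells out the five-term sequence bookkeeping in more detail than the paper does.
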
 
\begin{proof} 
\ref{prop:LU-transitive-a} follows from the natural isomorphism $\Inf^T_S
\Inf^{T'}_T \xRightarrow{\cong} \Inf^{T'}_S$ by passing to the left adjoints,
see Example~\ref{ex:mates}. 
Now, \ref{prop:LU-transitive-b} follows from \ref{prop:LU-transitive-a} by
applying Lemma~\ref{lem:spectral}.\ref{lem:spectral-a} and observing that the
functors $\LL_{U'}$ and $\LL_U$ are right $t$-exact by
Proposition~\ref{prop:Ln<0}.
\end{proof} 

\begin{prop}\label{prop:LU-ind} 
Let $S'\subseteq S$ be an open submonoid and put $T'\coloneqq f(S') \subseteq
T$. Put $U'\coloneqq U\cap S'$. Assume further that $k[S]$ is flat over
$k[S']$ and $k[T]$ is flat over $k[T']$. The following diagram is
commutative:
\[
\begin{tikzcd}
\D(S') \ar[d,"\ind_{S'}^S"'] \ar[r,"\LL_{U'}"] & \D(T') \ar[d,"\ind_{T'}^T"]\\
\D(S) \ar[r,"\LL_U"'] & \D(T).
\end{tikzcd}
\]
In particular, there is, for each $n\in\Z$, a natural isomorphism $\ind_{T'}^T
\LL^n_{U'} \xRightarrow{\cong} \LL^n_{U} \ind_{S'}^S$ of functors
$\Rep_k(S')\to \Rep_k(T)$.
\end{prop}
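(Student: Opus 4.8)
The plan is to obtain the commutativity purely formally, by identifying both composites as left adjoints of one and the same functor, exactly in the spirit of the proof of Proposition~\ref{prop:LU-transitive}\ref{prop:LU-transitive-a}.

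First I would dispose of the preliminaries. Since $S'$ is open in $S$ and $f$ is open, the restriction $f|_{S'}$ is an open surjective morphism of $p$-adic monoids $S'\twoheadrightarrow T'$ with kernel $U\cap S'=U'$, so $T'$ is an open submonoid of $T$, $\LL_{U'}$ exists by Theorem~\ref{thm:Inf-prod-general}, and the diagram is well-posed. Next, the flatness hypotheses ensure that $\ind_{S'}^S$ and $\ind_{T'}^T$ are exact (by the remark following Lemma~\ref{lem:FrobeniusII}); combining this with Frobenius reciprocity~II (Lemma~\ref{lem:FrobeniusII}) and Lemma~\ref{lem:K-injectives}\ref{lem:K-injectives-c}, the termwise functors $\ind_{S'}^S\colon\D(S')\to\D(S)$ and $\ind_{T'}^T\colon\D(T')\to\D(T)$ are left adjoint to $\Res^S_{S'}$ and $\Res^T_{T'}$, respectively.

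The next step is the observation that there is a canonical natural isomorphism
\[
\Res^S_{S'}\circ\Inf^T_S \xRightarrow{\ \cong\ } \Inf^{T'}_{S'}\circ\Res^T_{T'}
\]
of functors $\Rep_k(T)\to\Rep_k(S')$: both send $W$ to $W$ viewed as a smooth $S'$-representation through $S'\hookrightarrow S\xrightarrow{f}T$, which is the same composite as $S'\xrightarrow{f|_{S'}}T'\hookrightarrow T$. As all four functors are exact, this extends verbatim to the derived functors $\D(T)\to\D(S')$. By Lemma~\ref{lem:adjoint-composition}, $\LL_U\circ\ind_{S'}^S$ is left adjoint to $\Res^S_{S'}\circ\Inf^T_S$ and $\ind_{T'}^T\circ\LL_{U'}$ is left adjoint to $\Inf^{T'}_{S'}\circ\Res^T_{T'}$; hence, passing to left adjoints as in Example~\ref{ex:mates}, the displayed isomorphism transports to a natural isomorphism $\LL_U\circ\ind_{S'}^S\xRightarrow{\cong}\ind_{T'}^T\circ\LL_{U'}$, which is the asserted commutativity. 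For the final statement I would apply $\H^n$ to this isomorphism evaluated at $V[0]$: since $\ind_{S'}^S$ and $\ind_{T'}^T$ are exact, they commute with $\H^n$ and with the inclusion of degree $0$, so one reads off $\ind_{T'}^T\LL^n_{U'}(V)\cong\LL^n_U\ind_{S'}^S(V)$, naturally in $V$.

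I do not expect a genuine obstacle here; the proof is the by-now familiar calculus of mates. The one point that actually uses a hypothesis, and the one I would state with care, is that the termwise $\ind$-functors are honest left adjoints of restriction on the \emph{derived} categories — this is exactly what flatness of $k[S]$ over $k[S']$ and of $k[T]$ over $k[T']$ provides (without it one would only be handed a left-derived functor, a priori different from the termwise one, and the composition-of-adjunctions argument would break).
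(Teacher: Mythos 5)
Your proposal is correct and follows essentially the same route as the paper: both identify the natural isomorphism $\Res^S_{S'}\Inf^T_S \xRightarrow{\cong} \Inf^{T'}_{S'}\Res^T_{T'}$ and pass to left adjoints via Example~\ref{ex:mates}, with the flatness hypotheses guaranteeing that the termwise compact-induction functors are the genuine left adjoints of restriction on the derived categories. Your write-up merely spells out the adjunction bookkeeping that the paper leaves implicit.
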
 
\begin{proof} 
Clearly there is a natural isomorphism $\Res^S_{S'}\Inf^T_S \xRightarrow{\cong}
\Inf^{T'}_{S'}\Res^T_{T'}$. Passing to the left adjoints, see
Example~\ref{ex:mates}, yields the first assertion. The second assertion follows
from the first by passing to the $n$-th cohomology and using that $\ind_{T'}^T$
and $\ind_{S'}^S$ are exact.
\end{proof} 

\begin{prop}\label{prop:LU-character} 
Let $\chi\colon T\to k^\times$ be a smooth character viewed as a character of
$S$ via inflation along $f$. There is a natural isomorphism $\LL_U
(\chi\otimes_k V) \xrightarrow{\cong} \chi\otimes_k \LL_U(V)$, for each $V\in
\D(S)$.
\end{prop}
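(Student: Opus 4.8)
The plan is to deduce the isomorphism formally from the fact that $\Inf^T_S$ is strictly monoidal, by passing to left adjoints. First I would record the following: since $\Inf^T_S\colon \D(T)\to \D(S)$ is strictly monoidal and sends $\chi$ to the inflated character (which the statement also denotes $\chi$), the monoidality constraint $\Inf^T_S(\chi\otimes_k W)\cong \Inf^T_S(\chi)\otimes_k\Inf^T_S(W) = \chi\otimes_k\Inf^T_S(W)$ provides a natural isomorphism
\[
\Inf^T_S\circ(\chi\otimes_k-) \xRightarrow{\cong} (\chi\otimes_k-)\circ \Inf^T_S
\]
of functors $\D(T)\to \D(S)$.

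Next I would observe that, $\chi$ being a smooth character, one has $\chi\otimes_k\chi^{-1}\cong\one$, so $\chi\otimes_k-$ is an auto-equivalence of $\D(T)$ (and likewise of $\D(S)$) with quasi-inverse $\chi^{-1}\otimes_k-$; in particular $\chi\otimes_k-$ is left adjoint to $\chi^{-1}\otimes_k-$ on either side. Combining this with the adjunction $\LL_U\dashv \Inf^T_S$ of Theorem~\ref{thm:Inf-prod-general} via Lemma~\ref{lem:adjoint-composition}, the composite $\Inf^T_S\circ(\chi\otimes_k-)$ admits the left adjoint $(\chi^{-1}\otimes_k-)\circ\LL_U$, while $(\chi\otimes_k-)\circ\Inf^T_S$ admits the left adjoint $\LL_U\circ(\chi^{-1}\otimes_k-)$.

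Then I would pass to left adjoints in the displayed natural isomorphism, using Example~\ref{ex:mates} (the mate of a natural isomorphism between functors admitting left adjoints is again a natural isomorphism). This yields a natural isomorphism $\LL_U\circ(\chi^{-1}\otimes_k-) \xRightarrow{\cong} (\chi^{-1}\otimes_k-)\circ \LL_U$ of functors $\D(S)\to \D(T)$. Since $\chi^{-1}$ is again a smooth character of $T$ (inflated to $S$), replacing $\chi$ by $\chi^{-1}$ throughout produces the natural isomorphism $\LL_U(\chi\otimes_k V)\xrightarrow{\cong}\chi\otimes_k\LL_U(V)$ in $V\in\D(S)$, as asserted. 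There is no real obstacle here: this is a routine application of the calculus of mates, the only point needing a trivial check being that $\chi$, as a one-dimensional object, is $\otimes$-invertible in $\D(T)$, so that $\chi\otimes_k-$ is genuinely an equivalence with the stated adjoints.
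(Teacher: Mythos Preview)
Your proof is correct and follows essentially the same route as the paper: both use that $\chi\otimes_k-$ is an autoequivalence (so that $\chi\otimes_k-\dashv\chi^{-1}\otimes_k-$), invoke the monoidality of $\Inf^T_S$ to get a natural isomorphism $\Inf^T_S(\chi^{\pm1}\otimes_k-)\cong\chi^{\pm1}\otimes_k\Inf^T_S(-)$, and then pass to left adjoints via Example~\ref{ex:mates}. The only cosmetic difference is that the paper starts with $\chi^{-1}$ directly, avoiding your final substitution step.
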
 
\begin{proof} 
The functor $\chi\otimes_k\pholder$ is an equivalence of triangulated categories with
quasi-inverse
$\chi^{-1}\otimes_k\pholder$. In particular, $\chi\otimes_k\pholder$ is left
adjoint to $\chi^{-1}\otimes_k\pholder$. Moreover,
there is a natural isomorphism $\Inf^T_S(\chi^{-1}\otimes_k W)
\xrightarrow{\cong} \chi^{-1}\otimes_k \Inf^T_S(W)$, for $W\in \D(T)$. Passing
to the left adjoints, see Example~\ref{ex:mates}, yields the result.
\end{proof} 

\begin{rmk} 
If $S$ and $T$ are $p$-adic Lie groups, we will obtain in
Corollary~\ref{cor:projectionformula} a stronger version of the above
proposition.
\end{rmk} 

Next, we will study how $\LL_U$ behaves under restriction. This requires some
preparatory lemmas. We will employ a version of the Mackey decomposition for
derived categories.

\begin{notation}\label{nota:g_*} 
Let $G$ be locally profinite group and $S\subseteq G$ a closed submonoid. Given
$g\in G$ and $V\in \Rep_k(S)$, we define a smooth representation of $gSg^{-1}$
on $V$ via $(gsg^{-1})\star v \coloneqq sv$, for all $s\in S$, $v\in V$. We
denote this modified representation by $g_*V$. In this way, we obtain a functor
\[
g_*\colon \Rep_k(S) \to \Rep_k(gSg^{-1})
\]
which is easily seen to be an equivalence of categories. The induced equivalence
on the derived categories is again denoted $g_*$.
\end{notation} 

\begin{lem}[derived Mackey decomposition]\label{lem:derived-Mackey} 
Assume that $S$ is a locally profinite monoid. Let
$K,H\subseteq S$ be two subgroups with $K$ open and $H$ closed.
\begin{enumerate}[label=(\alph*)]
\item\label{lem:derived-Mackey-a} There is a natural
isomorphism of functors $\D(H)\to \D(K)$,
\[
\Res^S_K\RInd_H^S \xRightarrow{\cong} \prod_{s\in H\backslash S/K}
\Ind_{s^{-1}Hs\cap K}^K s_*^{-1}\Res^{H}_{H\cap sKs^{-1}}.
\]
\item\label{lem:derived-Mackey-b} There is a natural isomorphism of functors
$\D(K)\to \D(H)$,
\[
\Res^S_H \ind_K^S \xLeftarrow{\cong} \bigoplus_{s\in H\backslash S/K}
\ind_{H\cap sKs^{-1}}^H s_*\Res^{K}_{s^{-1}Hs\cap K}.
\]
\end{enumerate}
\end{lem}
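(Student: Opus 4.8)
The statement is a derived version of the classical Mackey formula, and the natural approach is to reduce it to the underived Mackey decomposition for smooth representations by checking that all the functors involved are (or extend from) exact functors on the abelian categories, so that the classical natural isomorphism passes to the homotopy and hence the derived categories. Concretely, for part \ref{lem:derived-Mackey-b} I would first recall the classical Mackey decomposition in $\Rep_k$: writing $S = \bigsqcup_{s} HsK$ for a set of representatives $s \in H\backslash S/K$, one has for $V \in \Rep_k(K)$ a natural $H$-equivariant isomorphism
\[
\bigoplus_{s\in H\backslash S/K} \ind_{H\cap sKs^{-1}}^H s_*\Res^{K}_{s^{-1}Hs\cap K}(V) \xrightarrow{\ \cong\ } \Res^S_H\ind_K^S(V),
\]
sending a function supported on the double coset $HsK$ to its restriction, exactly as in \cite[Prop.~14]{Modrep}. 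All the functors appearing here — $\ind$ along open inclusions (Lemma~\ref{lem:FrobeniusII}), $\Res$, and the relabelling equivalences $s_*$ (Notation~\ref{nota:g_*}) — are exact, hence extend termwise to the homotopy categories $\K(-)$; the classical isomorphism is natural in $V$, so it gives a natural isomorphism of triangulated functors $\K(K)\to \K(H)$. Since every functor in sight sends quasi-isomorphisms to quasi-isomorphisms (being termwise applications of exact functors) and since direct sums in $\D(-)$ are computed on complexes, this descends to the claimed natural isomorphism of functors $\D(K)\to\D(H)$. One subtlety: the index set $H\backslash S/K$ may be infinite; but since $K$ is open, each double coset $HsK$ is a union of cosets $HsK = \bigsqcup HsKk$... more to the point, $\ind_K^S V$ is supported, degreewise, on countably many cosets and the sum is the internal one in $\Chain(K)$, which is exact, so no convergence issue arises.

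For part \ref{lem:derived-Mackey-a} the situation is dual but genuinely more delicate because $\RInd$ is a \emph{derived} functor (smooth induction is only left exact) and a product appears. Here I would \emph{not} argue termwise. Instead I would pass to the right adjoints of the isomorphism in \ref{lem:derived-Mackey-b}, using the calculus of mates (Example~\ref{ex:mates}): the left-hand side of \ref{lem:derived-Mackey-b} is $\Res^S_H\circ\ind_K^S$, whose right adjoint is $\RInd_H^S\circ\Res^S_H$... wait — more carefully, $\ind_K^S \dashv \Res^S_K$ (Lemma~\ref{lem:FrobeniusII}, extended to $\D$ since $\ind$ is exact), and $\RInd_H^S$ is right adjoint to $\Res^S_H$ by Lemma~\ref{lem:K-injectives}\ref{lem:K-injectives-c} applied to the exact functor $\ind_K^S$... rather, $\Res^S_H$ has the exact left adjoint $\ind_H^S$, hence $\RInd_H^S \dashv \Res^S_H$ is not what is needed; what is needed is the right adjoint of $\Res^S_H\ind_K^S \colon \D(K)\to\D(H)$, which is $\Res^S_K\RInd_H^S\colon \D(H)\to\D(K)$ by Lemma~\ref{lem:adjoint-composition} (since $\Res^S_K$ is right adjoint to $\ind_K^S$ by Lemma~\ref{lem:FrobeniusII} extended to $\D$, and $\RInd_H^S$ is right adjoint to $\Res^S_H$ by Lemma~\ref{lem:FrobeniusI} extended via Lemma~\ref{lem:K-injectives}\ref{lem:K-injectives-c}). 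Dually, the right adjoint of a direct sum $\bigoplus_s F_s$ of functors admitting right adjoints $G_s$ is the product $\prod_s G_s$, and the right adjoint of $\ind_{H\cap sKs^{-1}}^H s_*\Res^{K}_{s^{-1}Hs\cap K}$ is $\Res^K_{s^{-1}Hs\cap K}\circ s_*^{-1}\circ \RInd_{H\cap sKs^{-1}}^K$, using that $s_*$ is an equivalence with inverse $s_*^{-1}$, that $\RInd$ is right adjoint to $\Res$, and that $\Res$ (along an open inclusion) is right adjoint to $\ind$ — but one must keep track of which side; checking that $\Res^K_{s^{-1}Hs\cap K}\RInd_{H\cap sKs^{-1}}^K$ is indeed the right adjoint of $\ind_{H\cap sKs^{-1}}^H s_*\Res^K_{s^{-1}Hs\cap K}$ requires reindexing the double cosets $H\backslash S/K$ under $s\mapsto s^{-1}$, which swaps the roles of $H$ and $K$ correctly. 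So \ref{lem:derived-Mackey-a} follows by taking mates in \ref{lem:derived-Mackey-b}, after relabelling representatives.

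\textbf{Main obstacle.} The genuinely subtle point is \ref{lem:derived-Mackey-a}: one must be confident that taking the right adjoint of an infinite \emph{coproduct} of functors in a setting where the individual right adjoints are \emph{derived} functors (and the target is an infinite \emph{product}) is legitimate. This is safe here because $\D(S)$ has all small products and coproducts (Theorem~\ref{thm:brown}, or rather the general facts recalled about $\D(S)$), and the right adjoint of $\bigoplus_i F_i$ is $\prod_i G_i$ whenever each $F_i \dashv G_i$ — a formal adjunction computation valid in any categories with the relevant (co)products. Once that is granted, Example~\ref{ex:mates} turns the isomorphism \ref{lem:derived-Mackey-b} into an isomorphism of the right adjoints. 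I would therefore structure the write-up as: (1) state and cite the classical underived Mackey isomorphism, (2) observe it is a natural isomorphism of exact-functor extensions, hence descends to $\D$, giving \ref{lem:derived-Mackey-b}, (3) identify the right adjoints of both sides using Lemmas~\ref{lem:adjoint-composition}, \ref{lem:FrobeniusI}, \ref{lem:FrobeniusII}, \ref{lem:K-injectives}\ref{lem:K-injectives-c}, and the coproduct/product adjunction, being careful about the $s \leftrightarrow s^{-1}$ reindexing of double cosets, and (4) invoke Example~\ref{ex:mates} to deduce \ref{lem:derived-Mackey-a}.
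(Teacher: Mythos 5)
Your treatment of \ref{lem:derived-Mackey-b} matches the paper's: the underived Mackey decomposition (the paper cites \cite[I.5.5]{Vigneras.1996}) is a natural isomorphism between composites of exact functors and therefore descends termwise to the derived categories. For \ref{lem:derived-Mackey-a} you take a genuinely different route. The paper does \emph{not} pass to adjoints; it applies the underived isomorphism termwise to a K-injective resolution and observes that this already computes the derived statement, because $\Res^{H}_{H\cap sKs^{-1}}$, $s_*^{-1}$, and $\Ind_{s^{-1}Hs\cap K}^K$ all admit exact left adjoints and hence preserve K-injective complexes (Lemma~\ref{lem:K-injectives}), $\Ind_{s^{-1}Hs\cap K}^K$ is exact (Remark~\ref{rmk:Ind-exact}), and a product of K-injective complexes computes the product in $\D(K)$. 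Your adjunction argument is also valid: the right adjoint of $\bigoplus_s F_s$ is $\prod_s G_s$ whenever $F_s\dashv G_s$ and the relevant (co)products exist, and Example~\ref{ex:mates} upgrades the isomorphism of left adjoints to one of their right adjoints. What you gain is that \ref{lem:derived-Mackey-a} costs nothing beyond \ref{lem:derived-Mackey-b} and formal category theory; what the paper's route buys is an explicit realization of the isomorphism on complexes, which is what is actually manipulated later (the components $\rho_s$ and their left mates $\lambda_s$ in the proof of Proposition~\ref{prop:LU-Res}).

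One slip to correct. The right adjoint of the $s$-th summand $\ind_{H\cap sKs^{-1}}^H\circ s_*\circ\Res^{K}_{s^{-1}Hs\cap K}$ is obtained by reversing the composition and taking right adjoints factorwise, giving $\RInd_{s^{-1}Hs\cap K}^K\circ s_*^{-1}\circ\Res^{H}_{H\cap sKs^{-1}}$; after identifying $\RInd_{s^{-1}Hs\cap K}^K$ with $\Ind_{s^{-1}Hs\cap K}^K$ via Remark~\ref{rmk:Ind-exact} (a point you should make explicit, since the statement is written with the underived $\Ind$), this is \emph{exactly} the $s$-th factor on the right of \ref{lem:derived-Mackey-a}, for the same representative $s\in H\backslash S/K$. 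The expression you wrote, $\Res^K_{s^{-1}Hs\cap K}\circ s_*^{-1}\circ\RInd_{H\cap sKs^{-1}}^K$, does not typecheck ($H\cap sKs^{-1}$ is not a subgroup of $K$), and the reindexing $s\mapsto s^{-1}$ you invoke to repair it is not needed: both sides of the lemma are indexed by the same double coset space, and the adjunction matches summands to factors index by index.
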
 
\begin{proof} 
Note that $\Ind^K_{s^{-1}Hs\cap K}$ is exact by Remark~\ref{rmk:Ind-exact}.
Both statements follow from the Mackey decomposition,
\cite[Thm.~1.1]{Yamamoto.2022}, by passing to the derived categories. For
\ref{lem:derived-Mackey-a} we observe that the functors $\Res^{H}_{H\cap
sKs^{-1}}$, $s_*^{-1}$, and $\Ind_{s^{-1}Hs\cap K}^K$ preserve
K-injective complexes by Lemma~\ref{lem:K-injectives}.\ref{lem:K-injectives-b},
since they all admit exact left adjoints.
\end{proof} 

Recall that we denoted by $U$ the kernel of $f\colon S\longtwoheadrightarrow T$.

\begin{lem}\label{lem:monoid-coset} 
Let $H_S\subseteq S$ be a subgroup and put $H_T \coloneqq f(H_S)$. Assume that
$f$ induces a bijection $H_S\backslash S\xrightarrow{\cong} H_T\backslash T$. 
\begin{enumerate}[label=(\alph*)]
\item\label{lem:monoid-coset-a} One has $sU\subseteq H_Ss$, for all $s\in S$. In
particular, $U\subseteq H_S$ is a group.

\item\label{lem:monoid-coset-b} Let $K_S\subseteq S$ be a subgroup and put
$K_T\coloneqq f(K_S)$. The map $f$ induces a
bijection $H_S\backslash S/K_S \xrightarrow{\cong} H_T\backslash T/K_T$.
\end{enumerate}
\end{lem}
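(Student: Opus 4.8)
The plan is to deduce both assertions directly from the injectivity and surjectivity of the hypothesised bijection $H_S\backslash S \xrightarrow{\cong} H_T\backslash T$; no input from the topology or the $p$-adic structure is needed, and in fact part~\ref{lem:monoid-coset-b} will not even use part~\ref{lem:monoid-coset-a}.

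For part~\ref{lem:monoid-coset-a} I would first fix $s\in S$ and $u\in U$. Since $f(u)=1$ we have $f(su)=f(s)f(u)=f(s)$, so the right cosets $H_Tf(su)$ and $H_Tf(s)$ coincide in $H_T\backslash T$; injectivity of $H_S\backslash S\to H_T\backslash T$ then forces $H_S(su)=H_Ss$, i.e. $su\in H_Ss$, which proves $sU\subseteq H_Ss$. Specialising to $s=1$ gives $U\subseteq H_S$. It then remains to upgrade ``submonoid'' to ``subgroup'', and this is precisely where the hypothesis that $H_S$ is a \emph{subgroup} of $S$ is used: for $u\in U$ the inverse $u^{-1}$ exists in $H_S\subseteq S$, and applying the monoid homomorphism $f$ yields $f(u^{-1})=f(u)^{-1}=1$, so $u^{-1}\in U$. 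Hence $U$ is closed under inversion and is a subgroup of $H_S$.

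For part~\ref{lem:monoid-coset-b} I would work with the candidate map $H_SsK_S\mapsto H_Tf(s)K_T$. Well-definedness and surjectivity are routine: $f(H_SsK_S)=f(H_S)f(s)f(K_S)=H_Tf(s)K_T$ because $f(H_S)=H_T$ and $f(K_S)=K_T$, and every $t\in T$ is of the form $f(s)$ by surjectivity of $f$. For injectivity, suppose $H_Tf(s_1)K_T=H_Tf(s_2)K_T$; choosing $h_S\in H_S$, $k_S\in K_S$ with $f(h_S)f(s_2)f(k_S)=f(s_1)$, i.e. $f(h_Ss_2k_S)=f(s_1)$, one gets $H_Tf(s_1)=H_Tf(h_Ss_2k_S)$, and injectivity of $H_S\backslash S\to H_T\backslash T$ yields $H_Ss_1=H_Sh_Ss_2k_S=H_Ss_2k_S$. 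Thus $s_1\in H_Ss_2K_S$ and the two double cosets agree.

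The only place that needs a second's thought is the ``in particular'' clause in part~\ref{lem:monoid-coset-a} — establishing that $U$ is a group rather than merely a submonoid — since the kernel of a morphism of monoids is in general only a submonoid; everything else is a formal manipulation of cosets.
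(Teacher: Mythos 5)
Your proposal is correct. Part~\ref{lem:monoid-coset-a} is essentially the paper's argument: the paper observes that $f(sU)=\{f(s)\}$ lies in a single coset and concludes $sU\subseteq H_Ss$ by injectivity, then notes that $U=U\cap H_S$ is the kernel of the group homomorphism $f|_{H_S}$ and hence a group; your explicit check that $u^{-1}\in H_S$ satisfies $f(u^{-1})=1$ is the same observation spelled out. For part~\ref{lem:monoid-coset-b} you take a mildly different route: the paper lets $K_S$ act on $H_S\backslash S$ by right multiplication, uses part~\ref{lem:monoid-coset-a} to see that this action factors through $K_T$, and identifies the orbit spaces, whereas you verify injectivity of the induced map on double cosets by a direct lift-and-compare computation that does not invoke part~\ref{lem:monoid-coset-a} at all. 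Both arguments are short and formal; yours is marginally more self-contained, while the paper's makes the structural reason (the $K_S$-action descends to a $K_T$-action on the common coset space) more visible. Your computation is airtight: the lift of $h_T,k_T$ to $H_S,K_S$ uses only $H_T=f(H_S)$ and $K_T=f(K_S)$, and since $H_S$ and $K_S$ are genuine subgroups, double cosets partition $S$, so $s_1\in H_Ss_2K_S$ indeed forces equality of the double cosets.
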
 
\begin{proof} 
The first assertion in \ref{lem:monoid-coset-a} is immediate from the
hypothesis, because $f(sU) = \{f(s)\} \subseteq H_Tf(s)$ is contained in a
single coset. For $s=1$ this shows $U \subseteq H_S$. Thus, $U = U\cap H_S$ is
the kernel of the group homomorphism $f\big|_{H_S}$, hence a group.
For~\ref{lem:monoid-coset-b}, note that $K_S$ acts on $H_S\backslash S$ by
right multiplication. By~\ref{lem:monoid-coset-a}, this action factors through
$K_T$, and this implies the assertion.
\end{proof} 

\begin{lem}\label{lem:Inf-Ind} 
Let $K_P$ be a profinite group and $K_U\subseteq K'_P\subseteq K_P$ closed
subgroups with $K_U$ normal in $K_P$. Write $K_M \coloneqq K_P/K_U$ and
$K'_M\coloneqq K'_P/K_U$. 

The natural transformation
\begin{equation}\label{eq:Inf-Ind}
\Inf^{K_M}_{K_P} \Ind_{K'_M}^{K_M} \xRightarrow{\cong} \Ind_{K'_P}^{K_P}
\Inf^{K'_M}_{K'_P}
\end{equation}
is an isomorphism of functors $\D(K'_M) \to \D(K_P)$.
\end{lem}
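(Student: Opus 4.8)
The plan is to reduce \eqref{eq:Inf-Ind} to a statement about abelian categories and then lift it termwise. Recall first that \eqref{eq:Inf-Ind} is the mate, in the sense of Proposition~\ref{prop:mates}, of the base-change identification
\[
\Res^{K_P}_{K'_P}\circ\Inf^{K_M}_{K_P} \xRightarrow{\ \cong\ } \Inf^{K'_M}_{K'_P}\circ\Res^{K_M}_{K'_M}
\]
(which is in fact the identity, both sides being the functor that lets $K'_P$ act on a $K_M$-representation through $K'_P\to K'_M\hookrightarrow K_M$) under the adjunctions $\Res^{K_M}_{K'_M}\dashv\Ind^{K_M}_{K'_M}$ and $\Res^{K_P}_{K'_P}\dashv\Ind^{K_P}_{K'_P}$. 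Since $K_P$ is profinite, so are $K_M=K_P/K_U$, $K'_P$ and $K'_M=K'_P/K_U$; hence by Remark~\ref{rmk:Ind-exact} the functors $\Ind^{K_M}_{K'_M}$ and $\Ind^{K_P}_{K'_P}$ are exact, and $\Inf^{K_M}_{K_P}$, $\Inf^{K'_M}_{K'_P}$ are exact as well. By Notation~\ref{nota:derivedfunctor}, each of the two composites in \eqref{eq:Inf-Ind} is therefore obtained by applying the corresponding composite of abelian functors termwise, and the natural transformation \eqref{eq:Inf-Ind} is the termwise application of its abelian counterpart. It thus suffices to prove that the abelian natural transformation $\Inf^{K_M}_{K_P}\Ind^{K_M}_{K'_M}\To\Ind^{K_P}_{K'_P}\Inf^{K'_M}_{K'_P}$ of functors $\Rep_k(K'_M)\to\Rep_k(K_P)$ is an isomorphism.

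At the abelian level I would argue by uniqueness of adjoints. By Lemma~\ref{lem:adjoint-composition}, together with Proposition~\ref{prop:Inf-adjoints}\ref{prop:Inf-adjoints-a} and Lemma~\ref{lem:FrobeniusI}, the functor $\Inf^{K_M}_{K_P}\Ind^{K_M}_{K'_M}$ is right adjoint to $\Res^{K_M}_{K'_M}\circ\LL^0_{K_U}$, while $\Ind^{K_P}_{K'_P}\Inf^{K'_M}_{K'_P}$ is right adjoint to $\LL^0_{K_U}\circ\Res^{K_P}_{K'_P}$; here $\LL^0_{K_U}$ is in both cases the left adjoint of the relevant inflation, given on objects by $V\mapsto V_{K_U}$ (using that $K_U$ is normal). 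A short computation with $\LL^0_{K_U}(V)=k[K_P/K_U]\otimes_{k[K_P]}V=V_{K_U}$ shows that the evident natural map $k[K'_P/K_U]\otimes_{k[K'_P]}\Res^{K_P}_{K'_P}V\to\Res^{K_M}_{K'_M}\bigl(k[K_P/K_U]\otimes_{k[K_P]}V\bigr)$ is the identity of $V_{K_U}$ with its $K'_M$-action, so $\LL^0_{K_U}\Res^{K_P}_{K'_P}\xRightarrow{\cong}\Res^{K_M}_{K'_M}\LL^0_{K_U}$; moreover this isomorphism is precisely the mate of the base-change identification above across the adjunctions $\LL^0_{K_U}\dashv\Inf$. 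By the compatibility of mates with composition of squares (Proposition~\ref{prop:mates} and Example~\ref{ex:mates}), \eqref{eq:Inf-Ind} and this last isomorphism are mates of one another, viewed as natural transformations between the parallel functors $\Inf^{K_M}_{K_P}\Ind^{K_M}_{K'_M}$, $\Ind^{K_P}_{K'_P}\Inf^{K'_M}_{K'_P}$ and their left adjoints; since the latter is an isomorphism, Example~\ref{ex:mates} gives that \eqref{eq:Inf-Ind} is an isomorphism of abelian functors. Applying this termwise proves the lemma.

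The main obstacle here is purely organizational: one must keep track of the various restriction, inflation and (smooth) induction functors and verify that the isomorphism manufactured by uniqueness of adjoints is indeed the natural transformation \eqref{eq:Inf-Ind} rather than some other isomorphism. If one prefers to bypass the mate calculus, the abelian claim can be checked directly in the function model $\Ind^{S}_{S'}V\cong\{f\colon S\to V\mid f(s's)=s'f(s),\ f\text{ smooth}\}$: there \eqref{eq:Inf-Ind} sends $f\colon K_M\to V$ to $f\circ\pi\colon K_P\to V$, where $\pi\colon K_P\twoheadrightarrow K_M$ is the quotient map, and the inverse sends $g\colon K_P\to V$ — which is automatically left $K_U$-invariant, since $K_U\subseteq K'_P$ acts trivially through the inflated action — to the function it induces on $K_P/K_U=K_M$. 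Well-definedness, mutual inverseness, and naturality in $V$ are all immediate from the definitions.
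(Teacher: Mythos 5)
Your proposal is correct and follows essentially the same route as the paper: reduce to the underived level using the exactness of smooth induction for profinite groups (Remark~\ref{rmk:Ind-exact}), then either check the map directly on functions or observe that the left mate $\LL^0_{K_U}\Res^{K_P}_{K'_P}\To\Res^{K_M}_{K'_M}\LL^0_{K_U}$ is an isomorphism of coinvariants functors and pass to right adjoints via Example~\ref{ex:mates}. The paper's proof is just a terser version of both alternatives you spell out.
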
 
\begin{proof} 
It suffices to show that \eqref{eq:Inf-Ind} is an isomorphism of functors
$\Rep_k(K'_M)\to \Rep_k(K_P)$, since by Remark~\ref{rmk:Ind-exact}, the
underived functors $\Ind_{K'_M}^{K_M}$ and $\Ind_{K'_P}^{K_P}$ are exact. But
this can be checked directly or by observing that
the natural map $\LL^0_{K_U}\Res^{K_P}_{K'_P} \xRightarrow{\cong}
\Res^{K_M}_{K'_M}\LL^0_{K_U}$ is obviously an isomorphism of functors
$\Rep_k(K_P)\to \Rep_k(K'_M)$, and then passing to the right adjoints,
see Example~\ref{ex:mates}.
\end{proof} 

\bigskip
For the rest of this section we assume that $S$ and $T$ are open submonoids of
$p$-adic Lie groups $P$ and $M$, respectively, and that $f$ arises from a
continuous surjection $P\longtwoheadrightarrow M$. This condition will be
satisfied for the $p$-adic monoids we consider. Note that, since $M$ with the
quotient topology is a $p$-adic Lie group, \cite[Thm.~9.6 (ii)]{DDMS99}, and the
$p$-adic analytic structure on $M$ is unique, \cite[Cor.~9.5]{DDMS99}, it
follows that $P\longtwoheadrightarrow M$ is a quotient map of topological
groups, hence is open. Therefore, $f$ is automatically open. 

\begin{prop}\label{prop:LU-Res} 
Let $H_S\subseteq S$ be a closed subgroup and put $H_T\coloneqq f(H_S)
\subseteq T$. Assume that $f$ induces a bijection $H_S\backslash
S\xrightarrow{\cong} H_T\backslash T$. The diagrams
\[
\begin{tikzcd}
\D(H_T) \ar[r,"\Inf^{H_T}_{H_S}"] \ar[d,"\RInd_{H_T}^T"'] & \D(H_S)
\ar[d,"\RInd_{H_S}^S"] &[2em] 
\D(S) \ar[d,"\Res^S_{H_S}"'] \ar[r,"\LL_U"] & \D(T) \ar[d,"\Res^T_{H_T}"]\\
\D(T) \ar[r,"\Inf^T_S"'] & \D(S) & 
\D(H_S) \ar[r,"\LL_U"'] & \D(H_T)
\end{tikzcd}
\]
commute.
\end{prop}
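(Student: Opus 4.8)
The plan is to prove the first square (the one involving $\Inf$ and $\RInd$) directly, and then to deduce the second square (the one involving $\LL_U$ and $\Res$) from it by passing to adjoints.

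For the first square I would begin by reducing to a compact subgroup. Fix a compact open subgroup $K_S\subseteq S$ and put $K_T\coloneqq f(K_S)$; since $f$ is open, $K_T$ is compact and open in $T$, and $f$ restricts to a surjection $K_S\twoheadrightarrow K_T$ with kernel $K_U\coloneqq U\cap K_S$. Because $\Res^S_{K_S}$ is conservative (Lemma~\ref{lem:Res-conservative}), it suffices to show that the comparison map $\Inf^T_S\RInd_{H_T}^T\To\RInd_{H_S}^S\Inf^{H_T}_{H_S}$ becomes an isomorphism after applying $\Res^S_{K_S}$. Using the evident isomorphism $\Res^S_{K_S}\Inf^T_S\cong\Inf^{K_T}_{K_S}\Res^T_{K_T}$ of exact functors, this reduces us to comparing $\Inf^{K_T}_{K_S}\Res^T_{K_T}\RInd_{H_T}^T$ with $\Res^S_{K_S}\RInd_{H_S}^S\Inf^{H_T}_{H_S}$ as functors $\D(H_T)\longrightarrow\D(K_S)$.

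Next I would expand both sides using the derived Mackey decomposition. Applying Lemma~\ref{lem:derived-Mackey}.\ref{lem:derived-Mackey-a} to $\Res^T_{K_T}\RInd_{H_T}^T$ (with $K_T$ open and $H_T$ closed in $T\subseteq M$) and to $\Res^S_{K_S}\RInd_{H_S}^S$ (with $K_S$ open and $H_S$ closed in $S\subseteq P$) writes them as products over $H_T\backslash T/K_T$ and $H_S\backslash S/K_S$ respectively, which $f$ identifies by Lemma~\ref{lem:monoid-coset}.\ref{lem:monoid-coset-b}. Since $K_T$ and $K_S$ are profinite, every coinduction functor occurring inside these Mackey products is exact (Remark~\ref{rmk:Ind-exact}), as are the conjugation equivalences, the restrictions, and the inflations, and moreover $\Inf^{K_T}_{K_S}$ commutes with small products (Theorem~\ref{thm:Inf-prod-general}); thus all the functors in sight are computed termwise by exact functors, and it remains to match the summands one at a time. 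For $s'\in H_S\backslash S/K_S$ with image $s=f(s')$, one checks — using $U\subseteq H_S$ (so that $f^{-1}(H_T)=H_S$) and $s'Us'^{-1}\subseteq H_S$ (Lemma~\ref{lem:monoid-coset}.\ref{lem:monoid-coset-a}) — that $f$ restricts to surjections $s'^{-1}H_Ss'\cap K_S\twoheadrightarrow s^{-1}H_Ts\cap K_T$ and $H_S\cap s'K_Ss'^{-1}\twoheadrightarrow H_T\cap sK_Ts^{-1}$ with kernels $U\cap K_S$, resp. $U\cap s'K_Ss'^{-1}$. The required isomorphism between the $s'$-summand on the right and $\Inf^{K_T}_{K_S}$ applied to the $s$-summand on the left is then assembled from three elementary compatibilities: restriction commutes with inflation; the conjugation equivalences commute with inflation (via $f(s'^{-1}xs')=s^{-1}f(x)s$); and, crucially, Lemma~\ref{lem:Inf-Ind}, applied to the normal subgroup $K_U=U\cap K_S$ of $K_S$ and the intermediate subgroup $s'^{-1}H_Ss'\cap K_S$ containing it, which identifies $\Inf^{K_T}_{K_S}\Ind_{s^{-1}H_Ts\cap K_T}^{K_T}$ with $\Ind_{s'^{-1}H_Ss'\cap K_S}^{K_S}\Inf^{s^{-1}H_Ts\cap K_T}_{s'^{-1}H_Ss'\cap K_S}$. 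These isomorphisms are natural in the argument, so via the $f$-induced bijection of index sets they assemble into an isomorphism of the two products compatible with the Mackey decompositions; this proves that the first square commutes.

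Finally, for the second square I would pass to adjoints. Composing the adjunctions $\LL_U\dashv\Inf$ and $\Res\dashv\RInd$ shows that $\Res^T_{H_T}\LL_U$ has right adjoint $\Inf^T_S\RInd_{H_T}^T$ and that $\LL_U\Res^S_{H_S}$ has right adjoint $\RInd_{H_S}^S\Inf^{H_T}_{H_S}$; hence the natural transformation of the second square is the mate of that of the first, and by Example~\ref{ex:mates} one is an isomorphism exactly when the other is. The main obstacle I anticipate is the term-by-term bookkeeping in the Mackey step: correctly tracking the various subgroups in the two decompositions under $f$, verifying the containment and normality hypotheses that make Lemma~\ref{lem:Inf-Ind} applicable to each summand, and checking that the local isomorphisms glue into a single natural isomorphism of products compatible with the Mackey splittings.
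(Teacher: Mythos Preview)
Your proposal is correct and follows essentially the same approach as the paper: reduce to a compact open subgroup via conservativity of restriction, expand both sides via the derived Mackey decomposition, identify the double coset spaces using Lemma~\ref{lem:monoid-coset}, match the factors using Lemma~\ref{lem:Inf-Ind}, and then obtain the second square by passing to mates. The one point the paper treats more carefully than you do is the final gluing: it is not enough to produce \emph{some} isomorphism between the two Mackey products---you must check that the isomorphism you build coincides with $\Res^S_{K_S}$ applied to the canonical comparison map $\beta\colon \Inf^T_S\RInd_{H_T}^T\Rightarrow\RInd_{H_S}^S\Inf^{H_T}_{H_S}$ (the right mate of the obvious $\Res$/$\Inf$ compatibility), and the paper does this by passing once more to left mates and verifying the resulting square on the underived level; you correctly flag this bookkeeping as the main obstacle.
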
 
\begin{proof} 
Note that $H_T$ is closed, since
Lemma~\ref{lem:monoid-coset}.\ref{lem:monoid-coset-a} implies $U\subseteq H_S$
so that $T\setminus H_T = f(S\setminus H_S)$ is open.
Consider the natural transformation
\begin{equation}\label{eq:Inf-Ind-1}
\beta\colon \Inf^T_S \RInd_{H_T}^T \To \RInd_{H_S}^S \Inf^{H_T}_{H_S},
\end{equation}
which arises as the right mate of the natural map $\Res^S_{H_S}\Inf^T_S
\xRightarrow{\cong} \Inf^{H_T}_{H_S}\Res^T_{H_T}$ by
Proposition~\ref{prop:mates}. It suffices to show that $\beta$ is an
isomorphism, because then its mate $\LL_{U}\Res^S_{H_S} \To
\Res^T_{H_T}\LL_{U}$ is an isomorphism as well by Example~\ref{ex:mates}, and
hence both diagrams commute.

Fix an open profinite subgroup $K_S\subseteq S$ so that $K_T\coloneqq f(K_S)$
is open in $T$. Since $\Res^S_{K_S}$ is conservative by
Lemma~\ref{lem:Res-conservative},
it suffices to show that $\Res^S_{K_S}(\beta)$ is an isomorphism. We compute
the source and target of $\Res^S_{K_S}(\beta)$ using the derived Mackey
decomposition, Lemma~\ref{lem:derived-Mackey}, and the fact that
$\Inf^{K_T}_{K_S}$ preserves small products,
Theorem~\ref{thm:Inf-prod-general}:
\begin{align*}
\Res^S_{K_S}\Inf^T_S\RInd^T_{H_T} &\xRightarrow{\cong} \Inf^{K_T}_{K_S}
\Res^T_{K_T}\RInd^T_{H_T}\\
&\xRightarrow{\cong}\Inf^{K_T}_{K_S} \prod_{t\in H_T\backslash T/K_T}
\Ind^{K_T}_{t^{-1}H_Tt\cap K_T} t_*^{-1}\Res^{H_T}_{H_T\cap tK_Tt^{-1}}
\\
&\xRightarrow{\cong}\prod_{t\in H_T\backslash T/K_T}\Inf^{K_T}_{K_S} 
\Ind^{K_T}_{t^{-1}H_Tt\cap K_T} t_*^{-1}\Res^{H_T}_{H_T\cap tK_Tt^{-1}}
\\
\intertext{and}
\Res^S_{K_S}\RInd_{H_S}^S\Inf^{H_T}_{H_S} &\xRightarrow{\cong} 
\prod_{s\in H_S\backslash S/K_S} \Ind_{s^{-1}H_Ss\cap K_S}^{K_S}
s_*^{-1} \Res^{H_S}_{H_S\cap sK_Ss^{-1}} \Inf^{H_T}_{H_S}
\end{align*}
By Lemma~\ref{lem:monoid-coset} we may identify the double coset spaces
$H_S\backslash S/K_S$ and $H_T\backslash T/K_T$. 

Let now $s\in S$ and write $\ol{s} \coloneqq f(s) \in T$. By
Lemma~\ref{lem:monoid-coset}.\ref{lem:monoid-coset-a}, we have $U\cap K_S
\subseteq s^{-1}H_Ss\cap K_S$, so that $f(s^{-1}H_Ss\cap K_S) = \ol{s}^{-1}H_T
\ol{s}\cap K_T$. To shorten the notation, we write
\begin{align*}
\Psi_s &\coloneqq \Ind^{K_S}_{s^{-1}H_Ss\cap K_S} s_*^{-1} \Res^{H_S}_{H_S\cap
sK_Ss^{-1}}\\
\intertext{and}
\Psi_{\ol s} &\coloneqq \Ind^{K_T}_{\ol{s}^{-1}H_T\ol{s} \cap K_T} \ol{s}_*^{-1}
\Res^{H_T}_{H_T\cap \ol{s}K_T\ol{s}^{-1}}.
\end{align*}
It follows from Lemma~\ref{lem:Inf-Ind} that the natural map
\[
\beta_s\colon \Inf^{K_T}_{K_S} \Psi_{\ol s}
\xRightarrow{\cong} \Psi_s
\Inf^{H_T}_{H_S}
\]
is an isomorphism. Let us verify that
$\Res^S_{K_S}(\beta)$ corresponds to $\prod_{s\in H_S\backslash S/K_S}
\beta_s$ under the above identifications; it then follows that
$\Res^S_{K_S}(\beta)$ is an isomorphism, which finishes the proof.

It remains to show that the diagram
\[
\begin{tikzcd}
\Res^S_{K_S}\Inf^T_S\RInd^T_{H_T} \ar[r,Rightarrow,"\cong"]
\ar[dr,Rightarrow,"\Res^S_{K_S}(\beta)"'] & \Inf^{K_T}_{K_S}
\Res^T_{K_T}\RInd^T_{H_T} \ar[d,Rightarrow,"\gamma"'] \ar[r,Rightarrow,"\cong"]
& \prod_{s\in H_S\backslash S/K_S} \Inf^{K_T}_{K_S} \Psi_{\ol s}
\ar[d,Rightarrow,"\prod_s\beta_s"]\\
& \Res^S_{K_S}\RInd_{H_S}^S \Inf^{H_T}_{H_S} \ar[r,Rightarrow, "\cong"] &
\prod_{s\in H_S\backslash S/K_S} \Psi_s \Inf^{H_T}_{H_S}
\end{tikzcd}
\]
commutes. The left vertical map $\gamma$ is defined so as to make the  triangle
commute. The commutativity of the square can be checked componentwise.
Therefore, it suffices to show that the diagram
\begin{equation}\label{eq:Inf-Mackey}
\begin{tikzcd}[column sep=4em]
\Inf^{K_T}_{K_S} \bigl(\Res^T_{K_T}\RInd^T_{H_T}\bigr)
\ar[d,Rightarrow,"\gamma"'] 
\ar[r,Rightarrow, "\Inf^{K_T}_{K_S}\rho_{\ol s}"] & \Inf^{K_T}_{K_S} 
\Psi_{\ol s} \ar[d,Rightarrow,"\beta_s"]\\
\bigl(\Res^S_{K_S}\RInd^S_{H_S}\bigr) \Inf^{H_T}_{H_S}
\ar[r,Rightarrow,"(\rho_s)_{\Inf^{H_T}_{H_S}}"'] & \Psi_s \Inf^{H_T}_{H_S}
\end{tikzcd}
\end{equation}
commutes, where $\rho_{\ol s}$ and $\rho_s$ are the natural maps coming from the
Mackey decomposition. The left mate of $\rho_s$ is the map
\[
\lambda_s\colon \Phi_s\coloneqq \ind_{H_S\cap sK_Ss^{-1}}^{H_S} s_*
\Res^{K_S}_{s^{-1}H_Ss\cap K_S} \To \Res^S_{H_S} \ind_{K_S}^S
\]
that is the $s$-th component of the isomorphism in the Mackey decomposition for
compact induction. The analogous statement holds for the left mate
\[
\lambda_{\ol s}\colon \Phi_{\ol s}\coloneqq \ind_{H_T\cap \ol s K_T\ol{s}^{-1}}
^{H_T} \ol{s}_* \Res^{K_T}_{\ol{s}^{-1}H_T\ol{s}\cap K_T} \To
\Res^T_{H_T} \ind_{K_T}^T
\]
of $\rho_{\ol s}$. By Example~\ref{ex:mates-vertical} the commutativity of
\eqref{eq:Inf-Mackey} is equivalent to the commutativity of
\[
\begin{tikzcd}[column sep=4em]
\bigl(\Res^S_{H_S}\ind_{K_S}^S\bigr) \Inf^{K_T}_{K_S}
\ar[d,Rightarrow,"\alpha"'] & 
\Phi_s \Inf^{K_T}_{K_S} \ar[l,Rightarrow,"(\lambda_s)_{\Inf^{K_T}_{K_S}}"']
\ar[d,Rightarrow,"\alpha_s"]\\
\Inf^{H_T}_{H_S} \bigl(\Res^T_{H_T}\ind_{K_T}^T\bigr) & \Inf^{H_T}_{H_S}
\Phi_{\ol s} \ar[l,Rightarrow, "\Inf^{H_T}_{H_S}\lambda_{\ol s}"],
\end{tikzcd}
\]
where $\alpha$ (resp.\ $\alpha_s$) is the left mate of $\gamma$ (resp.\ $\beta_s$).
But this can be checked explicitly on the underived level, because
all functors involved are exact.
\end{proof} 

\begin{rmk}\label{rmk:Inf-preserves-acyclics} 
Under the hypotheses of Proposition~\ref{prop:LU-Res} it follows that the
natural map
\[
\RR\bigl(\Ind_{H_S}^S \Inf^{H_T}_{H_S}\bigr) \xRightarrow{\cong}
\RInd_{H_S}^S \Inf^{H_T}_{H_S}
\]
is an isomorphism. Thus, if $I\in \Rep_k(H_T)$ is injective, then
$\Inf^{H_T}_{H_S}I$ is acyclic for $\Ind_{H_S}^S$.
\end{rmk} 

\begin{rmk} 
One can show the following analog of Proposition~\ref{prop:LU-Res}: Let $K_S
\subseteq S$ be an open subgroup and $K_T \coloneqq f(K_S)\subseteq T$. Assume
that $f$ induces a bijection $S/K_S \xrightarrow{\cong} T/K_T$. Then the
diagrams
\[
\begin{tikzcd}
\D(K_T) \ar[r,"\Inf^{K_T}_{K_S}"] \ar[d,"\ind_{K_T}^T"'] & \D(K_S)
\ar[d,"\ind_{K_S}^S"] &[2em] 
\D(S) \ar[d,"\Res^S_{K_S}"'] \ar[r,"\RR\Pi_U"] & \D(T) \ar[d,"\Res^T_{K_T}"]\\
\D(T) \ar[r,"\Inf^T_S"'] & \D(S) & 
\D(K_S) \ar[r,"\RR\Pi_U"'] & \D(K_T)
\end{tikzcd}
\]
commute. Indeed, the bijection $S/K_S\xrightarrow{\cong} T/K_T$ implies that $U$
is a subgroup of $K_S$ and that the natural map
\[
k[S] \otimes_{k[K_S]} \Inf^{K_T}_{K_S}(\pholder) \xRightarrow{\cong} \Inf^T_S \circ
(k[T]\otimes_{k[K_T]} \pholder)
\]
is an isomorphism of functors $\Rep_k(K_T)\to \Rep_k(S)$. As all functors
involved are exact, we deduce that the left diagram commutes. The commutativity
of the right diagram then follows by passing to the right adjoints, see
Example~\ref{ex:mates}.
\end{rmk} 
\begin{warning} 
Let $H_S\subseteq S$ be a subgroup and put $H_T \coloneqq f(H_S)$. Then
$H_S\backslash S\xrightarrow[\cong]{f} H_T\backslash T$ does \emph{not} imply
(and is not implied by) $S/H_S \xrightarrow[\cong]{f} T/H_T$. 

For example, this generally fails for the $p$-adic monoids considered in
\S\ref{subsec:positive}. Let us make this explicit:
Consider the $p$-adic submonoid $S = P^+$ of $\GL_2(\Q_p)$
consisting of upper triangular matrices $\begin{psmallmatrix}a & b\\0 & d
\end{psmallmatrix}$ satisfying $\val_p(a)\ge \val_p(d)$ and $\val_p(b)\ge
\val_p(d)$, where
$\val_p$ denotes the $p$-adic valuation on $\Q_p$. Let $T = M^+$ be the $p$-adic
submonoid of $P^+$ consisting of diagonal matrices. The canonical projection
$f\colon P^+\longtwoheadrightarrow M^+$ (given by forgetting the upper right
entry) is surjective with kernel $U =
\begin{psmallmatrix}1 & \Z_p\\ 0 & 1\end{psmallmatrix}$. Now, $P^+ = UM^+$ and
hence $U\backslash P^+ \xrightarrow{\cong} M^+$ via $f$. On the other hand,
for any $m = \diag(a,d)$ in $M^+$ with $\val_p(a)>\val_p(d)$ we have $mUm^{-1}
\subsetneqq U$, from which it follows that the map
\[
P^+/U = UM^+/U \cong \bigsqcup_{m\in M^+} U/mUm^{-1}
\longtwoheadrightarrow \bigsqcup_{m\in M^+}\{m\} = M^+
\]
is (surjective but) not injective.
\end{warning} 

\begin{cor}\label{cor:Hom-LU} 
Assume that $U$ is a group and that $f$ induces a bijection
$U\backslash S\xrightarrow{\cong} T$. There is a natural isomorphism
\[
\Hom_{\D(k)}\bigl(\Res^T_1(\LL_U X), Y\bigr) \cong \Hom_{\D(U)}\bigl(
\Res^S_U X, \Inf^1_U Y\bigr),
\]
for all $X\in \D(S)$ and $Y\in \D(k)$. 
In particular, whenever $n\in\Z$, $V\in \Rep_k(S)$, and $W\in \Vect_k$, there
is a natural isomorphism
\[
\Hom_k\bigl(\LL^{-n}_U(V),W\bigr) \cong \Ext^n_{U}(V,W).
\]
\end{cor}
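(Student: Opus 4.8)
The plan is to derive both statements from Proposition~\ref{prop:LU-Res}, applied to the closed subgroup $H_S = U$ of $S$. First I would note that $U = \Ker(f)$ is closed in $S$ since $f$ is continuous, and it is a subgroup by hypothesis; moreover $f(U) = \{1\}$, so that the hypothesis $U\backslash S \xrightarrow{\cong} T$ of the corollary is precisely the hypothesis $H_S\backslash S \xrightarrow{\cong} H_T\backslash T$ of Proposition~\ref{prop:LU-Res} with $H_S = U$ and $H_T = \{1\}$. (Here the functor denoted $\LL_U$ on $\D(U)\to\D(k)$ is the left adjoint of $\Inf^1_U\colon\D(k)\to\D(U)$; it exists by Theorem~\ref{thm:Inf-prod-general} applied to the open surjection $U\longtwoheadrightarrow\{1\}$, noting that $U$, being a closed subgroup of a $p$-adic Lie group, is itself a $p$-adic monoid.) Proposition~\ref{prop:LU-Res} then provides a natural isomorphism $\LL_U\circ\Res^S_U \xRightarrow{\cong} \Res^T_1\circ\LL_U$ of functors $\D(S)\to\D(k)$.

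Granting this, the first isomorphism follows by composing it with the adjunction $\LL_U\dashv\Inf^1_U$: for $X\in\D(S)$ and $Y\in\D(k)$ I would chain
\begin{align*}
\Hom_{\D(k)}\bigl(\Res^T_1\LL_U X,\, Y\bigr)
&\cong \Hom_{\D(k)}\bigl(\LL_U\Res^S_U X,\, Y\bigr)\\
&\cong \Hom_{\D(U)}\bigl(\Res^S_U X,\, \Inf^1_U Y\bigr),
\end{align*}
both steps being natural in $X$ and $Y$.

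For the ``in particular'' clause I would specialize to $X = V[0]$ with $V\in\Rep_k(S)$ and $Y = W[n]$ with $W\in\Vect_k$. On the left, $\Res^T_1\LL_U(V[0])$ is a complex of $k$-vector spaces whose cohomology in degree $i$ is (the underlying space of) $\LL^i_U(V)$; since $k$ is a field this complex splits as the direct sum of its shifted cohomologies, so $\Hom_{\D(k)}\bigl(\Res^T_1\LL_U(V[0]),\, W[n]\bigr)\cong \Hom_k\bigl(\LL^{-n}_U(V),\, W\bigr)$. On the right, $\Inf^1_U(W[n]) = (\Inf^1_U W)[n]$ because inflation is exact, and $\Hom_{\D(U)}\bigl(V[0],\, (\Inf^1_U W)[n]\bigr) = \Ext^n_{\Rep_k(U)}(V, W)$ since $\Rep_k(U)$ has enough injectives by Corollary~\ref{cor:enoughinj}. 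Combining these identifications yields $\Hom_k\bigl(\LL^{-n}_U(V),\, W\bigr)\cong\Ext^n_U(V,W)$.

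I do not anticipate a serious obstacle: essentially all the content sits in Proposition~\ref{prop:LU-Res}, and the only points requiring care are the verification that the corollary's hypothesis matches the proposition's with $H_S = U$, and the degree bookkeeping when passing from $\Hom$ in the derived categories to $\Hom_k$ and $\Ext^n_U$ — in particular keeping track of the shift $[n]$ against the (cohomological) degree $-n$ in which $\LL^{-n}_U$ lives.
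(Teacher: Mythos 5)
Your proposal is correct and follows the paper's own argument essentially verbatim: apply Proposition~\ref{prop:LU-Res} with $H_S = U$, $H_T = \{1\}$ to get $\LL_U\Res^S_U \cong \Res^T_1\LL_U$, compose with the adjunction from Theorem~\ref{thm:Inf-prod-general}, and specialize to $X = V[0]$, $Y = W[n]$. The extra degree bookkeeping you supply (splitting the complex over the field $k$) is a correct elaboration of what the paper leaves implicit.
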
 
\begin{proof} 
Applying Proposition~\ref{prop:LU-Res} with $H_S = U$ and $H_T = \{1\}$ and
Theorem~\ref{thm:Inf-prod-general} yields natural isomorphisms
\begin{align*}
\Hom_{\D(k)}\bigl(\Res^T_1(\LL_UX), Y\bigr) &\cong \Hom_{\D(k)}\bigl(
\LL_U(\Res^S_UX), Y\bigr)\\
&\cong \Hom_{\D(U)}\bigl(\Res^S_UX, \Inf^1_UY\bigr),
\end{align*}
for each $X\in \D(S)$ and $Y\in \D(k)$. The last statement follows from the
first by taking $X = V[0]$ and $Y = W[n]$ for $V\in \Rep_k(S)$ and $W\in
\Vect_k$.
\end{proof} 

\begin{cor}\label{cor:LU-products} 
Retain the hypotheses of Proposition~\ref{prop:LU-Res} and suppose that $H_S$ and
$H_T$ are compact, open, and torsion-free. Then $\LL_U\colon \D(S)\to \D(T)$
preserves small products and, in particular, is also a right adjoint.
\end{cor}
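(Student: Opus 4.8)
The plan is to reduce to the compact case of \S\ref{subsec:compact} by restricting along $H_S$ and $H_T$. First I would record that, since $H_S$ is a torsion-free compact $p$-adic Lie group and (by Lemma~\ref{lem:monoid-coset}.\ref{lem:monoid-coset-a}) $U\subseteq H_S$, the map $f$ restricts to a surjective homomorphism $H_S\twoheadrightarrow H_T$ of torsion-free compact $p$-adic Lie groups with kernel $U$; here $H_T = f(H_S)$ is open, compact, and torsion-free by hypothesis. Consequently the bottom arrow $\LL_U\colon \D(H_S)\to \D(H_T)$ of the right-hand square in Proposition~\ref{prop:LU-Res} is precisely the left adjoint of $\Inf^{H_T}_{H_S}$ built in Corollary~\ref{cor:Inf-product} (with $K_P = H_S$, $K_M = H_T$, $K_U = U$), namely the functor $f_{(1)}$ of Corollary~\ref{cor:tower}. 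The key point is that this functor preserves small products: by Corollary~\ref{cor:tower} — which relies on the $\otimes$-invertibility of the dualizing object, Proposition~\ref{prop:dualizing} — it sits inside an infinite chain of adjunctions and is therefore in particular a \emph{right} adjoint, hence commutes with all small limits by Proposition~\ref{prop:adjoint-limit}.

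Next I would transfer this to $\LL_U\colon \D(S)\to \D(T)$. Fix a small family $\{X_i\}_{i\in I}$ in $\D(S)$ and consider the canonical comparison map $\alpha\colon \LL_U(\prod_{i}X_i)\to \prod_{i}\LL_U(X_i)$ in $\D(T)$. Because $H_S$ and $H_T$ are open subgroups, Lemma~\ref{lem:Res-conservative} gives that $\Res^S_{H_S}$ and $\Res^T_{H_T}$ preserve small products and that $\Res^T_{H_T}$ is conservative, so it is enough to check that $\Res^T_{H_T}(\alpha)$ is an isomorphism. Applying $\Res^T_{H_T}$ and using the natural isomorphism $\Res^T_{H_T}\LL_U\cong \LL_U\Res^S_{H_S}$ of Proposition~\ref{prop:LU-Res} together with the product-preservation of the two restriction functors, one identifies $\Res^T_{H_T}(\alpha)$ with the comparison map $\LL_U(\prod_{i}\Res^S_{H_S}X_i)\to \prod_{i}\LL_U(\Res^S_{H_S}X_i)$ for the compact-case functor $\LL_U\colon \D(H_S)\to \D(H_T)$. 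By the first paragraph this is an isomorphism, hence so is $\Res^T_{H_T}(\alpha)$, and conservativity of $\Res^T_{H_T}$ then forces $\alpha$ to be an isomorphism. Thus $\LL_U$ preserves small products, and Corollary~\ref{cor:brown}.\ref{cor:brown-b} applied to $\LL_U\colon \D(S)\to \D(T)$ produces a triangulated left adjoint; in particular $\LL_U$ is itself a right adjoint.

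I expect the real content to sit entirely in the first paragraph — seeing that the compact-case $\LL_U$ preserves products — which is exactly what the Wirthm\"uller-style package of \S\ref{subsec:compact} (finite cohomological dimension of $U$, invertibility of $\omega_{K_P}$, the adjunction tower) is engineered to deliver; once that is granted, everything else is formal manipulation with conservative, product-preserving restriction functors. The only remaining subtlety is making sure that the natural isomorphisms from Lemma~\ref{lem:Res-conservative} and Proposition~\ref{prop:LU-Res} are compatible with the canonical maps into products, so that $\Res^T_{H_T}(\alpha)$ genuinely corresponds to the group-level comparison map; since all functors in sight are triangulated and the isomorphisms are the evident ones, this is a routine diagram chase.
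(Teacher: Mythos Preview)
Your proposal is correct and follows essentially the same route as the paper: reduce to the compact case via the conservative, product-preserving restrictions $\Res^S_{H_S}$, $\Res^T_{H_T}$ and the natural isomorphism of Proposition~\ref{prop:LU-Res}, then invoke Corollary~\ref{cor:tower} to see that the compact-case $\LL_U$ admits a left adjoint and hence preserves products. The paper's proof is terser but uses exactly these ingredients; your explicit mention of Corollary~\ref{cor:brown}.\ref{cor:brown-b} for the ``in particular'' clause is correct and is left implicit in the paper.
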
 
\begin{proof} 
Note that Lemma~\ref{lem:monoid-coset} implies that $U\subseteq H_S$ is compact
and torsion-free.
Let $I$ be a set, and take $X_i\in \D(S)$, for $i\in I$. We show that the
natural map
\[
\alpha\colon \LL_U \prod_{i\in I}X_i \to \prod_{i\in I} \LL_U(X_i)
\]
is an isomorphism in $\D(T)$. By Proposition~\ref{prop:LU-Res} there is a
natural isomorphism
\[
\Res^T_{H_T} \LL_U
\xRightarrow{\cong} 
\LL_U \Res^S_{H_S} 
.\]
Now observe that the diagram
\[
\begin{tikzcd}[row sep=3em]
\Res^T_{H_T} \LL_U \prod_{i\in I}X_i \ar[r,"\cong"]
\ar[d,"\Res^T_{H_T}(\alpha)"'] &
\LL_U \prod_{i\in I}\Res^S_{H_S}X_i \ar[d,"\alpha'"] \\
\Res^T_{H_T} \prod_{i\in I} \LL_UX_i \ar[r,"\cong"'] &
\prod_{i\in I} \LL_U\Res^S_{H_S}X_i
\end{tikzcd}
\]
is commutative, where $\alpha'$ is the canonical map. 
Since $\Res^S_{H_S}$ and $\Res^T_{H_T}$ are conservative and preserve small
products, by Lemma~\ref{lem:Res-conservative}, it suffices to verify that the
natural map
\[
\alpha' \colon \LL_U \prod_{i\in I} \Res^S_{H_S}(X_i) \to
\prod_{i\in I} \LL_U\bigl(\Res^S_{H_S}X_i\bigr)
\]
is an isomorphism in $\D(H_T)$. But this follows from the fact that $\LL_U\colon
\D(H_S)\to \D(H_T)$ admits a left adjoint (Corollary~\ref{cor:tower}), which
finishes the proof.
\end{proof} 
\subsection{The case of \texorpdfstring{$p$}{p}-adic Lie groups} 
\label{subsec:p-adic}
We fix a continuous surjection $f\colon P\longtwoheadrightarrow M$ of $p$-adic
Lie groups with kernel $U$. 

The crucial result we need is the following:

\begin{lem}\label{lem:Res-hom} 
Let $G$ be a locally profinite group and $K\subseteq G$ an open subgroup. The
natural map
\begin{equation}\label{eq:Res-hom}
\Res^G_K\bigl(\hom_{\D(G)}(X,Y)\bigr) \xrightarrow{\cong}
\hom_{\D(K)}\bigl(\Res^G_KX, \Res^G_KY\bigr)
\end{equation}
is an isomorphism, for all $X,Y\in \D(G)$.
\end{lem}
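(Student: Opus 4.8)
The plan is to deduce this from a \emph{projection formula} for compact induction together with the mate calculus of Example~\ref{ex:mates}. Since $K\subseteq G$ is open, $k[G]$ is free as a right $k[K]$-module, so $\ind_K^G = k[G]\otimes_{k[K]}(-)$ is exact; hence the adjunction $\ind_K^G\dashv\Res^G_K$ of Lemma~\ref{lem:FrobeniusII} extends to the derived categories, with $\ind_K^G$ computed termwise.

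First I would establish, for each fixed $X\in\D(G)$, a natural isomorphism of functors $\D(K)\to\D(G)$,
\[
\ind_K^G\bigl((-)\otimes_k\Res^G_KX\bigr)\xRightarrow{\cong}\ind_K^G(-)\otimes_k X .
\]
On smooth representations $V\in\Rep_k(K)$ this is the classical map $g\otimes(v\otimes x)\longmapsto(g\otimes v)\otimes gx$, with inverse $(g\otimes v)\otimes x\longmapsto g\otimes(v\otimes g^{-1}x)$; one checks it is well defined over $k[K]$ and $G$-equivariant. Since $k$ is a field and $\ind_K^G$ is exact, both sides are exact in $V$ and commute with the formation of total complexes, so the isomorphism extends to $\Chain(K)$ and descends to $\D(K)$; naturality in $X$ is clear as well.

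Next I would pass to right adjoints. Using, on $\D$, the adjunctions $\ind_K^G\dashv\Res^G_K$ and the closed symmetric monoidal structures on $\D(G)$ and $\D(K)$, the left-hand functor $\ind_K^G\circ(-\otimes_k\Res^G_KX)$ has right adjoint $\hom_{\D(K)}\bigl(\Res^G_KX,-\bigr)\circ\Res^G_K$, while the right-hand functor $(-\otimes_kX)\circ\ind_K^G$ has right adjoint $\Res^G_K\circ\hom_{\D(G)}(X,-)$. By Example~\ref{ex:mates} the mate of an isomorphism is again an isomorphism, so I obtain a natural isomorphism
\[
\Res^G_K\circ\hom_{\D(G)}(X,-)\xRightarrow{\cong}\hom_{\D(K)}\bigl(\Res^G_KX,-\bigr)\circ\Res^G_K ,
\]
whose value at $Y$ is an isomorphism as in \eqref{eq:Res-hom}. (Equivalently, one may run a Yoneda argument: for $Z\in\D(K)$ the same three adjunctions and the projection formula give natural bijections between $\Hom_{\D(K)}(Z,\Res^G_K\hom_{\D(G)}(X,Y))$ and $\Hom_{\D(K)}(Z,\hom_{\D(K)}(\Res^G_KX,\Res^G_KY))$, and $\D(K)$ is locally small.)

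The remaining — and only nontrivial — point is to check that the isomorphism obtained this way is the \emph{canonical} map \eqref{eq:Res-hom}, namely the one adjoint in $\D(K)$ to the restriction (along the strong monoidal functor $\Res^G_K$) of the evaluation $\hom_{\D(G)}(X,Y)\otimes_kX\to Y$. I expect this to be the main obstacle: it amounts to unwinding the chain above at the identity, using the triangle identities for the three adjunctions and the explicit form of the projection isomorphism from the first step. This is routine mate bookkeeping, but it does have to be written out; everything else is formal.
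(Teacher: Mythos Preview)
Your approach is correct but takes a genuinely different route from the paper. The paper's proof is a one-liner: it simply computes the cohomology of both sides using the formula \eqref{eq:H(RHom)-group}, which gives
\[
\H^n\bigl(\hom_{\D(S)}(X,Y)\bigr)\cong \varinjlim_{S'\subseteq S\text{ open subgroup}}\Hom_{\D(S')}(X,Y[n]),
\]
valid when $S$ is a group. Since $K$ is open in $G$, the open subgroups of $K$ are cofinal among those of $G$, so the two colimits agree and \eqref{eq:Res-hom} is an isomorphism on each $\H^n$, hence an isomorphism in $\D(K)$.

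Your argument via the projection formula $\ind_K^G\bigl((-)\otimes_k\Res^G_KX\bigr)\cong\ind_K^G(-)\otimes_kX$ and passage to right adjoints is also valid. It has the advantage of being purely formal and transportable to any closed symmetric monoidal setting with a strong monoidal functor satisfying the projection formula; indeed it is close in spirit to how the paper later handles $\Inf^M_P$ in Proposition~\ref{prop:Inf-hom} and Corollary~\ref{cor:projectionformula}. The cost is exactly the bookkeeping you flag in your step~3: one must check that the mate of the projection formula coincides with the canonical map \eqref{eq:Res-hom}. This follows because the projection isomorphism is itself the left mate (under $\ind_K^G\dashv\Res^G_K$) of the strict monoidality of $\Res^G_K$, and then the compatibility of mates with horizontal and vertical composition (the ``naturality'' noted after Proposition~\ref{prop:mates}) identifies the two resulting maps. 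So your worry is well placed but resolvable. By contrast, the paper's cohomological argument sidesteps this verification entirely at the price of relying on the explicit model for $\hom_{\D(S)}$ developed in the surrounding remarks.
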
 
\begin{proof} 
The map \eqref{eq:Res-hom} is an isomorphism on cohomology by
\eqref{eq:H(RHom)-group}, hence an isomorphism.
\end{proof} 

\begin{prop}\label{prop:Inf-hom} 
For all $X,Y\in \D(M)$ the natural map
\[
\alpha_{X,Y}\colon \Inf^M_P \hom_{\D(M)}(X,Y) \xrightarrow{\cong} \hom_{\D(P)}
\bigl(\Inf^M_PX, \Inf^M_PY\bigr)
\]
is an isomorphism in $\D(P)$.
\end{prop}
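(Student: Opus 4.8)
The plan is to reduce, by restricting to a compact open subgroup, to the case where $P$ and $M$ are compact and torsion-free — where $\D(M)$ is compactly generated by the single object $\one$ — and then to run a localizing-subcategory argument. The only substantial input will be that compact inflation preserves products, Corollary~\ref{cor:Inf-product}; the rest is formal.

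First I would fix an open, compact, torsion-free $p$-adic Lie group $K_P\subseteq P$ such that $K_M\coloneqq f(K_P)$ is open and torsion-free in $M$, which is possible exactly as in the proof of Theorem~\ref{thm:Inf-prod-general} (here one uses that $f$ is open). Put $K_U\coloneqq K_P\cap U$, so that $f$ restricts to a surjection $K_P\twoheadrightarrow K_M$ with kernel $K_U$ and $\Res^P_{K_P}\Inf^M_P\cong\Inf^{K_M}_{K_P}\Res^M_{K_M}$. Since $\Res^P_{K_P}$ is conservative (Lemma~\ref{lem:Res-conservative}), it suffices to show $\Res^P_{K_P}(\alpha_{X,Y})$ is an isomorphism. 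Applying Lemma~\ref{lem:Res-hom} to $K_P\subseteq P$ and to $K_M\subseteq M$, and using the compatibility of inflation with restriction, I would identify the source and target of $\Res^P_{K_P}(\alpha_{X,Y})$ with those of the analogous map
\[
\alpha^{K}_{A,B}\colon \Inf^{K_M}_{K_P}\hom_{\D(K_M)}(A,B)\to\hom_{\D(K_P)}\bigl(\Inf^{K_M}_{K_P}A,\Inf^{K_M}_{K_P}B\bigr)
\]
for the inflation $\Inf^{K_M}_{K_P}$, with $A\coloneqq\Res^M_{K_M}X$ and $B\coloneqq\Res^M_{K_M}Y$. A routine check — all the relevant data, namely the strong monoidal structure of inflation, the evaluation morphism, and the internal-hom adjunction, being compatible with restriction — then shows that these identifications carry $\Res^P_{K_P}(\alpha_{X,Y})$ to $\alpha^K_{A,B}$, reducing the statement to showing that $\alpha^K_{A,B}$ is an isomorphism for all $A,B\in\D(K_M)$.

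To prove this, I would fix $B$ and consider the full subcategory $\mathcal C_B\subseteq\D(K_M)$ of those $A$ for which $\alpha^K_{A,B}$ is an isomorphism. Both $A\mapsto\Inf^{K_M}_{K_P}\hom_{\D(K_M)}(A,B)$ and $A\mapsto\hom_{\D(K_P)}(\Inf^{K_M}_{K_P}A,\Inf^{K_M}_{K_P}B)$ are triangulated functors $\D(K_M)^{\op}\to\D(K_P)$ carrying coproducts in $A$ to products — for the first because $\hom_{\D(K_M)}(\coprod_iA_i,B)\cong\prod_i\hom_{\D(K_M)}(A_i,B)$ and $\Inf^{K_M}_{K_P}$ preserves small products by Corollary~\ref{cor:Inf-product}, and for the second because $\Inf^{K_M}_{K_P}$ preserves coproducts (it has the right adjoint $\RR\H^0(K_U,-)$) and internal $\hom$ sends coproducts in the first variable to products. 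By naturality of $\alpha^K_{-,B}$ between triangulated functors, $\mathcal C_B$ is then a triangulated subcategory closed under small coproducts, i.e.\ localizing; and it contains $\one$, since $\hom_{\D(K_M)}(\one,-)\cong\id$, $\Inf^{K_M}_{K_P}\one=\one$, and $\alpha^K_{\one,B}$ is, under these identifications, the identity of $\Inf^{K_M}_{K_P}B$. Finally, by Proposition~\ref{prop:compactlygenerated} the category $\D(K_M)$ is compactly generated by $\one$, so the only localizing subcategory containing $\one$ is $\D(K_M)$ itself; hence $\mathcal C_B=\D(K_M)$, and since $B$ was arbitrary this finishes the argument.

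I expect the main obstacle to be bookkeeping rather than conceptual: the compatibility check in the reduction step — that under the isomorphisms of Lemma~\ref{lem:Res-hom} the restriction of $\alpha_{X,Y}$ corresponds to the analogous map $\alpha^K_{A,B}$ — has to be carried out carefully from the definitions of the natural maps involved. The one genuinely non-formal ingredient imported from elsewhere is Corollary~\ref{cor:Inf-product} (product-preservation of compact inflation), which ultimately rests on the Balmer machinery of \S\ref{subsec:compact}.
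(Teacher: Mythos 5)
Your proof is correct and follows essentially the same route as the paper's: reduce to the compact torsion-free case via conservativity of restriction and Lemma~\ref{lem:Res-hom}, then show the full subcategory where $\alpha$ is an isomorphism is localizing and contains $\one$, concluding by compact generation (Proposition~\ref{prop:compactlygenerated}). The only cosmetic difference is that you fix the second variable and vary the first, while the paper considers the subcategory of $X$ for which $\alpha_{X,Y}$ is an isomorphism for all $Y$; both rest on the same inputs, namely product- and coproduct-preservation of compact inflation.
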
 
\begin{proof} 
The natural transformation $\alpha_{X,\pholder}\colon \Inf^M_P\circ \hom_{\D(M)}(X,\pholder)
\To \hom_{D(P)}(\Inf^M_PX,\pholder)\circ \Inf^M_P$ arises as the mate of the canonical
isomorphism
\[
(\Inf^M_PX\otimes_k\pholder)\circ \Inf^M_P \xRightarrow{\cong} \Inf^M_P\circ
(X\otimes_k\pholder),
\]
see Proposition~\ref{prop:mates}.

Let $K_P\subseteq P$ be a compact open subgroup such that $K_M \coloneqq
f(K_P)$ is torsion-free. By Lemma~\ref{lem:Res-conservative} the functor
$\Res^P_{K_P}$ is conservative. 
Therefore, it suffices to show that $\Res^P_{K_P}(\alpha_{X,Y}) =
\alpha_{\Res^M_{K_M}\!X, \Res^M_{K_M}\!\!Y}$ is an isomorphism in $\D(K_P)$. 

Thus, we may assume from the beginning that $P$ is compact and $M$ is
torsion-free and compact. Let
$\cat T$ be the full subcategory of $\D(M)$ with objects those $X\in \D(M)$
for which $\alpha_{X,Y}$ is an isomorphism for all $Y\in \D(M)$. It is clear
that $\cat T$ is triangulated. Since $\Inf^M_P$ preserves small products, by
Theorem~\ref{thm:Inf-prod-general}, and small coproducts, it easily
follows that $\cat T$ is localizing, \ie, closed under small coproducts. Now,
$\hom_{\D(M)}(\one, Y) \cong Y$ and $\hom_{\D(P)}(\Inf^M_P\one, \Inf^M_PY) \cong
\Inf^M_PY$. Under these identifications we have $\alpha_{\one,Y} =
\id_{\Inf^M_PY}$, which shows $\one \in \cat T$. By
Proposition~\ref{prop:compactlygenerated}, $\D(M)$ is compactly generated by
$\one$. This implies $\cat T = \D(M)$, which proves the assertion.
\end{proof} 

\begin{warning} 
Despite the result in Proposition~\ref{prop:Inf-hom}, the functor $\Inf^M_P$ is
in general \emph{not} fully faithful. For example, the counit $\LL_U(\one) =
\LL_U(\Inf^M_P\one) \to \one$ is not an isomorphism by
Corollary~\ref{cor:Grduality} provided $P$ and $M$ are compact and torsion-free.
\end{warning} 

\begin{cor}[Projection formula]\label{cor:projectionformula} 
For all $X\in \D(P)$ and $Y\in \D(M)$ the natural map
\[
\LL_U\bigl(\Inf^M_PX\otimes_kY\bigr) \xrightarrow{\cong} X\otimes_k\LL_U(Y)
\]
is an isomorphism in $\D(M)$. In particular, $\LL_U\Inf^M_PY \cong
\LL_U(\one)\otimes_k Y$ for all $Y\in \D(M)$.
\end{cor}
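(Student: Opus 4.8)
The plan is to obtain the isomorphism formally from Proposition~\ref{prop:Inf-hom} by passing to left adjoints, in the spirit of Propositions~\ref{prop:LU-character}, \ref{prop:RPi-hom} and~\ref{prop:LU-transitive}. Fixing the object $X$ that is being inflated and regarding the other argument $Y$ as variable, the two sides of the displayed morphism are the values at $Y$ of the functors $\LL_U\circ(\Inf^M_PX\otimes_k-)$ and $(X\otimes_k-)\circ\LL_U$ from $\D(P)$ to $\D(M)$. Both are composites of left adjoints: $\LL_U$ is a left adjoint by Theorem~\ref{thm:Inf-prod-general}, and the two tensor functors are left adjoints because $\D(P)$ and $\D(M)$ are closed symmetric monoidal. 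Hence each admits a right adjoint, which I compute with Lemma~\ref{lem:adjoint-composition}: the right adjoint of $\LL_U\circ(\Inf^M_PX\otimes_k-)$ is $\hom_{\D(P)}(\Inf^M_PX,-)\circ\Inf^M_P$, and the right adjoint of $(X\otimes_k-)\circ\LL_U$ is $\Inf^M_P\circ\hom_{\D(M)}(X,-)$.

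Next I invoke Proposition~\ref{prop:Inf-hom}, which supplies a natural isomorphism $\Inf^M_P\hom_{\D(M)}(X,-)\xRightarrow{\cong}\hom_{\D(P)}\bigl(\Inf^M_PX,\Inf^M_P(-)\bigr)$, that is, an isomorphism between the two right adjoints just computed. Passing to left adjoints --- concretely, taking the mate and using the fact, recorded in Example~\ref{ex:mates}, that the mate of an isomorphism is an isomorphism --- yields a natural isomorphism $\LL_U\bigl(\Inf^M_PX\otimes_k-\bigr)\xRightarrow{\cong}X\otimes_k\LL_U(-)$, which evaluated at $Y$ is the asserted isomorphism $\LL_U(\Inf^M_PX\otimes_kY)\xrightarrow{\cong}X\otimes_k\LL_U(Y)$. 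The one step I expect to require genuine care --- the main obstacle, though a routine one --- is checking that this mate agrees with the morphism named in the statement, namely the one adjoint under $\LL_U\dashv\Inf^M_P$ to $\id_{\Inf^M_PX}\otimes\eta_Y$, where $\eta$ is the unit of $\LL_U\dashv\Inf^M_P$ and where the strict monoidality of $\Inf^M_P$ is used to identify $\Inf^M_P\bigl(X\otimes_k\LL_U(Y)\bigr)$ with $\Inf^M_PX\otimes_k\Inf^M_P\LL_U(Y)$; this amounts to unwinding the explicit formula for the mate in Proposition~\ref{prop:mates}, and is the same kind of bookkeeping already carried out in the proofs of Propositions~\ref{prop:Inf-hom} and~\ref{prop:RPi-hom}. (One could instead argue by dévissage in $Y$ over the compact generator of Proposition~\ref{prop:compactlygenerated}, using Proposition~\ref{prop:LU-ind} and the compact case, but passing to adjoints is cleaner.)

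Finally, the last assertion is the special case of the first in which the non-inflated argument is the tensor unit: using the unit constraint and the symmetry of $\otimes_k$ to simplify, the isomorphism becomes $\LL_U\Inf^M_PY\cong\LL_U(\one)\otimes_kY$.
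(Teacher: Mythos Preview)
Your proposal is correct and follows essentially the same route as the paper's proof: fix the object being inflated, identify the right adjoints of the two composite functors as $\hom_{\D(P)}(\Inf^M_PX,-)\circ\Inf^M_P$ and $\Inf^M_P\circ\hom_{\D(M)}(X,-)$, invoke Proposition~\ref{prop:Inf-hom} to see these agree, and pass to left adjoints via Example~\ref{ex:mates}. The paper does not spell out the verification that the resulting mate is the ``natural map'' of the statement, so your remark on this point is a reasonable addition rather than a deviation.
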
 
\begin{proof} 
By Proposition~\ref{prop:Inf-hom} the natural map
\[
\Inf^M_P\circ \hom_{\D(M)}(Y,\pholder) \xRightarrow{\cong} \hom_{\D(P)}(\Inf^M_P
Y,\pholder)\circ \Inf^M_P
\]
is an isomorphism of functors $\D(M)\to \D(P)$. Passing to the left adjoints,
see Example~\ref{ex:mates}, shows that the induced map
\[
\LL_U\circ (\Inf^M_P Y\otimes_k\pholder) \xRightarrow{\cong} (Y\otimes_k\pholder)\circ \LL_U
\]
is an isomorphism of functors $\D(P)\to \D(M)$.
\end{proof} 

\begin{prop}\label{prop:LU-dual} 
Given $X\in \D(P)$ and $Y\in \D(M)$, the natural map
\[
\RR\H^0\bigl(U, \hom_{\D(P)}(X, \Inf^M_PY)\bigr)
\xrightarrow{\cong} \hom_{\D(M)}(\LL_UX,Y) 
\]
is an isomorphism in $\D(M)$.
\end{prop}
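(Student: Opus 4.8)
The plan is to realize both sides of the asserted isomorphism as right adjoints of one and the same functor and then to transport the projection formula across the relevant adjunctions via the mate formalism. Fix $X\in\D(P)$ and regard both sides as functors of $Y\in\D(M)$: the left-hand side is the composite $\RR\H^0(U,-)\circ\hom_{\D(P)}(X,-)\circ\Inf^M_P$, and the right-hand side is $\hom_{\D(M)}(\LL_UX,-)$.

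First I would collect the adjunctions involved. We have the adjunction $\Inf^M_P\dashv\RR\H^0(U,-)$ between $\D(M)$ and $\D(P)$ (Lemma~\ref{lem:K-injectives}.\ref{lem:K-injectives-c} applied to the exact functor $\Inf^M_P$; here the underived right adjoint $\Pi_U$ of $\Inf^M_P$ coincides with $\H^0(U,-)$ since $U\trianglelefteq P$, cf.\ Proposition~\ref{prop:Inf-adjoints} and the following remark), the adjunction $\LL_U\dashv\Inf^M_P$ (Theorem~\ref{thm:Inf-prod-general}), and the internal-hom adjunctions $X\otimes_k-\ \dashv\ \hom_{\D(P)}(X,-)$ on $\D(P)$ and $(-)\otimes_k\LL_UX\ \dashv\ \hom_{\D(M)}(\LL_UX,-)$ on $\D(M)$. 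Composing the first three shows that the left-hand functor $\RR\H^0(U,-)\circ\hom_{\D(P)}(X,-)\circ\Inf^M_P$ is right adjoint to $\LL_U\circ(X\otimes_k-)\circ\Inf^M_P$, while the right-hand functor $\hom_{\D(M)}(\LL_UX,-)$ is right adjoint to $(-)\otimes_k\LL_UX$.

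Next I would compare the two left adjoints. For $Z\in\D(M)$, the commutativity of $\otimes_k$ together with the projection formula (Corollary~\ref{cor:projectionformula}) gives a natural isomorphism
\[
\LL_U\bigl(X\otimes_k\Inf^M_PZ\bigr)=\LL_U\bigl(\Inf^M_PZ\otimes_kX\bigr)\xrightarrow{\ \cong\ }Z\otimes_k\LL_UX,
\]
that is, a natural isomorphism $\LL_U\circ(X\otimes_k-)\circ\Inf^M_P\xRightarrow{\cong}(-)\otimes_k\LL_UX$ of functors $\D(M)\to\D(M)$. Passing to right adjoints — recall from Example~\ref{ex:mates} that the mate of an isomorphism of left adjoints is an isomorphism of the associated right adjoints — yields a natural isomorphism $\hom_{\D(M)}(\LL_UX,-)\xRightarrow{\cong}\RR\H^0\bigl(U,\hom_{\D(P)}(X,\Inf^M_P-)\bigr)$; its inverse, evaluated at $Y$, is the natural map in the statement, whence the latter is an isomorphism. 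The only point needing care is the bookkeeping verification that this mate really does coincide with the canonically-defined arrow in the statement (assembled from the counits of $\LL_U\dashv\Inf^M_P$ and $\Inf^M_P\dashv\RR\H^0(U,-)$, the evaluation map $\hom_{\D(P)}(X,\Inf^M_PY)\otimes_kX\to\Inf^M_PY$, and the projection formula isomorphism), which is a routine diagram chase entirely parallel to the proof of Corollary~\ref{cor:projectionformula}. There is no essential obstacle here, since all the substantive inputs — rigid-compact generation, the fact that $\Inf^M_P$ preserves small products, and the strong-monoidality behind Proposition~\ref{prop:Inf-hom} — are already in place.
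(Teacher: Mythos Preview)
Your proposal is correct and follows essentially the same approach as the paper: both identify the two sides as right adjoints of $-\otimes_k\LL_U(X)$ and $\LL_U\circ(X\otimes_k-)\circ\Inf^M_P$, invoke the projection formula (Corollary~\ref{cor:projectionformula}) to see that these left adjoints are isomorphic, and then pass to right adjoints via Example~\ref{ex:mates}. The paper's version is terser and does not spell out the adjunction bookkeeping you mention, but the substance is identical.
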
 
\begin{proof} 
The natural map
\[
\RR\H^0(U,\pholder)\circ \hom_{\D(P)}(X,\pholder)\circ \Inf^M_P \To
\hom_{\D(M)}\bigl(\LL_U(X),\pholder\bigr)
\]
arises from $\pholder\otimes_k\LL_U(X) \xRightarrow{\cong} \LL_U\circ (\pholder\otimes_kX)
\circ \Inf^M_P$ by passing to the left adjoints, see Example~\ref{ex:mates}. The
latter map is an isomorphism by Corollary~\ref{cor:projectionformula}, hence so
is the first.
\end{proof} 

Recall that we denote by $X^\vee = \hom(X,\one)$ the dual object, see
Notation~\ref{nota:dual}.

\begin{cor}\label{cor:LU-dual} 
There is a natural isomorphism $\RR\H^0(U,X^\vee) \cong (\LL_UX)^\vee$ for all
$X\in \D(P)$.
\end{cor}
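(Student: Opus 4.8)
The plan is to read this off as the special case $Y = \one$ of Proposition~\ref{prop:LU-dual}. First I would recall that $\Inf^M_P$ is strictly monoidal (as noted at the start of \S\ref{subsec:general}), so that $\Inf^M_P\one \cong \one$ in $\D(P)$. Substituting $Y = \one$ into the isomorphism of Proposition~\ref{prop:LU-dual} then gives a natural isomorphism
\[
\RR\H^0\bigl(U, \hom_{\D(P)}(X,\Inf^M_P\one)\bigr) \xrightarrow{\cong} \hom_{\D(M)}\bigl(\LL_U X, \one\bigr)
\]
in $\D(M)$, and applying the identification $\Inf^M_P\one \cong \one$ rewrites the source as $\RR\H^0\bigl(U, \hom_{\D(P)}(X,\one)\bigr)$.

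By Notation~\ref{nota:dual} the source is by definition $\RR\H^0(U, X^\vee)$ and the target is $(\LL_U X)^\vee$, so this is exactly the asserted isomorphism. The only point needing a word of care is naturality in $X$: the isomorphism in Proposition~\ref{prop:LU-dual} is natural in both arguments, so freezing $Y = \one$ keeps it natural in $X$, and the two identifications of internal Hom-objects with duals are themselves natural in $X$. There is no genuine obstacle here — the entire content of the corollary is already contained in Proposition~\ref{prop:LU-dual}, and hence ultimately in the projection formula of Corollary~\ref{cor:projectionformula}.
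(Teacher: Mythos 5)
Your proof is correct and is exactly the paper's argument: the corollary is obtained by setting $Y=\one$ in Proposition~\ref{prop:LU-dual} and using $\Inf^M_P\one\cong\one$ together with the definition of the dual in Notation~\ref{nota:dual}. Nothing further is needed.
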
 
\begin{proof} 
Apply Proposition~\ref{prop:LU-dual} with $Y=\one$.
\end{proof} 
\subsection{Computing the left adjoint using positive 
monoids}\label{subsec:positive}
We suppose now that $P = U\rtimes M$ is a semidirect product of $p$-adic Lie
groups. Let $f\colon P\longtwoheadrightarrow M$ be the canonical projection. We
fix a compact open torsion-free subgroup $K_P$ of $P$. As before, we write $K_U
= K_P\cap U$ and $K_M = f(K_P)$. We make the following hypothesis:
\begin{hyp}\label{hyp:positive} 
There exists a \emph{strictly positive} element $z\in M$, \ie, an element $z$ in
the center of $M$ such that $zK_Uz^{-1} \subseteq K_U$ and
\[
\bigcup_{n>0} z^{-n} K_Uz^n = U.
\]
\end{hyp} 

We fix a strictly positive element $z\in M$. 
Note that the existence of a strictly positive element necessitates that $U$ be
the union of its compact open subgroups.

\begin{ex} 
The example one should have in mind is the following:
Let $\field/\Q_p$ be a finite extension and $\alg G$ a connected reductive group
defined over $\field$. Fix a parabolic subgroup $\alg P$ of $\alg G$ with
unipotent radical $\alg U$. Let $\ol{\alg P}$ be the opposite parabolic subgroup
with respect to $\alg P$, that is, the unique parabolic subgroup $\ol{\alg P}$
of $\alg G$ such that $\alg M\coloneqq \alg P\cap \ol{\alg P}$ is a Levi
subgroup for $\alg P$ and $\ol{\alg P}$. Denote $\ol{\alg U}$ the unipotent
radical of $\ol{\alg P}$. Let $K\subseteq \alg G(\field)$ be a
compact open subgroup with an Iwahori decomposition
\[
K = (K\cap \alg U(\field))(K\cap \alg M(\field))(K\cap \ol{\alg U}(\field)).
\]
By \cite[(6.14)]{Bushnell-Kutzko.1998} there
exists a strongly positive element for the choice $K_P \coloneqq (K\cap \alg
U(\field))(K\cap \alg M(\field))$. Note that the strongly positive elements in
the sense of \opcit\ are strictly positive in our sense. 
\end{ex} 

The following definition is standard, cf.~\cite[(6.5)]{Bushnell-Kutzko.1998},
\cite[II.4]{Vigneras.1998}, or \cite[\S3.1]{EmertonI}. 
\begin{defn} 
An element $m\in M$ is called \emph{positive} if $mK_Um^{-1}\subseteq K_U$. We
denote $M^+$ the monoid of positive elements in $M$. Note that $M^+$ is a
$p$-adic monoid, since $K_P\cap M\subseteq M^+$.

The $p$-adic monoid $P^+ \coloneqq K_UM^+$ satisfies $K_U\backslash P^+\cong
M^+$.
\end{defn} 

\begin{rmk} 
We allow for $U$ to be compact. In this case, Hypothesis~\ref{hyp:positive}
amounts to saying that $U=K_U$ is torsion-free and $M = M^+$. 
\end{rmk} 

Our goal in this section is to give a more precise description of the functor
$\LL_U$ using positive monoids. We first collect some elementary facts.

\begin{lem}\label{lem:M+-flat} 
The following assertions hold:
\begin{enumerate}[label=(\alph*)]
\item\label{lem:M+-flat-a} $M$ (resp.\ $P$) is generated as a monoid by $M^+$ and
$z^{-1}$ (resp.\ $P^+$ and $z^{-1}$).

\item\label{lem:M+-flat-b} Let $S = \{z^n\}_{n\ge0}$ be the multiplicative set
generated by $z$. Then $k[M] = S^{-1}k[M^+]$ is the left Ore localization of
$k[M^+]$ at $S$. In particular, the ring extension $k[M^+]\subseteq k[M]$ is flat. 

Similarly, $k[P] = S^{-1}k[P^+]$ is the left Ore localization of $k[P^+]$ at $S$,
and hence the ring extension $k[P^+]\subseteq k[P]$ is flat. 

\item\label{lem:M+-flat-c} The functors $\ind_{M^+}^M\colon \Rep_k(M^+)\to
\Rep_k(M)$ and $\ind_{P^+}^P\colon \Rep_k(P^+)\to \Rep_k(P)$ are exact.
\end{enumerate}
\end{lem}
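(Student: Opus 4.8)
The plan is to prove the three parts in order, using \ref{lem:M+-flat-a} to get \ref{lem:M+-flat-b} and \ref{lem:M+-flat-b} to get \ref{lem:M+-flat-c}. For \ref{lem:M+-flat-a} the key point is that $U = \bigcup_{n>0} z^{-n}K_Uz^n$ is an \emph{increasing} union of compact open subgroups and that $z$ lies in the centre of $M$. Thus for any $m\in M$ the compact open subgroup $mK_Um^{-1}$ of $U$ is contained in $z^{-n}K_Uz^n$ for $n\gg 0$, which rearranges to $z^n m K_U (z^n m)^{-1}\subseteq K_U$, i.e.\ $z^nm\in M^+$; hence $m = z^{-n}(z^nm)$, which proves the claim for $M$. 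For $P$ I would write $p = u\mu$ according to $P = U\rtimes M$, pick $n$ large enough that simultaneously $z^nuz^{-n}\in K_U$ (possible since $u\in U$ and $zK_Uz^{-1}\subseteq K_U$) and $z^n\mu\in M^+$ (by the case of $M$ just treated), and observe $z^np = (z^nuz^{-n})(z^n\mu)\in K_UM^+ = P^+$; hence $p = z^{-n}(z^np)$ lies in the submonoid generated by $P^+$ and $z^{-1}$.

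For \ref{lem:M+-flat-b}, in the case of $M$ the crucial observation is that $z$, being strictly positive, is \emph{central} in $M^+$, so $S = \{z^n\}_{n\ge0}$ is a central multiplicative subset of $k[M^+]$ consisting of non-zero-divisors (cancellation in the group $M$); the central localization $S^{-1}k[M^+]$ therefore exists and is flat over $k[M^+]$. I would then identify it with $k[M]$: the inclusion $k[M^+]\hookrightarrow k[M]$ sends $z$ to a unit and hence factors through $S^{-1}k[M^+]\to k[M]$, and this map is surjective by \ref{lem:M+-flat-a} and injective because $k[M^+]\to k[M]$ is injective (it embeds the $k$-basis $M^+$ into the $k$-basis $M$) while $z$ acts invertibly on $k[M]$. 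For $P$ the element $z$ is \emph{not} central, so the only thing that needs care is the left Ore condition. Here I would note that $z^jP^+z^{-j}\subseteq P^+$ for $j\ge0$, since $z^jK_Uz^{-j}\subseteq K_U$ and, $z$ being central in $M$, $z^jM^+z^{-j} = M^+$; hence conjugation by $z$ restricts to a ring endomorphism $\phi$ of $k[P^+]\subseteq k[P]$ with $z^ja = \phi^j(a)z^j$ for all $a\in k[P^+]$, which is exactly the left Ore condition, and since $z$ is again a non-zero-divisor, $S$ is a left denominator set. So $S^{-1}k[P^+]$ exists and is flat as a right $k[P^+]$-module, and the identification $S^{-1}k[P^+]\cong k[P]$ goes through verbatim as in the $M$ case.

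Part \ref{lem:M+-flat-c} is then immediate: $\ind_{M^+}^M = k[M]\otimes_{k[M^+]}(-)$ is the restriction to smooth representations of base change along the flat ring map $k[M^+]\hookrightarrow k[M]$, hence exact, because exactness of a sequence in $\Rep_k(M^+)$, resp.\ $\Rep_k(M)$, is detected in $\Mod(k[M^+])$, resp.\ $\Mod(k[M])$; and likewise for $\ind_{P^+}^P$. I expect the only genuinely non-formal step in the whole lemma to be the verification of the left Ore condition for $S$ in $k[P^+]$ and the resulting identification $k[P] = S^{-1}k[P^+]$; everything else is bookkeeping with the increasing union $\bigcup_{n}z^{-n}K_Uz^n$ and with the freeness of monoid algebras.
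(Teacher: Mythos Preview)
Your proof is correct and follows essentially the same approach as the paper: the compactness argument for \ref{lem:M+-flat-a}, the verification of the left Ore condition via $sas^{-1}\in k[P^+]$ for \ref{lem:M+-flat-b}, and the deduction of exactness from flatness for \ref{lem:M+-flat-c} all match. The paper treats $k[P^+]$ directly (noting the $M$ case is ``similar but easier'') and cites McConnell--Robson for the passage from the Ore condition to $S$ being a left denominator set, whereas you separate out the central case for $M$ and spell out the injectivity of $S^{-1}k[P^+]\to k[P]$ more explicitly; these are cosmetic differences only.
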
 
\begin{proof} 
\begin{itemize}
\item[\ref{lem:M+-flat-a}] Let $m \in M$. Since $mK_Um^{-1}$ is a compact
subgroup of $U = \bigcup_{n>0} z^{-n}K_Uz^n$, there exists $n$ such that
$mK_Um^{-1} \subseteq z^{-n}K_Uz^n$. Then $z^nm\in M^+$, proving the
assertion for $M$. Similarly, let $u\in U$ and $m\in M$ be arbitrary. Let
$n\gg0$ such that $u\in z^{-n}K_Uz^n$ and $z^nm\in M^+$. Then $z^num =
z^nuz^{-n}\cdot z^nm \in P^+$, proving the assertion for $P$.

\item[\ref{lem:M+-flat-b}] We prove the assertion for $k[P^+]$; the case of
$k[M^+]$ is similar but easier. Note that $k[P^+]$ embeds into the bigger
ring $k[P]$ in which the elements of $S$ are invertible. Moreover, $S$ satisfies
the left Ore condition, meaning that for all $s\in S$ and
$r\in k[P^+]$ there exist $s'\in S$ and $r'\in k[P^+]$ such that $s'r = r's$;
indeed this is clear for $s'\coloneqq s$ and $r' \coloneqq srs^{-1} \in k[P^+]$.
Now, \cite[Lem.~2.1.13.(ii)]{McConnell-Robson.2001} shows that $S$ is a left
denominator set in $k[P^+]$. 
Then \ref{lem:M+-flat-a} implies that $k[P]$ is
the left Ore localization of $k[P^+]$ with respect to $S$. The
flatness assertion follows from \opcit\ Proposition~2.1.16.

\item[\ref{lem:M+-flat-c}] This follows immediately from \ref{lem:M+-flat-b}.
\end{itemize}
\end{proof} 

Recall that $\Res^P_{P^+}\colon \D(P)\to \D(P^+)$ admits a left adjoint
$\ind_{P^+}^P$ (by Lemma~\ref{lem:M+-flat}.\ref{lem:M+-flat-c}) and a right
adjoint $\RInd_{P^+}^P$.

\begin{lem}\label{lem:P+-unit} 
The following equivalent assertions hold:
\begin{enumerate}[label=(\roman*)]
\item\label{lem:P+-unit-i} The unit $\eta\colon \id_{\D(P)} \xRightarrow{\cong}
\RInd_{P^+}^P\Res^P_{P^+}$ is an isomorphism.

\item\label{lem:P+-unit-ii} $\Res^P_{P^+}\colon\D(P)\to \D(P^+)$ is fully
faithful. 

\item\label{lem:P+-unit-iii} The counit $\varepsilon\colon
\ind_{P^+}^P\Res^P_{P^+} \xRightarrow{\cong} \id_{\D(P)}$ is an isomorphism.
\end{enumerate}
\end{lem}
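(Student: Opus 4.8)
The three assertions \ref{lem:P+-unit-i}, \ref{lem:P+-unit-ii}, \ref{lem:P+-unit-iii} are formally equivalent: \ref{lem:P+-unit-i} $\Leftrightarrow$ \ref{lem:P+-unit-ii} is Lemma~\ref{lem:adjoint-ffaithful}(b) applied to the adjunction $\ind_{P^+}^P \dashv \Res^P_{P^+}$ (the unit of \emph{that} adjunction is $\eta\colon \id_{\D(P)} \To \RInd_{P^+}^P\Res^P_{P^+}$ only after one notes that $\Res^P_{P^+}$ also has $\RInd_{P^+}^P$ as a right adjoint, so $\eta$ here is the unit of $\Res^P_{P^+}\dashv \RInd_{P^+}^P$; thus \ref{lem:P+-unit-i}$\Leftrightarrow$\ref{lem:P+-unit-ii} is Lemma~\ref{lem:adjoint-ffaithful}(a) for that adjunction). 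Likewise \ref{lem:P+-unit-ii} $\Leftrightarrow$ \ref{lem:P+-unit-iii} is Lemma~\ref{lem:adjoint-ffaithful}(b) for $\ind_{P^+}^P\dashv\Res^P_{P^+}$, since the counit of this adjunction is $\varepsilon\colon \ind_{P^+}^P\Res^P_{P^+}\To\id_{\D(P)}$. So it suffices to prove one of them; I would prove \ref{lem:P+-unit-iii}, the counit statement.

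Since $\Res^P_{P^+}$ is conservative (Lemma~\ref{lem:Res-conservative}) — here using that $P^+$ contains the compact open subgroup $K_P$, and restriction detects quasi-isomorphisms — it is enough to check that $\Res^P_{P^+}(\varepsilon_X)$ is an isomorphism in $\D(P^+)$ for every $X\in\D(P)$. Both $\ind_{P^+}^P$ and $\Res^P_{P^+}$ are exact (Lemma~\ref{lem:M+-flat}\ref{lem:M+-flat-c} for the former), so they commute with taking cohomology; hence it suffices to check the claim after further applying $\Res^{P^+}_{K_P}$, or indeed to reduce to the case $X = V[0]$ for $V\in\Rep_k(P)$ and verify that the underived counit $\ind_{P^+}^P\Res^P_{P^+}V \to V$, $s\otimes v\mapsto sv$, is an isomorphism of smooth $P$-representations. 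Injectivity and surjectivity of this map are elementary: surjectivity is clear since the map hits every $v = 1\otimes v$, and for injectivity one uses that $P = P^+\cdot\{z^{-n}\}_{n\ge0}$ (Lemma~\ref{lem:M+-flat}\ref{lem:M+-flat-a}) together with $k[P] = S^{-1}k[P^+]$ being the Ore localization (Lemma~\ref{lem:M+-flat}\ref{lem:M+-flat-b}): every element of $k[P]\otimes_{k[P^+]}V$ can be written as $z^{-n}\otimes w$ for some $n\ge0$ and $w\in V$, and if it maps to $0$ then $z^{-n}w = 0$ in $V$, whence $w = z^n z^{-n} w = 0$, so the original element is $0$.

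The cleanest route is actually to bypass the explicit computation: by flatness of $k[P^+]\subseteq k[P]$, the functor $k[P]\otimes_{k[P^+]}-$ is exact, and since $k[P]$ is the localization $S^{-1}k[P^+]$, the natural map $k[P]\otimes_{k[P^+]}\Res^P_{P^+}V \to V$ is an isomorphism of $k[P]$-modules for every $k[P]$-module $V$ (localization is idempotent: $S^{-1}(S^{-1}R)\otimes_{S^{-1}R}-$ is the identity). As this identification is visibly $P$-equivariant and both sides are smooth, we get that the underived counit is a natural isomorphism $\ind_{P^+}^P\Res^P_{P^+} \xRightarrow{\cong}\id_{\Rep_k(P)}$; applying this termwise to a complex and using exactness of both functors yields \ref{lem:P+-unit-iii} on $\D(P)$. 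I expect no serious obstacle here: the only point that needs a moment's care is confirming that the derived counit is obtained by applying the underived counit termwise — which holds precisely because $\ind_{P^+}^P$ and $\Res^P_{P^+}$ are exact, so their derived functors are computed termwise and the adjunction unit/counit are inherited termwise — and then invoking that an objectwise isomorphism of functors on $\Chain(P)$ descends to $\D(P)$.
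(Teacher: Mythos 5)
Your proposal is correct and follows essentially the same route as the paper: the three statements are formally equivalent by Lemma~\ref{lem:adjoint-ffaithful}, and since $\ind_{P^+}^P$ and $\Res^P_{P^+}$ are both exact, assertion \ref{lem:P+-unit-iii} reduces to the underived counit being an isomorphism, which holds because $\ind_{P^+}^P = k[P]\otimes_{k[P^+]}-$ is the Ore localization at $\{z^n\}_{n\ge 0}$ (Lemma~\ref{lem:M+-flat}). The explicit injectivity/surjectivity check and the detour through conservativity of $\Res^P_{P^+}$ are harmless extra detail; the paper simply cites the localization property directly.
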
 
\begin{proof} 
The fact that \ref{lem:P+-unit-i}, \ref{lem:P+-unit-ii}, and
\ref{lem:P+-unit-iii} are equivalent follows from
Lemma~\ref{lem:adjoint-ffaithful}. Since both $\ind_{P^+}^P$ and
$\Res^P_{P^+}$ are exact, it suffices for \ref{lem:P+-unit-iii} to show that the
counit
\[
\varepsilon\colon \ind_{P^+}^P\Res^P_{P^+} \To \id_{\Rep_k(P)}
\]
of the underived adjunction is an isomorphism. But this follows from the fact
that $\ind_{P^+}^P = k[P]\otimes_{k[P^+]}\pholder$ is a localization functor by
Lemma~\ref{lem:M+-flat}.\ref{lem:M+-flat-b}.
\end{proof} 

By Theorem~\ref{thm:Inf-prod-general} the functor $\Inf^{M^+}_{P^+}\colon
\D(M^+)\to \D(P^+)$ admits a left adjoint $\LL_{K_U}$.
\begin{prop}\label{prop:LU-positive} 
There is a natural isomorphism
\[
\ind_{M^+}^M \LL_{K_U} \Res^P_{P^+} \xRightarrow{\cong} \LL_U
\]
of functors $\D(P)\to \D(M)$.
\end{prop}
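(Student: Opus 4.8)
The plan is to obtain the desired isomorphism by concatenating two natural isomorphisms already available: the compatibility of $\LL_U$ with compact induction from open submonoids (Proposition~\ref{prop:LU-ind}), and the full faithfulness of $\Res^P_{P^+}$ (Lemma~\ref{lem:P+-unit}).

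First I would apply Proposition~\ref{prop:LU-ind} to the open submonoid $P^+ \subseteq P$. Here $f(P^+) = f(K_U M^+) = M^+$, since $f$ kills $K_U$; and $U \cap P^+ = K_U$, because if $k m \in U$ with $k \in K_U$, $m \in M^+$, then $m = k^{-1}(km) \in U \cap M = \{1\}$ in the semidirect product $P = U \rtimes M$. Thus the kernel of $P^+ \longtwoheadrightarrow M^+$ is precisely $K_U$, so the functor produced by Proposition~\ref{prop:LU-ind} is the same $\LL_{K_U}$ introduced just before the statement as the left adjoint of $\Inf^{M^+}_{P^+}$. The flatness hypotheses of Proposition~\ref{prop:LU-ind}, namely that $k[P]$ is flat over $k[P^+]$ and $k[M]$ is flat over $k[M^+]$, are exactly Lemma~\ref{lem:M+-flat}.\ref{lem:M+-flat-b}. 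Hence Proposition~\ref{prop:LU-ind} furnishes a natural isomorphism $\LL_U \ind_{P^+}^P \xRightarrow{\cong} \ind_{M^+}^M \LL_{K_U}$ of functors $\D(P^+) \to \D(M)$.

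Next I would precompose this isomorphism with $\Res^P_{P^+}$, obtaining $\LL_U \ind_{P^+}^P \Res^P_{P^+} \xRightarrow{\cong} \ind_{M^+}^M \LL_{K_U} \Res^P_{P^+}$, and then invoke Lemma~\ref{lem:P+-unit}.\ref{lem:P+-unit-iii}: the counit $\ind_{P^+}^P \Res^P_{P^+} \xRightarrow{\cong} \id_{\D(P)}$ is an isomorphism, so applying $\LL_U$ gives $\LL_U \ind_{P^+}^P \Res^P_{P^+} \xRightarrow{\cong} \LL_U$. Composing (and inverting) the two natural isomorphisms yields $\ind_{M^+}^M \LL_{K_U} \Res^P_{P^+} \xRightarrow{\cong} \LL_U$, as desired.

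I do not expect a genuine obstacle: the argument is a formal composition of Proposition~\ref{prop:LU-ind} and Lemma~\ref{lem:P+-unit}. The only points needing a moment's care are confirming that the kernel of $P^+ \longtwoheadrightarrow M^+$ is indeed $K_U$, so that the $\LL_{K_U}$ of Proposition~\ref{prop:LU-ind} matches the one in the statement, and that the relevant monoid algebras are flat — both immediate from the earlier results.
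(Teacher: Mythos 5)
Your proof is correct and is essentially the paper's argument viewed from the other side of the adjunction: the paper composes the unit isomorphism $\Inf^M_P \xRightarrow{\cong} \RInd_{P^+}^P\Res^P_{P^+}\Inf^M_P \xRightarrow{\cong} \RInd_{P^+}^P\Inf^{M^+}_{P^+}\Res^M_{M^+}$ (Lemma~\ref{lem:P+-unit}) and passes to left adjoints once at the end, whereas you invoke Proposition~\ref{prop:LU-ind} (itself the left-adjoint mate of the same $\Res$--$\Inf$ compatibility) and then the counit isomorphism of Lemma~\ref{lem:P+-unit}.\ref{lem:P+-unit-iii}. Your verifications that $U\cap P^+=K_U$ and that the flatness hypotheses hold are correct and are exactly the points that need checking.
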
 
\begin{proof} 
The functor $\ind_{M^+}^M\colon \D(M^+)\to \D(M)$ exists by
Lemma~\ref{lem:M+-flat}.\ref{lem:M+-flat-c} and is left adjoint to
$\Res^M_{M^+}$. From Lemma~\ref{lem:P+-unit} it follows that the composite
\[
\Inf^M_P \xRightarrow[\eta]{\cong} \RInd_{P^+}^P \Res^P_{P^+} \Inf^M_P
\xRightarrow{\cong} \RInd_{P^+}^P \Inf^{M^+}_{P^+} \Res^M_{M^+}
\]
is a natural isomorphism. The assertion follows by passing to the left adjoints,
cf.~Example~\ref{ex:mates}.
\end{proof} 

\begin{cor}\label{cor:LU-cohdim} 
The functors $\LL^{-n}_U\colon \Rep_k(P)\to \Rep_k(M)$ vanish for
$n\notin\{0,1,\dotsc,\dim U\}$. In particular, $\LL_U$ preserves bounded
complexes.
\end{cor}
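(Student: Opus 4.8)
The plan is to bootstrap from the positive-monoid description of $\LL_U$ down to the compact group $K_U$, where the left adjoint of inflation is computed explicitly in \S\ref{subsec:compact}. By Proposition~\ref{prop:LU-positive} there is a natural isomorphism $\LL_U\cong\ind_{M^+}^M\circ\LL_{K_U}\circ\Res^P_{P^+}$ of functors $\D(P)\to\D(M)$, where $\LL_{K_U}\colon\D(P^+)\to\D(M^+)$ is the left adjoint of $\Inf^{M^+}_{P^+}$. Both outer functors are exact ($\ind_{M^+}^M$ by Lemma~\ref{lem:M+-flat}.\ref{lem:M+-flat-c}), so, passing to $(-n)$-th cohomology and identifying $\Res^P_{P^+}V$ with $V$ regarded as a $P^+$-representation, one obtains $\LL^{-n}_U(V)\cong\ind_{M^+}^M\bigl(\LL^{-n}_{K_U}(V)\bigr)$, where $\LL^{-n}_{K_U}$ denotes the $(-n)$-th cohomology functor of $\LL_{K_U}\colon\D(P^+)\to\D(M^+)$. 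Since $K_U$ is open in $U$ we have $\dim K_U=\dim U$, so it suffices to show $\LL^{-n}_{K_U}=0$ for $n\notin\{0,\dotsc,\dim K_U\}$.

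To compute $\LL^{-n}_{K_U}$, recall that $K_U\subseteq P^+$ and that $f$ induces the bijection $K_U\backslash P^+\xrightarrow{\cong}M^+$ recorded in the definition of $P^+$. Proposition~\ref{prop:LU-Res}, applied with $H_S=K_U$ and $H_T=f(K_U)=\{1\}$, yields a commutative square
\[
\begin{tikzcd}
\D(P^+)\ar[d,"\Res^{P^+}_{K_U}"']\ar[r,"\LL_{K_U}"] & \D(M^+)\ar[d,"\Res^{M^+}_1"]\\
\D(K_U)\ar[r,"\LL_{K_U}"'] & \D(k),
\end{tikzcd}
\]
whose bottom arrow is the left adjoint of $\Inf^1_{K_U}\colon\D(k)\to\D(K_U)$; by uniqueness of adjoints this is the functor of \S\ref{subsec:compact} attached to $K_U\twoheadrightarrow\{1\}$, and since $K_U$ (and trivially $\{1\}$) is a torsion-free compact $p$-adic Lie group, Corollary~\ref{cor:Inf-product} together with Proposition~\ref{prop:dualizing} identifies it with $\RR\H^0(K_U,-)[\dim K_U]$ (using $\omega_{K_U}\cong k[\dim K_U]$). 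Hence, for $V\in\Rep_k(P^+)$,
\[
\Res^{M^+}_1\LL^{-n}_{K_U}(V)\;\cong\;\H^{-n}\bigl(\RR\H^0(K_U,\Res^{P^+}_{K_U}V)[\dim K_U]\bigr)\;=\;\H^{\dim K_U-n}(K_U,V).
\]
This vanishes for $n<0$ and for $n>\dim K_U$ — the latter because $K_U$ has $p$-cohomological dimension $\dim K_U$ by Lazard and Serre, as recalled before Proposition~\ref{prop:compactlygenerated}. Since $\Res^{M^+}_1\colon\D(M^+)\to\D(k)$ is conservative (vanishing is detected on underlying complexes of $k$-vector spaces, cf.\ the proof of Lemma~\ref{lem:Res-conservative}), it follows that $\LL^{-n}_{K_U}(V)=0$ for $n\notin\{0,\dotsc,\dim K_U\}$, proving the first assertion.

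Finally, $\LL_U$ is right $t$-exact by Proposition~\ref{prop:Ln<0}, hence way-out left, so for bounded $X\in\D(P)$ Lemma~\ref{lem:spectral}.\ref{lem:spectral-a} gives a convergent spectral sequence $E_2^{i,j}=\LL^i_U(\H^j(X))\To\H^{i+j}(\LL_U X)$; since $\LL^i_U=0$ for $i\notin\{-\dim U,\dotsc,0\}$ and $\H^j(X)=0$ for all but finitely many $j$, only finitely many $E_2$-terms are nonzero, whence $\LL_U X$ is bounded.

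The crux is the middle step: one cannot apply \S\ref{subsec:compact} directly to $\LL_U\colon\D(P)\to\D(M)$, because $U$ is not compact and $f$ induces no coset bijection over a compact open subgroup of $P$; this is why one routes through the open submonoid $P^+$ and the compact subgroup $K_U\subseteq P^+$, and one must keep the two functors denoted $\LL_{K_U}$ (namely $\D(P^+)\to\D(M^+)$ versus $\D(K_U)\to\D(k)$) carefully apart, related as they are by Proposition~\ref{prop:LU-Res}.
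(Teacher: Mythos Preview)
Your proof is correct and follows essentially the same route as the paper's: reduce via Proposition~\ref{prop:LU-positive} to the functor $\LL_{K_U}\colon\D(P^+)\to\D(M^+)$, then identify its cohomology functors with $\H^{\dim K_U-n}(K_U,-)$ using Corollary~\ref{cor:Inf-product} and Proposition~\ref{prop:dualizing}. You make explicit (via Proposition~\ref{prop:LU-Res} with $H_S=K_U$, $H_T=\{1\}$ and conservativity of $\Res^{M^+}_1$) the passage from the monoid-level $\LL_{K_U}$ to the compact-group-level one, which the paper's proof leaves implicit, and you supply the spectral-sequence argument for boundedness that the paper omits.
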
 
\begin{proof} 
Let $n\in \Z\setminus\{0,1,\dotsc,\dim U\}$. Since $\dim K_U = \dim U$, 
Corollary~\ref{cor:Inf-product} and Proposition~\ref{prop:dualizing} imply 
\[
\LL_{K_U}^{-n} = \H^{\dim K_U-n}(K_U,\pholder) = 0.
\]
Now, Proposition~\ref{prop:LU-positive} shows $\LL^{-n}_U =
\ind_{M^+}^M\LL^{-n}_{K_U} \Res^P_{P^+} = 0$.
\end{proof} 

\begin{cor}\label{cor:LU-bounded} 
The functors $\begin{tikzcd}[cramped] \LL_U\colon \D^{\bounded}(P) \ar[r,shift
left] & \ar[l,shift left] \D^{\bounded}(M):\!\Inf^M_P\end{tikzcd}$ define an
adjunction on the derived categories of bounded complexes.
\end{cor}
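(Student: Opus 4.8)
The plan is to deduce this directly from the unbounded adjunction $\LL_U\dashv\Inf^M_P$ of Theorem~\ref{thm:Inf-prod-general}, using the general principle that an adjunction restricts to any pair of full subcategories that are preserved by the two functors.

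First I would record that each functor does restrict appropriately. Since $\Inf^M_P$ is exact, applying it termwise sends a bounded complex of $M$-representations to a bounded complex of $P$-representations, so it restricts to a triangulated functor $\Inf^M_P\colon \D^{\bounded}(M)\to \D^{\bounded}(P)$. On the other side, Corollary~\ref{cor:LU-cohdim} says precisely that $\LL_U$ preserves bounded complexes, hence restricts to a triangulated functor $\LL_U\colon \D^{\bounded}(P)\to \D^{\bounded}(M)$. Both restricted functors remain triangulated because $\D^{\bounded}$ carries the triangulated structure inherited from $\D$.

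It then only remains to verify the adjunction bijection on the subcategories. For $X\in \D^{\bounded}(P)$ and $Y\in \D^{\bounded}(M)$ I would use that $\D^{\bounded}(M)\subseteq \D(M)$ and $\D^{\bounded}(P)\subseteq \D(P)$ are full, together with $\LL_UX\in\D^{\bounded}(M)$ and $\Inf^M_PY\in\D^{\bounded}(P)$, to obtain
\[
\Hom_{\D^{\bounded}(M)}\bigl(\LL_UX,Y\bigr) = \Hom_{\D(M)}\bigl(\LL_UX,Y\bigr) \cong \Hom_{\D(P)}\bigl(X,\Inf^M_PY\bigr) = \Hom_{\D^{\bounded}(P)}\bigl(X,\Inf^M_PY\bigr),
\]
the middle isomorphism being the adjunction of Theorem~\ref{thm:Inf-prod-general}. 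Naturality in $X$ and $Y$ is inherited verbatim from the unbounded case, which establishes the claim.

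There is essentially no obstacle here; the one place that genuinely invokes earlier work is the boundedness of $\LL_UX$, which needs the vanishing range of Corollary~\ref{cor:LU-cohdim} rather than merely the right $t$-exactness of $\LL_U$ (Proposition~\ref{prop:Ln<0}), since a priori $\LL_U$ could produce nonzero cohomology in all sufficiently negative degrees.
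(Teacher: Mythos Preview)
Your argument is correct and matches the paper's own proof, which simply says the statement is immediate from Corollary~\ref{cor:LU-cohdim}. You have spelled out in detail exactly the reasoning the paper leaves implicit: both functors preserve bounded complexes (one by exactness, the other by the vanishing range of Corollary~\ref{cor:LU-cohdim}), and the adjunction then restricts by fullness of $\D^{\bounded}$ in $\D$.
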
 
\begin{proof} 
This is immediate from Corollary~\ref{cor:LU-cohdim}.
\end{proof} 

We finish this section by giving another description of $\LL_{K_U}\colon
\D(P^+)\to \D(M^+)$ similar to Corollary~\ref{cor:Inf-product}. To start, let
$V\in \Rep_k(P^+)$ be a smooth $P^+$-representation. There is an action of $M^+$
on $\H^0(K_U,V)$ called the \emph{Hecke action}, cf.
\cite[3.1.3~Def.]{EmertonI}. Concretely, $m\in M^+$ acts on $\H^0(K_U,V)$ as the
composition
\[
\begin{tikzcd}[row sep=0em]
\H^0(K_U,V) \ar[r,"\conj_m"] &
\H^0(mK_Um^{-1}, V) \ar[r,"\cores"] &
\H^0(K_U,V),\\
v \ar[r,mapsto] & mv \ar[r,mapsto] & m\star v\coloneqq \sum_{u\in
K_U/mK_Um^{-1}} umv;
\end{tikzcd}
\]
note that the restriction of the Hecke $M^+$-action to $K_M$ gives the usual
action. 
We obtain a left exact functor $\H^0(K_U,\pholder)\colon \Rep_k(P^+)\to \Rep_k(M^+)$,
hence also a right derived functor
\begin{equation}\label{eq:RH-monoid}
\RR\H^0(K_U,\pholder)\colon \D(P^+)\to \D(M^+).
\end{equation}

\begin{warning} 
The functor \eqref{eq:RH-monoid} is \emph{not} the right adjoint of
$\Inf^{M^+}_{P^+}$, see also Remark~\ref{rmk:adjoints-monoid}. 
\end{warning} 

The following lemma resolves the ambiguity in the notation of $\RR\H^0(K_U,\pholder)$:

\begin{lem}\label{lem:RH-Res} 
The diagram
\[
\begin{tikzcd}[column sep=5em, row sep=3em]
\D(P^+) \ar[r,"{\RR\H^0(K_U,\pholder)}"] \ar[d,"\Res^{P^+}_{K_P}"'] &
\D(M^+) \ar[d,"\Res^{M^+}_{K_M}"]\\
\D(K_P) \ar[r,"{\RR\H^0(K_U,\pholder)}"'] \ar[d,"\Res^{K_P}_{K_U}"'] & \D(K_M)
\ar[d,"\Res^{K_M}_1"]\\
\D(K_U)\ar[r,"{\RR\H^0(K_U,\pholder)}"'] & \D(k)
\end{tikzcd}
\]
is commutative.
\end{lem} 
\begin{proof} 
Observe that $\Res^{P^+}_{K_P}$ admits an exact left adjoint, by
Lemma~\ref{lem:M+-flat}.\ref{lem:M+-flat-c} and hence preserves K-injective
complexes, see Lemma~\ref{lem:K-injectives}.\ref{lem:K-injectives-a}. Thus, the
commutativity of the top square can be checked on the underived level, where it
is clear. 

Since $\H^0(K_U,\pholder)\colon \Rep_k(K_P)\to \Rep_k(K_M)$ and
$\H^0(K_U,\pholder)\colon \Rep_k(K_U)\to \Vect_k$ have finite cohomological
dimension, \cite[Cor.~5.3.$\gamma$]{Hartshorne.1966} shows that both derived
functors can be computed using acyclic resolutions. As $\Res^{K_P}_{K_U}$
preserves $\H^0(K_U,\pholder)$-acyclic objects, see, \eg, \cite[(1.3.6)
Prop.~(ii)]{NSW}, it follows that also the bottom square commutes.
\end{proof} 

\begin{lem}\label{lem:F+} 
The functor~\eqref{eq:RH-monoid} admits a right adjoint $F^{M^+}_{P^+}\colon
\D(M^+)\to \D(P^+)$.
\end{lem} 
\begin{proof} 
By Brown representability, Corollary~\ref{cor:brown}, it suffices to show that
$\RR\H^0(K_U,\pholder)$ commutes with small direct sums. So let $\{Y_i\}_{i\in I}$ in
$\D(P^+)$ be a family. The inclusions $Y_j\to \bigoplus_{i\in I}Y_i$ induce a
canonical map
\[
\alpha\colon \bigoplus_{i\in I} \RR\H^0(K_U,Y_i) \to \RR\H^0\Bigl(K_U,
\bigoplus_{i\in I}Y_i\Bigr).
\]
By Lemma~\ref{lem:RH-Res}, and because $\Res^{M^+}_{K_M}$ and $\Res^{P^+}_{K_P}$
commute with small direct sums, we obtain a commutative diagram
\[
\begin{tikzcd}[column sep=4em]
\Res^{M^+}_{K_M} \bigoplus_{i\in I} \RR\H^0(K_U,Y_i)
\ar[r,"\Res^{M^+}_{K_M}\alpha"] \ar[d,"\cong"']
&
\Res^{M^+}_{K_M}\RR\H^0\bigl(K_U, \bigoplus_{i\in I}Y_i\bigr)
\ar[d,"\cong"]\\
\bigoplus_{i\in I} \RR\H^0\bigl(K_U, \Res^{P^+}_{K_P}Y_i\bigr)
\ar[r,"\cong"']
&
\RR\H^0\bigl(K_U, \bigoplus_{i\in I} \Res^{P^+}_{K_P}Y_i\bigr).
\end{tikzcd}
\]
The lower horizontal map is an isomorphism, since $\RR\H^0(K_U,\pholder)\colon
\D(K_P)\to \D(K_M)$ admits a right adjoint by Lemma~\ref{lem:F^M_P}. Therefore,
$\Res^{M^+}_{K_M}\alpha$ is an isomorphism. As $\Res^{M^+}_{K_M}$ is
conservative by Lemma~\ref{lem:Res-conservative}, it follows that $\alpha$ is an
isomorphism.
\end{proof} 

\begin{lem}\label{lem:F+-Res} 
The diagram
\[
\begin{tikzcd}
\D(M^+) \ar[r,"F^{M^+}_{P^+}"] \ar[d,"\Res^{M^+}_{K_M}"']
&
\D(P^+) \ar[d,"\Res^{P^+}_{K_P}"]
\\
\D(K_M) \ar[r,"F^{K_M}_{K_P}"'] & \D(K_P)
\end{tikzcd}
\]
is commutative.
\end{lem} 
\begin{proof} 
Lemma~\ref{lem:RH-Res} gives a natural isomorphism $\RR\H^0(K_U,\pholder)
\Res^{P^+}_{K_P} \xRightarrow{\cong} \Res^{M^+}_{K_M} \RR\H^0(K_U,\pholder)$. Passing
to the left mate, see Proposition~\ref{prop:mates}, we obtain a natural
transformation
\begin{equation}\label{eq:ind-RH-commute}
\alpha\colon \ind_{K_M}^{M^+} \RR\H^0(K_U,\pholder) \To \RR\H^0(K_U,\pholder)
\ind_{K_P}^{P^+}.
\end{equation}
We claim that $\alpha$ is an isomorphism. Since $\Res^{M^+}_{1}$ is
conservative, it suffices to check that 
\begin{equation}\label{eq:F+-Res-1}
\Res^{M^+}_1 \ind_{K_M}^{M^+}\RR\H^0(K_U,\pholder) \xRightarrow{\Res^{M^+}_{1}\alpha}
\Res^{M^+}_1\RR\H^0(K_U,\pholder) \ind_{K_P}^{P^+} \xRightarrow{\cong} \RR\H^0(K_U,\pholder)
\Res^{P^+}_{K_U}\ind_{K_P}^{P^+}
\end{equation}
is an isomorphism. 
Since $P^+ = K_UM^+$, the inclusion $M^+\subseteq P^+$ induces a
bijection $M^+/K_M \cong K_U\backslash P^+/K_P$. Let $\Gamma
\subseteq M^+$ be a complete representing system for $K_U\backslash P^+/K_P$. By
the Mackey decomposition~\ref{lem:derived-Mackey}, we have an isomorphism
\begin{equation}\label{eq:P+-Mackey}
\bigoplus_{m\in \Gamma} \ind_{mK_Um^{-1}}^{K_U}
\Res^{mK_Pm^{-1}}_{mK_Um^{-1}} m_* \xRightarrow{\cong}
\Res^{P^+}_{K_U}\ind_{K_P}^{P^+}
\end{equation}
of functors $\D(K_P)\to \D(K_U)$, where we have used $K_U\cap mK_Pm^{-1} =
mK_Um^{-1}$. Similarly, we have $\bigoplus_{m\in \Gamma}
\Res^{mK_Mm^{-1}}_1 m_* \xRightarrow{\cong} \Res^{M^+}_1\ind_{K_M}^{M^+}$. 
Since $\RR\H^0(K_U,\pholder)$ has finite cohomological dimension, it can be computed by
$\H^0(K_U,\pholder)$-acyclic complexes, see \cite[Cor.~5.3 $\gamma$]{Hartshorne.1966}.
Now, each of the functors $m_*$, $\Res^{mK_Pm^{-1}}_{mK_Um^{-1}}$, and
$\ind_{mK_Um^{-1}}^{K_U}$ preserves injective objects, since each admits an
exact left adjoint. As every $\H^i(K_U,\pholder)$ commutes with small direct sums by
Lemma~\ref{lem:F+}, it follows from \eqref{eq:P+-Mackey} that
$\Res^{P^+}_{K_U}\ind_{K_P}^{P^+}$ sends injective objects to
$\H^0(K_U,\pholder)$-acyclic objects. It follows that \eqref{eq:F+-Res-1} arises from
the natural map
\begin{align*}
\beta\colon \ind_{K_M}^{M^+} \H^0(K_U,V) &\to
\H^0\bigl(K_U,\ind_{K_P}^{P^+}V\bigr),\\
[m, v] &\longmapsto m\star [1,v] = \sum_{u\in K_U/mK_Um^{-1}} [um,v]
\end{align*}
by passing to the right derived functors. Thus, it suffices to show that $\beta$
is an isomorphism. Using the Mackey decomposition, we have a commutative diagram
\begin{equation}\label{eq:F+-Res-2}
\begin{tikzcd}[column sep=4em, row sep=3em]
\ind_{K_M}^{M^+}\H^0(K_U,V)
\ar[r,"\beta"]
&
\H^0\bigl(K_U,\ind_{K_P}^{P^+}V\bigr)
\\
\bigoplus_{m\in\Gamma} \H^0(K_U,V)
\ar[u,"\cong"] \ar[r,"\oplus_m\beta_m"']
&
\bigoplus_{m\in\Gamma} \H^0\bigl(K_U, \ind_{mK_Um^{-1}}^{K_U} m_*V\bigr)
\ar[u,"\cong"']
\end{tikzcd}
\end{equation}
of $k$-linear maps, where each $\beta_m\colon \H^0(K_U,V)\to \H^0(K_U,
\ind_{mK_Um^{-1}}^{K_U}m_*V)$ is defined by
\[
\beta_m(v) \coloneqq \sum_{u\in K_U/mK_Um^{-1}}[u,v].
\]
Now, it is easy to check that the map induced by $f\mapsto f(1)$ is the inverse
of $\beta_m$, hence $\beta_m$ is an isomorphism. We deduce from
\eqref{eq:F+-Res-2} that $\beta$ and then that $\alpha$ is an isomorphism.
Finally, passing to the right adjoints in \eqref{eq:ind-RH-commute}, we obtain 
the natural isomorphism $\Res^{P^+}_{K_P}F^{M^+}_{P^+}\xRightarrow{\cong}
F^{K_M}_{K_P}\Res^{M^+}_{K_M}$.
\end{proof} 

\begin{prop}\label{prop:F+-Wirthmueller} 
There is a natural isomorphism
\[
F^{M^+}_{P^+}(X)\otimes_k \Inf^{M^+}_{P^+}(Y) \xRightarrow{\cong}
F^{M^+}_{P^+}(X\otimes Y).
\]
In particular, $F^{M^+}_{P^+}\cong \omega_{P^+}\otimes_k \Inf^{M^+}_{P^+}$,
where $\omega_{P^+}\coloneqq F^{M^+}_{P^+}(\one)$ is a smooth character of
$P^+$ sitting in cohomological degree $\dim K_U$.
\end{prop} 
\begin{proof} 
Note that, for $V\in \Rep_k(P^+)$ and $W\in \Rep_k(M^+)$ the natural map
\begin{align}\label{eq:P+-pf}
\H^0(K_U,V) \otimes_k W &\to \H^0\bigl(K_U, V\otimes_k
\Inf^{M^+}_{P^+}W\bigr),\\
v\otimes w &\longmapsto v\otimes w \notag
\end{align}
is an isomorphism in $\Rep_k(M^+)$. Indeed, bijectivity is clear, and for every
$v\in \H^0(K_U,V)$, $w\in W$, and $m\in M^+$ we have
$m\star v\otimes mw = \sum_{u\in K_U/mK_Um^{-1}}
umv\otimes mw = \sum_{u\in K_U/mK_Um^{-1}} um (v\otimes w) = m\star (v\otimes
w)$, which shows that \eqref{eq:P+-pf} is $M^+$-equivariant. 

The map \eqref{eq:P+-pf} extends to a natural isomorphism
\begin{equation}\label{eq:P+-pf-1}
\H^0(K_U,X)\otimes_k Y \xrightarrow{\cong} \H^0\bigl(K_U, X\otimes_k
\Inf^{M^+}_{P^+}Y\bigr)
\end{equation}
of bifunctors $\K(P^+)\times \K(M^+)\to \K(M^+)$. For any $Y\in \D(M^+)$ we
obtain a natural transformation
\begin{align*}
\RR\H^0(K_U,\pholder)\otimes_k Y &\xRightarrow{\cong} \RR\bigl( \H^0(K_U,\pholder)
(\pholder\otimes_k \Inf^{M^+}_{P^+}Y)\bigr)\\
&\xRightarrow{\;\alpha} \RR\H^0(K_U,\pholder)\circ (\pholder\otimes_k \Inf^{M^+}_{P^+}Y).
\end{align*}
We claim that the map denoted $\alpha$ is an isomorphism. Recall (\eg, from the
proof of Lemma~\ref{lem:RH-Res}) that
$\RR\H^0(K_U,\pholder)$ can be computed using $\H^0(K_U,\pholder)$-acyclic resolutions. Note
that, if $V\in \Rep_k(P^+)$ is $\H^0(K_U,\pholder)$-acyclic and $W\in \Rep_k(M^+)$ is
arbitrary, then $V\otimes_k \Inf^{M^+}_{P^+}W$ is $\H^0(K_U,\pholder)$-acyclic: indeed,
by Lemma~\ref{lem:RH-Res} this can be checked after applying $\Res^{P^+}_{K_U}$,
in which case we may identify $\Res^{P^+}_{K_U}(V\otimes_k \Inf^{M^+}_{P^+}W)
\cong V^{\oplus I}$, where $I$ parametrizes a $k$-basis of $W$, and then the
claim follows from the fact that each $\H^i(K_U,\pholder)$ commutes with small direct
sums (Lemma~\ref{lem:F+}). It follows that if $X\in \K(P^+)$ is a complex
consisting of $\H^0(K_U,\pholder)$-acyclic objects and $Y\in \K(M^+)$ is arbitrary,
then $X\otimes_k \Inf^{M^+}_{P^+}Y$ consists of $\H^0(K_U,\pholder)$-acyclic objects.
We deduce that $\alpha$ is an isomorphism, cf. \cite[Cor.~14.3.5]{KS}.

Thus, we have constructed a natural isomorphism
\[
\RR\H^0(K_U,\pholder)\circ (\pholder\otimes_k \Inf^{M^+}_{P^+} Y) \xLeftarrow{\cong}
(\pholder\otimes_k Y) \circ \RR\H^0(K_U,\pholder).
\]
Passing to the right mate yields a natural map
$\beta\colon F^{M^+}_{P^+}X \otimes_k \Inf^{M^+}_{P^+}Y \Longrightarrow
F^{M^+}_{P^+}(X\otimes Y)$. Now, we have a commutative diagram
\[
\begin{tikzcd}
\Res^{P^+}_{K_P}\bigl(F^{M^+}_{P^+}X\otimes_k \Inf^{M^+}_{P^+}Y\bigr)
\ar[r,"\Res^{P^+}_{K_P}\beta"]
\ar[d,"\cong"']
&
\Res^{P^+}_{K_P}\bigl(F^{M^+}_{P^+}(X\otimes Y)\bigr)
\ar[d,"\cong"]
\\
F^{K_M}_{K_P}\Res^{M^+}_{K_M}X \otimes_k \Inf^{K_M}_{K_P}\Res^{M^+}_{K_M}Y
\ar[r,"\cong"']
&
F^{K_M}_{K_P}\bigl(\Res^{M^+}_{K_M}X\otimes_k \Res^{M^+}_{K_M}Y\bigr),
\end{tikzcd}
\]
where the vertical maps are isomorphisms by Lemma~\ref{lem:F+-Res} and the
bottom horizontal map is the isomorphism from Corollary~\ref{cor:Grduality}.
Hence, the top horizontal map is an isomorphism. As $\Res^{P^+}_{K_P}$ is
conservative by Lemma~\ref{lem:Res-conservative}, we deduce that $\beta$ is an
isomorphism.

Finally, Lemma~\ref{lem:F+-Res} and Proposition~\ref{prop:dualizing}
show $\Res^{P^+}_{K_P}(\omega_{P^+}) = \omega_{K_P} = k[\dim K_U]$, which proves
the last statement.
\end{proof} 

\begin{cor}\label{cor:RH(KU,-)=LKU} 
There is a natural isomorphism $\RR\H^0(K_U,\omega_{P^+}\otimes_k\pholder)
\xRightarrow{\cong} \LL_{K_U}$ of functors $\D(P^+)\to \D(M^+)$.
\end{cor} 
\begin{proof} 
The natural map in the assertion, say, $\alpha$ arises, by passing to the left
mate, from the natural isomorphism $(\omega_{P^+}\otimes_k\pholder) \Inf^{M^+}_{P^+}
\xRightarrow{\cong} F^{M^+}_{P^+}$. We then have a commutative diagram
\[
\begin{tikzcd}[column sep=4em, row sep=3em]
\Res^{M^+}_{K_M} \RR\H^0(K_U,\omega_{P^+}\otimes_k \pholder)
\ar[r,Rightarrow,"\Res^{M^+}_{K_M}\alpha"] \ar[d,Rightarrow,"\cong"']
&
\Res^{M^+}_{K_M} \LL_{K_U}
\ar[d,Rightarrow,"\cong"]
\\
\RR\H^0(K_U,\omega_{K_P}\otimes_k\pholder) \Res^{P^+}_{K_P}
\ar[r,Rightarrow,"\cong"']
&
\LL_{K_U} \Res^{P^+}_{K_P},
\end{tikzcd}
\]
where the left vertical map is an isomorphism by Lemmas~\ref{lem:RH-Res}
and~\ref{lem:F+-Res}, the right vertical map is an isomorphism by
Proposition~\ref{prop:LU-Res}, and the bottom horizontal map is an isomorphism
by Corollary~\ref{cor:Inf-product}. Hence, $\Res^{M^+}_{K_M}\alpha$ is an
isomorphism. As $\Res^{M^+}_{K_M}$ is conservative by
Lemma~\ref{lem:Res-conservative}, it follows that $\alpha$ is an isomorphism.
\end{proof} 

\begin{rmk}\label{rmk:deltaP} 
One can show $\omega_{P^+} = \Res^P_{P^+}\omega_P$, where $\omega_P =
\delta_P[\dim U]$ and $\delta_P$ is a smooth character of $P$ (which is
necessarily trivial on $U$). Hence, $\omega_{P^+}$ is in fact
$\otimes$-invertible. (Note that this fails for general characters of
monoids.)

In fact, $\delta_P$ is the (inflation to $P$ of the) character $\alpha^u$
defined in \cite[3.6]{EmertonII}.
\end{rmk} 

\begin{cor}\label{cor:LU-explicit} 
One has a natural isomorphism
\[
\ind_{M^+}^M \RR\H^0(K_U,\omega_{P^+}\otimes_k\pholder) \Res^P_{P^+}
\xRightarrow{\cong} \LL_U
\]
of functors $\D(P)\to \D(M)$.
\end{cor} 

\begin{cor}\label{cor:LU-trivial} 
Let $f\colon P\longtwoheadrightarrow M$ be a continuous surjection of $p$-adic
Lie groups with kernel $U$, and let $K_U\subseteq U$ be a torsion-free compact
open subgroup. 
Assume there is an automorphism $\varphi\colon U\xrightarrow{\cong} U$ of
topological groups such that $\bigcap_{n\ge0} \varphi^n(K_U) = \{0\}$ and
$\bigcup_{n\ge0}\varphi^{-n}(K_U) = U$. The following equivalent properties
are satisfied:
\begin{enumerate}[label=(\roman*)]
\item\label{cor:LU-trivial-i} $\Inf^M_P\colon \D(M)\to \D(P)$ is fully faithful;

\item\label{cor:LU-trivial-ii} The counit $\varepsilon\colon \LL_U
\Inf^{M}_{P}\xRightarrow{\cong} \id_{\D(M)}$ is an isomorphism;

\item\label{cor:LU-trivial-iii} $\LL_U(\one_P)\cong \one_M$ in $\D(M)$;

\item\label{cor:LU-trivial-iv} $\LL_U(\one_U)\cong \one$ in $\D(k)$.
\end{enumerate}
\end{cor} 
\begin{proof} 
For \ref{cor:LU-trivial-i}$\iff$\ref{cor:LU-trivial-ii}, see
Lemma~\ref{lem:adjoint-ffaithful}, and the equivalence
\ref{cor:LU-trivial-ii}$\iff$\ref{cor:LU-trivial-iii} follows from the
projection formula~\ref{cor:projectionformula}; note that
\ref{cor:LU-trivial-iii} is satisfied if and only if $\varepsilon_{\one_P}$ is
an isomorphism. The equivalence
\ref{cor:LU-trivial-iii}$\iff$\ref{cor:LU-trivial-iv} follows from
Proposition~\ref{prop:LU-Res}.\medskip

The hypothesis on $\varphi$ shows that
there exists $n_0\ge0$ with $\varphi^{n_0}(K_U) \subseteq K_U$. Hence, replacing
$\varphi^{n_0}$ by $\varphi$ if necessary, we may assume that
$\varphi$ is \emph{positive}, meaning that $\varphi(K_U)\subseteq K_U$.
By \ref{cor:LU-trivial-iii}$\iff$\ref{cor:LU-trivial-iv}, we may assume $P =
U\rtimes M$, where $M \cong \Z$ is generated by an
element $z$ acting through conjugation on $U$ via $\varphi$. Let $M^+$ be the
monoid generated by $z$ and put $P^+ = K_UM^+$.\medskip

If $U$ has the discrete topology, the compact group $K_U$ is finite. As
$\varphi$ is positive, we must have $\varphi^n(K_U)= K_U$ for all $n\in\Z$, and
hence the hypothesis shows $K_U = \{1\}$ and then $U = \{1\}$, in which case the
assertion is trivial.\medskip

If $U$ is not discrete, the hypothesis implies that $U$ contains no compact open
subgroup which is normalized by $M$. In particular, $U$ is not compact. By
Corollary~\ref{cor:LU-explicit}, there is a natural isomorphism
\[
\ind_{M^+}^{M} \H^{d+i}(K_U,\delta_{P^+}) \xrightarrow{\cong} \LL_U^i(\one),
\]
where $\omega_{P^+} = \delta_{P^+}[d]$ and $d = \dim U$. Fix $i\neq 0$. As $K_U$
is a Poincar\'e group, the $k$-vector space $\H^{-i}(K_U,\one)$ is
finite-dimensional, and the cup product $\H^{-i}(K_U,\one)\times
\H^{d+i}(K_U,\one)\to \one$ is a perfect pairing. Therefore, $\H^{-i}(K_U,\one)$
identifies with the $k$-linear dual of $\H^{d+i}(K_U,\one)$, and for each $m\in
M^+$ the diagram
\[
\begin{tikzcd}
\H^{d+i}(mK_Um^{-1},\one) \ar[r,"\cong"] \ar[d,"\cores"'] &
\H^{-i}(mK_Um^{-1},\one)^* \ar[d,"\res^*"]\\
\H^{d+i}(K_U,\one) \ar[r,"\cong"'] & \H^{-i}(K_U,\one)^*
\end{tikzcd}
\]
is commutative. As $\H^{-i}(K_U,\one)$ is finite-dimensional and $i\neq 0$, we
find (by the hypothesis) some $n\gg0$ such that for $m\coloneqq z^n$ the right
vertical map vanishes. Hence, also the left vertical map is zero. As the
Hecke-action of $m$ on $\H^{d+i}(K_U,\delta_{P^+})$ is given by the composite
\[
\H^{d+i}(K_U,\delta_{P^+}) \xrightarrow{\delta_{P^+}(m)\cdot\conj_m}
\H^{d+i}(mK_Um^{-1},\delta_{P^+}) \xrightarrow{\cores}
\H^{d+i}(K_U,\delta_{P^+}),
\]
where $\conj_m$ is the conjugation by $m$ on cohomology, we deduce that
$\ind_{M^+}^M \H^{d+i}(K_U,\delta_{P^+}) = \{0\}$. Therefore, $\LL_U^i(\one) =
\{0\}$. Finally, it is clear that $\LL_U^0(\one) = \one$
(cf.~Proposition~\ref{prop:Ln<0}).
\end{proof} 

\begin{ex} 
If $U$ is the group of $\Q_p$-points of a connected unipotent linear algebraic
group, then
the functor $\Inf^M_P\colon \D(M)\to \D(P)$ is fully faithful. Indeed, by
Proposition~\ref{prop:LU-transitive}.\ref{prop:LU-transitive-a} the class of
groups $U'$ satisfying $\LL_{U'}(\one_{U'})\cong \one$ is closed under extensions.
Now, $U$ admits a composition series with components isomorphic to $\Q_p$, so we
may assume $U=\Q_p$. But then the hypothesis of Corollary~\ref{cor:LU-trivial}
is satisfied for $K_U = \Z_p$ and $\varphi$ being multiplication by $p$.
\end{ex}

\section{The left adjoint of derived parabolic induction} 
\label{sec:Jacquet}
As in \S\ref{sec:derivedinf} we fix a field $k$ of characteristic $p>0$.

\subsection{General results}\label{subsec:parabolic-general} 
Let $G$ be a $p$-adic reductive group, \ie, the group of $\field$-points of a
connected reductive group defined over a finite field extension $\field /\Q_p$.
Let $P$ be a parabolic subgroup of $G$ with Levi quotient $M$ and
unipotent radical $U$. 

The functor of \emph{parabolic induction}
\[
i_M^G \coloneqq \Ind_P^G \circ \Inf^M_P\colon \Rep_k(M)\to \Rep_k(G) 
\]
is exact and hence defines a functor $i_M^G\colon \D(M)\to \D(G)$ on the derived
categories. 

\begin{thm}\label{thm:L(U,-)} 
The derived parabolic induction $i_M^G$ admits a left adjoint, denoted
$\LL(U,\pholder)$. In fact, there is a natural isomorphism
\[
\RHom_{\Rep_k(M)}\bigl(\LL(U,X), Y\bigr) \xrightarrow{\cong}
\RHom_{\Rep_k(G)}\bigl(X, i_M^G(Y)\bigr)\qquad \text{in $\D(k)$,}
\]
for all $X\in \D(G)$ and $Y\in \D(M)$.
\end{thm}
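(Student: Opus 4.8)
The plan is to factor $i_M^G = \RR\Ind_P^G \circ \Inf^M_P$ and to build the left adjoint as the composite of the left adjoints of the two factors, using the results already in place for $\Inf^M_P$. First I would note that $\Ind_P^G$ is exact (since $P \subseteq G$ is closed and, more to the point, $G/P$ is compact so smooth induction from $P$ to $G$ is exact), hence descends termwise to $\RR\Ind_P^G \colon \D(P) \to \D(G)$, and that its left adjoint on the underived level is the restriction functor $\Res^G_P$, which is exact. By Lemma~\ref{lem:K-injectives}.\ref{lem:K-injectives-c} applied with $L = \Res^G_P$ and $R = \Ind_P^G$ (the roles swapped: $\Res^G_P$ is an exact \emph{left} adjoint, so its \emph{right} adjoint $\Ind_P^G$ has a well-behaved derived functor), one gets that $\Res^G_P \colon \D(G) \to \D(P)$ is left adjoint to $\RR\Ind_P^G$. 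On the other side, Theorem~\ref{thm:Inf-prod-general} gives a left adjoint $\LL_U$ of $\Inf^M_P \colon \D(M)\to \D(P)$. Composing adjunctions via Lemma~\ref{lem:adjoint-composition}, the functor $\LL(U,-) \coloneqq \LL_U \circ \Res^G_P$ is left adjoint to $\RR\Ind_P^G \circ \Inf^M_P = i_M^G$. This proves the existence statement.

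For the refinement to $\RHom$-complexes, I would upgrade each of the two adjunctions to the level of $\RHom$ in $\D(k)$ and then splice them. For the inflation step this is exactly Corollary~\ref{cor:RHom-LU-Inf}, which provides a natural isomorphism $\RHom_{\Rep_k(M)}(\LL_U(Z),Y) \xrightarrow{\cong} \RHom_{\Rep_k(P)}(Z, \Inf^M_P Y)$ in $\D(k)$. For the induction step I need the analogous statement
\[
\RHom_{\Rep_k(P)}\bigl(\Res^G_P X, Z\bigr) \xrightarrow{\cong} \RHom_{\Rep_k(G)}\bigl(X, \RR\Ind_P^G Z\bigr)\quad\text{in }\D(k);
\]
this is proved by the same mechanism as Corollary~\ref{cor:RHom-LU-Inf}: the exact functor $\Ind_P^G$ induces a natural morphism on $\RHom$-complexes, which one precomposes with the counit $\Res^G_P\RR\Ind_P^G \to \id$ (equivalently postcomposes with the unit), and one checks the resulting map is an isomorphism by passing to $n$-th cohomology, where it becomes the adjunction bijection $\Hom_{\D(P)}(\Res^G_P X, Z[n]) \cong \Hom_{\D(G)}(X, \RR\Ind_P^G Z[n])$. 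A morphism in $\D(k)$ inducing isomorphisms on all cohomology groups is an isomorphism, so we are done. Setting $Z = \LL_U(\Res^G_P X)$ in the second isomorphism and feeding $\Res^G_P X$ into the first yields the claimed chain
\[
\RHom_{\Rep_k(M)}\bigl(\LL(U,X),Y\bigr) \xrightarrow{\cong} \RHom_{\Rep_k(P)}\bigl(\Res^G_P X, \Inf^M_P Y\bigr) \xrightarrow{\cong} \RHom_{\Rep_k(G)}\bigl(X, i_M^G(Y)\bigr).
\]

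The only genuinely delicate point is the exactness of $\Ind_P^G$, which is what makes $\RR\Ind_P^G$ computable termwise and underlies the whole factorization. I would handle this either by invoking Remark~\ref{rmk:Ind-exact} (in the form that $G/P$ admits a continuous section — here $G/P$ is profinite, being the $\field$-points of the projective variety $\mathbf{G}/\mathbf{P}$, so the cited section result applies) or, more robustly, by the Iwasawa/Iwahori decomposition $G = PK$ for a suitable compact open $K$, which reduces smooth induction from $P$ to induction along the open compact pair $K \cap P \subseteq K$ and makes exactness transparent. Everything else is a formal consequence of results already established: the composition-of-adjunctions lemma, the calculus of mates (to identify the composite left adjoint correctly), and the cohomology-wise verification of the $\RHom$-level statement exactly as in Corollary~\ref{cor:RHom-LU-Inf}. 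I expect no further obstacle.
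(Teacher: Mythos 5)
Your proposal is correct and follows essentially the same route as the paper: factor $i_M^G = \Ind_P^G\circ\Inf^M_P$, obtain the left adjoint as $\LL_U\circ\Res^G_P$ via Lemma~\ref{lem:adjoint-composition} and Theorem~\ref{thm:Inf-prod-general}, and upgrade to the $\RHom$-level statement by the same mechanism as Corollary~\ref{cor:RHom-LU-Inf}, using the exactness of $\Ind_P^G$. Your extra care in justifying that exactness (via the compactness of $G/P$, or the Iwasawa decomposition) fills in a point the paper treats as standard.
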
 
\begin{proof} 
By Lemma~\ref{lem:adjoint-composition} and
Theorem~\ref{thm:Inf-prod-general} it follows that $\LL(U,\pholder) = \LL_U\circ
\Res^G_P$ is the left adjoint of $i_M^G$. The last statement is analogous to
Corollary~\ref{cor:RHom-LU-Inf} (use that $\Ind_P^G$ is exact).
\end{proof} 

\begin{notation}\label{nota:L(U,-)^n} 
For any $n\in\Z$ we denote by $\LL^n(U,\pholder)\colon \Rep_k(G)\to \Rep_k(M)$ the
$n$-th cohomo\-logy functor of $\LL(U,\pholder)$, that is,
\[
\LL^n(U,V) \coloneqq \H^n\bigl(\LL(U,V[0])\bigr), \qquad \text{for $V\in
\Rep_k(G)$.}
\]
\end{notation} 

We record the following consequence of Theorem~\ref{thm:L(U,-)}:

\begin{cor}\label{cor:L(U,-)-spectral} 
Given $V\in \Rep_k(G)$ and $W\in \Rep_k(M)$, there is a convergent
first-quadrant spectral sequence
\[
E_2^{i,j} = \Ext_{\Rep_k(M)}^i\bigl(\LL^{-j}(U,V), W\bigr) \To
\Ext_{\Rep_k(G)}^{i+j}\bigl(V, i_M^GW\bigr).
\]
In particular, there is a five-term exact sequence
\[
\begin{tikzcd}[column sep=1em]
0 \ar[r] 
&
\Ext_{\Rep_k(M)}^1\bigl(\LL^0(U,V), W\bigr) \ar[r]
&
\Ext_{\Rep_k(G)}^1\bigl(V, i_M^GW\bigr) \ar[r]
\ar[d,phantom,""{coordinate,name=Z}]
&
\Hom_{\Rep_k(M)}\bigl(\LL^{-1}(U,V), W\bigr) 
\ar[dll,"{d_2^{0,1}}" description, rounded corners, at end, to path={
-- ([xshift=2ex]\tikztostart.east) |- (Z) \tikztonodes -|
([xshift=-2ex]\tikztotarget.west) -- (\tikztotarget)}]
\\
&
\Ext_{\Rep_k(M)}^2\bigl(\LL^0(U,V), W\bigr) \ar[r]
&
\Ext_{\Rep_k(G)}^2\bigl(V, i_M^GW\bigr).
\end{tikzcd}
\]
\end{cor}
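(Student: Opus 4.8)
The plan is to imitate the proof of Corollary~\ref{cor:RHom-LU-Inf-spectral} almost verbatim, feeding the adjunction of Theorem~\ref{thm:L(U,-)} into the hypercohomology spectral sequence of Lemma~\ref{lem:spectral}.\ref{lem:spectral-b}. So I would fix $V\in\Rep_k(G)$ and $W\in\Rep_k(M)$ and set $X\coloneqq\LL(U,V[0])\in\D(M)$. The first point to record is that $X$ is a \emph{bounded} complex: indeed $\LL(U,-)=\LL_U\circ\Res^G_P$, the restriction $\Res^G_P$ is exact, and $\LL_U$ preserves bounded complexes by Corollary~\ref{cor:LU-cohdim}; more precisely the cohomology of $X$ is concentrated in degrees $[-\dim U,0]$, and by Proposition~\ref{prop:Ln<0} one has $\H^n(X)=\LL^n(U,V)=0$ for all $n>0$.

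Next I would apply Lemma~\ref{lem:spectral}.\ref{lem:spectral-b} to the triangulated functor $F\coloneqq\RHom_{\Rep_k(M)}(-,W)\colon\D(M)^\op\to\D(k)$, which is left $t$-exact, and to the bounded object $X$; the fourth case of that lemma ($X$ bounded) applies and yields a convergent spectral sequence
\[
E_2^{i,j}=\bigl(\H^iF\bigr)\bigl(\H^{-j}(X)\bigr)\To\H^{i+j}(FX).
\]
Then I would identify the three pieces. For a representation $N\in\Rep_k(M)$ one has $\H^iF(N)=\Ext^i_{\Rep_k(M)}(N,W)$, which vanishes for $i<0$; together with $\H^{-j}(X)=\LL^{-j}(U,V)=0$ for $j<0$ (from the previous paragraph), this shows that the array $E_2^{i,j}=\Ext^i_{\Rep_k(M)}(\LL^{-j}(U,V),W)$ lies in the first quadrant. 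For the abutment, Theorem~\ref{thm:L(U,-)} supplies a natural isomorphism $FX=\RHom_{\Rep_k(M)}(\LL(U,V[0]),W)\cong\RHom_{\Rep_k(G)}(V[0],i_M^G(W))$ in $\D(k)$, so (using that $i_M^G$ is exact) $\H^{i+j}(FX)\cong\Ext^{i+j}_{\Rep_k(G)}(V,i_M^GW)$. This is the asserted spectral sequence.

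Finally, the five-term exact sequence is just the standard low-degree exact sequence
\[
0\to E_2^{1,0}\to\H^1(FX)\to E_2^{0,1}\xrightarrow{d_2^{0,1}}E_2^{2,0}\to\H^2(FX)
\]
attached to any convergent first-quadrant cohomological spectral sequence; unwinding the identifications above turns it into the displayed sequence. I do not expect a genuine obstacle here, since every input is already in place; the only points that require a moment's care are (a) checking the boundedness hypothesis of Lemma~\ref{lem:spectral}.\ref{lem:spectral-b}, which is exactly Corollary~\ref{cor:LU-cohdim}, and (b) the bookkeeping forced by the contravariance of $\RHom_{\Rep_k(M)}(-,W)$, which makes the reindexing land in the first quadrant rather than the third — both already handled in the proof of the analogous Corollary~\ref{cor:RHom-LU-Inf-spectral}.
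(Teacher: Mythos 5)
Your proposal is correct and matches the paper's proof: both feed the adjunction of Theorem~\ref{thm:L(U,-)} into Lemma~\ref{lem:spectral}.\ref{lem:spectral-b} applied to $F=\RHom_{\Rep_k(M)}(-,W)$ and $X=\LL(U,V[0])$. The only (immaterial) difference is that you invoke full boundedness of $X$ via Corollary~\ref{cor:LU-cohdim}, whereas the paper gets by with right-boundedness from Proposition~\ref{prop:Ln<0} together with the way-out property of $F$.
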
 
\begin{proof} 
The functor $\RHom_{\Rep_k(M)}(\pholder,W)$ is left $t$-exact and $\LL(U,V)$ is right
bounded. We obtain the spectral sequence from
Lemma~\ref{lem:spectral}.\ref{lem:spectral-b} combined with
Theorem~\ref{thm:L(U,-)}.
\end{proof} 

\begin{prop}\label{prop:L(U,-)-transitive} 
Let $Q\subseteq P$ be a parabolic subgroup of $G$ with Levi quotient $L$ and
unipotent radical $U'$.

\begin{enumerate}[label=(\alph*)]
\item\label{prop:L(U,-)-transitive-a} The diagram
\[
\begin{tikzcd}[column sep=3em]
\D(G) \ar[r,"{\LL(U,\pholder)}"] \ar[dr,"{\LL(U',\pholder)}"'] & \D(M) \ar[d,"{\LL(U'/U,\pholder)}"]
\\
& \D(L)
\end{tikzcd}
\]
is commutative.

\item\label{prop:L(U,-)-transitive-b} Given $V\in \Rep_k(G)$, there is a
convergent third-quadrant spectral sequence
\[
E_2^{i,j} = \LL^i\bigl(U'/U, \LL^j(U,V)\bigr) \To \LL^{i+j}(U', V).
\]
In particular, there is a five-term exact sequence
\[
\begin{tikzcd}
\LL^{-2}(U',V) \ar[r]
&
\LL^{-2}\bigl(U'/U, \LL^0(U,V)\bigr) \ar[r,"d^{-2,0}"]
\ar[dr,phantom,""{coordinate,name=Z}]
&
\LL^0\bigl(U'/U, \LL^{-1}(U,V)\bigr) 
\ar[dl, rounded corners, at end, to path={
-- ([xshift=2ex]\tikztostart.east) |- ([xshift=2.5ex]Z) \tikztonodes -|
([xshift=-2ex]\tikztotarget.west) -- (\tikztotarget)}]
\\
&
\LL^{-1}(U',V\bigr) \ar[r]
&
\LL^{-1}\bigl(U'/U, \LL^0(U,V)\bigr) \ar[r]
&
0.
\end{tikzcd}
\]
\end{enumerate}
\end{prop} 
\begin{proof} 
\begin{itemize}
\item[\ref{prop:L(U,-)-transitive-a}] Clearly, there is a natural isomorphism
$\LL^0(U'/U,\pholder)\circ \LL^0(U,\pholder) \xRightarrow{\cong} \LL^0(U',\pholder)$ of functors
$\Rep_k(G)\to \Rep_k(L)$. Passing to the right adjoints, see
Example~\ref{ex:mates}, yields a natural isomorphism
$i_L^G\xRightarrow{\cong} i_M^G\circ i_L^M$. But this extends to a natural
isomorphism of functors $\D(L)\to \D(G)$. Hence, passing to the left adjoints
yields a natural isomorphism $\LL(U'/U,\pholder)\circ \LL(U,\pholder)\xRightarrow{\cong}
\LL(U',\pholder)$.

\item[\ref{prop:L(U,-)-transitive-b}] The functors $\LL(U'/U,\pholder)$ and $\LL(U,\pholder)$
are right $t$-exact (as left adjoints of $t$-exact functors). Now,
Lemma~\ref{lem:spectral}.\ref{lem:spectral-a} applied to
\ref{prop:L(U,-)-transitive-a} yields the desired spectral sequence.
\end{itemize}
\end{proof} 

Fix a compact, open, torsion-free subgroup $K$ of $G$. For any closed subgroup
$H$ of $G$ we write $K_H\coloneqq K\cap H$. 
Let $M^+$ and $P^+$ be the positive monoids of \S\ref{subsec:positive}
associated with $K_P$. Then \cite[(6.14)]{Bushnell-Kutzko.1998}
shows that Hypothesis~\ref{hyp:positive} is satisfied.

\begin{prop}\label{prop:L(U,-)-derived} 
There is a natural isomorphism
\[
\ind_{M^+}^M \RR\H^{0}\bigl(K_U,
\omega_{P^+}\otimes_k\pholder\bigr)\Res^G_{P^+}
\xRightarrow{\cong} \LL(U,\pholder)
\]
of functors $\D(G)\to \D(M)$. In particular, $\LL^{-n}(U,\pholder)$ vanishes
provided $n\notin \{0,1,\dotsc,\dim K_U\}$.
\end{prop}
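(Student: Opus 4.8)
The plan is to obtain this as a formal consequence of the two positive-monoid descriptions of $\LL_U$ already at hand, namely Proposition~\ref{prop:L(U,-)-positive} and Proposition~\ref{prop:L_KU+-derived}, combined with the exactness of compact induction from $M^+$ to $M$.

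First I would apply Proposition~\ref{prop:L(U,-)-positive} to replace $\LL(U,-)$ by $\ind_{M^+}^M\circ\LL_{K_U}\circ\Res^G_{P^+}$, so that $\H^0\circ\LL(U,-)\cong\H^0\circ\ind_{M^+}^M\circ\LL_{K_U}\circ\Res^G_{P^+}$ as functors $\D(G)\to\Rep_k(M)$. Since $k[M]$ is flat over $k[M^+]$, the functor $\ind_{M^+}^M$ is exact (Lemma~\ref{lem:M+-flat}.\ref{lem:M+-flat-c}) and therefore commutes with the cohomology functor $\H^0$; this rewrites the right-hand side as $\ind_{M^+}^M\circ\bigl(\H^0\circ\LL_{K_U}\bigr)\circ\Res^G_{P^+}$. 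Next I would invoke Proposition~\ref{prop:L_KU+-derived}, which supplies a natural isomorphism $\H^0\circ\LL_{K_U}\xRightarrow{\cong}\H^0\circ\RR\H^0(K_U,\omega_P\otimes_k-)$ of functors $\D(P^+)\to\Rep_k(M^+)$, where $\RR\H^0(K_U,-)$ carries the Hecke action of $M^+$. Precomposing this isomorphism with $\Res^G_{P^+}$, postcomposing with $\ind_{M^+}^M$, and using the convention $\omega_{P^+}=\Res^P_{P^+}\omega_P$ to identify $\RR\H^0(K_U,\omega_P\otimes_k-)\circ\Res^G_{P^+}$ with $\RR\H^0\bigl(K_U,\omega_P\otimes_k\Res^G_{P^+}(-)\bigr)$ then yields the asserted natural isomorphism.

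For the vanishing statement I would argue as follows. Combining $\LL(U,-)=\LL_U\circ\Res^G_P$ with Corollary~\ref{cor:LU-cohdim} already shows that $\LL^{-n}(U,-)=0$ whenever $n\notin\{0,1,\dotsc,\dim U\}$, and $\dim U=\dim K_U$. Alternatively, this may be read off from the isomorphism just established together with exactness of $\ind_{M^+}^M$, which gives $\LL^{-n}(U,-)\cong\ind_{M^+}^M\circ\LL^{-n}_{K_U}\circ\Res^G_{P^+}$; by Corollary~\ref{cor:Inf-product} and Proposition~\ref{prop:dualizing} one has $\LL_{K_U}\cong\RR\H^0(K_U,-)[\dim K_U]$, so $\LL^{-n}_{K_U}=\H^{\dim K_U-n}(K_U,-)$, which vanishes unless $0\le\dim K_U-n\le\dim K_U$ because $K_U$ is a Poincar\'e group of dimension $\dim K_U$. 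The whole argument is bookkeeping once the two cited propositions are available; the one subtle point is to track the $M^+$-action (and the character $\delta_P$ inside $\omega_P$) through each of the identifications, but this is exactly what Proposition~\ref{prop:L_KU+-derived} and the conventions of \S\ref{subsec:positive} are arranged to handle.
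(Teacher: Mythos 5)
Your proposal is correct and is exactly the paper's argument: the paper's proof is the one-liner ``combine Proposition~\ref{prop:L(U,-)-positive} with Proposition~\ref{prop:L_KU+-derived},'' and you have simply spelled out that combination, including the (correct) use of exactness of $\ind_{M^+}^M$ to commute it past $\H^0$ and the vanishing range coming from Corollary~\ref{cor:LU-cohdim} (equivalently, from $\LL^{-n}_{K_U}=\H^{\dim K_U-n}(K_U,-)$ and the Poincar\'e property of $K_U$).
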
 
\begin{proof} 
Immediate from Corollary~\ref{cor:LU-explicit}. 
\end{proof} 

\begin{rmk} 
From Corollary~\ref{cor:LU-explicit} and \cite[Thm.~3.4.7(1) and
Lem.~3.2.1(1)]{EmertonII} it follows that on (locally) admissible
representations the $\delta$-functor $\LL^\bullet(U,\pholder)$ is isomorphic to the
$\delta$-functor $\H_\bullet(U,\pholder) = \H^{\dim U - \bullet}\Ord_P(\pholder)\otimes
\delta_P$ considered in \cite[\S4.2]{Hauseux.2018}.
\end{rmk} 

\begin{cor}\label{cor:L(U,-)-bounded} 
There are adjunctions 
\[
\begin{tikzcd}[row sep=0em,]
\LL(U,\pholder)\colon \D^+(G) \ar[r,shift left] & \ar[l,shift left] \D^+(M) :\!
i_M^G,\\
\LL(U,\pholder)\colon \D^{\bounded}(G) \ar[r,shift left] & \ar[l, shift left]
\D^{\bounded}(M) :\! i_M^G.
\end{tikzcd}
\]
\end{cor}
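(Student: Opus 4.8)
The plan is to obtain both adjunctions by restricting the adjunction $\LL(U,-)\dashv i_M^G$ of Theorem~\ref{thm:L(U,-)} to the full triangulated subcategories in question. The guiding principle is elementary: if $L\dashv R$ is an adjoint pair and $\cat C_0$, $\cat D_0$ are full subcategories of the source of $L$ and of $R$ with $L(\cat C_0)\subseteq\cat D_0$ and $R(\cat D_0)\subseteq\cat C_0$, then $L$ and $R$ restrict to an adjoint pair $\cat C_0\leftrightarrows\cat D_0$, since the adjunction bijection (and, equally, the $\RHom$-isomorphism of Theorem~\ref{thm:L(U,-)}) only involves $\Hom$-groups, which do not change on passing to a full subcategory. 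So the whole task reduces to checking that $i_M^G$ and $\LL(U,-)$ each send $\D^+$ into $\D^+$ and $\D^\bounded$ into $\D^\bounded$.

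For $i_M^G=\RR\Ind_P^G\circ\RR\Inf^M_P$ this is immediate: both $\Ind_P^G$ and $\Inf^M_P$ are exact, so $i_M^G$ is computed termwise on complexes and visibly preserves left-boundedness and boundedness.

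For $\LL(U,-)$ I would appeal to Proposition~\ref{prop:L(U,-)-derived}. Applying the shift functors to the natural isomorphism stated there gives, for every $X\in\D(G)$ and every $n\in\Z$, an isomorphism $\H^n(\LL(U,X))\cong\ind_{M^+}^M\H^n\bigl(\RR\H^0(K_U,\omega_P\otimes_k\Res^G_{P^+}X)\bigr)$, where I use that $\ind_{M^+}^M$ is exact (Lemma~\ref{lem:M+-flat}.\ref{lem:M+-flat-c}) and hence commutes with cohomology. Now $\Res^G_{P^+}$ is exact; the functor $\omega_P\otimes_k-$ equals $(\delta_P\otimes_k-)[\dim K_U]$, i.e.\ an exact twist followed by a shift by $\dim K_U$; and $\RR\H^0(K_U,-)\colon\D(P^+)\to\D(M^+)$ is left $t$-exact, being the right derived functor of the left exact functor $\H^0(K_U,-)$, and is of cohomological dimension $\dim K_U$, since $\H^i(K_U,-)=0$ for $i>\dim K_U$ (the cohomology being computable after forgetting the Hecke action, cf.\ Lemma~\ref{lem:RH(U,-)Res}, and $K_U$ being a Poincar\'e group of dimension $\dim K_U$). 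Feeding a left-bounded $X\in\D^{\ge a}(G)$ through these functors yields an object of $\D^{\ge a-\dim K_U}(M^+)$, and a bounded $X$ yields a bounded object; since $\H^n(\LL(U,X))$ is then the image under $\ind_{M^+}^M$ of the corresponding cohomology object and therefore vanishes whenever the latter does, it follows that $\LL(U,X)\in\D^+(M)$, resp.\ $\D^\bounded(M)$. Combining this with the second paragraph and the restriction principle of the first paragraph gives the two asserted adjunctions.

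I do not expect a genuine obstacle; the only point requiring slightly more than the boundedness statement of Corollary~\ref{cor:LU-cohdim} is the $\D^+$ (as opposed to merely $\D^\bounded$) assertion, which hinges on $\LL(U,-)$ having cohomology bounded below relative to its input by $-\dim K_U$ — equivalently, on the left $t$-exactness of $\RR\H^0(K_U,-)$ used above. If one preferred to circumvent the positive-monoid description, an alternative would be to write $X$ as the homotopy colimit of its canonical truncations $\tau^{\le n}X$, observe that $\LL(U,-)$ commutes with this directed homotopy colimit (it is a left adjoint) and with cohomology, and bound the cohomology of each $\LL(U,\tau^{\le n}X)$ using the hypercohomology spectral sequence of Lemma~\ref{lem:spectral} together with the vanishing $\LL^{-m}(U,-)=0$ for $m\notin\{0,\dots,\dim K_U\}$ from Proposition~\ref{prop:L(U,-)-derived}.
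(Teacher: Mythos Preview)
Your argument is correct and is essentially an unpacking of the paper's one-line proof, which simply says ``This is immediate from Proposition~\ref{prop:L(U,-)-derived}.'' You invoke precisely that proposition and make explicit the cohomological-amplitude reasoning (left $t$-exactness of $\RR\H^0(K_U,-)$, the shift hidden in $\omega_P$, exactness of $\ind_{M^+}^M$) that the paper leaves to the reader; the alternative homotopy-colimit argument you sketch is extra and not needed.
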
 
\begin{proof} 
This is immediate from Proposition~\ref{prop:L(U,-)-derived}.
\end{proof} 

As an immediate consequence of Corollary~\ref{cor:Hom-LU} we have the following
result:
\begin{prop} 
There is a natural isomorphism
\[
\Hom_{\D(k)}\bigl(\Res^M_1\LL(U,X), Y\bigr) \cong \Hom_{\D(U)}\bigl(\Res^G_UX,
\Inf^1_UY\bigr)
\]
for all $X\in \D(G)$, $Y\in \D(k)$. In particular, for every $n\in\Z$ there
exists a natural $k$-linear isomorphism
\[
\Hom_k\bigl(\LL^{-n}(U,V), W\bigr) \xrightarrow{\cong} \Ext_{\Rep_k(U)}^n(V,
\Inf^1_UW),
\]
for all $V\in \Rep_k(G)$ and $W\in \Vect_k$.
\end{prop}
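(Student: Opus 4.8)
The plan is to deduce both statements directly from Corollary~\ref{cor:Hom-LU}, applied to the canonical projection $f\colon P\longtwoheadrightarrow M$ with kernel $U$. First I would check that the hypotheses of that corollary are met in the present situation. Here $U$ is the unipotent radical of $P$, hence a group, and the Levi decomposition $P = UM$ shows that $f$ induces a bijection $U\backslash P \xrightarrow{\cong} M$; moreover $P$ and $M$ are $p$-adic Lie groups and $f$ is the continuous surjection attached to the inclusion of the Levi subgroup, so we are indeed in the setting of \S\ref{subsec:general}. Corollary~\ref{cor:Hom-LU} then provides, for every $Z\in \D(P)$ and $Y\in \D(k)$, a natural isomorphism
\[
\Hom_{\D(k)}\bigl(\Res^M_1(\LL_U Z), Y\bigr) \cong \Hom_{\D(U)}\bigl(\Res^P_U Z, \Inf^1_U Y\bigr).
\]

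Next I would specialize to $Z = \Res^G_P X$ for $X\in \D(G)$. By Theorem~\ref{thm:L(U,-)} one has $\LL_U\Res^G_P = \LL(U,-)$, and clearly $\Res^P_U\Res^G_P = \Res^G_U$. Substituting these identities gives
\[
\Hom_{\D(k)}\bigl(\Res^M_1\LL(U,X), Y\bigr) \cong \Hom_{\D(U)}\bigl(\Res^G_U X, \Inf^1_U Y\bigr),
\]
naturally in $X\in \D(G)$ and $Y\in \D(k)$, which is the first assertion.

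For the ``in particular'' clause I would take $X = V[0]$ with $V\in \Rep_k(G)$ and $Y = W[n]$ with $W\in \Vect_k$, $n\in\Z$. The right-hand side then becomes $\Hom_{\D(U)}\bigl(V[0], \Inf^1_U W[n]\bigr) = \Ext^n_{\Rep_k(U)}(V, \Inf^1_U W)$ by the definition of Ext-groups in the derived category. For the left-hand side I would use that every object of $\D(k)$ is formal, since $\Vect_k$ is semisimple, so that in $\D(k)$ one has $\Res^M_1\LL(U,V) \cong \bigoplus_{j\in\Z}\LL^j(U,V)[-j]$ (a finite sum, by Corollary~\ref{cor:LU-cohdim}). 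Hence
\[
\Hom_{\D(k)}\bigl(\Res^M_1\LL(U,V), W[n]\bigr) \cong \prod_{j\in\Z}\Hom_{\D(k)}\bigl(\LL^j(U,V)[-j], W[n]\bigr),
\]
and the only nonzero factor is the one with $j = -n$, equal to $\Hom_k(\LL^{-n}(U,V), W)$. Combining the two computations yields the stated $k$-linear isomorphism.

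There is essentially no serious obstacle here: the result is a formal consequence of the already-established Corollary~\ref{cor:Hom-LU}. The only points requiring a little attention are the verification that the abstract kernel--quotient picture of \S\ref{subsec:general} matches the geometric Levi decomposition $P = UM$, and the bookkeeping of restriction functors together with the cohomological shift in the last step; both are routine.
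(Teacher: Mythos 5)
Your argument is correct and is exactly the route the paper takes: the paper states this proposition as an immediate consequence of Corollary~\ref{cor:Hom-LU} applied with $S=P$, $T=M$, precomposed with $\Res^G_P$ via $\LL(U,-)=\LL_U\circ\Res^G_P$. Your additional spelling-out of the ``in particular'' clause via formality of $\D(k)$ is a correct elaboration of what the paper leaves implicit.
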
 
\subsection{Preservation of global admissibility} 
\label{subsec:global-adm}
Retain the notation of \S\ref{subsec:parabolic-general}. Let $\ol P$ be the
parabolic
subgroup of $G$ opposite $P$ with unipotent radical $\ol U$ and Levi quotient
$M$. We make the further assumption that the compact, open, torsion-free
subgroup $K\subseteq G$ admits an Iwahori decomposition:
\[
K = K_{\ol U} K_M K_{U}.
\]

Recall from \cite[Def.~4.2]{Schneider-Sorensen.2023a} the following definition:
\begin{defn} 
Let $H$ be a $p$-adic Lie group and $H' \subseteq H$ a compact, open,
torsion-free subgroup. A complex $X\in \D(H)$ is called \emph{globally
admissible} if for every $i\in \Z$ the $k$-vector space $\H^i(H',X)$ is
finite-dimensional. 
By \cite[Cor.~4.6]{Schneider-Sorensen.2023a}, this definition is independent of the
choice of $H'$. 

The strictly full triangulated subcategory of $\D(H)$ consisting of the globally
admissible complexes is denoted $\D(H)^{\gadm}$.
\end{defn} 

In this section we prove the following result:

\begin{thm}\label{thm:globally-admissible} 
There is an adjunction
\[
\begin{tikzcd}
\LL(U,\pholder)\colon \D(G)^{\gadm} \ar[r,shift left] & \ar[l,shift left] \D(M)^{\gadm}
:\! i_M^G.
\end{tikzcd}
\]
\end{thm}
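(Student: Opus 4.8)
The plan is to prove that both $\LL(U,-)$ and $i_M^G$ preserve global admissibility, and then invoke Theorem~\ref{thm:L(U,-)} (which gives the adjunction on the full derived categories) restricted to the corresponding strictly full triangulated subcategories. Once the two preservation statements are established, the restricted adjunction is automatic because the unit and counit are morphisms in $\D(G)$ and $\D(M)$ respectively, and the $\RHom$-level adjunction isomorphism of Theorem~\ref{thm:L(U,-)} is inherited verbatim. So the entire content is the two closure statements.

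For $i_M^G$: fix the torsion-free compact open subgroup $K\subseteq G$ with Iwahori decomposition $K = K_{\ol U}K_MK_U$. Given $Y\in \D(M)^{\gadm}$, I want to compute $\H^i(K, i_M^G Y)$ and see it is finite-dimensional. Since $i_M^G = \RR\Ind_P^G\circ \Inf^M_P$, I would use the Mackey-type formula $\Res^G_K\RR\Ind_P^G \cong \prod_{g\in P\backslash G/K}\RInd_{P^g\cap K}^K (\cdots)$; because $G = PK$ (Iwahori decomposition), the double coset space $P\backslash G/K$ is a single point, so $\Res^G_K i_M^G Y \cong \RInd_{P\cap K}^K \Inf^{M}_{P\cap K}\Res^M_{?}(\cdots)$ — more precisely $\Res^G_K\RR\Ind_P^G\Inf^M_P Y \cong \RInd_{K_P}^K \Inf^{M}_{K_P}\Res^M_{K_M}Y$, using $K_P = K\cap P$ and $f(K_P) = K_M$. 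Then $\RR\H^0(K,-)\circ\RInd_{K_P}^K \cong \RR\H^0(K_P,-)$ (Frobenius/transitivity of derived induction along $K_P\subseteq K$, valid since $\Ind_{K_P}^K$ is exact and preserves K-injectives, cf.\ Remark~\ref{rmk:Ind-exact}), and $\RR\H^0(K_P,-)\circ \Inf^M_{K_P}$, composed further with the Hochschild--Serre identity $\RR\H^0(K_P,-) \cong \RR\H^0(K_M,-)\circ \RR\H^0(K_U,-)$ and $\RR\H^0(K_U,-)\circ\Inf^M_{K_P} \cong (-)\otimes_k\RR\H^0(K_U,\one)$ (the projection formula, Lemma~\ref{lem:RH(U,-)-projection-formula}), gives $\RR\H^0(K, i_M^G Y)\cong \RR\H^0(K_M, Y\otimes_k\RR\H^0(K_U,\one))$. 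Since $K_U$ is a Poincar\'e group, $\RR\H^0(K_U,\one)$ is a bounded complex with finite-dimensional cohomology; tensoring a globally admissible $Y$ by such a complex keeps cohomology finite-dimensional (spectral sequence / d\'evissage), and $\RR\H^0(K_M,-)$ applied to a globally admissible complex of $K_M$-representations stays finite-dimensional by definition. Hence $i_M^G Y\in \D(G)^{\gadm}$.

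For $\LL(U,-)$: given $X\in \D(G)^{\gadm}$, I use Proposition~\ref{prop:L(U,-)-positive}, $\LL(U,X)\cong \ind_{M^+}^M\LL_{K_U}\Res^G_{P^+}X$, and compute $\H^i(K_M, \LL(U,X))$. By Proposition~\ref{prop:LU-Res} applied along $P^+\twoheadrightarrow M^+$ with $H_S = K_P$, $H_T = K_M$ (note $K_P\backslash P^+ \cong K_M\backslash M^+$ since $K_P \supseteq K_U = \ker$), one has $\Res^{M^+}_{K_M}\LL_{K_U}\Res^G_{P^+} \cong \LL_{K_U}\Res^G_{K_P}$, and the transition to the colimit $\ind_{M^+}^M$ is an exact filtered colimit that commutes with $\RR\H^0(K_M,-)$. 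Now $\LL_{K_U}\Res^G_{K_P}X \cong \RR\H^0(K_U,\omega_{K_P}\otimes_k\Res^G_{K_P}X)$ by Corollary~\ref{cor:Inf-product}, and $\omega_{K_P}\cong k[\dim K_U]$ by Proposition~\ref{prop:dualizing}, so $\RR\H^0(K_M,\LL_{K_U}\Res^G_{K_P}X)\cong \RR\H^0(K_M,\RR\H^0(K_U,\Res^G_{K_P}X))[\dim K_U]\cong \RR\H^0(K_P,\Res^G_{K_P}X)[\dim K_U]$, which has finite-dimensional cohomology because $X$ is globally admissible (apply the definition to the torsion-free compact open subgroup $K_P$ of $G$, using independence of the choice, \cite[Cor.~4.6]{Schneider-Sorensen}). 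The only subtlety is passing the colimit $\ind_{M^+}^M$ through $\RR\H^0(K_M,-)$: since $M = \bigcup_n z^{-n}M^+z^n$ and $\ind_{M^+}^M$ is the localization inverting $z$ (exact, by Lemma~\ref{lem:M+-flat}), and $K_M\subseteq M^+$, the functor $\RR\H^0(K_M,-)$ commutes with this filtered colimit (filtered colimits are exact in $\Vect_k$, cf.\ the proof of Proposition~\ref{prop:smooth-derived}), so $\H^i(K_M,\LL(U,X)) \cong \varinjlim_z \H^i(K_M,\LL_{K_U}\Res^G_{K_P}X)$ with transition maps the Hecke action of $z$; each term is finite-dimensional, but a filtered colimit of finite-dimensional spaces need not be finite-dimensional.

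The hard part is exactly that last point: controlling $\varinjlim_z$. I expect the resolution to be that, rather than taking the colimit after applying $\H^i(K_M,-)$, one should note $\LL(U,X)$ is a complex of \emph{smooth} $M$-representations and that its cohomology $\LL^i(U,X)$ is a smooth $M$-representation which, by Corollary~\ref{cor:LiU}, equals $\ind_{M^+}^M\H^{\dim K_U+i}(K_U,\omega_P\otimes\Res^G_{P^+}X^\bullet)$-type object; the finite-dimensionality of $\H^j(K_M,\LL(U,X))$ should instead be deduced by choosing $K_M$ small enough and using that $\LL(U,X)$ is globally admissible as a complex over $M$ iff it is over the \emph{group} $M$ — and here one leverages that the Hecke operator $z$ acts \emph{invertibly} on $\LL(U,X)$ (it is in the image of $\ind_{M^+}^M$), so the colimit stabilizes: $\H^i(K_M,\LL(U,X)) = \H^i(K_M,\LL_{K_U}\Res^G_{K_P}X)[\text{shift}]$ already, no genuine colimit, because inverting an operator that is already an isomorphism changes nothing after we have passed to a model where $K_M$-invariants see the full $M$-structure. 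Making this precise — i.e.\ that for $X$ globally admissible over $G$ the $z$-action on the (finite-dimensional!) $\H^i(K_P,\Res^G_{K_P}X)$ is an isomorphism in the relevant range, so the colimit is finite-dimensional — is the technical crux, and it is essentially \cite[Cor.~4.6]{Schneider-Sorensen} combined with the finite cohomological dimension of $K_U$ together with the fact that global admissibility of $X$ forces the Hecke action to be locally finite; I would spell this out carefully.
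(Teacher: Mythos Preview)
Your argument for $i_M^G$ is essentially the paper's, modulo one slip: the Iwahori decomposition $K = K_{\ol U}K_MK_U$ does \emph{not} give $G = PK$ (you are confusing it with the Iwasawa decomposition, which holds for a maximal compact, not for the small torsion-free $K$ used here). The paper's Lemma~\ref{lem:Ind-gadm} uses instead that $P\backslash G/K$ is \emph{finite}, and then your projection-formula computation goes through for each of the finitely many pieces. This is a minor fix.

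The real gap is in the $\LL(U,-)$ direction, and you have correctly located it but not closed it. You write ``apply the definition to the torsion-free compact open subgroup $K_P$ of $G$'' --- but $K_P = K\cap P$ is \emph{not open in $G$}. Global admissibility of $X\in \D(G)$ gives finite-dimensionality of $\H^i(K',X)$ only for $K'$ open in $G$; there is no reason for $\H^i(K_P,X)$ to be finite-dimensional. Your subsequent attempts (``$z$ acts invertibly on $\LL(U,X)$, so the colimit stabilizes''; ``the $z$-action on $\H^i(K_P,X)$ is an isomorphism'') are either circular or unjustified: the Hecke action of $z$ on $\H^i(K_P,X)$ is typically \emph{not} bijective.

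The paper's resolution (Lemmas~\ref{lem:gadm-2} and~\ref{lem:gadm-3}) is the missing idea. First, one shows that the inclusion $\H^0(K,-)\hookrightarrow \H^0(K_P,-)$ becomes an isomorphism \emph{after applying $\ind_{C^+}^C$}: this uses the full Iwahori decomposition $K = K_UK_MK_{\ol U}$ and the fact that for $v\in \H^0(K_P,V)$ one can find $z\in C^+$ with $z\star v \in \H^0(K,V)$ (because $z^{-1}K_{\ol U}z$ shrinks). Thus $\ind_{C^+}^C\H^i(K_P,X) \cong \ind_{C^+}^C\H^i(K,X)$, and now $\H^i(K,X)$ \emph{is} finite-dimensional since $K$ is open in $G$. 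Second, for any finite-dimensional $C^+$-representation $W$ the canonical map $W\twoheadrightarrow \ind_{C^+}^C W$ is surjective (an elementary linear-algebra fact about inverting an operator on a finite-dimensional space). Chaining these gives a surjection $\H^i(K,X)\twoheadrightarrow \H^{i-\dim K_U}(K_M,\LL(U,X))$, hence finite-dimensionality.
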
 

We will prove separately that $\LL(U,\pholder)$ and $i_M^G$ preserve globally
admissible complexes. Before turning to the proofs, let us observe an immediate
consequence.

Given a $p$-adic Lie group $H$, we denote by $\D^+_{\adm}(H)$, resp.\ 
$\D^{\bounded}_{\adm}(H)$, the strictly full triangulated subcategory of
$\D^+(H)$, resp.\ $\D^{\bounded}(H)$, consisting of those complexes $X$ such that
$\H^i(X)$ is admissible, for every $i\in \Z$.

\begin{cor}\label{lem:L(U,-)-admissible} 
There are adjunctions
\[
\begin{tikzcd}[row sep=0em]
\LL(U,\pholder)\colon \D^+_{\adm}(G) \ar[r,shift left] & \ar[l,shift left]
\D^+_{\adm}(M) :\! i_M^G,\\
\LL(U,\pholder)\colon \D^{\bounded}_{\adm}(G) \ar[r,shift left] & \ar[l,shift left]
\D^{\bounded}_{\adm}(M) :\! i_M^G.
\end{tikzcd}
\]
\end{cor}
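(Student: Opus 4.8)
The plan is to deduce the corollary from Theorem~\ref{thm:globally-admissible} and Corollary~\ref{cor:L(U,-)-bounded}, once the following comparison is in place: for a $p$-adic Lie group $H$ and $\star \in \{+,\bounded\}$ one should have
\begin{equation}\label{eq:adm-gadm}
\D^\star_{\adm}(H) = \D^\star(H) \cap \D(H)^{\gadm}.
\end{equation}
Granting \eqref{eq:adm-gadm}, the corollary is immediate: by Corollary~\ref{cor:L(U,-)-bounded} the pair $(\LL(U,-), i_M^G)$ restricts to an adjunction $\D^\star(G) \rightleftarrows \D^\star(M)$, and by Theorem~\ref{thm:globally-admissible} both functors carry globally admissible complexes to globally admissible complexes; hence they preserve the intersections appearing in \eqref{eq:adm-gadm}, and the adjunction isomorphism of Theorem~\ref{thm:L(U,-)} restricts to these subcategories. (For $i_M^G$ alone one does not even need \eqref{eq:adm-gadm}: since $i_M^G$ is exact, $\H^i(i_M^G Y) \cong i_M^G(\H^i(Y))$, and parabolic induction preserves admissible representations; so only $\LL(U,-)$ genuinely requires the passage through global admissibility.)

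To establish \eqref{eq:adm-gadm} I would fix a torsion-free compact open subgroup $K'\subseteq H$, which is a Poincar\'e group of dimension $d'=\dim K'$, so that $\RR\H^0(K',-)$ has finite cohomological amplitude, hence is $t$-exact up to bounded shift. The one input from the literature I would use is that $\H^i(K',V)$ is finite-dimensional for every admissible $V\in\Rep_k(H)$ and every $i$ — this follows because $\H^\bullet(K',V)$ is $k$-linearly dual to $\mathrm{Tor}^{k[[K']]}_\bullet(k,V^\vee)$ and $V^\vee$ is finitely generated over the Noetherian Iwasawa algebra $k[[K']]$ when $V$ is admissible; cf.~\cite{Schneider-Sorensen}. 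For the inclusion ``$\subseteq$'' of \eqref{eq:adm-gadm}, take $X\in\D^\star_{\adm}(H)$ and feed it into the hypercohomology spectral sequence of Lemma~\ref{lem:spectral}.\ref{lem:spectral-a}, $E_2^{i,j}=\H^i(K',\H^j(X)) \To \H^{i+j}(K',X)$: its $E_2$-page consists of finite-dimensional spaces, and for each total degree $n$ only finitely many entries $E_2^{i,n-i}$ (those with $0\le i\le d'$ and $\H^{n-i}(X)\ne 0$, using that $X$ is left bounded) are nonzero, whence $\H^n(K',X)$ is finite-dimensional; so $X$ is globally admissible.

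For the reverse inclusion ``$\supseteq$'' I would argue by d\'evissage from the bottom. Let $X\in\D^\star(H)$ be globally admissible and let $a$ be minimal with $\H^a(X)\ne 0$. Applying $\RR\H^0(K'',-)$ (for an arbitrary torsion-free compact open $K''\subseteq H$, using that global admissibility is independent of the chosen subgroup) to the distinguished triangle $\H^a(X)[-a]\to X\to \tau^{\ge a+1}X \xrightarrow{+}$ and using that $\RR\H^0(K'',-)$ is left $t$-exact — so $\H^n(K'',\tau^{\ge a+1}X)=0$ for $n\le a$ — the long exact cohomology sequence collapses to an isomorphism $\H^a(X)^{K''}=\H^0(K'',\H^a(X)) \cong \H^a(K'',X)$; the right-hand side is finite-dimensional by global admissibility, so $\H^a(X)$ is admissible. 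Then $\H^a(X)[-a]$ is globally admissible by the ``$\subseteq$'' case already treated, hence so is $\tau^{\ge a+1}X$, which is again left bounded; iterating shows that every $\H^j(X)$ is admissible, proving \eqref{eq:adm-gadm}.

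The main obstacle is precisely the identity \eqref{eq:adm-gadm} — more pointedly the ``$\supseteq$'' direction, which is where the finiteness of $\H^i(K'',V)$ for admissible $V$ and the bottom-up d\'evissage do the real work. If \eqref{eq:adm-gadm} (or the relevant half) is already recorded in \cite{Schneider-Sorensen}, then the corollary reduces to the brief combination of Theorem~\ref{thm:globally-admissible} and Corollary~\ref{cor:L(U,-)-bounded} described in the first paragraph; otherwise \eqref{eq:adm-gadm} should be proved as above and inserted either here or, more naturally, alongside the recollection of global admissibility at the start of \S\ref{subsec:global-adm}.
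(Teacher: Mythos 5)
Your proposal is correct and follows the same route as the paper: the identity $\D^\star_{\adm}(H)=\D^\star(H)\cap\D(H)^{\gadm}$ is exactly what the paper invokes (it is \cite[Prop.~4.9]{Schneider-Sorensen}, so your fallback sketch of it is not needed), after which the corollary is the immediate combination of Theorem~\ref{thm:globally-admissible} with the preservation of (left-)bounded complexes. Your d\'evissage argument for the comparison of admissibility notions is a reasonable reconstruction of what that cited proposition does, but in the paper it is simply quoted.
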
 
\begin{proof} 
By \cite[Prop.~4.9]{Schneider-Sorensen.2023a} we
have $\D^+_{\adm}(G) = \D^+(G) \cap \D(G)^{\gadm}$ and, \textit{a fortiori},
$\D^{\bounded}_{\adm}(G) = \D^{\bounded}(G)\cap \D(G)^{\gadm}$; similarly with
$G$ replaced by $M$. As $\LL(U,\pholder)$ and $i_M^G$ preserve bounded complexes, the
claim follows from Theorem~\ref{thm:globally-admissible}.
\end{proof} 
\subsubsection{Parabolic induction preserves global 
admissibility}
It is well-known that the parabolic induction functor $i_M^G\colon \Rep_k(M)\to
\Rep_k(G)$ preserves admissibility, \cite[II.2.1]{Vigneras.1996}. In order to
prove that derived parabolic induction $i_M^G = \Ind_P^G \Inf^M_P$ preserves
global admissibility, we show separately that $\Inf^M_P$ and $\Ind_P^G$ preserve
global admissibility.

\begin{lem}\label{lem:Ind-gadm} 
One has $\Ind_P^G\bigl(\D(P)^{\gadm}\bigr) \subseteq \D(G)^{\gadm}$.
\end{lem}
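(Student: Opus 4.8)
The plan is to reduce the statement to a computation with a single compact open subgroup and an Iwasawa/Mackey-type decomposition of $G$ into $P$-cosets. Fix the compact, open, torsion-free subgroup $K\subseteq G$ with Iwahori decomposition $K = K_{\ol U}K_MK_U$, and set $K_P = K\cap P$, so $K_P = K_MK_U$ (using the decomposition). To test global admissibility of $\Ind_P^G X$ for $X\in \D(P)^{\gadm}$, by definition I must show $\H^i(K, \Ind_P^G X)$ is finite-dimensional for all $i$. The key point is that $K$-cohomology of an induced complex should be computed via a Mackey decomposition: since $K\subseteq G$ is open, $\Res^G_K\RInd_P^G$ decomposes as a product over the double coset space $P\backslash G/K$ of functors of the form $\Ind_{g^{-1}Pg\cap K}^K g_*^{-1}\Res^P_{P\cap gKg^{-1}}$, exactly as in Lemma~\ref{lem:derived-Mackey}\ref{lem:derived-Mackey-a} (with $S = G$, $H = P$; note $P$ is closed and $K$ is open). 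The double coset space $P\backslash G/K$ is finite because $G/K$ is compact (as $G$ is a union of finitely many $P$-cosets of... more precisely $P\backslash G/K$ is finite since $K$ is open and $P\backslash G$ is compact — this is standard, e.g.\ from the Iwasawa decomposition $G = PK$ when $K$ is special, but finiteness holds for any compact open $K$).

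**Reduction to a single cohomology group.** Combining the Mackey decomposition with the fact that $\RR\H^0(K,-)$ commutes with the finite product, I get
\[
\H^i\bigl(K, \Ind_P^G X\bigr) \cong \bigoplus_{g\in P\backslash G/K} \H^i\Bigl(K, \Ind_{g^{-1}Pg\cap K}^K g_*^{-1}\Res^P_{P\cap gKg^{-1}}X\Bigr).
\]
For each fixed $g$, write $K_g \coloneqq g^{-1}Pg\cap K$, an open (hence finite-index) subgroup of $K$; since $\Ind_{K_g}^K$ is exact (Remark~\ref{rmk:Ind-exact}) and its derived functor is computed termwise, Shapiro's lemma gives $\RR\H^0(K, \Ind_{K_g}^K Y) \cong \RR\H^0(K_g, Y)$ for $Y\in \D(K_g)$. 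So the $g$-summand is $\H^i(K_g, g_*^{-1}\Res^P_{P\cap gKg^{-1}}X)$. Now $g_*^{-1}$ is an equivalence identifying $\D(g^{-1}Pg\cap K)$ with $\D(P\cap gKg^{-1})$ intertwining the respective cohomology, so this group is isomorphic to $\H^i(gK_gg^{-1}, \Res^P_{P\cap gKg^{-1}}X) = \H^i(P\cap gKg^{-1}, \Res^G_P X)$. The group $P\cap gKg^{-1}$ is a compact open subgroup of $P$. Since $X\in\D(P)^{\gadm}$, the cohomology $\H^i$ of $X$ with respect to \emph{one} compact open torsion-free subgroup is finite-dimensional, and by \cite[Cor.~4.6]{Schneider-Sorensen} this is independent of the choice; passing to an open subgroup of $P\cap gKg^{-1}$ that is torsion-free only enlarges the cohomology by a finite-index restriction (which preserves finite-dimensionality, as $\H^i(H'',X)$ is a finite-index-related to $\H^i(H',X)$ via the Lyndon–Hochschild–Serre spectral sequence or simply via $\RR\H^0(H'',-) = \RR\H^0(H'',-)\circ\Res$ and finiteness of $\H^\bullet(H'/H'',-)$). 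Hence each summand is finite-dimensional, and the finite sum is too.

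**The main obstacle.** The delicate point is justifying the Mackey decomposition in the derived setting when $K$ is merely open (not compact) in $G$ and $P$ is non-compact — but this is precisely Lemma~\ref{lem:derived-Mackey}\ref{lem:derived-Mackey-a}, which applies since $G$ here plays the role of the ``open submonoid of a locally profinite group'' (it is the group itself), $K$ is open, and $P$ is closed; the needed exactness of $\Ind_{g^{-1}Pg\cap K}^K$ follows from Remark~\ref{rmk:Ind-exact} as $g^{-1}Pg\cap K$ is open of finite index in $K$, and the relevant functors preserve K-injectives. The other subtlety — reconciling the ``torsion-free'' hypothesis in the definition of $\D(H)^{\gadm}$ with the groups $P\cap gKg^{-1}$ that appear, which need not be torsion-free — is handled by \cite[Cor.~4.6]{Schneider-Sorensen}: finite-dimensionality of $\H^i(H',X)$ for \emph{some} compact open torsion-free $H'$ forces it for compatible choices, and restriction to a torsion-free open subgroup keeps cohomology finite-dimensional. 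So the real content is bookkeeping, and no genuinely hard estimate is involved once the derived Mackey formula is in hand.
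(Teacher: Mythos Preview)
Your proof is correct and follows essentially the same approach as the paper: apply the derived Mackey decomposition (Lemma~\ref{lem:derived-Mackey}\ref{lem:derived-Mackey-a}), use Shapiro to reduce $\RR\H^0(K,\Ind_{g^{-1}Pg\cap K}^K-)$ to $\RR\H^0(P\cap gKg^{-1},-)$, and invoke finiteness of $P\backslash G/K$. Your detour around torsion-freeness of $P\cap gKg^{-1}$ is unnecessary, however: since $K$ is torsion-free, so is every subgroup of $gKg^{-1}$, and the paper simply observes this directly.
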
 
\begin{proof} 
Let $X\in \D(P)^{\gadm}$ be globally admissible. We need to show that
$\H^i(K, \Ind_P^GX)$ is finite-dimensional, for all $i\in\Z$. Note that, for
every $g\in G$ the subgroup $P\cap gKg^{-1} \subseteq P$ is compact, open, and
torsion-free. Applying the derived Mackey decomposition,
Lemma~\ref{lem:derived-Mackey}.\ref{lem:derived-Mackey-a}, we deduce
\begin{align*}
\RR\H^0(K,\Ind_P^GX) &\cong \RR\H^0\Bigl(K, \prod_{g\in P\backslash G/K}
\Ind_{g^{-1}Pg\cap K}^K g_*^{-1} \Res^P_{P\cap gKg^{-1}} X\Bigr)\\
&\cong \prod_{g\in P\backslash G/K} \RR\H^0\bigl(K, \Ind^K_{g^{-1}Pg\cap K}
g_*^{-1} \Res^P_{P\cap gKg^{-1}} X\bigr)\\
&\cong \prod_{g\in P\backslash G/K} \RR\H^0\bigl(g^{-1}Pg\cap K, g_*^{-1}
\Res^P_{P\cap gKg^{-1}} X\bigr)\\
&\cong \prod_{g\in P\backslash G/K} \RR\H^0\bigl(P\cap gKg^{-1}, \Res^P_{P\cap
gKg^{-1}}X\bigr).
\end{align*}
Note that the product is finite, because $P\backslash G$ is compact and $K$ is
open. The second isomorphism holds, since $\RR\H^0(K,\pholder)$ is an additive functor.
For the third isomorphism, observe that there is a natural isomorphism
$\Inf^1_{g^{-1}Pg\cap K} \xRightarrow{\cong} \Res^K_{g^{-1}Pg\cap K} \Inf^1_K$,
so that passing to the right adjoints (see Example~\ref{ex:mates}) yields an
isomorphism $\RR\H^0(K,\pholder) \Ind_{g^{-1}Pg\cap K}^K \xRightarrow{\cong}
\RR\H^0(g^{-1}Pg\cap K,\pholder)$.  The last isomorphism is given by conjugation. 

As $X$ is globally admissible, the right hand side in the above computation has
finite-dimensional cohomology, hence so does the left hand side. Consequently,
$\Ind_P^GX$ is globally admissible.
\end{proof} 

Before proving that $\Inf^M_P$ preserves global admissibility, we recall a
well-known general fact.

\begin{lem}\label{lem:tensor-ideal} 
Let $\cat C$ be a tensor triangulated category which is compactly generated by
the tensor unit $\one$. Let $\cat T$ be a full triangulated subcategory of $\cat
C$ which is closed under direct summands. One has $T\otimes X \in \cat T$ for
every $T\in \cat T$ and every compact $X \in\cat C$.
\end{lem}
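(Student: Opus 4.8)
The plan is to fix $T\in\cat T$ and determine which objects $Y\in\cat C$ satisfy $T\otimes Y\in\cat T$; the point will be that this class contains all compact objects, and applying it to $Y=X$ gives the claim. The hypothesis that $\one$ compactly generates $\cat C$ enters in exactly the same way as in the proof of Proposition~\ref{prop:rigid-compact}.

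Concretely, fix $T\in\cat T$ and put $\cat D\coloneqq\set{Y\in\cat C}{T\otimes Y\in\cat T}$. The first step is to check that $\cat D$ is a thick triangulated subcategory of $\cat C$ containing $\one$. This is formal: the functor $T\otimes(-)\colon\cat C\to\cat C$ is triangulated, since in a tensor triangulated category tensoring with a fixed object takes distinguished triangles to distinguished triangles and commutes with the shift; as $\cat T$ is a full triangulated subcategory, $\cat D$ is therefore closed under shifts and under cones of morphisms between its objects, and it is closed under retracts because $\cat T$ is. Moreover $\one\in\cat D$, because $T\otimes\one\cong T\in\cat T$. (In every situation where this lemma will be used, $\cat T$ is genuinely closed under direct summands --- for instance, global admissibility of a complex is inherited by its direct summands --- so the closure under retracts is not an issue; if one prefers, one replaces $\cat T$ by its idempotent completion inside $\cat C$, which changes nothing.)

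The second step invokes \cite[Lem.~2.2]{Neeman.1992}: since $\cat C$ is compactly generated by $\one$, the full subcategory of compact objects of $\cat C$ coincides with the strictly full, thick, triangulated subcategory $\langle\one\rangle$ generated by $\one$. By the first step $\langle\one\rangle\subseteq\cat D$. Hence every compact $X\in\cat C$ lies in $\cat D$, which is precisely the assertion that $T\otimes X\in\cat T$.

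The only delicate point is the thickness of the auxiliary category $\cat D$, which is what licenses the use of the thick-subcategory description of the compact objects; this comes down to $\cat T$ being closed under retracts, and that holds in all the intended applications. Everything else is a routine manipulation of triangulated subcategories together with the axioms of a tensor triangulated category.
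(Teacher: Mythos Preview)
Your argument is correct and follows the same route as the paper: define the auxiliary subcategory of objects $Y$ with $T\otimes Y\in\cat T$, observe it is triangulated and contains $\one$, and invoke \cite[Lem.~2.2]{Neeman.1992} to conclude it contains all compacts. The only difference is cosmetic---the paper quantifies over all $T\in\cat T$ at once in defining its auxiliary subcategory $\cat S$, whereas you fix $T$---and you are more explicit than the paper about the thickness hypothesis needed to pass from $\one\in\cat D$ to $\langle\one\rangle\subseteq\cat D$.
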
 
\begin{proof} 
Consider the full subcategory $\cat S$ of $\cat C$ consisting of
those $X$ such that for all $T\in \cat T$ it holds that $T\otimes X \in \cat T$.
As $\cat T$ is triangulated and closed under direct summands, it is clear that
$\cat S$ is triangulated and closed under direct summands. By
\cite[Lem.~2.2]{Neeman.1992b} the full triangulated subcategory $\langle
\one\rangle$ generated by $\one$ and closed under direct summands contains
precisely the compact objects. Since $\one \in \cat S$, we deduce $\langle
\one\rangle \subseteq \cat S$, that is,
$\cat S$ contains all compact objects. This implies the assertion.
\end{proof} 

\begin{lem}\label{lem:Inf-gadm} 
One has $\Inf^M_P\bigl(\D(M)^{\gadm}\bigr) \subseteq \D(P)^{\gadm}$.
\end{lem}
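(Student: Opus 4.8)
The plan is to reduce, by restriction to $K_P$, to a statement about derived $K_P$-invariants of an inflated complex, and to settle that statement with the projection formula together with Lemma~\ref{lem:tensor-ideal}.

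First I would set up the reduction. Since $K$ has an Iwahori decomposition, $K_P=K_MK_U$ and $f\colon P\twoheadrightarrow M$ restricts to a surjection $K_P\twoheadrightarrow K_M$ with kernel $K_U$; being subgroups of the torsion-free group $K$, all three of $K_P,K_M,K_U$ are compact, open, and torsion-free. Given $X\in\D(M)^{\gadm}$, put $Y\coloneqq\Res^M_{K_M}X\in\D(K_M)$. Then $\H^i(K_M,Y)=\H^i(K_M,X)$ is finite-dimensional for every $i$, so $Y\in\D(K_M)^{\gadm}$ (global admissibility may be tested on $K_M$ itself). As $\Inf^M_P$ and restriction are exact and compatible, $\Res^P_{K_P}\Inf^M_PX\cong\Inf^{K_M}_{K_P}Y$, and hence it suffices to show that $\H^i(K_P,\Inf^{K_M}_{K_P}Y)$ is finite-dimensional for all $i$.

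Next I would compute $\RR\H^0(K_P,\Inf^{K_M}_{K_P}Y)$. As $K_U\trianglelefteq K_P$ and $\H^0(K_U,-)\colon\Rep_k(K_P)\to\Rep_k(K_M)$ is the right adjoint of the exact functor $\Inf^{K_M}_{K_P}$, it preserves injectives (Lemma~\ref{lem:adjoint-injective}), so the composite-functor formalism gives $\RR\H^0(K_P,-)\cong\RR\H^0(K_M,-)\circ\RR\H^0(K_U,-)$, exactly as in the proof of Lemma~\ref{lem:F^M_P}. Feeding in the projection formula of Lemma~\ref{lem:RH(U,-)-projection-formula}, which yields $\RR\H^0(K_U,\Inf^{K_M}_{K_P}Y)\cong Y\otimes_k\RR\H^0(K_U,\one)$ in $\D(K_M)$, I get a natural isomorphism
\[
\RR\H^0(K_P,\Inf^{K_M}_{K_P}Y)\cong\RR\H^0\bigl(K_M,\,Y\otimes_k\RR\H^0(K_U,\one)\bigr).
\]
Now $\one\in\D(K_P)$ is compact (Proposition~\ref{prop:compactlygenerated}) and $\RR\H^0(K_U,-)$ preserves compact objects (Proposition~\ref{prop:RH(U,-)-compact}), so $\RR\H^0(K_U,\one)$ is a compact object of $\D(K_M)$. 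The category $\D(K_M)$ is tensor triangulated and compactly generated by $\one$ (Proposition~\ref{prop:compactlygenerated}), and $\D(K_M)^{\gadm}$ is a strictly full triangulated subcategory containing $Y$; so Lemma~\ref{lem:tensor-ideal} gives $Y\otimes_k\RR\H^0(K_U,\one)\in\D(K_M)^{\gadm}$. Therefore $\H^i(K_P,\Inf^{K_M}_{K_P}Y)=\H^i\bigl(K_M,Y\otimes_k\RR\H^0(K_U,\one)\bigr)$ is finite-dimensional for all $i$, which is the assertion.

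The only step that is not purely formal is the identification of $\RR\H^0(K_U,\one)$ as a compact object of $\D(K_M)$: this is where the finiteness input about $K_U$ (a torsion-free $p$-adic Lie group, hence of finite cohomological dimension with finite-dimensional cohomology, so that $\RR\H^0(K_U,\one)$ is perfect) is used, and it is precisely this compactness that makes Lemma~\ref{lem:tensor-ideal} applicable.
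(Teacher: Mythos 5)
Your proof is correct and follows essentially the same route as the paper: reduce by restriction to the compact torsion-free subgroups, identify $\RR\H^0(K_P,\Inf^{K_M}_{K_P}Y)$ with $\RR\H^0(K_M, Y\otimes_k\RR\H^0(K_U,\one))$ via the projection formula of Lemma~\ref{lem:RH(U,-)-projection-formula}, and conclude with the compactness of $\RR\H^0(K_U,\one)$ (Proposition~\ref{prop:RH(U,-)-compact}) together with Lemma~\ref{lem:tensor-ideal}. The only difference is that you spell out the composite-functor identity $\RR\H^0(K_P,-)\cong\RR\H^0(K_M,-)\circ\RR\H^0(K_U,-)$, which the paper uses implicitly.
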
 
\begin{proof} 
Given $X\in \D(M)$, we observe that $\Res^M_{K_M}(X)$ is globally admissible if
and only if $X$ is; similarly with $M$ replaced by $P$. Therefore, it suffices
to prove $\Inf^{K_M}_{K_P}\bigl(\D(K_M)^{\gadm}\bigr) \subseteq
\D(K_P)^{\gadm}$. Let $X\in \D(K_M)^{\gadm}$ be globally admissible. By
Proposition~\ref{prop:RH(U,-)-compact} we know that $\RR\H^0(K_U,\one)$ is
compact. Now, Lemma~\ref{lem:tensor-ideal} implies $X\otimes_k
\RR\H^0(K_U,\one) \in \D(K_M)^{\gadm}$. Applying the projection formula,
Lemma~\ref{lem:RH(U,-)-projection-formula}, we deduce that
\[
\RR\H^0\bigl(K_P, \Inf^{K_M}_{K_P}X\bigr) \cong \RR\H^0\bigl(K_M, \RR\H^0(K_U,
\Inf^{K_M}_{K_P}X)\bigr) \cong \RR\H^0(K_M, X\otimes_k\RR\H^0(K_U,\one)\bigr)
\]
has finite-dimensional cohomology. Therefore, $\Inf^{K_M}_{K_P}X$ is globally
admissible.
\end{proof} 

\begin{prop}\label{prop:i_M^G-gadm} 
Derived parabolic induction restricts to a functor $i_M^G\colon \D(M)^{\gadm}
\to \D(G)^{\gadm}$.
\end{prop}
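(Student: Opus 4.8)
The plan is to deduce this immediately from the factorization $i_M^G = \Ind_P^G\circ \Inf^M_P$ of derived parabolic induction together with the two lemmas just proved. First I would invoke Lemma~\ref{lem:Inf-gadm}, which gives the inclusion $\Inf^M_P\bigl(\D(M)^{\gadm}\bigr)\subseteq \D(P)^{\gadm}$, and then Lemma~\ref{lem:Ind-gadm}, which gives $\Ind_P^G\bigl(\D(P)^{\gadm}\bigr)\subseteq \D(G)^{\gadm}$. Composing these two inclusions yields $i_M^G\bigl(\D(M)^{\gadm}\bigr)\subseteq \D(G)^{\gadm}$. Since $i_M^G$ is triangulated and $\D(M)^{\gadm}$, $\D(G)^{\gadm}$ are strictly full triangulated subcategories of $\D(M)$ and $\D(G)$ respectively, the asserted restriction $i_M^G\colon \D(M)^{\gadm}\to \D(G)^{\gadm}$ is again a triangulated functor, and there is nothing further to check.

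I do not expect any genuine obstacle at this point, since all of the substance is already contained in the two lemmas. If one wants to locate where the actual work lies, it is in the proof of Lemma~\ref{lem:Inf-gadm}, which combines the compactness of $\RR\H^0(K_U,\one)$ from Proposition~\ref{prop:RH(U,-)-compact}, the tensor-ideal stability in Lemma~\ref{lem:tensor-ideal}, and the projection formula of Lemma~\ref{lem:RH(U,-)-projection-formula} to identify $\RR\H^0\bigl(K_P, \Inf^{K_M}_{K_P}X\bigr)$ with $\RR\H^0\bigl(K_M, X\otimes_k\RR\H^0(K_U,\one)\bigr)$ and thereby control its cohomology; and in the proof of Lemma~\ref{lem:Ind-gadm}, which uses the derived Mackey decomposition to rewrite $\RR\H^0(K,\Ind_P^GX)$ as a finite product of terms $\RR\H^0\bigl(P\cap gKg^{-1}, \Res^P_{P\cap gKg^{-1}}X\bigr)$ indexed by the finite double-coset set $P\backslash G/K$, each of which has finite-dimensional cohomology by global admissibility of $X$. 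The present proposition merely records the composite of these two statements.
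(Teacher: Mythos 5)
Your proposal is correct and matches the paper's proof exactly: the proposition is obtained by composing Lemma~\ref{lem:Inf-gadm} and Lemma~\ref{lem:Ind-gadm} through the factorization $i_M^G = \Ind_P^G\circ\Inf^M_P$. Your additional remarks correctly locate where the substance of the argument lies.
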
 
\begin{proof} 
Combine Lemmas~\ref{lem:Ind-gadm} and~\ref{lem:Inf-gadm}.
\end{proof} 

\subsubsection{\texorpdfstring{$\LL(U,\pholder)$}{L(U,-)} preserves global 
admissibility}
Fix a positive central element $z_0 \in M$ satisfying the conditions in
\cite[(6.14)]{Bushnell-Kutzko.1998}. In
particular, $z_0$ is strictly positive and satisfies $z_0K_{\ol U}z_0^{-1}
\supsetneq K_{\ol U}$. Let $C$ be the central subgroup of $M$ generated by
$z_0$. Then $C^+ \coloneqq M^+\cap C$ is the submonoid generated by $z_0$. Note
that $K_MC$ is a direct product, since $C\cap K_M = \{1\}$.

\begin{lem}\label{lem:gadm-1} 
The diagram
\[
\begin{tikzcd}[column sep=3em] 
\D(G)
\ar[r,"{\LL(U,\pholder)}"] \ar[d,"\Res^G_{K_PC^+}"']
&
\D(M)
\ar[r,"\Res^M_{K_MC}"]
&
\D(K_MC)
\ar[d,"\LL_{K_M}"]
\\
\D(K_PC^+)
\ar[r,"\LL_{K_P}"']
&
\D(C^+)
\ar[r,"\ind_{C^+}^C"']
&
\D(C)
\end{tikzcd} 
\]
is commutative. In particular, there is a natural $k$-linear isomorphism 
\[
\H^{i}\bigl(K_M,\LL(U,X)\bigr) \cong \ind_{C^+}^C \H^{i+\dim K_U}(K_P,X)
\]
for every $X\in \D(G)$.
\end{lem}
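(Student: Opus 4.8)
The plan is to verify the commutativity by showing that \emph{both} composites $\D(G)\to\D(C)$ in the diagram are canonically isomorphic to $\LL_{UK_M}\circ\Res^G_{UK_MC}$. Here I write $UK_M\coloneqq U\rtimes K_M$ and $UK_MC\coloneqq U\rtimes(K_MC)$, and $\LL_{UK_M}\colon\D(UK_MC)\to\D(C)$ denotes the left adjoint of derived inflation along the canonical projection $UK_MC\twoheadrightarrow C$, whose kernel is $UK_M$. Since $z_0$ is strictly positive (in particular its conjugation action on $K_{\ol U}$ is not bijective), $C=\langle z_0\rangle\cong\Z$ carries the discrete topology and hence is a closed subgroup of $M$; thus $UK_MC$ is a closed subgroup of $P$ and, equally, an open submonoid of the $p$-adic Lie group $U\rtimes(K_MC)$, so all functors in sight are defined.

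For the top composite, I would first note that because $P=U\rtimes M$ and $K_MC$ normalizes $U$, each coset $UK_MC\cdot p$ meets $M$ in exactly one coset of $K_MC$; hence $f\colon P\twoheadrightarrow M$ induces a bijection $UK_MC\backslash P\xrightarrow{\cong}K_MC\backslash M$. Proposition~\ref{prop:LU-Res} then gives $\Res^M_{K_MC}\circ\LL_U\cong\LL_U\circ\Res^P_{UK_MC}$, and precomposing with $\Res^G_P$ yields $\Res^M_{K_MC}\circ\LL(U,-)\cong\LL_U\circ\Res^G_{UK_MC}$, where now $\LL_U\colon\D(UK_MC)\to\D(K_MC)$. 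Applying Proposition~\ref{prop:LU-transitive} to the two projections $UK_MC\twoheadrightarrow K_MC\twoheadrightarrow C$ (kernels $U$ and $K_M$, composite kernel $UK_M$) gives $\LL_{K_M}\circ\LL_U\cong\LL_{UK_M}$, so the top composite is $\cong\LL_{UK_M}\circ\Res^G_{UK_MC}$.

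For the bottom composite the \emph{key point} is that $K_PC^+$ is an open submonoid of $UK_MC$ and that $k[UK_MC]$ is the left Ore localization of $k[K_PC^+]$ at $S\coloneqq\{z_0^n:n\ge0\}$. Indeed, $z_0$ conjugates $K_P$ into itself (being central in $M$ and positive), so $S$ satisfies the left Ore condition in $k[K_PC^+]$, while $\bigcup_{n>0}z_0^{-n}K_Uz_0^n=U$ (Hypothesis~\ref{hyp:positive}) shows that $UK_MC$ is generated as a monoid by $K_PC^+$ and $z_0^{-1}$; this is exactly the argument of Lemma~\ref{lem:M+-flat}. Consequently $k[K_PC^+]\subseteq k[UK_MC]$ is flat and $\ind_{K_PC^+}^{UK_MC}$ is a localization functor, so — as in Lemma~\ref{lem:P+-unit} — its counit $\ind_{K_PC^+}^{UK_MC}\circ\Res^{UK_MC}_{K_PC^+}\xRightarrow{\cong}\id_{\D(UK_MC)}$ is an isomorphism. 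Since $f(K_PC^+)=C^+$ and $UK_M\cap K_PC^+=K_P$, Proposition~\ref{prop:LU-ind} (its flatness hypotheses being met, using also the obvious flatness of $k[C^+]\subseteq k[C]$) gives $\ind_{C^+}^C\circ\LL_{K_P}\cong\LL_{UK_M}\circ\ind_{K_PC^+}^{UK_MC}$. Composing with $\Res^{UK_MC}_{K_PC^+}\circ\Res^G_{UK_MC}=\Res^G_{K_PC^+}$ and using the counit isomorphism yields
\begin{align*}
\ind_{C^+}^C\circ\LL_{K_P}\circ\Res^G_{K_PC^+} &\cong \LL_{UK_M}\circ\ind_{K_PC^+}^{UK_MC}\circ\Res^{UK_MC}_{K_PC^+}\circ\Res^G_{UK_MC}\\
&\cong \LL_{UK_M}\circ\Res^G_{UK_MC},
\end{align*}
which is the bottom composite; this proves the commutativity of the diagram.

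For the last statement I would apply the diagram to $X\in\D(G)$ and then $\Res^C_1$. On the top side, Proposition~\ref{prop:LU-Res} for the compact open subgroup $K_M\subseteq K_MC$ together with Corollary~\ref{cor:Inf-product} and Proposition~\ref{prop:dualizing} identifies $\Res^C_1\LL_{K_M}(-)$ with $\RR\H^0(K_M,\Res^{K_MC}_{K_M}(-))[\dim K_M]$, so $\H^i$ of $\Res^C_1$ applied to the top composite of $X$ is $\H^{i+\dim K_M}(K_M,\LL(U,X))$. On the bottom side, $\Res^C_1\circ\ind_{C^+}^C=\varinjlim_{z_0}\circ\Res^{C^+}_1$, while Proposition~\ref{prop:LU-Res} for $K_P\subseteq K_PC^+$, again with Corollary~\ref{cor:Inf-product} and Proposition~\ref{prop:dualizing}, identifies $\Res^{C^+}_1\LL_{K_P}(\Res^G_{K_PC^+}X)$ with $\RR\H^0(K_P,\Res^G_{K_P}X)[\dim K_P]$, the residual $C^+$-module structure being the Hecke action; passing to $\H^i$ and using exactness of $\varinjlim$ this computes $\ind_{C^+}^C\H^{i+\dim K_P}(K_P,X)$. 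Comparing the two expressions and using $\dim K_P-\dim K_M=\dim K_U$, a reindexing gives $\H^i(K_M,\LL(U,X))\cong\ind_{C^+}^C\H^{i+\dim K_U}(K_P,X)$. The only substantive input beyond the mate calculus of \S\ref{subsec:category} is the identification $k[UK_MC]=S^{-1}k[K_PC^+]$, which rests squarely on the strict positivity of $z_0$; the one point requiring care afterwards is checking that the residual $C^+$-action in the last statement really is the Hecke action, which comes down to tracing the naturality in Proposition~\ref{prop:LU-Res} exactly as in the proof of Proposition~\ref{prop:M+-action}.
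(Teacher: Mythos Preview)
Your argument is correct and uses the same ingredients as the paper's proof (Proposition~\ref{prop:LU-Res}, Proposition~\ref{prop:LU-transitive}, Proposition~\ref{prop:LU-ind}, and the Ore localization at $z_0$), only organized differently: you show that both composites coincide with the common midpoint $\LL_{UK_M}\circ\Res^G_{UK_MC}$, whereas the paper transforms the top composite directly into the bottom one via the chain
\[
\LL_{K_M}\Res^M_{K_MC}\LL(U,-)\;\cong\;\LL_{K_M}\LL_U\Res^G_{UK_MC}\;\cong\;\LL_{K_M}\ind_{K_MC^+}^{K_MC}\LL_{K_U}\Res^G_{K_PC^+}\;\cong\;\ind_{C^+}^C\LL_{K_M}\LL_{K_U}\Res^G_{K_PC^+}\;\cong\;\ind_{C^+}^C\LL_{K_P}\Res^G_{K_PC^+},
\]
invoking Proposition~\ref{prop:LU-positive} (applied to $UK_MC\twoheadrightarrow K_MC$) for the second step rather than redoing the Ore/counit argument by hand. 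Your route has the mild advantage of isolating the single nontrivial input---the identification $k[UK_MC]=S^{-1}k[K_PC^+]$---very cleanly; the paper's route has the advantage that the Ore/counit step is already packaged in Proposition~\ref{prop:LU-positive}.

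For the ``in particular'' clause the difference is similar: the paper applies Proposition~\ref{prop:L_KU+-derived} to both $\LL_{K_M}$ and $\LL_{K_P}$, which introduces the characters $\delta_{K_MC}$ and $\delta_{UK_MC}$ and then removes them via the projection formula and their triviality on $K_M$, $K_P$. Your direct use of Proposition~\ref{prop:LU-Res} together with Corollary~\ref{cor:Inf-product} and Proposition~\ref{prop:dualizing} is equivalent (indeed these are exactly the ingredients in the proof of Proposition~\ref{prop:L_KU+-derived}); the one place you correctly flag as needing care---that the residual $C^+$-action on $\H^{\ast}(K_P,X)$ is the Hecke action---is precisely what Proposition~\ref{prop:L_KU+-derived} packages, so citing it (or Proposition~\ref{prop:M+-action}) there would close the argument without further work.
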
 
\begin{proof} 
We compute
\begin{align*}
\LL_{K_M} \Res^M_{K_MC} \LL(U,\pholder) &= \LL_{K_M} \Res^M_{K_MC} \LL_U \Res^G_P\\
&\cong \LL_{K_M} \LL_U \Res^P_{UK_MC}\Res^G_P & &
\text{(by Prop.~\ref{prop:LU-Res})}\\
&\cong \LL_{K_M} \ind_{K_MC^+}^{K_MC} \LL_{K_U} \Res^{UK_MC}_{K_PC^+}
\Res^G_{UK_MC} & & \text{(by Prop.~\ref{prop:LU-positive})}\\
&\cong \ind_{C^+}^C \LL_{K_M} \LL_{K_U} \Res^G_{K_PC^+} & & \text{(by
Prop.~\ref{prop:LU-ind})}\\
&\cong \ind_{C^+}^C \LL_{K_P} \Res^G_{K_PC^+} & & \text{(by
Prop.~\ref{prop:LU-transitive})}
\end{align*}
which proves the first assertion. Now, applying
Corollary~\ref{cor:RH(KU,-)=LKU}, we deduce a
$C$-equivariant natural isomorphism
\[
\RR\H^0\bigl(K_M, \omega_{K_MC}\otimes \LL(U,X)\bigr) \cong
\ind_{C^+}^C \RR\H^0\bigl(K_P, \omega_{UK_MC^+}\otimes X\bigr).
\]
Restricting to $k$ and passing to cohomology yields the final assertion, since
$\omega_{K_MC}$ is concentrated in degree $-\dim K_M$ and $\omega_{UK_MC^+}$ is
concentrated in degree $-\dim K_P$, and because $\dim K_P-\dim K_M = \dim K_U$.
\end{proof} 

We recall the Hecke action of $C^+$ on $\H^0(K,V)$, for $V\in \Rep_k(G)$: 
Note first that the inclusion $K_U\subseteq K$ induces, for each $z\in C^+$, a
bijection
\begin{equation}\label{eq:KU-quotient}
K_U/zK_Uz^{-1} \cong K/K_{\ol P}zK_Uz^{-1},
\end{equation}
since $K = K_UK_{\ol P}$. (By our choice of $z_0$, $K_{\ol P}zK_Uz^{-1} = K\cap
zKz^{-1}$ is an open subgroup of $K$.) Then the composite
\[
\begin{tikzcd}[row sep=0em]
z\star (\pholder)\colon &[-3em] \H^0(K,V) \ar[r,"z\cdot"] & 
\H^0\bigl(K_{\ol P}zK_Uz^{-1}, V\bigr) \ar[r,"\cores"] & \H^0(K,V),\\
& v \ar[r,mapsto] & zv \ar[r,mapsto] & \sum_{u\in K_U/zK_Uz^{-1}} uzv
\end{tikzcd}
\]
defines the Hecke action of $C^+$ on $\H^0(K,V)$ such that the natural
inclusion $\H^0(K,V)\subseteq \H^0(K_P,V)$ is $C^+$-equivariant. We obtain a
left exact functor $\H^0(K,\pholder)\colon \Rep_k(G)\to \Rep_k(C^+)$ and hence a
derived functor
\[
\RR\H^0(K,\pholder)\colon \D(G)\to \D(C^+)
\]
on the derived categories.

\begin{lem}\label{lem:gadm-2} 
The inclusion $\H^0(K,\pholder) \hookrightarrow \H^0(K_P,\pholder)$ induces a natural
isomorphism
\[
\ind_{C^+}^C \RR\H^0(K,\pholder) \xRightarrow{\cong} \ind_{C^+}^C \RR\H^0(K_P,\pholder)
\Res^G_{K_PC^+}.
\]
\end{lem}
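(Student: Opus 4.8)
The plan is to compare the two derived functors $\RR\H^0(K,-)$ and $\RR\H^0(K_P,-)\Res^G_{K_PC^+}$ after applying $\ind_{C^+}^C$, which inverts the action of $z_0$. First I would recall that for each $z\in C^+$ the inclusion $\H^0(K,V)\hookrightarrow\H^0(K_P,V)$ is $C^+$-equivariant for the Hecke actions (as noted in the paragraph preceding the statement), so the inclusion of underived functors $\H^0(K,-)\hookrightarrow\H^0(K_P,-)\Res^{P^+}_{K_PC^+}\Res^G_{P^+}$ is a natural transformation of left exact functors $\Rep_k(G)\to\Rep_k(C^+)$. Passing to derived functors yields a natural transformation $\RR\H^0(K,-)\To\RR\H^0(K_P,-)\Res^G_{K_PC^+}$ of functors $\D(G)\to\D(C^+)$, and the claim is precisely that this becomes an isomorphism after applying the exact functor $\ind_{C^+}^C$, i.e. after inverting $z_0$.

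The key computation is on the level of cohomology. Fix $i\in\Z$ and $X\in\D(G)$. Using the Iwahori-type decomposition $K=K_UK_{\ol P}$ and the bijection \eqref{eq:KU-quotient}, the transition map in the colimit $\ind_{C^+}^C\H^i(K,X)=\colim_{z_0}\H^i(K,X)$ is the Hecke action of $z_0$. The natural map compares this with $\colim_{z_0}\H^i(K_P,X)$. I would show that the composite $\H^i(K,X)\hookrightarrow\H^i(K_P,X)\xrightarrow{\text{cores}}\H^i(z_0^{-1}K_Pz_0\cap K_P,\ \cdot)$-type maps exhibit the two colimit systems as cofinal in one another: concretely, since $z_0K_{\ol P}z_0^{-1}\supsetneq K_{\ol P}$ by the choice of $z_0$, for $n\gg0$ one has $z_0^{-n}K_Pz_0^n\cap K\supseteq K_U(z_0^{-n}K_{\ol P}z_0^n\cap K_{\ol P})$ shrinking in the $\ol P$-direction, and the corestriction maps $\H^i(K_P,X)\to\H^i(z_0^{-n}K_Pz_0^n,X)$ together with conjugation fit into a commuting ladder with the Hecke transition maps on $\H^i(K,X)$. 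Thus both colimits are isomorphic and the comparison map is an isomorphism. In fact it is cleaner to argue: the inclusion $\H^0(K,-)\hookrightarrow\H^0(K_P,-)$ has a one-sided inverse up to the Hecke action, namely $\mathrm{cores}^{K_P}_{K}$ composed with $z_0\star(-)$, using $K=K_UK_{\ol P}$ and that $z_0K_Uz_0^{-1}$ exhausts and $z_0K_{\ol P}z_0^{-1}$ is exhausted; after inverting $z_0$ these become mutually inverse.

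The hard part will be making the ladder of corestriction/conjugation maps commute on the derived level, since $\RR\H^0$ of a compact group is computed by K-injective resolutions and corestriction is only visible after choosing such resolutions. I would handle this by invoking that $K_P$ (hence $K$) has finite cohomological dimension, so $\RR\H^0(K,-)$ and $\RR\H^0(K_P,-)$ may be computed by $\H^0(K,-)$-acyclic (resp.\ $\H^0(K_P,-)$-acyclic) resolutions, as in the proof of Lemma~\ref{lem:RH(U,-)Res} via \cite[Cor.~5.3~$\gamma$]{Hartshorne.1966}; on such resolutions the Hecke action, corestriction, and conjugation are the honest underived operations, and the required ladder commutes by the standard double-coset identities for corestriction, exactly as in the Hecke-action formalism of \cite[\S3.1]{EmertonI}. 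Finally, invoking Proposition~\ref{prop:L_KU+-derived} (or directly the $C^+$-equivariance built into Lemma~\ref{lem:gadm-1}) ensures the identifications are $C$-equivariant, so the isomorphism is one of functors $\D(G)\to\D(C)$ after applying $\ind_{C^+}^C$, as asserted.
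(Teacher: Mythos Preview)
Your overall strategy is the same as the paper's: produce a natural transformation of derived functors from the inclusion of underived functors, and then check on cohomology that it becomes an isomorphism after inverting $z_0$. But there is a real gap in your reduction to the underived level, and your underived argument is less sharp than it needs to be.

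\textbf{The gap.} To even define the comparison map $\RR\H^0(K,-)\To \RR\H^0(K_P,-)\Res^G_{K_PC^+}$ at the derived level and then check it on a single resolution, you must compute both sides on the \emph{same} resolution. Taking an injective resolution $X\to I^\bullet$ in $\Rep_k(G)$ gives $\RR\H^0(K,X)\cong \H^0(K,I^\bullet)$, since $\Res^G_K$ preserves injectives. But the issue is why $\Res^G_{K_PC^+}(I^\bullet)$ is term-wise $\H^0(K_P,-)$-acyclic; finite cohomological dimension of $K_P$ alone does not give you this, and $K_P$ is not open in $G$ so $\Res^G_{K_P}$ does not obviously preserve injectives. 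The paper handles this as its first step: it invokes \cite[Prop.~2.1.11]{EmertonII}, which says $\Res^G_{K_P}$ does preserve injectives (this is nontrivial and uses that $K_P = K\cap P$ for $K$ compact open and $P$ closed). From there $\Res^G_{K_PC^+}(J)$ is $\H^0(K_P,-)$-acyclic for every injective $J\in\Rep_k(G)$, and one is reduced to the underived map. Your sentence about acyclic resolutions sweeps this under the rug.

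\textbf{The underived step.} Once reduced to showing that $\ind_{C^+}^C \H^0(K,V)\to \ind_{C^+}^C \H^0(K_P,V)$ is bijective, the paper's argument is short and direct: injectivity is clear, and for surjectivity one uses smoothness of $V$ to choose $z\in C^+$ with $v\in\H^0(K_P,V)$ already fixed by $z^{-1}Kz\cap K = z^{-1}K_{\ol U}z\cdot K_P$; then the Hecke sum $z\star v=\sum_{u\in K_U/zK_Uz^{-1}}uzv$ lands in $\H^0(K,V)$ because the $u$'s represent $K/(K_{\ol P}\,zK_Uz^{-1})$, so $z^{-1}\otimes(z\star v)\mapsto 1\otimes v$. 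This is essentially your ``one-sided inverse up to the Hecke action'', but your formulation ``$\cores^{K_P}_K$'' is not meaningful here since $K_P$ has infinite index in $K$; the correct mechanism is the Hecke operator itself, not a corestriction. Your cofinality language is also misplaced: both colimits run over the same monoid $C^+$, so you are not comparing two indexing categories but rather showing a map of $C^+$-diagrams becomes an isomorphism after localisation, and for that the explicit preimage above is what you need.
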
 
\begin{proof} 
We first show that $\Res^G_{K_PC^+}$ sends injective objects to
$\H^0(K_P,\pholder)$-acyclic objects. As $K_P$ is open in $K_PC^+$, it follows that
$\Res^{K_PC^+}_{K_P}$ admits an exact left adjoint and hence preserves injective
objects (Lemma~\ref{lem:adjoint-injective}). As $\H^0(K_P,\pholder)$ has finite
cohomological dimension, the functor $\RR\H^0(K_P,\pholder)$ can be computed using
acyclic objects, see \cite[Cor.~5.3 $\gamma$]{Hartshorne.1966}. Hence, there is
an isomorphism
\[
\Res^{C^+}_1 \RR\H^0(K_P,\pholder) \cong \RR\H^0(K_P,\pholder)\Res^{K_PC^+}_{K_P}.
\]
Now, if $J$ in $\Rep_k(G)$ is injective, then so is $\Res^G_{K_P}(J)$ by
\cite[Prop.~2.1.11]{EmertonII} and therefore,
\[
\Res^{C^+}_1 \H^i\bigl(K_P, \Res^G_{K_PC^+}(J)\bigr) = \H^i\bigl(K_P,
\Res^G_{K_P}(J)\bigr) = 0,\qquad \text{for all $i>0$.}
\]
Hence, $\Res^G_{K_PC^+}(J)$ is $\H^0(K_P,\pholder)$-acyclic.\medskip

We are thus reduced to showing that the natural map
\begin{equation}\label{eq:ind+-invariant}
\ind_{C^+}^C \H^0(K,V) \to \ind_{C^+}^C \H^0(K_P,V)
\end{equation}
is bijective, for all $V\in \Rep_k(G)$. As $\ind_{C^+}^C$ is exact, the
injectivity is clear. The surjectivity follows from the same argument as in
\cite[Lem.~3.4.5]{EmertonII} which we recall here for the benefit of the reader:
Let $v\in \H^0(K_P,V)$ be arbitrary. As $V$ is smooth, there exists $z\in C^+$
such that $v$ is fixed by $z^{-1}K_{\ol U}zK_P = z^{-1}Kz\cap K$. Now, we have
\[
z\star v = \sum_{u\in K_U/zK_Uz^{-1}} uzv \in \H^0(K,V),
\]
because $zv$ is fixed by $K_{\ol P}zK_Uz^{-1}$ and $u$ runs through a
representing system of $K/K_{\ol P}zK_Uz^{-1}$, see~\eqref{eq:KU-quotient}.
Therefore, $z^{-1}\otimes z\star v$ maps to $1\otimes v \in \ind_{C^+}^C
\H^0(K_P,V)$ under \eqref{eq:ind+-invariant}. This finishes the proof.
\end{proof} 

\begin{lem}\label{lem:gadm-3} 
Let $V\in \Rep_k(C^+)$ be a finite-dimensional representation. The canonical map
\begin{align*}
V &\longtwoheadrightarrow \ind_{C^+}^C V,\\
v &\longmapsto 1\otimes v
\end{align*}
is surjective.
\end{lem}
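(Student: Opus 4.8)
The plan is to reduce the statement to an elementary fact about localizing a finite-dimensional $k$-vector space at a single endomorphism. First I would record that $C$ is the infinite cyclic group generated by $z_0$ and $C^+ = \{z_0^n\mid n\ge0\}$ the submonoid it generates, so that $k[C]$ is the (central) localization of $k[C^+]$ obtained by inverting $z_0$ — this is immediate here, or one may invoke the argument of Lemma~\ref{lem:M+-flat}.\ref{lem:M+-flat-b} verbatim. Consequently $\ind_{C^+}^C V = k[C]\otimes_{k[C^+]}V$ is the localization of $V$ at the single $k$-linear operator $T$ by which $z_0$ acts on $V$, every element of it can be written as $z_0^{-n}\otimes v$ with $n\ge0$ and $v\in V$, and the canonical map in question is $v\mapsto 1\otimes v$. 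Note that smoothness plays no role, as $C^+$ is discrete.

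Next, since $\dim_k V<\infty$, the descending chain $V\supseteq TV\supseteq T^2V\supseteq\dotsb$ stabilizes: there is $N\ge0$ with $T^NV = T^{N+1}V$, and then $T$ restricts to a surjective, hence bijective, endomorphism of $W\coloneqq T^NV$ (equivalently, one can appeal to the Fitting decomposition $V = V_{\mathrm{nil}}\oplus W$). Given $z_0^{-n}\otimes v$, I would rewrite it as $z_0^{-(n+N)}\otimes T^Nv$ with $T^Nv\in W$; choosing $w\in W$ with $T^{n+N}w = T^Nv$ (possible since $T^{n+N}$ is bijective on $W$) and moving $z_0^{n+N}\in k[C^+]$ across the tensor product, one gets
\[
z_0^{-n}\otimes v \;=\; z_0^{-(n+N)}\otimes T^{n+N}w \;=\; \bigl(z_0^{-(n+N)}z_0^{n+N}\bigr)\otimes w \;=\; 1\otimes w,
\]
so the given element lies in the image of $v'\mapsto 1\otimes v'$. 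This establishes surjectivity.

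There is no real obstacle here; the only point worth flagging is that finite-dimensionality is genuinely needed — it is used precisely to ensure the chain $\{T^iV\}_i$ stabilizes and that $T$ becomes invertible on the stable subspace $W$ — so that $\ind_{C^+}^C V$ collapses to (a quotient of) $V$ rather than growing, in contrast e.g. with the non-surjective localization map $k[t]\to k[t,t^{-1}]$.
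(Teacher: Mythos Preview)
Your proof is correct. The stabilization of the chain $V\supseteq TV\supseteq T^2V\supseteq\dotsb$ and the resulting invertibility of $T$ on $W=T^NV$ is exactly what makes the localization collapse, and your rewriting of $z_0^{-n}\otimes v$ as $1\otimes w$ is clean.

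By way of comparison, the paper does not spell out any argument here: it simply points to the proof of \cite[Lem.~3.2.1~(1)]{EmertonII} and moves on. Emerton's argument is essentially the same linear-algebra observation (a finite-dimensional module over $k[z_0]$ localized at $z_0$ is already generated by the image of the module itself), so you are not doing anything different in spirit---you are just writing out what the citation encodes. Your closing remark about finite-dimensionality being essential, with the example $k[t]\to k[t,t^{-1}]$, is a nice sanity check that the paper omits.
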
 
\begin{proof} 
This is immediate from the proof of \cite[Lem.~3.2.1 (1)]{EmertonII}.
\end{proof} 

\begin{prop}\label{prop:L(U,-)-gadm} 
The functor $\LL(U,\pholder)$ restricts to a functor $\D(G)^{\gadm} \to \D(M)^{\gadm}$.
\end{prop}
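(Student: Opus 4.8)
The plan is to reduce the global admissibility of $\LL(U,X)$, for $X\in\D(G)^{\gadm}$, to a finite-dimensionality statement about the cohomology of $X$ itself — which is then exactly what global admissibility of $X$ furnishes. Since $K_M = K\cap M$ is a compact, open, torsion-free subgroup of $M$, it suffices by \cite[Cor.~4.6]{Schneider-Sorensen} to check that $\H^i\bigl(K_M, \LL(U,X)\bigr)$ is finite-dimensional over $k$ for every $i\in\Z$.

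First I would invoke Lemma~\ref{lem:gadm-1}, which provides a $k$-linear isomorphism
\[
\H^i\bigl(K_M, \LL(U,X)\bigr) \cong \ind_{C^+}^C \H^{i+\dim K_U}(K_P, X),
\]
where $\H^j(K_P,X)$ carries its Hecke action of $C^+$. Thus the task becomes to show that $\ind_{C^+}^C \H^j(K_P,X)$ is finite-dimensional for every $j\in\Z$. As $\ind_{C^+}^C$ is exact, it commutes with passage to cohomology, so $\ind_{C^+}^C \H^j(K_P,X) \cong \H^j\bigl(\ind_{C^+}^C \RR\H^0(K_P,\Res^G_{K_PC^+}X)\bigr)$. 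By Lemma~\ref{lem:gadm-2} this is naturally isomorphic to $\H^j\bigl(\ind_{C^+}^C \RR\H^0(K,X)\bigr)$, and using exactness of $\ind_{C^+}^C$ once more, this equals $\ind_{C^+}^C \H^j(K,X)$.

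Now, because $X$ is globally admissible and $K\subseteq G$ is compact, open, and torsion-free, the $k$-vector space $\H^j(K,X)$ is finite-dimensional; it is a $C^+$-representation via the Hecke action, hence a finite-dimensional object of $\Rep_k(C^+)$. By Lemma~\ref{lem:gadm-3}, the canonical map $\H^j(K,X)\to \ind_{C^+}^C \H^j(K,X)$ is surjective, so $\ind_{C^+}^C \H^j(K,X)$ is finite-dimensional. Chaining the isomorphisms back, $\H^i\bigl(K_M,\LL(U,X)\bigr)$ is finite-dimensional for every $i$, i.e. $\LL(U,X)\in\D(M)^{\gadm}$, which is what we want.

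The only point requiring care — and the nearest thing to an obstacle — is the bookkeeping of the $C^+$-module structures: Lemmas~\ref{lem:gadm-1} and~\ref{lem:gadm-2} must be applied with the Hecke actions in play so that the displayed isomorphisms compose, and the twist by the character $\delta_P$ appearing in Lemma~\ref{lem:gadm-1} must be tracked (though, since $\delta_P$ is one-dimensional and we only need a dimension count, it is harmless and is already absorbed via the projection formula in the proof of that lemma). Beyond that, the statement is a formal consequence of the three preceding lemmas and requires no new input.
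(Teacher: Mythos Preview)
Your proof is correct and follows essentially the same approach as the paper: both arguments chain together Lemmas~\ref{lem:gadm-1}, \ref{lem:gadm-2}, and~\ref{lem:gadm-3} to identify $\H^i\bigl(K_M,\LL(U,X)\bigr)$ with a quotient of the finite-dimensional space $\H^{i+\dim K_U}(K,X)$. The paper presents the chain in the forward direction (starting from $\H^i(K,X)$ and surjecting onto the target), whereas you unwind it from the target backwards, but the content is identical.
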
 
\begin{proof} 
Let $X$ in $\D(G)^{\gadm}$ be a globally admissible complex and let $i\in\Z$ be
arbitrary. Since $\Res^{C^+}_1 \RR\H^0(K,\pholder) \cong
\RR\H^0(K,\pholder) \Res^G_K$, we know that $\H^i(K,X)$ is a finite-dimensional smooth
$C^+$-representation. Now, the sequence of maps
\begin{align*}
\H^i(K,X) &\longtwoheadrightarrow \ind_{C^+}^C \H^i(K,X) & &
\text{(Lemma~\ref{lem:gadm-3})}\\
&\xrightarrow{\cong} \ind_{C^+}^C \H^i(K_P,X) & &
\text{(Lemma~\ref{lem:gadm-2})}\\
&\xrightarrow{\cong} \H^{i-\dim K_U}\bigl(K_M, \LL(U,X)\bigr) & &
\text{(Lemma~\ref{lem:gadm-1})}
\end{align*}
shows that $\H^{i-\dim K_U}\bigl(K_M, \LL(U,X)\bigr)$ is finite-dimensional.
Therefore $\LL(U,X)$ is globally admissible.
\end{proof}

Finally, Propositions~\ref{prop:i_M^G-gadm} and~\ref{prop:L(U,-)-gadm} together
imply Theorem~\ref{thm:globally-admissible}.
\subsection{On the Satake homomorphism}\label{subsec:Satake} 
The functor $\LL(U,\pholder)\colon \D(G)\to \D(M)$ can be used to define a derived
version of the Satake homomorphism, which we will now describe.

We fix an open compact mod center subgroup $K\subseteq G$ satisfying the
Iwasawa decomposition $G = PK$ and $K_P = K_UK_M$. Writing $\LL(K_U,\pholder)\coloneqq
\LL_{K_U} \circ\Res^{K}_{K_P}\colon
\D(K)\to \D(K_M)$, there are natural isomorphisms 
\begin{align*}
\LL(U,\pholder) \ind_K^G &= \LL_U \Res^G_P \ind_K^G \xRightarrow{\cong} \LL_U
\ind_{K_P}^P\Res^{K}_{K_P} \xRightarrow{\cong} \ind_{K_M}^M \LL(K_U,\pholder),
\end{align*}
where the first isomorphism follows from the Iwasawa decomposition and the last
is the one from Proposition~\ref{prop:LU-ind}.

\begin{defn} 
Given $X\in \D(K)$, we call the $k$-algebra homomorphism
\[
\Satake_X\colon \End_{\D(G)}\bigl(\ind_K^GX\bigr) \to
\End_{\D(M)}\bigl(\ind_{K_M}^M \LL(K_U,X)\bigr)
\]
induced by $\LL(U,\pholder)$ the \emph{derived Satake homomorphism}.

If $V\in \Rep_k(K)$, composing $\Satake_V$ with the cohomology functor
$\H^{-n}$ yields a homomorphism
\[
\Satake_V^n\colon \End_{\Rep_k(G)}\bigl(\ind_K^G V\bigr) \to \End_{\Rep_k(M)}
\bigl(\ind_{K_M}^M \LL^{-n}(K_U,V)\bigr)
\]
of $k$-algebras, which we call the \emph{$n$-th Satake homomorphism}. 
(As usual, we write $\LL^{-n}(K_U,V)\coloneqq \H^{-n}(\LL(K_U,V[0]))$.)
Note that
$\Satake_V^n$ is zero whenever $n\notin \{0,1,\dotsc,\dim K_U\}$.
\end{defn} 

\begin{rmk}\label{rmk:Ronchetti} 
One could also define a variant of the derived Satake homomorphism via
\[
\RHom_{\Rep_k(G)}\bigl(\ind_K^GV, \ind_K^GV\bigr) \to
\RHom_{\Rep_k(M)}\bigl(\ind_{K_M}^M\LL(K_U,V), \ind_{K_M}^M
\LL(K_U,V)\bigr).
\]
It might be interesting to relate the induced map on the cohomology algebras to
Ronchetti's derived Satake homomorphism, \cite{Ronchetti.2019}.
\end{rmk} 

The aim of this section is to show that $\Satake^0_V$ and $\Satake^{\dim K_U}_V$
are the well-known variants of the Satake homomorphism that were introduced by
Herzig, \cite{Herzig.2011b}, and extensively studied by Herzig and
Henniart--Vign\'eras, \cite{Herzig.2011a,
Henniart-Vigneras.2015, Henniart-Vigneras.2012}.

\begin{thm}\label{thm:Satake} 
Let $V\in \Rep_k(K)$.
\begin{enumerate}[label=(\alph*)]
\item\label{thm:Satake-a} Denote $\eta\colon V\longtwoheadrightarrow
\LL^0(K_U,V)$ the natural projection map. The map $\Satake^0_V$ is explicitly
given by
\begin{align*}
\End_{\Rep_k(G)}(\ind_K^GV) &\to \End_{\Rep_k(M)}\bigl(\ind_{K_M}^M
\LL^0(K_U,V)\bigr),\\
\Satake^0_V(\Phi)([1,\eta(v)])(m) &= \eta\Bigl(\sum_{u\in K_U\backslash U}
\Phi([1,v])(um)\Bigr)
\end{align*}
for all $\Phi\in \End_{\Rep_k(G)}(\ind_K^GV)$, $v\in V$, and $m\in M$.

\item\label{thm:Satake-b} The map $\Satake^{\dim K_U}_V$ is explicitly given by
\begin{align*}
\End_{\Rep_k(G)}(\ind_K^GV) &\to \End_{\Rep_k(M)}\bigl(\ind_{K_M}^M
\H^0(K_U,V)\bigr),\\
\Satake_V^{\dim K_U}(\Phi)([1,v])(m) &= \sum_{u\in U/K_U} \Phi([1,v])(mu)
\end{align*}
for all $\Phi\in \End_{\Rep_k(G)}(\ind_K^GV)$, $v\in \H^0(K_U,V)$, and $m\in
M$.
\end{enumerate}
\end{thm}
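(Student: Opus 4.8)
The plan is to unwind the chain of natural isomorphisms
$\LL(U,-)\ind_K^G \xRightarrow{\cong} \ind_{K_M}^M\LL(K_U,-)$ explicitly at the level of the relevant cohomology functors, and to identify the resulting map on endomorphism algebras. For part~\ref{thm:Satake-a}, I would reduce everything to the underived picture: since $\LL^0(U,-)$ is the Jacquet functor (left adjoint of $\Inf^M_P$, by Proposition~\ref{prop:Ln<0}) and $\LL^0(K_U,-)$ is the usual $K_U$-coinvariants functor, the composite isomorphism $\LL^0(U,\ind_K^GV)\cong \ind_{K_M}^M\LL^0(K_U,V)$ on $0$-th cohomology can be traced through Propositions~\ref{prop:LU-positive}, \ref{prop:LU-ind}, and the Iwasawa decomposition $G=PK$. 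The point is that $\Satake_V^0(\Phi)$ is, by construction, the image of $\Phi$ under applying $\LL(U,-)$ and then $\H^0$; so one writes $\LL^0(U,\ind_K^GV)$ concretely as a space of $U$-coinvariants of functions on $G$, and the formula $\Satake^0_V(\Phi)([1,\eta(v)])(m)=\eta(\sum_{u\in K_U\backslash U}\Phi([1,v])(um))$ should fall out by choosing representatives; the sum over $K_U\backslash U$ is finite because the relevant function is $K_U$-invariant near the support and $P\backslash G$ is compact. This part is essentially a bookkeeping exercise in adjunctions, and the main subtlety is keeping track of which side of the coset space carries the action.

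For part~\ref{thm:Satake-b}, the top cohomological degree $\dim K_U$, I would use the explicit description from Corollary~\ref{cor:LiU}: $\LL^{\dim U}_U\cong \ind_{M^+}^M\,\H^{2\dim K_U}(K_U,\omega_P\otimes-)\circ\Res^P_{P^+}$, but more usefully $\LL^{-\dim K_U}(K_U,-)=\H^0(K_U,-)$ with its Hecke action, via Proposition~\ref{prop:L_KU+-derived} together with Poincaré duality $\H^{\dim K_U}(K_U,\omega_P\otimes V)\cong \H^0(K_U,V)$ (using $\omega_P=\delta_P[\dim K_U]$ and $\delta_P|_{K_P}$ trivial). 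So $\Satake_V^{\dim K_U}$ lands in $\End_{\Rep_k(M)}(\ind_{K_M}^M\H^0(K_U,V))$. The key input is that the $M^+$-action on $\H^0(K_U,-)$ appearing here is the Hecke action (Corollary~\ref{cor:M+-action} and Corollary~\ref{cor:LiU}), so one may realize $\ind_{K_M}^M\LL^{-\dim K_U}(K_U,V)$ as $\ind_{M^+}^M\H^0(K_U,\Res^G_{P^+}\ind_K^GV)$, and then compute the action of an endomorphism $\Phi$ by evaluating the induced map on functions. The formula $\Satake_V^{\dim K_U}(\Phi)([1,v])(m)=\sum_{u\in U/K_U}\Phi([1,v])(mu)$ should emerge: the sum over $U/K_U$ is the colimit structure of $\ind_{M^+}^M$ (inverting $z$) combined with the cohomological corestriction/transfer in the $\H^0(K_U,-)$ description, and it is finite on each element because $[1,v]$ has compact support and its image lands in a single $K$-coset eventually.

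Throughout I would exploit the compatibility of $\LL(U,-)$ with compact induction (Proposition~\ref{prop:LU-ind}) and with restriction to the positive submonoid (Proposition~\ref{prop:L(U,-)-positive}), so that all computations take place inside concrete function spaces $\ind_K^GV\subseteq\Ind_K^GV$ and their images, where the formulas can be verified by evaluating at points of $G$ resp.\ $M$. The main obstacle I anticipate is part~\ref{thm:Satake-b}: matching the abstract transfer map in $\H^{\dim K_U}(K_U,-)$ (which enters through the dualizing object $\omega_P$ and the identification of Proposition~\ref{prop:L_KU+-derived}) with the concrete summation $\sum_{u\in U/K_U}$ requires pinning down the Poincaré duality isomorphism compatibly with the Hecke action for \emph{all} positive $m$, not just on $K_M$; this is exactly the content that made Proposition~\ref{prop:L_KU+-derived} delicate, and here one must additionally check that the resulting pairing is normalized so that the corestriction maps on $\H^{\dim K_U}$ dualize to restriction maps on $\H^0$, which is where the comparison with Herzig's normalization in \cite{Herzig.2011b} ultimately rests. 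Once that identification is secured, comparing with the formulas in \cite[\S2.3]{Herzig.2011b} (after accounting for the fact that Herzig works with $\ol U$ or with a twist by a modulus-type character, which is absorbed into $\delta_P$ being trivial mod $p$) finishes the proof.
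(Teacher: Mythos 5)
Your plan for part~\ref{thm:Satake-a} is essentially the paper's proof: one makes the isomorphism $\LL^0(U,\ind_K^GV)\cong\ind_{K_M}^M\LL^0(K_U,V)$ explicit as $[g,v]\mapsto[\pr_M(g),\eta(v)]$ for $g\in P$ via the Iwasawa decomposition, and the formula drops out of the bijection $K_U\backslash U\times K_M\backslash M\cong K_P\backslash P$. No issues there.

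For part~\ref{thm:Satake-b} your overall route (pass to $\ind_{M^+}^M\H^0(K_U,\Res^G_{P^+}\ind_K^GV)$ with the Hecke $M^+$-action) is the paper's, but two points. First, the obstacle you single out is not where the difficulty lies: no Poincar\'e duality pairing has to be normalized. Since $\omega_P=\delta_P[\dim K_U]$, the identification $\LL^{-\dim K_U}(K_U,V)\cong\delta_P\otimes\H^0(K_U,V)$ is a pure shift, and Proposition~\ref{prop:L_KU+-derived} already identifies the $M^+$-action with the Hecke action $v\mapsto\sum_{u\in K_U/mK_Um^{-1}}umv$; at no point does one dualize corestriction on $\H^{\dim K_U}$ to restriction on $\H^0$ (that kind of argument appears in Corollary~\ref{cor:LU-trivial}, not here). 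Second, the step you describe as ``the formula should emerge'' is where essentially all of the content of part (b) sits, and your plan does not yet supply it. The paper isolates it as Proposition~\ref{prop:ind-H^0}: the $M$-equivariant isomorphism $\ind_{M^+}^M\H^0(K_U,\ind_{K_P}^PW)\cong\ind_{K_M}^M\H^0(K_U,W)$ sending $1\otimes[K_Ug,w]$ to $[\pr_M(g),\mu_g(w)]$, where $\mu_g\colon\H^0(K_U^g\cap K_P,W)\to\H^0(K_U,W)$ is a transfer map. Establishing this needs the Mackey decomposition of $\H^0(K_U,\ind_{K_P}^PW)$ over $K_U\backslash P/K_P$, the computation $m\star[K_Ug,w]=[K_Umg,\mu_{m,g}(w)]$ of the Hecke action on Mackey components (Lemma~\ref{lem:Hecke-action-ind}), and the fact that inverting $z$ moves every component into $P^+$, where $\mu_{m,g}=\id$ (Lemma~\ref{lem:mu_mg-rules}). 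Only then do the double coset identities $K_PgK_U=\bigsqcup_u K_MugK_U$ and $K_M\backslash P/K_U\cong K_M\backslash M\times U/K_U$ (Lemma~\ref{lem:Satake}) convert the transfer maps $\mu_{g^{-1}}$ into the asserted sum over $U/K_U$. Your outline is compatible with this, but as written it defers rather than performs the decisive computation.
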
 

\begin{explanation} 
Since $\delta_P\otimes\pholder$ is an equivalence of categories, there are
natural isomorphisms $\H^0(K_U,\pholder)\circ (\delta_P\otimes \pholder) \xRightarrow{\cong}
(\delta_P\otimes \pholder)\circ \H^0(K_U,\pholder)$ and $(\delta_P\otimes \pholder)\circ \ind_{K_M}^M
\xRightarrow{\cong} \ind_{K_M}^M\circ (\delta_P\otimes \pholder)$. By
Corollary~\ref{cor:RH(KU,-)=LKU} we have a natural isomorphism $\LL^{-\dim
K_U}(K_U,V) \cong \H^0(K_U,\delta_P\otimes V)\cong \delta_P\otimes \H^0(K_U,V)$.
Consequently, the target of $\Satake_V^{\dim K_U}$ may be identified with
\[
\End_{\Rep_k(M)}\bigl(\ind_{K_M}^M\LL^{-\dim K_U}(K_U,V)\bigr) \cong
\End_{\Rep_k(M)}\bigl(\ind_{K_M}^M \H^0(K_U,V)\bigr),
\]
which justifies the formulation in
Theorem~\ref{thm:Satake}.\ref{thm:Satake-b}.
\end{explanation} 

The proof of Theorem~\ref{thm:Satake}.\ref{thm:Satake-b} needs some
preparation and is deferred until the end of the section. Statement
\ref{thm:Satake-a} is well-known. Since I could not find a precise
reference to the literature, I will recall the easy proof for the convenience of
the reader.

\begin{proof}[Proof of Theorem~\ref{thm:Satake}.\ref{thm:Satake-a}] 
By definition the diagram
\[
\begin{tikzcd}
\ind_{K}^GV \ar[d,"\Phi"'] \ar[r,two heads,"\eta_U"] & \LL^0(U,\ind_{K}^GV)
\ar[d,"{\LL^0(U,\Phi)}"] \ar[r,"\cong"] & \ind_{K_M}^M\LL^0(K_U,V)
\ar[d,"\Satake_V^0(\Phi)"] \\
\ind_{K}^GV \ar[r, two heads, "\eta_U"'] & \LL^0(U,\ind_{K}^GV) \ar[r,"\cong"']
& \ind_{K_M}^M\LL^0(K_U,V)
\end{tikzcd}
\]
is commutative, where the horizontal compositions are given explicitly by
\[
[g,v] \longmapsto [\pr_{M}(g), \eta(v)],\qquad \text{for $g\in P$ and $v\in V$.}
\]
(Recall that $G = PK$.) Fix a complete representing
system $\mathcal{M}$ of $K_M\backslash M$ in $M$. Then the map
\begin{align*}
K_U\backslash U \times \mathcal{M} &\xrightarrow{\cong} K_P\backslash P,\\
(K_Uu, m) &\longmapsto K_Pum
\end{align*}
is bijective. Using $\Phi([1,v]) = \sum_{u\in K_U\backslash U}
\sum_{m\in \mathcal{M}} \bigl[(um)^{-1}, \Phi([1,v])(um)\bigr]$, we compute
\begin{align*}
\Satake^0_V(\Phi)\bigl([1,\eta(v)]\bigr) &= \sum_{u\in K_U\backslash U}
\sum_{m\in \mathcal{M}} \bigl[m^{-1}, \eta\bigl(\Phi([1,v])(um)\bigr)\bigr]\\
&= \sum_{m\in \mathcal{M}} \Bigl[m^{-1}, \eta\Bigl(\sum_{u\in K_U\backslash U}
\Phi([1,v])(um)\Bigr)\Bigr],
\end{align*}
which shows $\Satake^0_V(\Phi)\bigl([1,\eta(v)]\bigr)(m) = \eta\bigl(\sum_{u\in
K_U\backslash U} \Phi([1,v])(um)\bigr)$ for all $m\in M$.
\end{proof} 

\subsubsection{Finishing the proof of 
Theorem~\ref{thm:Satake}.\ref{thm:Satake-b}}
Fix $W\in \Rep_k(K_P)$ and recall from Corollary~\ref{cor:LU-explicit} the
isomorphism
\[
\LL^{-\dim K_U}_U(\ind_{K_P}^P W) \cong \ind_{M^+}^M \H^0\bigl(K_U, \delta_P
\otimes \ind_{K_P}^PW\bigr) \cong \delta_P \otimes \ind_{M^+}^M \H^0(K_U,
\ind_{K_P}^PW).
\]
As was already remarked, the character $\delta_P$ can be neglected for the
computation of the Satake homomorphism. The $M^+$-action on $\H^0(K_U,
\ind_{K_P}^PW)$ is given by
\[
m\star f \coloneqq \sum_{u\in K_U/mK_Um^{-1}} umf,
\]
for $m\in M^+$ and $f\in \ind_{K_P}^PW$. By the Mackey decomposition,
Lemma~\ref{lem:derived-Mackey}, we have a natural isomorphism
\begin{equation}\label{eq:Mackey-H^0-ind}
\H^0\bigl(K_U, \ind_{K_P}^PW\bigr) \cong \bigoplus_{g\in K_U\backslash P/K_P}
\H^0(K_U^g\cap K_P, W),
\end{equation}
where $K_U^g\coloneqq g^{-1}K_Ug$. We deduce that $\H^0(K_U,\ind_{K_P}^PW)$ is
generated by elements of the form
\[
[K_Ug,w] \coloneqq \sum_{u\in K_U/K_U\cap K_P^{g^{-1}} } [ug, w],\quad \text{for
$w\in \H^0(K_U^g\cap K_P,W)$,}
\]
where $g$ runs through a representing system for $K_U\backslash P/K_P$. In order
to describe the $M^+$-action on the $[K_Ug,w]$ we make the following definition:

\begin{defn} 
Given $m\in M^+$ and $g\in P$, we define the ``projection map''
\begin{align*}
\mu_{m,g} \colon \H^0(K_U^g\cap K_P, W) &\to \H^0(K_U^{mg}\cap K_P,W),\\
w &\longmapsto \sum_{u\in K_U^{mg}\cap K_P/K_U^g\cap K_P} uw.
\end{align*}
As $m$ is positive, we have $K_U^{mg}\cap K_P \supseteq K_U^g\cap K_P$, and
hence $\mu_{m,g}$ is well-defined.
\end{defn} 

\begin{lem}\label{lem:mu_mg-rules} 
Let $g\in P$ and $m,m_1,m_2\in M^+$.
\begin{enumerate}[label=(\alph*)]
\item\label{lem:mu_mg-rules-a} $\mu_{m,g} =
\id_{\H^0(K_U,W)}$ provided $g\in P^+ = K_UM^+$.
\item\label{lem:mu_mg-rules-b} $\mu_{m_1,g} = \mu_{m_2,g}$ provided
$m_1g,m_2g\in P^+$.
\item\label{lem:mu_mg-rules-c} $\mu_{1,g} = \id_{\H^0(K_U^g\cap K_P,W)}$.
\item\label{lem:mu_mg-rules-d} $\mu_{m_1m_2,g} = \mu_{m_1,m_2g} \circ
\mu_{m_2,g}$.
\item\label{lem:mu_mg-rules-e} $\mu_{m,ugh}(w) = h^{-1} \mu_{m,g}(hw)$ for each
$u\in K_U$, $h\in K_P$, $w\in \H^0(K_U^{ugh}, W)$.
\end{enumerate}
\end{lem}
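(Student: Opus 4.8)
The plan is to prove all five identities by direct computations with cosets and conjugates, using throughout that each $\mu_{m,g}$ is nothing but the trace (summation) map attached to an inclusion of subgroups of $K_P$ acting on $W$. The one structural fact that recurs is: for $m$ positive one has $K_U\subseteq m^{-1}K_Um$ (a rewriting of $mK_Um^{-1}\subseteq K_U$), whence, conjugating by $g$ and intersecting with $K_P$, the inclusion $K_U^g\cap K_P\subseteq K_U^{mg}\cap K_P$ that makes $\mu_{m,g}$ well defined. I would also record at the outset that $U\cap K_P=K_U$ (an element of $U\cap K_P$ lies in $U\cap K$), so that $K_U^g\cap K_P\subseteq U\cap K_P=K_U$ for every $g\in P$.

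For \ref{lem:mu_mg-rules-a}: if $g=u_0m_0\in K_UM^+$ then $K_U^g=m_0^{-1}K_Um_0\supseteq K_U$, so $K_U^g\cap K_P=K_U$; and since $mg=(mu_0m^{-1})(mm_0)$ again lies in $K_UM^+=P^+$, the same computation gives $K_U^{mg}\cap K_P=K_U$, so $\mu_{m,g}$ is a sum over the one-element set $K_U/K_U$, \ie the identity of $\H^0(K_U,W)$. For \ref{lem:mu_mg-rules-b}: the hypotheses $m_1g,m_2g\in P^+$ give $K_U^{m_1g}\cap K_P=K_U^{m_2g}\cap K_P=K_U$ by the same argument, while $K_U^g\cap K_P\subseteq K_U$ in any case, so both $\mu_{m_i,g}$ equal the single trace map $w\mapsto\sum_{u\in K_U/(K_U^g\cap K_P)}uw$. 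Item \ref{lem:mu_mg-rules-c} is the index-one case and immediate. For \ref{lem:mu_mg-rules-d}: positivity of $m_2$ and of $m_1m_2$ yields the tower $K_U^g\cap K_P\subseteq K_U^{m_2g}\cap K_P\subseteq K_U^{m_1m_2g}\cap K_P$, and the identity is the standard transitivity of trace maps along a tower of subgroups—picking coset representatives and using that $w$ is $(K_U^g\cap K_P)$-fixed, the products of a representing system of the middle group in the top with one of the bottom in the middle form a representing system of the bottom in the top.

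The only identity needing more than bookkeeping is \ref{lem:mu_mg-rules-e}, and that is where I expect the small subtlety to lie. For $u\in K_U$ and $h\in K_P$ I would compute $K_U^{ugh}=h^{-1}K_U^gh$ (using $u^{-1}K_Uu=K_U$) and, more delicately, $K_U^{mugh}=h^{-1}K_U^{mg}h$: here one uses that, $m$ being positive and $u$ lying in $K_U\subseteq m^{-1}K_Um$, conjugation by $u$ fixes the group $m^{-1}K_Um$, so $u^{-1}m^{-1}K_Umu=m^{-1}K_Um$. Intersecting with $K_P=h^{-1}K_Ph$ gives $K_U^{ugh}\cap K_P=h^{-1}(K_U^g\cap K_P)h$ and $K_U^{mugh}\cap K_P=h^{-1}(K_U^{mg}\cap K_P)h$; a representing system $u'$ of $(K_U^{mg}\cap K_P)/(K_U^g\cap K_P)$ then conjugates to a representing system $h^{-1}u'h$ on the other side, so $\mu_{m,ugh}(w)=\sum_{u'}(h^{-1}u'h)w=h^{-1}\sum_{u'}u'(hw)=h^{-1}\mu_{m,g}(hw)$, after noting that $hw$ is $(K_U^g\cap K_P)$-fixed because $w$ is $h^{-1}(K_U^g\cap K_P)h$-fixed. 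Beyond keeping straight that conjugation is on the right in $K_U^g=g^{-1}K_Ug$ and the observation $u^{-1}(m^{-1}K_Um)u=m^{-1}K_Um$, I do not anticipate any real obstacle.
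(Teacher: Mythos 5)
Your proof is correct and fills in the same argument the paper sketches: the key facts you use — that $K_U^h\cap K_P=K_U$ for $h\in P^+$, transitivity of trace maps along the tower of subgroups for (d), and that $u$ normalizes both $K_U$ and $m^{-1}K_Um$ for (e) — are exactly the observations the paper's (much terser) proof relies on. No discrepancies.
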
 
\begin{proof} 
Note that, if $h\in P^+$, then $K_U^h\cap K_P = K_U$. Now,
\ref{lem:mu_mg-rules-a} follows by applying this observation to $h = g$ and $h =
mg$, and \ref{lem:mu_mg-rules-b} follows by applying it to $h = m_1g$ and $h =
m_2g$.

\ref{lem:mu_mg-rules-c} is trivial and \ref{lem:mu_mg-rules-d} is
straightforward. 
For \ref{lem:mu_mg-rules-e} note that $K_U^u = K_U$ and $K_U^{mu} = K_U^m$,
because $u, mum^{-1} \in K_U$.
\end{proof} 

\begin{notation} 
Given $g\in P$, we put 
\[
\mu_g \coloneqq \mu_{m,g}\colon \H^0(K_U^g\cap K_P,W)\to
\H^0(K_U, W),
\]
where $m\in M^+$ is any element satisfying $mg\in P^+$. (Such elements exist by
Hypothesis~\ref{hyp:positive}.) By
Lemma~\ref{lem:mu_mg-rules}.\ref{lem:mu_mg-rules-b} the definition of $\mu_g$ is
independent of the chosen $m$.
\end{notation} 

\begin{lem}\label{lem:Hecke-action-ind} 
For all $g\in P$, $w\in \H^0(K_U^g\cap K_P,W)$ and $m\in M^+$ we have
\[
m\star [K_Ug,w] = [K_Umg, \mu_{m,g}(w)]\qquad \text{in $\H^0(K_U,
\ind_{K_P}^PW)$.}
\]
\end{lem}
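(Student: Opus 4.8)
The plan is to prove the identity by an explicit computation inside $\ind_{K_P}^P W$, unwinding the three ingredients involved — the element $[K_Ug,w]$, the Hecke action $m\star(-)$, and the projection map $\mu_{m,g}$ — and then matching coset sums.

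First I would fix notation for the subgroups that appear: for $h\in P$ set $N_h\coloneqq K_U\cap {}^hK_P$, a subgroup of $K_U$, and $J_h\coloneqq K_U^h\cap K_P = {}^{h^{-1}}N_h$, a subgroup of $K_P$ (the group over which the relevant $W$-invariants live). By definition $[K_Uh,w'] = \sum_{u\in K_U/N_h}[uh,w']$ for $w'\in\H^0(J_h,W)$. The positivity of $m$ enters exactly through the inclusions $mN_gm^{-1}\subseteq N_{mg}$ and hence $J_g\subseteq J_{mg}$: one has $mN_gm^{-1} = (mK_Um^{-1})\cap {}^{mg}K_P\subseteq K_U\cap {}^{mg}K_P = N_{mg}$ because $mK_Um^{-1}\subseteq K_U$, and conjugating by $(mg)^{-1}$ gives $J_g\subseteq J_{mg}$. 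These are the containments that make $\mu_{m,g}$ well-defined and all the re-indexings below legitimate.

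Next I would compute $m\cdot[K_Ug,w]$ in the $P$-representation: writing $mug = (mum^{-1})\,mg$ with $mum^{-1}\in K_U$, and using that conjugation by $m$ is a group isomorphism $K_U\xrightarrow{\cong}mK_Um^{-1}$ carrying $N_g$ onto $mN_gm^{-1}$, one obtains $m\cdot[K_Ug,w] = \sum_{\bar u\in mK_Um^{-1}/mN_gm^{-1}}[\bar u\,mg,\,w]$. Applying the Hecke sum $\sum_{v\in K_U/mK_Um^{-1}}v\cdot(-)$ and merging the two coset sums along the chain $mN_gm^{-1}\subseteq mK_Um^{-1}\subseteq K_U$ yields
\[
m\star[K_Ug,w] = \sum_{t\in K_U/mN_gm^{-1}}[t\,mg,\,w],
\]
which is well-defined since for $\kappa\in mN_gm^{-1}$ one has $(mg)^{-1}\kappa(mg)\in J_g$, and $w$ is $J_g$-fixed. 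Finally I would decompose $K_U/mN_gm^{-1}$ through the intermediate group $N_{mg}$, writing $t = sy$ with $s\in K_U/N_{mg}$ and $y\in N_{mg}/mN_gm^{-1}$; since $(mg)^{-1}y(mg)\in J_{mg}\subseteq K_P$, we may pull it into the $W$-slot, $[sy\,mg,w] = [s\,mg,\,((mg)^{-1}y(mg))w]$. As $y$ runs over $N_{mg}/mN_gm^{-1}$ the conjugate $(mg)^{-1}y(mg)$ runs over $J_{mg}/J_g$, so $\sum_y((mg)^{-1}y(mg))w = \sum_{x\in J_{mg}/J_g}xw = \mu_{m,g}(w)$, giving $m\star[K_Ug,w] = \sum_{s\in K_U/N_{mg}}[s\,mg,\mu_{m,g}(w)] = [K_Umg,\mu_{m,g}(w)]$.

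The main obstacle is purely the coset bookkeeping: keeping straight the subgroups $N_h$, $J_h$, $mN_gm^{-1}$ and their mutual conjugations, checking at each merge or split that the relevant subgroup chain holds, and verifying that $w\in\H^0(J_g,W)$ renders every partial sum independent of the chosen representatives (this last point uses only the already-noted containments $J_g\subseteq J_{mg}$ and $J_g = (mg)^{-1}(mN_gm^{-1})(mg)$). Positivity of $m$ is used in exactly one place — to guarantee $mN_gm^{-1}\subseteq N_{mg}$ — which is also precisely what is needed for $\mu_{m,g}$ to make sense.
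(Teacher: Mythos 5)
Your computation is correct and is essentially the paper's own argument: both proofs unwind the Hecke action on $[K_Ug,w]$, merge the two coset sums along a chain of subgroups made possible by the positivity inclusion, then re-split through the intermediate subgroup whose quotient indexes $\mu_{m,g}$, and pull the extra $K_P$-elements into the $W$-slot. The only difference is cosmetic — you keep the bookkeeping inside $K_U$ via the chain $mN_gm^{-1}\subseteq N_{mg}\subseteq K_U$, whereas the paper conjugates by $(mg)^{-1}$ and works with $K_U^g\cap K_P\subseteq K_U^{mg}\cap K_P\subseteq K_U^{mg}$, which are the same chains up to that global conjugation.
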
 
\begin{proof} 
We compute
\begin{align*}
m\star [K_Ug,w] &= \sum_{n\in K_U/K_U^{m^{-1}} } \sum_{u\in K_U/K_U \cap
K_P^{g^{-1}} } [nmug, w]\\
&= \sum_{n\in K_U^{mg}/K_U^g} \sum_{u\in K_U^g/K_U^g\cap K_P} [mgnu, w]\\
&= \sum_{n\in K_U^{mg}/K_U^g\cap K_P} [mgn,w]\\
&= \sum_{n\in K_U^{mg}/K_U^{mg}\cap K_P} \sum_{u\in K_U^{mg}\cap K_P/ K_U^g\cap
K_P} [mgnu,w]\\
&= \sum_{n\in K_U/K_U\cap K_P^{(mg)^{-1}} } \Bigl[nmg, \sum_{u\in K_U^{mg}\cap
K_P/K_U^g\cap K_P} uw\Bigr]\\
&= [K_Umg, \mu_{m,g}(w)].
\end{align*}
\end{proof} 

\begin{prop}\label{prop:ind-H^0} 
The natural map
\begin{align*}
\ind_{M^+}^M \H^0\bigl(K_U, \ind_{K_P}^PW\bigr) &\xrightarrow{\cong}
\ind_{K_M}^M \H^0(K_U,W),\\
1\otimes [K_Ug,w] &\longmapsto \bigl[\pr_M(g), \mu_{g}(w)\bigr]
\end{align*}
is an $M$-equivariant isomorphism.
\end{prop}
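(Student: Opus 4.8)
The plan is to realize the asserted map as the mate, under the adjunction $\ind_{M^+}^M\dashv\Res^M_{M^+}$ (Lemma~\ref{lem:FrobeniusII}, noting $M^+\subseteq M$ is open), of an $M^+$-equivariant map
\[
\Theta\colon\H^0\bigl(K_U,\ind_{K_P}^PW\bigr)\to\Res^M_{M^+}\ind_{K_M}^M\H^0(K_U,W),
\]
and then to exhibit an explicit two-sided inverse. First I would define $\Theta$: by the Mackey decomposition~\eqref{eq:Mackey-H^0-ind}, the source is the direct sum of the lines $\{[K_Ug,w]:w\in\H^0(K_U^g\cap K_P,W)\}$ over a fixed set of representatives $g$ for $K_U\backslash P/K_P$, so it suffices to set $\Theta([K_Ug,w])\coloneqq[\pr_M(g),\mu_g(w)]$ on each summand; this is a well-defined linear map because $\mu_g$ is well-defined by Lemma~\ref{lem:mu_mg-rules}.\ref{lem:mu_mg-rules-b}. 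One then checks the same formula computes $\Theta([K_Ug,w])$ for \emph{every} $g\in P$: writing $g=ug_0h$ with $u\in K_U$, $h\in K_P$ and $g_0$ a chosen representative, one has $[K_Ug,w]=[K_Ug_0,hw]$ in the source (immediate from the definition of the Mackey elements, using $K_P^{(g_0h)^{-1}}=K_P^{g_0^{-1}}$ and $K_U^{g_0h}\cap K_P=h^{-1}(K_U^{g_0}\cap K_P)h$), while $\pr_M(g)=\pr_M(g_0)\pr_M(h)$ with $\pr_M(h)\in K_M$ and $\mu_g(w)=h^{-1}\mu_{g_0}(hw)$ by Lemma~\ref{lem:mu_mg-rules}.\ref{lem:mu_mg-rules-e}, so that $[\pr_M(g),\mu_g(w)]=[\pr_M(g_0)\pr_M(h),\pr_M(h)^{-1}\mu_{g_0}(hw)]=[\pr_M(g_0),\mu_{g_0}(hw)]=\Theta([K_Ug_0,hw])$.

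Next I would verify that $\Theta$ is $M^+$-equivariant. By Lemma~\ref{lem:Hecke-action-ind}, $m\star[K_Ug,w]=[K_Umg,\mu_{m,g}(w)]$, hence $\Theta(m\star[K_Ug,w])=[\pr_M(mg),\mu_{mg}(\mu_{m,g}(w))]$. Picking $m'\in M^+$ with $m'mg\in P^+$, the definition of $\mu_{(-)}$ gives $\mu_{mg}=\mu_{m',mg}$ and $\mu_g=\mu_{m'm,g}$ (both because $m'mg=m'(mg)\in P^+$), and Lemma~\ref{lem:mu_mg-rules}.\ref{lem:mu_mg-rules-d} yields $\mu_{mg}\circ\mu_{m,g}=\mu_{m'm,g}=\mu_g$. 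Therefore $\Theta(m\star[K_Ug,w])=[m\pr_M(g),\mu_g(w)]=m\cdot\Theta([K_Ug,w])$. Passing to the mate produces an $M$-equivariant map $\wt\Theta\colon\ind_{M^+}^M\H^0(K_U,\ind_{K_P}^PW)\to\ind_{K_M}^M\H^0(K_U,W)$ with $\wt\Theta(1\otimes[K_Ug,w])=[\pr_M(g),\mu_g(w)]$, which is the map in the statement.

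The last and hardest step is to show $\wt\Theta$ is bijective, which I would do by producing the inverse $\Psi([x,v])\coloneqq z^{-n}\otimes[K_Uz^nx,v]$ for any $n\ge0$ with $z^nx\in M^+$ (such $n$ exist by Lemma~\ref{lem:M+-flat}.\ref{lem:M+-flat-a}; note $z^nx\in P^+$ forces $K_U^{z^nx}\cap K_P=K_U$, so $v\in\H^0(K_U^{z^nx}\cap K_P,W)$ as required). Well-definedness of $\Psi$ — independence of $n$ and of the chosen coset representative $x$, and additivity — will follow from Lemma~\ref{lem:mu_mg-rules}.\ref{lem:mu_mg-rules-a} (so $\mu_{z^\ell,h}=\id$ when $h\in P^+$), from the identity $[K_Uhk,v]=[K_Uh,kv]$ for $k\in K_P$, and from $[K_Uu'h,v]=[K_Uh,v]$ for $u'\in K_U$ (valid because $[K_Uh,v]$ is $K_U$-invariant, so $u'\cdot[K_Uh,v]=[K_Uh,v]$). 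That $\wt\Theta\circ\Psi=\id$ is immediate: $\wt\Theta(\Psi([x,v]))=z^{-n}\cdot[\pr_M(z^nx),\mu_{z^nx}(v)]=z^{-n}\cdot[z^nx,v]=[x,v]$, using $\mu_{z^nx}=\id$.

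For $\Psi\circ\wt\Theta=\id$, take a generator $1\otimes[K_Ug,w]$ and choose $n\gg0$ with $z^ng\in P^+$; factoring $z^ng=u'm'$ with $u'\in K_U$, $m'\in M^+$, the key observation is that $z^n\pr_M(g)=\pr_M(z^ng)=\pr_M(u'm')=m'$ since $\pr_M(u')=1$, together with $\mu_g=\mu_{z^n,g}$ because $z^ng\in P^+$. Then
\begin{align*}
\Psi\bigl(\wt\Theta(1\otimes[K_Ug,w])\bigr)
&=z^{-n}\otimes[K_Uz^n\pr_M(g),\mu_g(w)]
=z^{-n}\otimes[K_Uz^ng,\mu_{z^n,g}(w)]\\
&=z^{-n}\otimes\bigl(z^n\star[K_Ug,w]\bigr)=1\otimes[K_Ug,w].
\end{align*}
Since $\wt\Theta$ and $\Psi$ are additive, they are mutually inverse, and as $\wt\Theta$ is $M$-equivariant the proposition follows. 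The main obstacle is precisely this last computation: it rests on the behaviour of the Mackey elements $[K_Ug,w]$ under left translation by $K_U$ and on identifying $z^n\pr_M(g)$ with the $M^+$-part $m'$ of $z^ng\in P^+=K_UM^+$; the remaining verifications are mechanical applications of Lemmas~\ref{lem:mu_mg-rules} and~\ref{lem:Hecke-action-ind}.
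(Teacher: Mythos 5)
Your proof is correct, and it runs on the same fuel as the paper's — the Mackey decomposition \eqref{eq:Mackey-H^0-ind}, Lemmas~\ref{lem:mu_mg-rules} and~\ref{lem:Hecke-action-ind}, and the existence of $m\in M^+$ with $mg\in P^+$ — but the route is organized differently. The paper factors the map through the subrepresentation $\ind_{K_P}^{P^+}W\subseteq\ind_{K_P}^PW$: it first shows this inclusion becomes an isomorphism after applying $\ind_{M^+}^M\H^0(K_U,-)$, with explicit inverse $1\otimes[K_Ug,w]\mapsto m^{-1}\otimes[K_Umg,\mu_{m,g}(w)]$, and then identifies $\H^0(K_U,\ind_{K_P}^{P^+}W)$ with $\ind_{K_M}^{M^+}\H^0(K_U,W)$ $M^+$-equivariantly using the bijection $K_U\backslash P^+/K_P\cong M^+/K_M$. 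You instead build the map on the full induction as the mate of an $M^+$-equivariant $\Theta$ and invert it by hand; your $\Psi$ is precisely the composite of the paper's two inverses, so the content is identical. The paper's packaging isolates the ``inverting $z$'' step into one clean localization statement, while yours concentrates it in the computation $\Psi\circ\wt\Theta=\id$; in exchange you get a genuinely two-sided explicit inverse, which the paper never writes down. One small point to tighten: you check $\Psi\circ\wt\Theta=\id$ only on elements $1\otimes[K_Ug,w]$, and additivity alone does not reach a general element $z^{-n}\otimes a$ of $k[M]\otimes_{k[M^+]}\H^0(K_U,\ind_{K_P}^PW)$. This is easily repaired: from $\wt\Theta\circ\Psi=\id$ you already have surjectivity, and if $\wt\Theta(z^{-n}\otimes a)=0$ then $M$-equivariance of $\wt\Theta$ and invertibility of $z$ on the target give $\wt\Theta(1\otimes a)=0$, whence $1\otimes a=\Psi\bigl(\wt\Theta(1\otimes a)\bigr)=0$ and so $z^{-n}\otimes a=0$.
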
 
\begin{proof} 
We first observe that the inclusion $\ind_{K_P}^{P^+}W \subseteq \ind_{K_P}^P W$
induces an isomorphism
\[
\ind_{M^+}^M \H^0(K_U,\ind_{K_P}^{P^+}W) \xrightarrow{\cong} \ind_{M^+}^M
\H^0(K_U,\ind_{K_P}^PW).
\]
Indeed, injectivity is clear, since $\H^0(K_U,\pholder)$ and $\ind_{M^+}^M$ are left
exact. The surjectivity follows from the fact that for every $g\in P$ there
exists $m\in M^+$ such that $mg\in P^+$. Concretely, the inverse map is given by
\begin{align}\label{eq:ind-H^0-1} 
\ind_{M^+}^M \H^0(K_U, \ind_{K_P}^PW) &\to \ind_{M^+}^M \H^0(K_U,
\ind_{K_P}^{P^+}W),\\
1\otimes [K_Ug,w] &\longmapsto m^{-1}\otimes [K_Umg, \mu_{m,g}(w)],
\end{align} 
where $m\in M^+$ is chosen such that $mg\in P^+$. (Note that the same $m$ works
if $g$ is replaced by any other element in $K_Ug$.)

We now prove that the map
\begin{align}\label{eq:ind-H^0-2}
\H^0(K_U, \ind_{K_P}^{P^+}W) &\xrightarrow{\cong} \ind_{K_M}^{M^+} H^0(K_U,
W),\\
[K_Um,w] &\longmapsto [m,w],
\end{align}
where $m\in M^+$, is an $M^+$-equivariant isomorphism. The $M^+$-equivariance
of the map follows from Lemma~\ref{lem:Hecke-action-ind} together with
$\mu_{m',m} = \id$,
for all $m',m\in M^+$ (Lemma~\ref{lem:mu_mg-rules}.\ref{lem:mu_mg-rules-a}). As
for \eqref{eq:Mackey-H^0-ind} the
Mackey decomposition provides a canonical isomorphism
\[
\H^0(K_U, \ind_{K_P}^{P^+}W) \cong \bigoplus_{m\in K_U\backslash
P^+/K_P} \H^0(K_U,W).
\]
Since the projection map $K_U\backslash P^+/K_P \to M^+/K_M$ is
bijective, we deduce that \eqref{eq:ind-H^0-2} is an isomorphism.

The map in the proposition is now given by \eqref{eq:ind-H^0-1} composed with
$\ind_{M^+}^M\eqref{eq:ind-H^0-2}$ and hence is an isomorphism.
\end{proof} 

\begin{lem}\label{lem:Satake} 
The following assertions hold:
\begin{enumerate}[label=(\alph*)]
\item\label{lem:Satake-a} 
\[
K_P gK_U = \bigsqcup_{u\in K_U/ K_U\cap gK_Ug^{-1}} K_MugK_U,\qquad
\text{for all $g\in P$.}
\]
\item\label{lem:Satake-b}
\[
K_M\backslash P/K_U \cong K_M\backslash M \times U/K_U.
\]
\end{enumerate}
\end{lem}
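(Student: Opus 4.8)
The plan is to prove both assertions by elementary manipulations with the semidirect product $P = U \rtimes M$, using only the following facts: $K_P = K_U K_M$ with $K_U = K_P \cap U$ normal in $K_P$; $K_M = f(K_P)$ is contained in $K_P$ (since $1 \in K_U$ forces $K_M \subseteq K_U K_M = K_P$); and the canonical projection $f \colon P \to M$ restricts to the identity on $M$, so that $\ker f = U$.

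For \ref{lem:Satake-a} I would first note that $K_M \subseteq K_P$ normalizes $K_U$, whence $K_P = K_U K_M = K_M K_U$ and $K_P g K_U = K_M(K_U g K_U)$. The standard double-coset decomposition gives
\[
K_U g K_U = \bigsqcup_{u \in K_U/(K_U \cap gK_Ug^{-1})} u g K_U ,
\]
and left multiplication by $K_M$ yields $K_P g K_U = \bigcup_u K_M u g K_U$. It then remains to prove that the sets $K_M u g K_U$, for $u$ ranging over $K_U/(K_U \cap gK_Ug^{-1})$, are pairwise disjoint. If $K_M u_1 g K_U$ and $K_M u_2 g K_U$ meet, write $u_2 g k_2 = m u_1 g k_1$ with $m \in K_M$ and $k_1, k_2 \in K_U$; applying $f$ and using $\ker f = U$ together with $f\big|_M = \id_M$ gives $f(g) = m f(g)$, hence $m = 1$. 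Then $u_1^{-1} u_2 = g k_1 k_2^{-1} g^{-1}$ lies in $K_U \cap g K_U g^{-1}$, so $u_1$ and $u_2$ index the same coset. This yields the claimed disjoint union.

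For \ref{lem:Satake-b} I would exploit that $U \trianglelefteq P$ and $U \cap M = \{1\}$, so that the multiplication map $M \times U \to P$, $(m,u) \mapsto mu$, is a bijection. Under this identification the left $K_M$-action and the right $K_U$-action move only the $M$- and $U$-coordinates respectively, i.e. $k_M (mu) k_U = (k_M m)(u k_U)$ with $k_M m \in M$ and $u k_U \in U$. Hence $K_M m u K_U \mapsto (K_M m, u K_U)$ and $(K_M m, u K_U) \mapsto K_M m u K_U$ are well-defined mutually inverse maps between $K_M \backslash P / K_U$ and $(K_M \backslash M) \times (U/K_U)$, the well-definedness of each being immediate from the uniqueness of the factorization $p = mu$.

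I do not expect a genuine obstacle: once the semidirect-product coordinates are fixed, both parts are routine group theory. The only step requiring a moment's attention is the disjointness claim in \ref{lem:Satake-a}, where one must use the homomorphism $f$ to rule out overlaps arising from the $K_M$-factor; everything else is bookkeeping, and part \ref{lem:Satake-b} is essentially a restatement of $P = MU$ with $U$ normal and $M \cap U$ trivial.
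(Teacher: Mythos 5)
Your proof is correct and simply supplies the routine details behind the paper's one-line proof (``\ref{lem:Satake-a} is easy and \ref{lem:Satake-b} is trivial''): the orbit decomposition of $K_UgK_U$ plus the projection $f$ to kill the $K_M$-overlap for \ref{lem:Satake-a}, and the unique factorization $P=MU$ for \ref{lem:Satake-b} are exactly the intended arguments. No gaps.
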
 
\begin{proof} 
\ref{lem:Satake-a} is easy and \ref{lem:Satake-b} is trivial.
\end{proof} 

Finally, we finish the proof of Theorem~\ref{thm:Satake}.

\begin{proof}[Proof of Theorem~\ref{thm:Satake}.\ref{thm:Satake-b}] 
Let $V\in \Rep_k(K)$ and $\Phi\in \End_{\Rep_k(G)}(\ind_K^GV)$. Recall that we
want to show
\[
\Satake^{\dim K_U}_V(\Phi)([1,v])(m) = \sum_{u\in U/K_U} \Phi([1,v])(mu)
\]
for all $v\in \H^0(K_U,V)$ and $m\in M$. Recall also that the Iwasawa
decomposition $G = PK$ allows us to identify $\ind_K^GV$ with $\ind_{K_P}^PV$.

Note that $K_PgK_U = \bigsqcup_{u\in K_U\cap K_P^g\backslash K_U} K_Pgu$.
Therefore, we have
\begin{align*}
\Phi([1,v]) &= \sum_{g\in K_P\backslash P} [g^{-1}, \Phi([1,v])(g)] \\
&= \sum_{g\in K_P\backslash P/K_U} \sum_{u\in K_U\cap K_P^g\backslash K_U}
\bigl[u^{-1} g^{-1}, \Phi([1,v])(gu)\bigr]\\
&= \sum_{g\in K_P\backslash P/K_U} \sum_{u\in K_U\cap K_P^g\backslash K_U}
\bigl[u^{-1} g^{-1}, \Phi([1,v])(g)\bigr],
\end{align*}
where the last equality uses $v\in \H^0(K_U,V)$ and the $K_U$-equivariance of
$\Phi$. 
Viewing $\Satake^{\dim K_U}_V(\Phi)$ as an endomorphism on $\ind_{M^+}^M
\H^0(K_U,\ind_{K_P}^PV)$ we deduce
\[
\Satake^{\dim K_U}_V(\Phi)\bigl(1\otimes [K_U,v]\bigr) = \sum_{g\in
K_P\backslash P/K_U} 1\otimes [K_Ug^{-1}, \Phi([1,v])(g)].
\]
Now, applying the isomorphism in Proposition~\ref{prop:ind-H^0}, we view
$\Satake^{\dim K_U}_V(\Phi)$ as an endomorphism of $\ind_{K_M}^M \H^0(K_U,V)$
and compute
\begin{align*}
\Satake^{\dim K_U}_V(\Phi)([1,v]) &= \sum_{g\in K_P\backslash P/K_U}
\bigl[\pr_M(g^{-1}), \mu_{g^{-1}}(\Phi([1,v])(g))\bigr]\\
&= \sum_{\substack{g\in K_P\backslash P/K_U,\\ u\in K_U/K_U\cap gK_Ug^{-1}} }
\bigl[\pr_M(g^{-1}), \Phi([1,v])(ug)\bigr] & & \text{(def. of $\mu_{g^{-1}}$)}\\
&= \sum_{g\in K_M\backslash P/K_U} \bigl[\pr_M(g^{-1}), \Phi([1,v])(g)\bigr] & &
\text{(Lem.~\ref{lem:Satake}.\ref{lem:Satake-a})}\\
&= \sum_{m\in K_M\backslash M} \Bigl[m^{-1}, \sum_{u\in U/K_U} 
\Phi([1,v])(mu)\Bigr] & & \text{(Lem.~\ref{lem:Satake}.\ref{lem:Satake-b})},
\end{align*}
where for the second equality we have used $K_P\cap gK_Ug^{-1} = K_U \cap
gK_Ug^{-1}$ (recall that $U$ is normalized by $P$). This implies the claim.
\end{proof}

\section{The example of \texorpdfstring{$\GL_2(\Q_p)$}{GL(2,Qp)}} 
\label{sec:example}
In this section we put $G \coloneqq \GL_2(\Q_p)$ and let $B$ be the Borel
subgroup of upper triangular matrices with unipotent radical $U$ and $T$
the Levi subgroup of diagonal matrices. Further, we fix the maximal compact open
subgroup $K \coloneqq \GL_2(\Z_p)$ and denote by $Z$ the center of
$\GL_2(\Q_p)$. We fix an algebraically closed coefficient field $k$ of
characteristic $p$. 

We will compute $\LL^\bullet(U,V)$ for all
irreducible smooth representations $V$ of $G$. 

\subsection{The weights of \texorpdfstring{$K$}{K}} 
The irreducible smooth representations of $K$ have been classified by
Barthel--Livn\'e in \cite[Prop.~4]{Barthel-Livne.1994}. We recall the
classification and set up some notation.

\begin{defn} 
Given $0\le r\le p-1$ and $0\le e <p-1$, we consider
\begin{equation}\label{eq:weights}
\Sym^r(k^2) \otimes {\det}^e.
\end{equation}
Every irreducible smooth representation of $K$ is isomorphic to some
representation of the form \eqref{eq:weights}.
We identify $\Sym^r(k^2)$ with the subvector space of homogeneous polynomials of
degree $r$ in $k[x,y]$. Under this identification, $K$ acts through its quotient
$\GL_2(\F_p)$ via
\[
\begin{pmatrix}
a & b\\c & d
\end{pmatrix} \cdot f(x,y) \coloneqq f(ax+cy, bx+dy),\quad \text{for all
$\begin{pmatrix}a&b\\c&d\end{pmatrix} \in \GL_2(\F_p)$.}
\]
\end{defn} 

Thus, a $k$-basis of $\Sym^r(k^2)$ is given by $x^r, x^{r-1}y, \dotsc,y^r$, and
we have
\begin{align*}
\H^0\bigl(K_U, \Sym^r(k^2)\bigr) &= \spann_k\{x^r\}, \\
\LL^0\bigl(K_U,\Sym^r(k^2)\bigr) &= \Sym^r(k^2)/\spann_k\{x^r,x^{r-1}y,\dotsc,
xy^{r-1}\},
\end{align*}
where $\spann_k\{\ldots\}$ denotes the $k$-linear span.
\subsection{Irreducible smooth representations of 
\texorpdfstring{$\GL_2(\Q_p)$}{GL(2,Qp)}}
We view $\Sym^r(k^2)$ as a smooth $ZK$-representation by letting $p$ act
trivially.
Denote by $X$ (resp.\ $Y$) the $k$-linear projection of $\Sym^r(k^2)$ onto
$\spann_k\{x^r\}$ (resp.\ $\spann_k\{y^r\}$) with respect to the basis
$x^r,x^{r-1}y,\dotsc,y^r$. We define the endomorphism $\Phi \in
\End_{\Rep_k(G)}\bigl(\ind_{ZK}^G \Sym^r(k^2)\bigr)$ by
\[
\Phi([1,v]) \coloneqq \Bigl[\begin{pmatrix}p&0\\0&1\end{pmatrix}^{-1},
Y(v)\Bigr] + \sum_{i=0}^{p-1} \Bigl[\begin{pmatrix}1 &i\\0 &
p\end{pmatrix}^{-1}, X(\begin{pmatrix}1&i\\0&1\end{pmatrix}\cdot v)\Bigr].
\]
Then \cite[Prop.~8]{Barthel-Livne.1994} shows that
$\End_{\Rep_k(G)}\bigl(\ind_{ZK}^G\Sym^r(k^2)\bigr) \cong k[\Phi]$ as
$k$-algebras.

\begin{defn} 
For $0\le r\le p-1$ and $\lambda\in k$ we define the smooth $G$-representation
$V(r,\lambda)$ by the short exact sequence
\begin{equation}\label{eq:irreducible}
0\to \ind_{ZK}^G\Sym^r(k^2) \xrightarrow{\Phi-\lambda} \ind_{ZK}^G\Sym^r(k^2)
\to V(r,\lambda) \to 0.
\end{equation}
Note: $\Phi-\lambda$ is injective by \cite[Thm.~19]{Barthel-Livne.1994}. Given a
smooth character $\chi\colon \Q_p^\times \to k^\times$, we put
\[
V(r,\lambda,\chi) \coloneqq (\chi\circ\det)\otimes V(r,\lambda).
\]
\end{defn} 

\begin{notation}\label{nota:characters} 
For each $\lambda\in k$ we consider the characters 
\begin{alignat}{4}
\mu_\lambda\colon \Q_p^\times &\to k^\times,&\hspace{4em}  x&\longmapsto
\lambda^{\val_p(x)}\quad \text{and}\\
\omega\colon \Q_p^\times\cong p^{\Z}\times\Z_p^\times &\to \F_p^\times
\subseteq k^\times, &
(p^n,x) &\longmapsto x\bmod p.
\intertext{If $\chi_1,\chi_2\colon \Q_p^\times \to k^\times$ are smooth
characters, we write}
\chi_1\boxtimes \chi_2 \colon T &\to k^\times, & 
\begin{pmatrix}a&0\\0&d\end{pmatrix} &\longmapsto \chi_1(a)\chi_2(d).
\end{alignat}
\end{notation} 

The representations
$V(r,\lambda,\chi)$ are now described as follows:

\begin{thm}[{\cite[Thm.~30]{Barthel-Livne.1994}, 
\cite[Thm.~1.1]{Breuil.2003}}]\label{thm:irreducible}
Let $(r,\lambda)\in \{0,1,\dotsc,p-1\}\times k$ and let $\chi\colon
\Q_p^\times\to k^\times$ be a smooth character.
\begin{enumerate}[label=(\alph*)]
\item\label{thm:irreducible-a} If $(r,\lambda)\notin \{(0, \pm 1), (p-1,\pm
1)\}$, then $V(r,\lambda,\chi)$ is irreducible. 
\item\label{thm:irreducible-b} There are non-split short exact sequences
\begin{gather}
0\to (\chi\mu_{\pm 1}\circ\det) \otimes \Sp \to V(0,\pm1, \chi) \to
\chi\mu_{\pm1}\circ\det \to 0\\
\intertext{and}
0\to \chi\mu_{\pm1}\circ\det \to V(p-1,\pm1,\chi) \to (\chi\mu_{\pm1}\circ\det)
\otimes \Sp \to 0,
\end{gather}
where $\Sp = i_T^G(\one_T)/\one_G$ is the Steinberg representation, and where
$\one_T$ and $\one_G$ are the trivial representations of $T$ and $G$,
respectively.

\item\label{thm:irreducible-c} If $(r,\lambda)\neq (0,\pm1)$ and $\lambda\neq
0$, there is a $G$-equivariant isomorphism
\[
V(r,\lambda,\chi) \xrightarrow{\cong} i_T^G(\chi\mu_{\lambda^{-1}}\boxtimes
\chi\mu_\lambda\omega^r).
\]

\item\label{thm:irreducible-d} $V(r,\lambda,\chi)$ is supersingular if and only
if $\lambda=0$.
\end{enumerate}
\end{thm}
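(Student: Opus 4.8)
The statement assembles the classification of Barthel--Livn\'e and Breuil, and I would organize a proof around the endomorphism $\Phi$, the compact induction $\ind_{ZK}^G\Sym^r(k^2)$, and the Jacquet functor $\LL^0(U,-)$ together with the explicit Satake formulas of \S\ref{subsec:Satake}. The first reduction is to $\chi=\one$: twisting by $\chi\circ\det$ is an autoequivalence of $\Rep_k(G)$ that commutes with $\ind_{ZK}^G$, with $\Phi$, with $\LL(U,-)$ (Proposition~\ref{prop:LU-character}), and with $i_T^G$ up to twisting the inducing character; hence all four assertions for $V(r,\lambda,\chi)$ follow from those for $V(r,\lambda)$.

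For part \ref{thm:irreducible-c} the plan is to build a nonzero $G$-map $V(r,\lambda)\to i_T^G(\mu_{\lambda^{-1}}\boxtimes\mu_\lambda\omega^r)$ and show it is an isomorphism. By Proposition~\ref{prop:Ln<0} the functor $\LL^0(U,-)$ is the left adjoint of $i_T^G$ on $\Rep_k$, so such a map is the same datum as a $T$-map $\LL^0(U,V(r,\lambda))\to\mu_{\lambda^{-1}}\boxtimes\mu_\lambda\omega^r$. Applying the right-exact functor $\LL^0(U,-)$ to the defining sequence \eqref{eq:irreducible} and using Proposition~\ref{prop:L(U,-)-positive} with $\LL^0(K_U,\Sym^r(k^2))=\langle y^r\rangle_k$, one identifies $\LL^0(U,\ind_{ZK}^G\Sym^r(k^2))$ with $\ind_{ZK_M}^T\langle y^r\rangle_k$ and computes $\LL^0(U,\Phi)$ by the explicit formula of Theorem~\ref{thm:Satake}\ref{thm:Satake-a}: it acts through the invertible Hecke operator attached to $\diag(p,1)$, so $\LL^0(U,V(r,\lambda))\cong\mu_{\lambda^{-1}}\boxtimes\mu_\lambda\omega^r$ whenever $\lambda\neq0$ (the unramified twists recording the eigenvalue $\lambda$, the factor $\omega^r$ coming from the $K_M$-action on $y^r$). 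This produces the map, which is nonzero; when the target is irreducible --- which is exactly the case $(r,\lambda)\notin\{(0,\pm1),(p-1,\pm1)\}$, since regularity of $\mu_{\lambda^{-1}}\boxtimes\mu_\lambda\omega^r$ fails precisely at those pairs --- one verifies irreducibility of $i_T^G$ of a regular character by a pro-$p$-Iwahori invariants computation, and then both source and target being irreducible forces an isomorphism. This also settles part \ref{thm:irreducible-a} for $\lambda\neq0$.

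For the supersingular part \ref{thm:irreducible-d}, the implication $\lambda\neq0\Rightarrow$ not supersingular is immediate from \ref{thm:irreducible-b} and \ref{thm:irreducible-c}. Conversely, for $\lambda=0$ the same Satake computation shows $\LL^0(U,\Phi)$ becomes invertible after $\ind_{M^+}^M$, hence surjective, whence $\LL^0(U,V(r,0))=0$ by right-exactness; together with the self-duality up to twist of $V(r,0)$ this shows $V(r,0)$ is neither a sub nor a quotient of any principal series, and as every irreducible subquotient of a $\GL_2(\Q_p)$ principal series occurs as a sub or a quotient, $V(r,0)$ is supersingular. Finally, for part \ref{thm:irreducible-b} one analyzes $i_T^G(\mu_{\pm1}\boxtimes\mu_{\pm1}\omega^r)$ for $r\in\{0,p-1\}$: it has length two with Jordan--H\"older factors a twist of $\one_G$ and a twist of $\Sp$, and the direction of the non-split extension is pinned down by the $K$-socle --- i.e.\ by recording which constituent contains the weight $\Sym^r(k^2)$ --- which flips between $r=0$ and $r=p-1$; comparing with \eqref{eq:irreducible} then identifies $V(0,\pm1)$ and $V(p-1,\pm1)$ with the two complementary extensions.

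The main obstacle is the irreducibility of the supersingular $V(r,0)$ in part \ref{thm:irreducible-a}: unlike everything else, this does not reduce to bookkeeping with $\LL^0(U,-)$, and I would follow Barthel--Livn\'e, passing to the pro-$p$-Iwahori--Hecke module $V(r,0)^{I_1}$, computing it from \eqref{eq:irreducible}, showing it is a simple supersingular Hecke module, and invoking that on supersingular blocks the functor $V\mapsto V^{I_1}$ reflects irreducibility. Everywhere else the Satake formula of Theorem~\ref{thm:Satake} does the work.
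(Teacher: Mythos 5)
The paper offers no proof of this statement: it is imported as a black box from \cite{Barthel-Livne.1994} and \cite{Breuil.2003} (hence the citations in the theorem header), and the paper uses it in the opposite direction to yours --- Proposition~\ref{prop:L(U,V(r,lambda))} first computes $\LL^{-i}\bigl(U,V(r,\lambda,\chi)\bigr)$ from \eqref{eq:irreducible} and the Satake formulas alone, and only then combines that computation with the quoted classification to produce Table~\ref{tab:irreducibles}. Within your sketch, the reduction to $\chi=\one$, the identification of $\LL^0\bigl(U,\ind_{ZK}^G\Sym^r(k^2)\bigr)$ with $\ind_{ZK_T}^T(\one\boxtimes\omega^r)$, the computation of $\Satake^0_{\Sym^r(k^2)}(\Phi)$, and the vanishing of $\LL^0\bigl(U,V(r,0)\bigr)$ are all sound and agree with the paper's own computations.

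There are, however, two genuine gaps. First, your treatment of \ref{thm:irreducible-a} and \ref{thm:irreducible-c} is circular: you establish the isomorphism in \ref{thm:irreducible-c} by arguing that a nonzero map between two irreducible objects is an isomorphism, and then assert that this ``settles part \ref{thm:irreducible-a}''. Irreducibility of the source is precisely assertion \ref{thm:irreducible-a}; it cannot be both used and deduced. The nonzero adjoint map is automatically surjective onto the irreducible target, but injectivity needs an independent argument --- in \cite{Barthel-Livne.1994} this is the explicit verification that the Frobenius-reciprocity map $\ind_{ZK}^G\Sym^r(k^2)\to i_T^G(\mu_{\lambda^{-1}}\boxtimes\mu_\lambda\omega^r)$ is surjective with kernel exactly $(\Phi-\lambda)\ind_{ZK}^G\Sym^r(k^2)$, a computation that knowledge of $\LL^0(U,-)$ of both sides cannot replace, since $\LL^0(U,-)$ is only right exact and kills supersingular representations. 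Second, the irreducibility of $V(r,0)$ is not in \cite{Barthel-Livne.1994} at all: it was left open there and is the main theorem of \cite{Breuil.2003}. The principle you invoke, that $V\mapsto V^{I_1}$ reflects irreducibility on supersingular blocks, is for $\GL_2(\Q_p)$ essentially equivalent to the statement being proved; Breuil's argument is a direct analysis showing that every nonzero subrepresentation of $V(r,0)$ contains a specific $I_1$-eigenvector generating the whole representation, not an appeal to a general functorial fact. (The ``self-duality up to twist'' used in your argument for \ref{thm:irreducible-d}, and the identification of the direction of the extensions in \ref{thm:irreducible-b}, are likewise consequences of these explicit computations rather than inputs to them.) So the skeleton is right, but the two hard ingredients --- the kernel computation behind \ref{thm:irreducible-c} and the supersingular irreducibility --- are asserted rather than proved.
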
 

By \cite[Thm.~33]{Barthel-Livne.1994}, every smooth irreducible representation
of $G$ admitting a central character is a quotient of some $V(r,\lambda,\chi)$.

\subsection{Computation of the left adjoint on irreducibles} 
Recall the endomorphism $\Phi$ on the representation $\ind_{ZK}^G\Sym^r(k^2)$
from the previous section. 
The character $\delta_B\colon B\to k^\times$ is (by Remark~\ref{rmk:deltaP})
explicitly given by
\[
\delta_B = \Inf^T_B\bigl( \omega\boxtimes\omega^{-1}\bigr).
\]

\begin{lem}\label{lem:Satake(Phi)} 
Let $0\le r\le p-1$ and recall the map $\eta\colon
\Sym^r(k^2)\longtwoheadrightarrow \LL^0(K_U,\Sym^r(k^2)) \cong
\one\boxtimes\omega^r$.
\begin{enumerate}[label=(\alph*)]
\item\label{lem:Satake(Phi)-a} The endomorphism $\Satake_{\Sym^r(k^2)}^0(\Phi)$
on $\ind_{ZK_T}^T (\one\boxtimes\omega^r)$ is given by
\[
\Satake_{\Sym^r(k^2)}^0(\Phi)\bigl([1,\eta(y^r)]\bigr) = \Bigl[\begin{pmatrix}
p & 0\\0 & 1\end{pmatrix}^{-1}, \eta(y^r)\Bigr].
\]

\item\label{lem:Satake(Phi)-b} The endomorphism
$\Satake_{\Sym^r(k^2)}^{1}(\Phi)$ on $\ind_{ZK_T}^T \delta_B\otimes 
\spann_k\{x^r\}\cong \ind_{ZK_T}^T(\omega^{r+1}\boxtimes \omega^{-1})$ is given by
\[
\Satake_{\Sym^r(k^2)}^1(\Phi)\bigl([1,x^r]\bigr) = 
\Bigl[\begin{pmatrix}1 & 0\\0 & p\end{pmatrix}^{-1}, x^r\Bigr].
\]
\end{enumerate}
\end{lem}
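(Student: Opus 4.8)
The plan is to apply Theorem~\ref{thm:Satake} directly to the specific endomorphism $\Phi$ with $K$ replaced by $ZK$ and $K_P$ by $ZK_T$ (the computations go through verbatim for the open compact-mod-center subgroup $ZK$, which satisfies $G = B\cdot ZK$ and $(ZK)_B = (ZK)_U (ZK)_T$ with $(ZK)_U = K_U$). So everything reduces to evaluating the two explicit formulas from Theorem~\ref{thm:Satake}\ref{thm:Satake-a} and \ref{thm:Satake-b} on the given generators, using the concrete description of $\Phi$ recalled in \S\ref{sec:example}.

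For part~\ref{lem:Satake(Phi)-a}, I would start from
\[
\Satake^0_{\Sym^r(k^2)}(\Phi)\bigl([1,\eta(y^r)]\bigr)(m) = \eta\Bigl(\sum_{u\in K_U\backslash U} \Phi([1,y^r])(um)\Bigr),
\]
and evaluate at $m = \begin{psmallmatrix}p&0\\0&1\end{psmallmatrix}^{-1}$ and at $m = 1$, showing the value at $\begin{psmallmatrix}p&0\\0&1\end{psmallmatrix}^{-1}$ is $\eta(y^r)$ and the value at $1$ (hence at every other coset representative) is $0$. Here one plugs in $\Phi([1,y^r]) = \bigl[\begin{psmallmatrix}p&0\\0&1\end{psmallmatrix}^{-1}, Y(y^r)\bigr] + \sum_{i=0}^{p-1}\bigl[\begin{psmallmatrix}1&i\\0&p\end{psmallmatrix}^{-1}, X(\begin{psmallmatrix}1&i\\0&1\end{psmallmatrix}y^r)\bigr]$, notes $Y(y^r) = y^r$ (for $r\ge 1$; for $r=0$ one has $Y = X = \mathrm{id}$ and a small separate check) while $X(\begin{psmallmatrix}1&i\\0&1\end{psmallmatrix}y^r) = X(y^r) = 0$, and then uses the support description of the functions $[g,v]\in\ind_{ZK}^G\Sym^r(k^2)$ together with the bijection $K_U\backslash U\cong \bigl\{\begin{psmallmatrix}1&j\\0&1\end{psmallmatrix}: j\in\Q_p/\Z_p\bigr\}$ to see which translates $um$ land in $ZK\begin{psmallmatrix}p&0\\0&1\end{psmallmatrix}$.

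For part~\ref{lem:Satake(Phi)-b}, I would use the formula from Theorem~\ref{thm:Satake}\ref{thm:Satake-b},
\[
\Satake^{\dim K_U}_{\Sym^r(k^2)}(\Phi)([1,v])(m) = \sum_{u\in U/K_U}\Phi([1,v])(mu),
\]
with $v = x^r\in \H^0(K_U,\Sym^r(k^2))$, evaluating at $m = \begin{psmallmatrix}1&0\\0&p\end{psmallmatrix}^{-1}$ and at $m=1$. The term $\bigl[\begin{psmallmatrix}p&0\\0&1\end{psmallmatrix}^{-1}, Y(x^r)\bigr]$ contributes nothing since $Y(x^r)=0$ for $r\ge 1$ (the $r=0$ case again handled directly), and the relevant contribution comes from the sum $\sum_i\bigl[\begin{psmallmatrix}1&i\\0&p\end{psmallmatrix}^{-1}, X(\begin{psmallmatrix}1&i\\0&1\end{psmallmatrix}x^r)\bigr]$, where $\begin{psmallmatrix}1&i\\0&1\end{psmallmatrix}x^r = x^r$ so $X(x^r)=x^r$; one then tracks for which $u\in U/K_U$ the value $\begin{psmallmatrix}1&i\\0&p\end{psmallmatrix}^{-1}(mu)$ lies in $ZK$. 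The identification of the target $\ind_{ZK_T}^T\delta_B\otimes\langle x^r\rangle_k$ with $\ind_{ZK_T}^T(\omega^{r+1}\boxtimes\omega^{-1})$ is just the computation of the $K_T$-action on $\langle x^r\rangle_k$ tensored with $\delta_B|_{K_T}$, using $\delta_B = \Inf^T_B(\omega\boxtimes\omega^{-1})$.

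The main obstacle is bookkeeping rather than conceptual: correctly matching the Iwasawa/Bruhat coset representatives appearing in the definition of $\Phi$ (which is phrased via $\ind_{ZK}^G$) with the coset sums $\sum_{u\in K_U\backslash U}$ and $\sum_{u\in U/K_U}$ in the Satake formulas, keeping track of which translates of the standard generators $\begin{psmallmatrix}p&0\\0&1\end{psmallmatrix}^{-1}$, $\begin{psmallmatrix}1&i\\0&p\end{psmallmatrix}^{-1}$ are supported on $ZK\cdot(\text{something in }B)$, and confirming that the apparently infinite sums over $U/K_U$ or $K_U\backslash U$ collapse to finitely many nonzero terms. The degenerate case $r=0$ (where $X$ and $Y$ both equal the identity and $\LL^0(K_U,-)$ and $\H^0(K_U,-)$ coincide with the whole space) should be noted but causes no real trouble. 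I expect the whole argument to be a short explicit calculation once the conventions are lined up.
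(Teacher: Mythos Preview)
Your approach is the paper's own: apply Theorem~\ref{thm:Satake} to the explicit Hecke operator $\Phi$ and compute. One computational slip, however, needs fixing. In part~\ref{lem:Satake(Phi)-a} you claim $X\bigl(\begin{psmallmatrix}1&i\\0&1\end{psmallmatrix}y^r\bigr) = X(y^r) = 0$, but under the action $\begin{psmallmatrix}a&b\\c&d\end{psmallmatrix}\cdot f(x,y) = f(ax+cy,\,bx+dy)$ one has $\begin{psmallmatrix}1&i\\0&1\end{psmallmatrix}\cdot y^r = (ix+y)^r$, whose $x^r$-coefficient is $i^r$, which is nonzero for $i\neq 0$ and $r\ge 1$. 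The correct reason the second summand dies is that $\Image(X) = \langle x^r\rangle_k \subseteq \Ker(\eta)$ when $r>0$, so $\eta\circ X = 0$; for $r=0$ one instead uses $\sum_{i=0}^{p-1} X(1) = p\cdot 1 = 0$ since $\Char k = p$. This is exactly how the paper argues.

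One minor point of phrasing: checking the value at $m=1$ alone does not justify vanishing at ``every other coset representative''. The clean way (and the paper's way) is to observe that the support of $\Phi([1,v])$ lies over just the two $ZK_T$-cosets represented by $\begin{psmallmatrix}p&0\\0&1\end{psmallmatrix}$ and $\begin{psmallmatrix}1&0\\0&p\end{psmallmatrix}$, so only these two values of $m$ need to be computed in each part.
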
 
\begin{proof} 
By Theorem~\ref{thm:Satake} we have
\begin{align*}
\Satake^0_{\Sym^r(k^2)}(\Phi)\bigl([1,\eta(y^r)]\bigr) &= \sum_{t\in
ZK_T\backslash T} \Bigl[t^{-1}, \eta\Bigl(\sum_{u\in K_U\backslash U}
\Phi([1,y^r])(ut)\Bigr)\Bigr]\\
&= \Bigl[\begin{pmatrix}p&0\\0&1\end{pmatrix}^{-1}, \eta(Y(y^r))\Bigr] +
\Bigl[\begin{pmatrix}1 & 0\\0&p\end{pmatrix}^{-1}, \eta\Bigl(\;\sum_{i=0}^{p-1}
X(\begin{pmatrix}1&i\\0&1\end{pmatrix}\cdot y^r)\Bigr)\Bigr]\\
&= \Bigl[\begin{pmatrix}p&0\\0&1\end{pmatrix}^{-1}, \eta(y^r)\Bigr],
\end{align*}
where we have used that the second summand in the second line is zero: If $r>0$,
this follows from $\Image(X) \subseteq\Ker(\eta)$; if $r=0$, we use
$\Char(k) = p$. This proves \ref{lem:Satake(Phi)-a}. For \ref{lem:Satake(Phi)-b}
we compute in a similar fashion:
\begin{align*}
\Satake^1_{\Sym^r(k^2)}(\Phi)\bigl([1,x^r]\bigr) &= \sum_{t\in ZK_T\backslash T}
\Bigl[ t^{-1}, \sum_{u\in U/K_U} \Phi([1,x^r])(tu)\Bigr]\\
&= \Bigl[\begin{pmatrix}p & 0\\0&1\end{pmatrix}^{-1},
\sum_{i=0}^{p-1}\begin{pmatrix}1 &i\\0&1\end{pmatrix}\cdot Y(x^r)\Bigr] +
\Bigl[\begin{pmatrix}1&0\\0&p\end{pmatrix}^{-1}, X(x^r)\Bigr]\\
&= \Bigl[\begin{pmatrix}1&0\\0&p\end{pmatrix}^{-1}, x^r\Bigr]. 
\qedhere
\end{align*}
\end{proof} 

\begin{prop}\label{prop:L(U,V(r,lambda))} 
Let $(r,\lambda)\in \{0,1,\dotsc,p-1\}\times k$ and let $\chi\colon
\Q_p^\times \to k^\times$ be a smooth character. One has
\[
\LL^{-i}\bigl(U, V(r,\lambda,\chi)\bigr) = \begin{cases}
\chi\mu_{\lambda^{-1}}\boxtimes \chi\mu_\lambda\omega^r, & \text{if $i=0$ and
$\lambda\neq 0$,}\\
\chi\mu_\lambda\omega^{r+1}\boxtimes \chi\mu_{\lambda^{-1}}\omega^{-1},&
\text{if $i=1$ and $\lambda\neq 0$,}\\
0,& \text{otherwise.}
\end{cases}
\]
\end{prop}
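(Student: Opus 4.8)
The plan is to reduce everything to an explicit computation with compact induction and then invoke the previously established Satake machinery. First I would use the short exact sequence \eqref{eq:irreducible} defining $V(r,\lambda,\chi)$, which after twisting by $\chi\circ\det$ reads
\[
0\to \ind_{ZK}^G\bigl(\Sym^r(k^2)\otimes(\chi\circ\det)\bigr) \xrightarrow{\Phi-\lambda} \ind_{ZK}^G\bigl(\Sym^r(k^2)\otimes(\chi\circ\det)\bigr) \to V(r,\lambda,\chi)\to 0.
\]
Applying the right $t$-exact functor $\LL(U,-)$ (Proposition~\ref{prop:L(U,-)-derived}, which also tells us $\LL^{-i}(U,-)=0$ for $i\notin\{0,1\}$ since $\dim K_U=1$ here) yields a long exact cohomology sequence relating the $\LL^{-i}(U,-)$ of the middle and outer terms. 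Since $\LL^{-i}(U,\ind_{ZK}^G(-))\cong \ind_{ZK_T}^T\LL^{-i}(K_U,-)$ by Proposition~\ref{prop:L(U,-)-positive} (adapted to the mod-center setting — one works with $P^+$, $M^+$ built from $K_B=K_UK_T$), and because $\Sym^r(k^2)$ has $K_U$-cohomology only in degrees $0$ and $1$, the key point is to identify the maps $\LL^{-i}(U,\Phi-\lambda)$ on these induced representations. This is exactly what $\Satake^0$ and $\Satake^1$ compute.

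Next I would feed in Lemma~\ref{lem:Satake(Phi)}: $\Satake^0_{\Sym^r(k^2)}(\Phi)$ is multiplication by the double coset operator $\bigl[\begin{psmallmatrix}p&0\\0&1\end{psmallmatrix}^{-1},-\bigr]$ on $\ind_{ZK_T}^T(\one\boxtimes\omega^r)$ (after twisting, on $\ind_{ZK_T}^T(\chi\boxtimes\chi\omega^r)$), and $\Satake^1_{\Sym^r(k^2)}(\Phi)$ is multiplication by $\bigl[\begin{psmallmatrix}1&0\\0&p\end{psmallmatrix}^{-1},-\bigr]$ on $\ind_{ZK_T}^T(\omega^{r+1}\boxtimes\omega^{-1})$ (twisted: $\ind_{ZK_T}^T(\chi\omega^{r+1}\boxtimes\chi\omega^{-1})$). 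For a Hecke operator on a compactly induced module from $ZK_T$ to $T\cong\Q_p^\times\times\Q_p^\times$ modulo center, subtracting $\lambda$ gives a map whose cokernel is the character obtained by "inverting" the generator: concretely $\ind_{ZK_T}^T(\psi_1\boxtimes\psi_2)\xrightarrow{[\mathrm{diag}(p,1)^{-1},-]-\lambda}\ind_{ZK_T}^T(\psi_1\boxtimes\psi_2)$ has, for $\lambda\neq 0$, cokernel the character $\mu_{\lambda^{-1}}\cdot(\psi_1\boxtimes\psi_2)$ extended so that $\mathrm{diag}(p,1)$ acts by $\lambda$ — and it is injective, so the kernel vanishes; for $\lambda=0$ the operator is injective with cokernel $0$ after inverting, i.e.\ $\ind_{C^+}^C$ kills it. I would record this as a short lemma about compact induction from a codimension-one-in-the-torus subgroup, proved by the same localization-at-$z$ argument underlying Lemma~\ref{lem:M+-flat}.

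Assembling: when $\lambda\neq 0$, both $\LL^0(U,\Phi-\lambda)$ and $\LL^1(U,\Phi-\lambda)$ are injective with computable cokernels, so the long exact sequence degenerates to give $\LL^0(U,V(r,\lambda,\chi))\cong \operatorname{coker}\Satake^0 \cong \chi\mu_{\lambda^{-1}}\boxtimes\chi\mu_\lambda\omega^r$ and $\LL^{-1}(U,V(r,\lambda,\chi))\cong\operatorname{coker}\Satake^1\cong\chi\mu_\lambda\omega^{r+1}\boxtimes\chi\mu_{\lambda^{-1}}\omega^{-1}$ (keeping track that the degree-$1$ piece of $\LL(K_U,\Sym^r(k^2))$ already carries the $\delta_B$-twist $\omega\boxtimes\omega^{-1}$, which combines with $\langle x^r\rangle$ of weight $\omega^r\boxtimes\one$ — wait, one must be careful about which entry; the $K_T$-weight of $x^r$ is $\omega^r$ in the first variable, and together with $\delta_B=\omega\boxtimes\omega^{-1}$ one gets $\omega^{r+1}\boxtimes\omega^{-1}$, matching the stated answer). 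When $\lambda=0$ both Satake operators become, after applying $\ind_{C^+}^C$, the zero map out of a finitely generated module on which the relevant central element acts topologically nilpotently — more precisely $\ind_{ZK_T}^T$ of the operator $[\mathrm{diag}(p,1)^{-1},-]$ is invertible whereas $[\mathrm{diag}(1,p)^{-1},-]$, hmm — so the cokernel is $0$ and hence $\LL^{-i}(U,V(r,0,\chi))=0$ for all $i$; alternatively this follows from Corollary~\ref{cor:LU-trivial}-type vanishing together with the supersingularity in Theorem~\ref{thm:irreducible}\ref{thm:irreducible-d}. The main obstacle I anticipate is the bookkeeping of characters and twists — getting the precise identification of $\operatorname{coker}$ of a Hecke operator on $\ind_{ZK_T}^T$ as a genuine $T$-character (including the $\mu_\lambda$ versus $\mu_{\lambda^{-1}}$ placement and the $\delta_B$ contribution in degree $1$), and making sure the long exact sequence argument is valid uniformly in $r$ including the edge cases $r=0$ and $r=p-1$ where the $\eta$-map and $\Char k=p$ cause the vanishing in Lemma~\ref{lem:Satake(Phi)} — rather than any conceptual difficulty.
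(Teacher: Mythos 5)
Your proposal is correct and follows essentially the same route as the paper: reduce to $\chi=\one$ via Proposition~\ref{prop:LU-character}, apply the right $t$-exact $\LL(U,-)$ to the defining sequence \eqref{eq:irreducible}, use the injectivity statements and Lemma~\ref{lem:Satake(Phi)} to split the long exact sequence into two short exact sequences, and identify the cokernels of the Hecke operators on $\ind_{ZK_T}^T$. The only wobble is your $\lambda=0$ discussion: the clean reason the cokernels vanish there is simply that $[\diag(p,1)^{-1},-]$ and $[\diag(1,p)^{-1},-]$ are translations by elements of $T$ and hence bijective on the full compact induction $\ind_{ZK_T}^T$ --- nothing is nilpotent or zero.
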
 
\begin{proof} 
By Proposition~\ref{prop:LU-character} we have $\LL\bigl(U,
V(r,\lambda,\chi)\bigr) \cong (\chi\circ\det)\otimes \LL\bigl(U,
V(r,\lambda)\bigr)$. We may therefore assume from the start that $\chi =
\one$ is the trivial character. As $\dim U = 1$, we know from
Proposition~\ref{prop:L(U,-)-derived} that $\LL^{-i}\bigl(U,V(r,\lambda)\bigr) =
0$ provided $i\notin \{0,1\}$.
Consider the short exact sequence
\begin{equation}\label{eq:V(r,lambda)}
0 \to \ind_{ZK}^G \Sym^r(k^2) \xrightarrow{\Phi -\lambda} \ind_{ZK}^G
\Sym^r(k^2) \to V(r,\lambda) \to 0.
\end{equation}
It is immediate from Lemma~\ref{lem:Satake(Phi)}.\ref{lem:Satake(Phi)-a} that
$\Satake^0_{\Sym^r(k^2)}(\Phi) - \lambda$ is injective. Hence, the long exact
sequence associated with $\LL(U,-)$ and \eqref{eq:V(r,lambda)} splits into the
following two short exact sequences:
\begin{gather}
0 \to \ind_{ZK_T}^T (\one\boxtimes \omega^r)
\xrightarrow{\begin{psmallmatrix}p&0\\0&1\end{psmallmatrix}^{-1} - \lambda}
\ind_{ZK_T}^T
(\one\boxtimes \omega^r) \to \LL^0\bigl(U,V(r,\lambda)\bigr) \to 0,\\
\intertext{and}
0 \to \ind_{ZK_T}^T(\omega^{r+1}\boxtimes \omega^{-1})
\xrightarrow{\begin{psmallmatrix}1&0\\0&p\end{psmallmatrix}^{-1}-\lambda}
\ind_{ZK_T}^T (\omega^{r+1}\boxtimes \omega^{-1}) \to \LL^{-1}\bigl(U,
V(r,\lambda)\bigr) \to 0.
\end{gather}
The assertion now follows easily.
\end{proof} 

\begin{cor}\label{cor:irreducibles} 
Let $\chi,\chi_1,\chi_2\colon \Q_p^\times\to k^\times$ be smooth characters with
$\chi_1\neq \chi_2$, and let $0\le r\le p-1$. The $T$-representations
$\LL^{-i}(U,V)$, for irreducible smooth $G$-representations $V$ and $i\in
\{0,1\}$, are as presented in Table~\ref{tab:irreducibles}.
\begin{table}[ht!] 
\begin{tabular}{lccc}\toprule
Type & $V$ & $\LL^{-1}(U,V)$ & $\LL^{0}(U,V)$ \\ \midrule
character & $\chi\circ\det$ & $0$ & $\chi\boxtimes\chi$ \\
special series & $(\chi\circ\det)\otimes \Sp$ &
$\chi\omega\boxtimes\chi\omega^{-1}$ & $0$ \\
principal series & $i_T^G(\chi_1\boxtimes \chi_2)$ & $\chi_2\omega\boxtimes
\chi_1\omega^{-1}$ & $\chi_1\boxtimes\chi_2$ \\
supersingular & $V(r,0,\chi)$ & $0$ & $0$ \\\bottomrule
\end{tabular}
\caption{Computing $\LL^{-i}(U,V)$ for irreducible $V$.}
\label{tab:irreducibles}
\end{table} 
\end{cor}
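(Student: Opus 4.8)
The plan is to go through the four rows of Table~\ref{tab:irreducibles} one at a time, reducing each to Proposition~\ref{prop:L(U,V(r,lambda))} together with the classification in Theorem~\ref{thm:irreducible} and the long exact sequence for $\LL(U,-)$. First, for the \emph{principal series} row: by Theorem~\ref{thm:irreducible}\ref{thm:irreducible-c}, every $i_T^G(\chi_1\boxtimes\chi_2)$ with $\chi_1\neq\chi_2$ is (after twisting) of the form $V(r,\lambda,\chi)$ with $\lambda\neq0$; one reads off $\chi = \chi_1\mu_{\lambda}$ (up to matching $\omega$-powers) and $r$ from the relation $\chi_1\boxtimes\chi_2 = \chi\mu_{\lambda^{-1}}\boxtimes \chi\mu_\lambda\omega^r$, i.e. $\chi_1 = \chi\mu_{\lambda^{-1}}$ and $\chi_2 = \chi\mu_\lambda\omega^r$. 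Substituting these into the two nonzero lines of Proposition~\ref{prop:L(U,V(r,lambda))} gives $\LL^0 = \chi_1\boxtimes\chi_2$ and $\LL^{-1} = \chi\mu_\lambda\omega^{r+1}\boxtimes\chi\mu_{\lambda^{-1}}\omega^{-1} = \chi_2\omega\boxtimes\chi_1\omega^{-1}$, exactly the table entry. Care is needed to check that the pair $(r,\lambda)$ never lands in the excluded set $\{(0,\pm1),(p-1,\pm1)\}$ when $\chi_1\neq\chi_2$ forces $V$ to be irreducible principal series — but this is automatic since those exceptional cases are precisely the reducible ones in Theorem~\ref{thm:irreducible}\ref{thm:irreducible-b}.

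Next, the \emph{supersingular} row $V(r,0,\chi)$: Proposition~\ref{prop:L(U,V(r,lambda))} with $\lambda=0$ gives $\LL^{-i}(U,V(r,0,\chi)) = 0$ for all $i$, since all three cases of the case distinction yield $0$. (This also shows $\LL(U,-)$ annihilates supersingulars, which is the expected statement that Jacquet functors kill supercuspidals.) For the \emph{special series} row $(\chi\circ\det)\otimes\Sp$ and the \emph{character} row $\chi\circ\det$, I would use the non-split short exact sequences of Theorem~\ref{thm:irreducible}\ref{thm:irreducible-b} relating $V(0,\pm1,\chi)$ and $V(p-1,\pm1,\chi)$ to $\chi\mu_{\pm1}\circ\det$ and $(\chi\mu_{\pm1}\circ\det)\otimes\Sp$, combined with the long exact cohomology sequence of $\LL(U,-)$ (recall $\LL(U,-)$ is right $t$-exact with $\LL^{-i}(U,-)=0$ for $i\notin\{0,1\}$ since $\dim U = 1$). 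By Proposition~\ref{prop:LU-character} we may twist away $\chi$ and reduce to $\Sp$ and $\one_G$. Here $\one_G = V(0,\pm1)/((\mu_{\pm1}\circ\det)\otimes\Sp)$ and dually; from Proposition~\ref{prop:L(U,V(r,lambda))} we know $\LL^0(U,V(0,1)) $ and $\LL^{-1}(U,V(0,1))$ as $1$-dimensional characters, and similarly for $V(p-1,1)$. Chasing the long exact sequence for $0\to(\mu_1\circ\det)\otimes\Sp\to V(0,1)\to\mu_1\circ\det\to 0$ determines $\LL^\bullet(U,\Sp)$ and $\LL^\bullet(U,\one_G)$ once one pins down the connecting maps.

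The main obstacle will be exactly this last point: identifying the connecting homomorphism $\LL^0(U,\one_G)\to\LL^{-1}(U,(\mu_1\circ\det)\otimes\Sp)$ (equivalently, after the $\mu_{-1}$ version) and showing it vanishes, so that the long exact sequence degenerates into the clean answer $\LL^{-1}(U,\Sp) = \omega\boxtimes\omega^{-1}$, $\LL^0(U,\Sp)=0$, $\LL^{-1}(U,\one_G)=0$, $\LL^0(U,\one_G)=\one\boxtimes\one$. One clean way around this: compute $\LL^\bullet(U,\one_G)$ directly — $\one_G = i_T^G(\one_T)$ is parabolically induced, so $\LL(U,i_T^G(\one_T)) \cong$ (unit counit applied to) $\one_T$ via $\LL(U,-)\dashv i_M^G$; more precisely $\LL^0(U,\one_G)=\one_T=\one\boxtimes\one$, and $\LL^{-1}(U,\one_G)=0$ because $\one_G$ is a direct limit of $\ind_{ZK}^G$-type pieces, or simply because $\one_G = V(0,1)/((\mu_1\circ\det)\otimes\Sp)$ is not supersingular and one applies Proposition~\ref{prop:L(U,V(r,lambda))} to the reducible case by a continuity/specialization argument. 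Knowing $\LL^\bullet(U,\one_G)$ exactly, the long exact sequence for $V(0,1)$ then forces $\LL^\bullet(U,\Sp)$, and the defining sequence $0\to\one_G\to i_T^G(\one_T)\to\Sp\to 0$ (note $\Sp = i_T^G(\one_T)/\one_G$) gives an independent cross-check. I would present the argument in the order: principal series, supersingular (both immediate from Proposition~\ref{prop:L(U,V(r,lambda))}), then $\one_G$, then $\Sp$, flagging the connecting-map computation as the one genuinely delicate step and settling it via the $i_T^G$-adjunction rather than a brute-force diagram chase.
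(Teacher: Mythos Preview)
Your overall plan matches the paper's: the principal series and supersingular rows come directly from Proposition~\ref{prop:L(U,V(r,lambda))}, and the Steinberg row is extracted from the long exact sequence for $0\to\Sp\to V(0,1)\to\one\to 0$ once the character row is known. The substantive gap is in your treatment of the character case. You write ``$\one_G = i_T^G(\one_T)$'', but this is false: $\one_G$ is a proper subrepresentation of $i_T^G(\one_T)$ with quotient $\Sp$, as you yourself recall a few lines later. Moreover, even for $i_T^G(\one_T)$ the adjunction counit $\LL(U,i_T^G(\one_T))\to\one_T$ is \emph{not} an isomorphism: by Theorem~\ref{thm:irreducible}\ref{thm:irreducible-c} one has $i_T^G(\one_T)\cong V(p-1,1)$, and Proposition~\ref{prop:L(U,V(r,lambda))} gives $\LL^{-1}(U,V(p-1,1))\cong\omega\boxtimes\omega^{-1}\neq 0$. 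Your alternative ``continuity/specialization'' suggestion has no content either, since Proposition~\ref{prop:L(U,V(r,lambda))} only computes $\LL^\bullet$ of the full $V(r,\lambda,\chi)$, not of its constituents.

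The paper instead invokes Corollary~\ref{cor:LU-trivial}: since $\Res^G_B(\chi\circ\det) = \Inf^T_B(\chi\boxtimes\chi)$, that corollary gives $\LL(U,\chi\circ\det)\cong\chi\boxtimes\chi$ concentrated in degree~$0$. In particular $\LL^{-1}(U,\one_G)=0$, so in the long exact sequence for $0\to\Sp\to V(0,1)\to\one_G\to 0$ the connecting map has zero source and there is nothing delicate to analyze: one reads off $\LL^{-1}(U,\Sp)\cong\LL^{-1}(U,V(0,1))\cong\omega\boxtimes\omega^{-1}$, and since $\LL^0(U,V(0,1))$ and $\LL^0(U,\one_G)$ are both isomorphic to $\one\boxtimes\one$, the surjection between them is an isomorphism and $\LL^0(U,\Sp)=0$. (Minor point: your connecting map points the wrong way; it goes $\LL^{-1}(U,\one_G)\to\LL^0(U,\Sp)$, from the quotient in degree~$-1$ to the subobject in degree~$0$.)
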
 
\begin{proof} 
If $V \cong \chi\circ\det$, the assertion follows from
Corollary~\ref{cor:LU-trivial}. If $V$ is supersingular, then $\LL(U,V) = \{0\}$
by Proposition~\ref{prop:L(U,V(r,lambda))}. Let now $V\cong
i_T^G(\chi_1\boxtimes \chi_2)$ for two (not necessarily distinct) smooth
characters $\chi_1,\chi_2\colon \Q_p^\times\to k^\times$. Write $\chi_j =
\mu_{\lambda_j}\omega^{r_j}$ for some $\lambda_j\in k^\times$ and $0\le r_j\le
p-1$ and take $\lambda\in k^\times$ such that $\lambda^2 = \lambda_1\lambda_2$.
We have $i_T^G(\chi_1\boxtimes \chi_2) \cong V(r_2-r_1, \lambda_2\lambda^{-1},
\mu_\lambda\omega^{r_1})$ by
Theorem~\ref{thm:irreducible}.\ref{thm:irreducible-c}. Applying
Proposition~\ref{prop:L(U,V(r,lambda))} we compute
\begin{align*}
\LL^0(U,V) &\cong \mu_{\lambda}\omega^{r_1}\mu_{\lambda\lambda_2^{-1}} \boxtimes
\mu_\lambda\omega^{r_1}\mu_{\lambda_2\lambda^{-1}} \omega^{r_2-r_1}\\ 
&\cong \chi_1 \boxtimes \chi_2\\
\intertext{and}
\LL^{-1}(U,V) &\cong \mu_{\lambda}\omega^{r_1}\mu_{\lambda_2\lambda^{-1}}
\omega^{r_2-r_1+1} \boxtimes \mu_{\lambda}\omega^{r_1}
\mu_{\lambda \lambda_2^{-1}}\omega^{-1} \\
& \cong \chi_2\omega \boxtimes \chi_1\omega^{-1}.
\end{align*}
This settles the principal series case. Finally, we deduce the special series
case as follows: By Proposition~\ref{prop:LU-character} we may assume
$\chi=\one$. By Theorem~\ref{thm:irreducible}.\ref{thm:irreducible-b} there
is a short exact sequence
\[
0 \to \Sp \to V(0,1) \to \one \to 0.
\]
The long exact sequence associated with $\LL(U,-)$ yields
\[
\begin{tikzcd}
0 \ar[r] & \LL^{-1}(U,\Sp) \ar[r] & \LL^{-1}\bigl(U,V(0,1)\bigr)
\ar[r] \ar[d,phantom,""{coordinate, name=Z}]&
\LL^{-1}(U,\one) 
\ar[dll, rounded corners, at end, to path={
-- ([xshift=2ex]\tikztostart.east) |- (Z) \tikztonodes -|
([xshift=-2ex]\tikztotarget.west) -- (\tikztotarget)}] \\
& \LL^0(U,\Sp) \ar[r] & \LL^0\bigl(V(0,1)\bigr) \ar[r,"\varphi"'] &
\LL^0(U,\one) \ar[r] & 0
\end{tikzcd}
\]
Since $\LL^{-1}(U,\one) = \{0\}$, we deduce $\LL^{-1}(U,\Sp) \cong
\LL^{-1}\bigl(U,V(0,1)\bigr) \cong \omega\boxtimes \omega^{-1}$.
As $\LL^0\bigl(V(0,1)\bigr) \cong \one\boxtimes\one \cong \LL^0(U,\one)$, it
follows that $\varphi$ is an isomorphism, whence $\LL^0(U,\Sp) = \{0\}$.
\end{proof}

\bibliographystyle{alphaurl}
\bibliography{../references}{}
\end{document}